\documentclass[12pt]{article}
\usepackage[utf8]{inputenc}
\usepackage{textcomp}
\usepackage{graphicx, amssymb, amsfonts,amsmath,amsthm,latexsym,mathrsfs}
\usepackage{dsfont}
\usepackage[title]{appendix}
\numberwithin{equation}{section}
\usepackage{xcolor}
\usepackage{cite}
\usepackage[colorlinks=true]{hyperref}
\allowdisplaybreaks

\newcommand{\ignore}[1]{}

\theoremstyle{plain}

\newtheorem{theorem}{Theorem}[section]
\newtheorem{lemma}[theorem]{Lemma}

\newtheorem{corollary}[theorem]{Corollary}

\newtheorem{proposition}[theorem]{Proposition}

\newtheorem{rmkk}[theorem]{Remark}
\newtheorem{hypp}[theorem]{Hypotheses}

\newcommand{\E}{\mathbb{E}}
\newcommand{\PP}{\mathbb{P}}
\newcommand{\F}{\mathcal{F}}
\newcommand{\e}{\varepsilon}
\newcommand{\LL}{\mathscr{L}}
\newcommand{\M}{\mathcal{M}}
\newcommand{\MM}{\mathsf{M}}
\newcommand{\ind}{\mathds{1}}
\newcommand{\R}{\mathbb{R}}
\newcommand{\N}{\mathbb{N}}
\newcommand{\NN}{\mathcal{N}}
\newcommand{\cc}{\boldsymbol{\mathsf{c}}}
\newcommand{\J}{\boldsymbol{\mathrm {J}}}
\newcommand{\I}{\boldsymbol{\mathrm {I}}}

\title{Fast random sampling and small noise analysis for stochastic control models}

\author{
	Sarvesh R. Iyer \\ 
	Department of Mathematics, Ashoka University,\\ Sonipat, Haryana, India-131029 \\ 
	\texttt{sarveshiyer@gmail.com}\and
	Vivek Kumar$^*$\\ 
	Department of  Mathematics  and Statistics, IIT Kanpur, \\Kanpur, Uttar Pradesh, India-208016 \\ 
	\texttt{vivekmsc118@gmail.com, vivekkumar@iitk.ac.in}}
\date{}

\begin{document}
	
	\maketitle
	\footnotetext[0]{\hspace{-0.6cm}\textsuperscript{*}Corresponding author.\\
		\textit{Key Words:}  Random Sampling,  Renewal Theory, Stochastic Differential Equations, Law of Large Numbers, Central Limit Theorems.\\
		Mathematics Subject Classification (2020): 60F17, 60K05, 60F05, 60H10, 60F25.}

	
	\begin{abstract}
		In this paper, we study  a linear control system with a given state feedback law. The system is influenced by rapid \textit{random} sampling occurring at frequency $\frac1n, n\in \N$, as well as by \textit{white noise} of small intensity $\e\in (0,1]$. We study the behavior of the system as $n \to \infty$ and $\e \searrow 0$ jointly, and prove that it converges to its ideal deterministic analogue. For the random fluctuations around its analogous deterministic trajectory, we obtain either stochastic differential equations or an ordinary differential equation depending on the joint behavior of $\e$ and $n$. Further, we extend this problem to a nonlinear system driven by multiplicative white noise, where the noise intensity is scaled by a small parameter. In this case, we again perform a similar analysis as in the linear case.
	\end{abstract}

	\section{Introduction} 
	In any model based on ODE, it is important to keep systems stable, safe, and efficient. Control theory provides the tools to do this, and for this reason, it has become a very popular subject, with applications in robotics, aircraft, cars, power plants, and many other fields.  It helps design systems that work reliably under different conditions; see \cite{chen2012optimal,sontag2013mathematical, yuz2014sampled}. In practice, control is not always applied continuously. Most modern controllers are digital, and they compute and update the control only at discrete time instants. Digital control uses a computer to control a system and it checks the system at fixed time intervals and makes adjustments based on those readings. This is different from continuous control, where the system is monitored and adjusted all the time. For instance, in an inverted pendulum, the computer checks the pendulum’s angle and speed every second, decides how much force the motor should apply to keep it upright, and holds that force steady until the next check. Between two updates, the control input is kept fixed. This is known as the sample and hold method, ensuring the pendulum stays balanced (see \cite[Chapter 1]{sontag2013mathematical}). There are several applications of sample and hold control, for examples, see \cite{chen2012optimal, sontag2013mathematical, wang2023overview} and references therein.

	In many applications, such as seismic data acquisition or networked control, sampling is influenced by noise, delays, and scheduling uncertainties, making it inherently random rather than strictly periodic. This motivates modeling the sampling period as a random variable with a given probability distribution, which allows for more realistic analysis and reconstruction. For instance, in the case of an inverted pendulum controlled by a computer, the controller ideally samples the pendulum’s position at fixed intervals to keep it upright. In practice, however, network delays, sensor glitches, or energy saving strategies can cause measurements to arrive irregularly, so the controller holds the input over random time lengths while the plant evolves open-loop. Even with a high average rate, these fluctuations affect performance and stability, highlighting the need to study random sampling.
	The study of control systems with random sampling has a long and interesting history \cite{kalman1957analysis,leneman1968random, kushner1969stability}. Kalman \cite{kalman1957analysis}, was the first to explore this idea in the 1950s. Later, Leneman \cite{leneman1968random} and also Kushner  and Tobias \cite{kushner1969stability}, extended his work during the 1960s. A helpful review can be found in chapter \cite{tanwani2018stabilization}, which gives a clear overview of these developments.  For more studies and recent advances on random sampling in control systems, see \cite{dhama2021asymptotic,gao2009robust,nilsson1998stochastic, shen2012sampled, tanwani2019performance, tanwani2018stabilization, tanwani2020error, wang2023overview} and references therein.

	Further, real systems are often affected by some small external effects and uncertainties. To describe these effects in dynamical systems, stochastic models of differential equations are used. In continuous time, this leads to stochastic differential equations (SDEs), where a small random noise term represents these external influences in the system, \cite{dhama2023fluctuation, dhama2025asymptotic, dhama2020approximation, evans2012introduction,oksendal2003sde}.
	
	In this article, we study a framework with both  random sampling mechanisms and external small stochastic effects. The important question in  such situations is whether such sampled systems keep the important properties of their continuous deterministic counterparts. For instance, does stability remain intact$?$  Does performance degrade$?$ These issues are central in the study of computer controlled and networked control systems. There has been limited research on the interaction between sampling and noise \cite{dhama2023fluctuation, dhama2025asymptotic, dhama2020approximation,dhama2021asymptotic}. The present work aims to address these questions for a given dynamical problem.


	As mentioned earlier, several works on sample and hold methods are available in the literature ( see \cite{chen2012optimal, dhama2023fluctuation, dhama2025asymptotic, dhama2020approximation, dhama2021asymptotic,  evans2012introduction, gao2009robust, nilsson1998stochastic, shen2012sampled, sontag2013mathematical, yuz2014sampled, kalman1957analysis,leneman1968random, kushner1969stability,  tanwani2019performance, tanwani2018stabilization, tanwani2020error}). For broader survey discussions, we refer the reader to \cite{tanwani2018stabilization, wang2023overview, zhang2023sampled}. However, in the present paper, we focus only on the studies that are most closely related to our work. In particular, Pahlajani and Dhama have recently obtained several noteworthy results on sampled data and hold control systems under small noise perturbations \cite{dhama2023fluctuation, dhama2025asymptotic, dhama2020approximation}, as well as on systems with random sampling \cite{dhama2021asymptotic}.
	In their works  \cite{dhama2023fluctuation, dhama2025asymptotic, dhama2020approximation}, both authors mainly consider a controlled dynamical system governed by the deterministic ODE
	$\dot y_t = c(y_t,u_t)$, with initial condition $y_0\in\mathbb{R}^d$,
	where $c$ is the drift and $u_t$ is the control.
	When the state is continuously observed, a feedback law $u_t=\kappa(y_t)$ is applied,
	leading to the closed loop system $\dot y_t = c(y_t,y_t)$,
	where $c(x,z)=c(x,\kappa(z))$, which describes the ideal deterministic behavior.
	To model digital controllers, the control is updated only at discrete times and held
	constant between updates, resulting in the sample and hold system
	$\dot y_t^\delta = c\big(y_t^\delta, y_{\delta\lfloor t/\delta\rfloor}^\delta\big)$
	with sampling period $\delta>0$.
	Finally, small external noise perturbations are incorporated, yielding
	$\dot y_t^{\e,\delta}
	= c\big(y_t^{\e,\delta}, y_{\delta\lfloor t/\delta\rfloor}^{\e,\delta}\big)
	+ \e\,\dot {\mathsf{F}}_t$,
	where $\e>0$ measures the noise intensity and $\dot {\mathsf{F}}_t$ is a general
	stochastic forcing.  Since their approaches and analytical settings are closely connected to ours, we review these works individually to highlight their main contributions. Subsequently, in the next subsection,  we clarify how our results differ from and extend their findings.
	
	Pahlajani and Dhama  first  studied the linear systems in this direction. In their first work \cite{dhama2020approximation}, they considered a linear system given by
	\begin{equation*}\label{DhamaSDE}
		dY^{\e,\delta}_t = \big[AY^{\e,\delta}_t + BU^{\e,\delta}_k \big] dt + \e  dW_t, 
		\quad U^{\e,\delta}_k = -K (X^{\e,\delta}_{k\delta-} + \e V_{k\delta}), \quad ,
	\end{equation*}
	for $t \in [k\delta,(k+1)\delta)$ and where $W_t$ and $V_{k\delta}$ are independent Brownian motions representing system and measurement noise. For positive integers $m$ and $d,$  $A\in \R^{d\times d}, B\in  \R^{d\times m}$ and $K\in \R^{m\times d}$ are constant matrices.
	The main objective of this paper  \cite{dhama2020approximation} was to characterize the limiting behavior of $X^{\e,\delta}_t$ as $\e,\delta \to 0$, both in terms of the mean dynamics, which follow the deterministic ODE, and the fluctuations, which are governed by SDEs. 
	
	In the  work \cite{dhama2023fluctuation}, both the authors have generalized  the previous work \cite{dhama2020approximation} by considering nonlinear drift with multiplicative noise as
	\begin{equation*}\label{stochastic}
		dX_t^{\e,\delta} = c(X_t^{\e,\delta}, X^{\e,\delta}_{\delta\lfloor t/\delta\rfloor})dt 
		+ \e\sigma(X_t^{\e,\delta})dW_t.
	\end{equation*}
	Here $c: \R^d\times \R^d\to \R^d$ is a sufficiently regular mapping and
	takes the specific form $c(x,y)=f(x)+g(x)\kappa(y), x,y\in \R^d$, with the functions $f:\R^d \to \R^d$, $g:\R^d \to \R^{d\times m}$, $\kappa:\R^d \to \R^m$ satisfying certain regularity conditions.
	The authors obtain asymptotic approximations for both the mean dynamics and the fluctuations 
	of $X_t^{\e,\delta}$ as $\e,\delta \searrow 0$. The mean behavior is described by a 
	limiting ODE of the form 
	\begin{equation*}\label{DhamalimitingODE}
		x_t  = x_0 + \int_0^t [f(x_s)+g(x_s)\kappa(x_s)] ds
	\end{equation*}
	on the interval $[0,T]$ while the fluctuations are captured by a linear SDE whose form depends on the relative 
	rates at which $\e$ and $\delta$ vanish.

	Further, in the article \cite{dhama2025asymptotic},  Dhama studied the following nonlinear sampled data systems perturbed by both Brownian noise and small jumps via Poisson random measures
	\begin{align*}\label{DhamaJumpProblem} dY_t^{\e,\delta}=c(Y_{t-},Y^{\e,\delta}_{\pi_\delta(t)-})dt+\e\sigma(Y_{t-})dW_t+\e\int_{0<|x|<1}F(Y_{t-},x),\tilde N(dt,dx)
	\end{align*} with $\pi_\delta(t)=\delta\lfloor t/\delta\rfloor$. Methodologically, the work performs a joint small parameter expansion under three regimes for $\delta/\e$ and proves pathwise versions of Law of Large Numbers ( LLN) and Central Limit Theorem (CLT) type results.

	The paper  \cite{dhama2021asymptotic},  differs from the above three studies. It considers a linear state feedback system  implemented using a sample and hold mechanism (as mentioned in \cite{dhama2020approximation}), where the sampling times are random and follow a renewal process. Unlike the earlier works, randomness enters only through the sampling times, and no extra noise is added to the system.  The primary objective is to understand how fast, yet finite rate, random sampling alters the system dynamics when compared to the ideal continuous time model. Using the law of large numbers and the central limit theorem results for random matrix products, the study describes both the average behavior and the typical fluctuations caused purely by the randomness in sampling.
	\subsection{ The Novelty and Methods}
	This work primarily deals with a linear control model subject to both rapidly increasing random sampling at rate $n\in\N$ and an external noise perturbation of very small intensity $\e$. The primary goal is to investigate law of large numbers (LLN) and central limit theorem (CLT) type results in the regime where the effects of random sampling and noise perturbations both vanish, i.e., when $(\frac1n, \e)\searrow 0.$
	One of the main differences between our work and that of \cite{dhama2023fluctuation, dhama2021asymptotic, dhama2025asymptotic, dhama2020approximation} is the simultaneous consideration of both random sampling and an external noise term. In particular, our work is an extension of the work \cite{dhama2020approximation, dhama2021asymptotic} by updating sampling method in \cite{dhama2020approximation} while  it  extend \cite{dhama2021asymptotic}  by introducing an external  noise forcing term with very  small intensity $\e$.    To our best knowledge, the present work has not been done yet in the literature.

	Further, we also generalize the results obtained in our linear setting to a more broader framework that includes nonlinear systems and  multiplicative white noise. This extension is inspired by the work \cite{dhama2025asymptotic}, where author has worked under deterministic sampling. In contrast, along with the nonlinearity, we also consider random sampling, which makes the problem more complex and interesting. In this context, Lemma \ref{G4decomposition} of linear case, plays a key role in establishing the CLT and makes calculation more simple. To the best of our knowledge, this setting is also new and has not yet been investigated in the existing literature.  This work also covers the results obtained in \cite{dhama2023fluctuation} in the context of random sampling.
	
	Earlier, we mentioned that this paper deals with two interacting sources of randomness, which makes the analysis considerably more challenging.
	This can be considered as a multiscale problems \cite{has1966stochastic, khasminskij1968principle,cerrai2009averaging, givon2007strong, brehier2022averaging, cerrai2025averaging}, where asymptotic behavior plays a key role. Multiscale systems of such kind were first investigated by Khasminskii in his seminal works \cite{has1966stochastic,khasminskij1968principle}, where he developed the averaging method to derive simplified dynamics for the slow component by averaging over the fast one. This idea has since inspired extensive research; for example, see \cite{cerrai2009averaging, givon2007strong, brehier2022averaging, cerrai2025averaging} and references therein. The other techniques to solve multiscale problems is the idea used by  Freidlin and Sowers \cite{freidlin1999comparison}  where they have solved Large deviation principle by using first homogenization and then averaging principle.

	In the present article, in the case of the linear system, our analysis is inspired by and partially follows the framework introduced in \cite{dhama2020approximation}, where authors have shown the LLN and CLT type results for their linear control problems with periodic sampling.  For the linear setting in our case,  in the LLN result, we have shown that as $\e\searrow 0$ and $n\to \infty,$ the the sampled stochastic system behaves close to the ideal deterministic system.
	In the CLT part, we have analyzed  the behavior of  the fluctuations of the sampled stochastic system around the deterministic controlled trajectory. Depending on the balance between noise intensity and sampling frequency, these fluctuations converge either to a stochastic differential equation or to a deterministic equation. To derive these results, we encounter technical difficulties due to the presence of combination of random sampling together with external noise. This goal cannot be achieved by simply applying the techniques from \cite{dhama2020approximation}. Therefore, we have developed an extended framework beyond that method.
	To overcome these challenges, we first perform a careful decomposition of the main expression into several components, each of which can be analyzed separately. In order to do this, well established methods from probability theory are employed thoughtfully and in a sophisticated manner. The Wald’s identity to handle sums of random variables, the elementary renewal theorem to control renewal processes. Further, we use the Donsker’s theorem for approximating scaled processes by Brownian motion, and Doob’s maximal inequality to bound the supremum of martingales. The contribution of the small noise term is treated using the Burkholder-Davis-Gundy (BDG) inequality in combination with Doob’s maximal inequality, ensuring precise control over stochastic fluctuations. This systematic approach allows us to rigorously establish the limiting behavior of the system, despite the complexity introduced by the simultaneous presence of sampling and noise.

	\subsection{Plan of the paper} In the next section, we introduce the overall framework of our study and present the main results. We first formulate the problem rigorously and set out the notation and assumptions that will be used throughout the paper. We then outline the key ideas and techniques underlying our analysis. Section \ref{section3} contains the necessary definitions and auxiliary results that will be used in the subsequent sections. In Section \ref{section4}, we derive LLN type results, which describe the average, or deterministic, behavior of the system. Section \ref{section5} is devoted to CLT type results. This section is divided into two subsections, in which the detailed proofs of the CLT are provided. Subsection \ref{subsection5.1} focuses on the decomposition of the random sampling component, whereas Subsection \ref{subsection5.2} is concerned with the analysis of the noise component. Finally, in Section \ref{section6}, we extend our framework to a more general class of linear problems and obtain analogous results for this broader setting.
	
	\subsection{Notations}Let $(\Omega, \mathcal{F}, \mathbb{P})$ be a probability space, and let 
	$\{\xi_i\}_{i=1}^{\infty}$ be a sequence of i.i.d.\ positive real-valued random variables defined on this space. 
	Throughout the paper, $|\cdot|$ denotes the induced matrix norm, and for two matrices $A$ and $B$, 
	we write $|A|\cdot|B|$ to denote the product of their norms. The symbol $C$ denotes a generic positive constant whose value may change from line to line. Whenever necessary, its specific dependence will be indicated explicitly in the relevant statement. 
	If no dependence is specified, $C$ represents a universal positive constant.
	We denote the set of natural numbers by $\mathbb{N}$ and the set of positive integers by $\mathbb{Z}_+$. 
	The notation $\textit{Var}$ is used to represent variance.  
	Further, pth power over expectation is denoted by $ \E[\cdot]^p:=\left(\int_{\Omega} \cdot~ d \PP\right)^p.$

	\section{ Problem Formulation,   Assumptions and Main Results}\label{section2}
	Before going to the introduce the dynamical system of our problem, let us first discuss about the random sampling setup which we are going to use in our paper. In our sampling and hold  setup, we  are going to replace deterministic discretizations by a renewal process, using the prescription from \cite[ page 360]{dhama2021asymptotic}. For this, we define  time process associated to  random variables $\{\xi_i\}_{i=1}^{\infty}$  as  
	\begin{equation*}
		\tau_k := \sum_{i=1}^{k} \xi_i\quad ,\quad \tau_0 =0,
	\end{equation*}
	and the renewal process 
	\begin{equation*}
		N_t := \sup\{k \in \mathbb Z_+ : \tau_k \leq t\}.
	\end{equation*}
	Throughout the paper we are assuming  that the sequence $\{\xi_i\}_{i=1}^{\infty}$ has finite moment generating functions. The appropriate ``fast sampling" takes place via a parameter $n \in \mathbb N$. That is, for $k\in \mathbb Z_+, n\in \mathbb{N},$ let $\xi^n_k := \frac 1n \xi_k$  and let $\{\tau^n_k\}_{k \in \mathbb Z_+}, \{N^n_t\}_{t \geq 0}$ be the associated time and renewal processes respectively given by \begin{equation}\label{taunk}
		\tau^n_k := \sum_{i=1}^{k} \xi^n_i\quad ,\quad \tau^n_0=0,
	\end{equation}
	and \begin{equation}\label{Nnt}
		N^n_t := \sup\{k \in \mathbb Z_+ : \tau^n_k \leq t\}.
	\end{equation}	
	By definitions, $\xi^1_k = \xi_k$, $\tau^1_k = \tau_k$ for all $k \geq 1$, and $N^1_t = N_t$ for all $t \geq 0$. These will be used interchangeably. Given $n \in \mathbb{N}$, the appropriate discretizations function at level $n$ in this setting is the function $t \to \tau^n_{N^n_t}$, which is the last sampling point smaller than $t$ at the scale $\frac 1n$. Observe that
	\begin{equation}\label{nnt}
		N^n_T = \sup\{k : \tau^n_k \leq T\} = \sup\{k : \tau^1_k \leq nT\} = N^1_{nT}.
	\end{equation}
	Let us now turn our attention to the dynamical feedback control framework. We consider the linear differential equation given below.
	\begin{align}\label{ode1}
		\dot{x}_t=Ax_t+Bu_t; ~~~x(0)=x_0\in \R^d
	\end{align}
	where $A\in \R^{d\times d}, B\in  \R^{d\times m}$,  $m, d $ are fixed integers and $t \in [0,T]$ for some finite $T>0$.
	We apply the feedback control law $u=-Kx$, where $K\in  \R^{m\times d}$ is a suitable  matrix. Throughout the paper, we assume that the matrix $A$ is invertible, matrix $A-BK$ generates a strongly continuous semigroup $e^{t(A-BK)}$ and matrices $A$ and $BK$ commutes. Consequently, one can get following an equivalent integral form of equation \eqref{ode1}
	\begin{equation}\label{contrlUsoln}
		x_t=x_0+\int_0^t (A-BK)x_s ds=e^{t(A-BK)}x_0.
	\end{equation}
	For the sake of simplicity, we assume that the initial data $x_0$ is bounded and its bound is incorporated into a generic constant $C$. Now, consider implementing the feedback control law described earlier using a sample and hold strategy in dynamics \eqref{ode1}. For any fix $n\in \mathbb{N}$ and any $k\in \mathbb Z_+$, suppose that the system is sampled at time instant $t=\tau_k^n$.  At each sampling instant $t$, the current state $x_{t}=x_{\tau_k^n}$ is measured, and the control input is determined according to  
	$$u_{\tau_k^n}=-Kx_{\tau_k^n},$$
	and this control value is then kept constant over the interval $[\tau_k^n, \tau_{k+1}^n).$  The trajectory $x_{t}^n$ over time $t \in [\tau_k^n, \tau_{k+1}^n)$ can be obtained by solving the differential equation
	\begin{equation} \label{randomcontrol}
		\dot{x}^n_t = Ax^n_t + B(-K x^n_{\tau^n_k}) = Ax^n_t - BK x^n_{\tau^n_k}, 
	\end{equation}	
	with initial condition $x_{\tau_k^n}^n=x_{\tau_k^n-}^n$. In this interval $t\in [\tau_k^n, \tau_{k+1}^n), x^n_{\tau^n_k}=x^{ n}_{\tau^n_{N^n_t}}$ is fixed and the system is linear with constant coefficients on each interval $[\tau^n_k,\tau^n_{k+1}), ~ k\in \mathbb Z_+.$ Hence, the solution can be  given by
	\begin{equation}\label{randomcontrolsoln}
		x_t^n
		= \Big( e^{A(t-\tau^n_k)}
		- \int_{\tau^n_k}^{t} e^{A (t-s)} BK ds
		\Big) x^n_{\tau^n_k},
		~~~t \in [\tau^n_k,\tau^n_{k+1}).
	\end{equation}
	As we can see that, the trajectory depends on $n$ also, and we can expect that as $n\to\infty$, it should converge to the continuous time solution \eqref{contrlUsoln}.
	
	We now consider a situation when the system is influenced by some  small external random force. For this, let us consider a  filtration $\{ \F_t: t\geq 0\}$ on the  probability space  $(\Omega, \F,\PP)$  with  satisfying standard conditions (see \cite{oksendal2003sde}). In this probability space, consider an  n-dimensional Brownian motion, $W=\{W_t\}_{t\geq 0}$ which represent the external noise in the  system and and is  independent of the sequence $\{\xi_i\}_{i=1}^{\infty}$.  We suppose the intensity of the noise is very small, say of order $\e\in (0,1)$. Now, the system  \eqref{randomcontrol} evolves according to the following stochastic hybrid system with additive noise 
	\begin{equation} \label{controlwithnoise}
		dX_t^{\e, n} = \left( AX_t^{\e, n} - BK X^{\e, n}_{\tau^n_k} \right) dt + \e  dW_t, \quad t \in [\tau_k^n, \tau_{k+1}^n),
	\end{equation}
	and with initial data $X_0^{\e, n}=x_0.$ It can be noticed that in the interval $t\in [\tau_k^n, \tau_{k+1}^n), X_{\tau_k^n}^{\e,n}=X^{\e, n}_{\tau^n_{N^n_t}}$ is fixed.  Following the  representation \eqref{randomcontrolsoln}, we have 
	\begin{align*}
		X_t^{\e,n}=\left[e^{(t-{\tau_k^n})A}-\int_{{\tau_k^n}}^t e^{(t-s)A}BK ds\right] X_{\tau_k^n}^{\e,n}+\e\int_{{\tau_k^n}}^te^{(t-s)A} dW_s
	\end{align*}
	and thus for any $t\geq 0,$ we have 
	\begin{align*}
		X_t^{\e,n}=\sum_{k \ge 0} \ind_{[\tau_k^n,\tau_{k+1}^n)} (t) \Bigg\{\Bigg[e^{(t-{\tau_k^n})A}-\int_{{\tau_k^n}}^t e^{(t-s)A}BK ds\Bigg] X_{\tau_k^n}^{\e,n}
		+\e\int_{{\tau_k^n}}^te^{(t-s)A} dW_s\Bigg\}.
	\end{align*}
	\textit{In \eqref{controlwithnoise}, if we fix $\e\in (0,1)$ and take $n\to \infty$, we can expect the limiting dynamics  are described by the process $X_t^\e$  which has following form
		\begin{align}\label{nlimit}
			dX_t^\e=[(A-BK)X_t^\e]dt+\e dW_t, ~~~~~X_0^\e=x_0.
		\end{align}
		Further, we are expecting that by taking limit $\e\searrow 0,$ the convergence of the solution $X^\e_t$ given by \eqref{nlimit} to the deterministic trajectory $x(t)$ solving \eqref{contrlUsoln} is straightforward.
		Our primary objective is to determine how the combined presence of random sampling and Brownian motion affects these classical limits. More specifically, we are interested in how the relative rates at which $\e \searrow 0$ and $n \to \infty$ simultaneously  modify the convergence of $X^{\e, n}_t$ to $x(t)$.} 
	
	Considering the joint convergence of $\e \searrow 0$ and $n \to \infty$, we identify the following three asymptotic regimes:
	\begin{equation}\label{Regimes}
		\cc :=
		\lim_{\e\searrow 0, n\to \infty} \frac{1}{n\e}\begin{cases}
			=0 & \text{Regime 1}\\
			\in (0,\infty) & \text{Regime 2}\\
			=\infty & \text{Regime 3}.
		\end{cases}       
	\end{equation}
	Here, in the first Regime, the sampling  process is very fast while the noise decreases slowly in comparison. Consequently, the system is already well sampled while the noise is still present. In this case, the main effect comes from small noise acting in a rapidly varying environment. In the second Regime, sampling and noise evolve at almost the same speed, i.e., neither one dominates the other. Finally, in the third Regime,  the noise intensity $\e$ decreases much faster than the sampling parameter $n$ increases. As a result, the effect of external noise become negligible at an early stage of the system's evolution.  For the cases $\cc=0$ and $\cc \in (0,\infty)$, we set
	\begin{equation}\label{kappa}
		\varkappa(\e) := \left|\frac{1}{ n\e}-\cc\right|,
	\end{equation}
	Note that $\displaystyle\lim_{\e \searrow 0, n\to \infty} \varkappa(\e)=0$. 
	
	Now,  for each $n \in \mathbb{N}$, define  a time–discretizations map $\pi^n: [0, \infty) \to \{ \tau^n_k : k \in \mathbb{Z}_+ \}$ by
	$$
	\pi^n(t) := \tau^n_{N^n_t} \quad \text{for } t \in [0, \infty).
	$$
	That is, for each time $t \in [0,\infty)$, ${\pi}^n$ picks the closest previous sampling time in the random discretizations grid. In particular, if $t \in [\tau^n_k, \tau^n_{k+1})$ for some $k \in \mathbb Z_+$, then
	$N^n_t  = k$ by definition \ref{Nnt}, and hence ${\pi}^n (t) = \tau^n_k$. With the help of $\pi^n$, we can rewrite equations \eqref{randomcontrol} and \eqref{controlwithnoise} for $t\in [0,\infty)$ as follows:
	\begin{equation}\label{M2withoutnoiseforallt}
		\dot{x}^n_t= A x^n_t-BK x^n_{\pi^n(t)} 
	\end{equation}
	with $x^n_0=x_0$ and 
	
	\begin{equation}\label{LSDEwRS}
		dX_t^{\e, n} = \left( AX_t^{\e, n} - BK X^{\e,n}_{\pi^n(t)} \right) dt + \e  dW_t, 
	\end{equation} with $X^{\e, n}_0=x_0$ or equivalently 
	\begin{equation}\label{M2withnoiseforallt}
		X_t^{\e,n}=x_0+\int_0^{t}(AX_s^{\e,n}-BKX^{\e,n}_{\pi^n(s)}) ds +\e\int_{0}^{t} dW_s.
	\end{equation}
	We can now state our first  main results of this paper. The first result is a ``law of large numbers", guaranteeing that as $\e \searrow 0, n \to \infty$, $X^{\e,n}_t \to x_t$ uniformly in $L^p, p \geq 1$ in all the Regime.

	\begin{theorem}\label{LLN}
		Let $x(t)$ denotes the solution of \eqref{contrlUsoln} and let $X^{\e,n}_t$ be the solution to SDE \eqref{controlwithnoise}. Let $T\geq 0$ be fixed. Then, for arbitrary $\e>0, n\in \mathbb N, p\geq 1,$ there exists $ C_{ABKTp} > 0, $ depending only on $A, B, K$,$T$ and $p$ such that
		\begin{equation*}
			\E \left[ \sup_{0 \leq s \leq T} |X_s^{\e, n} - x_s|^p \right] \leq C_{ABKTp}  \left(\E[\NN_p]+\e^p\right),
		\end{equation*}
		where $\NN_p = \int_0^T (s-\pi^n(s))^p ds$ satisfies $\E[\NN_p]\le \frac{1}{n^p}C_{T\xi_1}$  and so $\to 0$ as $n\to \infty$ by Corollary~\ref{Np} below.
	\end{theorem}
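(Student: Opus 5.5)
We will compare $X^{\e,n}$ and $x$ through their integral forms \eqref{M2withnoiseforallt} and \eqref{contrlUsoln}, and close the estimate with Gronwall. Subtracting the two equations and writing $BKX^{\e,n}_{\pi^n(s)} = BKX^{\e,n}_s - BK(X^{\e,n}_s - X^{\e,n}_{\pi^n(s)})$ gives
\begin{equation*}
X^{\e,n}_t - x_t = \int_0^t (A-BK)(X^{\e,n}_s - x_s)\,ds + \int_0^t BK\big(X^{\e,n}_s - X^{\e,n}_{\pi^n(s)}\big)\,ds + \e W_t .
\end{equation*}
The remaining work is to control the sampling error $X^{\e,n}_s - X^{\e,n}_{\pi^n(s)}$.

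\textbf{Step 1 (a priori moment bound).} First we show $\E[\sup_{s\le T}|X^{\e,n}_s|^p]\le C_{ABKTp}$, uniformly in $n$ and in $\e\in(0,1]$. Since $\sup_{r\le t}|X^{\e,n}_{\pi^n(r)}|\le \sup_{r\le t}|X^{\e,n}_r|$, \eqref{M2withnoiseforallt} yields
\begin{equation*}
\sup_{r\le t}|X^{\e,n}_r| \le |x_0| + (|A|+|BK|)\int_0^t \sup_{r\le s}|X^{\e,n}_r|\,ds + \e\sup_{r\le T}|W_r| .
\end{equation*}
Pathwise Gronwall, followed by taking $p$-th moments and applying Doob's (or BDG) inequality to $\sup_{r\le T}|W_r|$, gives the bound.

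\textbf{Step 2 (sampling increment).} For $s\in[\pi^n(s),\ldots]$, the equation gives
\begin{equation*}
X^{\e,n}_s - X^{\e,n}_{\pi^n(s)} = \int_{\pi^n(s)}^s \big(AX^{\e,n}_r - BKX^{\e,n}_{\pi^n(r)}\big)\,dr + \e\big(W_s - W_{\pi^n(s)}\big).
\end{equation*}
Hence
\begin{equation*}
|X^{\e,n}_s - X^{\e,n}_{\pi^n(s)}| \le C(s-\pi^n(s))\sup_{r\le T}|X^{\e,n}_r| + \e|W_s - W_{\pi^n(s)}| .
\end{equation*}
It then suffices to show
\begin{equation*}
\E\Big[\Big(\int_0^T |X^{\e,n}_s - X^{\e,n}_{\pi^n(s)}|\,ds\Big)^p\Big] \le C\big(\E[\NN_p] + \e^p\big).
\end{equation*}

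\textbf{Step 3 (main obstacle: decoupling).} The drift part contains $\sup_r|X^{\e,n}_r|$, which is correlated with the sampling times. We will handle it by writing
\begin{equation*}
\sup_{r\le T}|X^{\e,n}_r| \le C\Big(1 + \e\sup_{r\le T}|W_r|\Big),
\end{equation*}
which holds pathwise by the Gronwall bound of Step 1, uniformly over all sampling realizations. Combined with Jensen's inequality $\big(\int_0^T g\big)^p\le T^{p-1}\int_0^T g^p$, the drift term becomes
\begin{equation*}
C\,\E\Big[\big(1+\e^p\sup_{r\le T}|W_r|^p\big)\NN_p\Big] \le C\,\E[\NN_p],
\end{equation*}
where the last step uses independence of $W$ and $\{\xi_i\}$ to factor the expectation.

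For the noise part, we condition on the sampling sequence. Given $\{\xi_i\}$, the increment $W_s - W_{\pi^n(s)}$ is Gaussian with variance proportional to $s-\pi^n(s)\le s\le T$. Therefore
\begin{equation*}
\e^p\,\E\int_0^T |W_s - W_{\pi^n(s)}|^p\,ds \le C\e^p .
\end{equation*}
The same factoring also shows that the $\e W_t$ term contributes $C\e^p$.

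\textbf{Step 4 (conclusion).} Taking $\sup_{t\le T}$, raising to the $p$-th power, and applying Gronwall to $\E\sup_{r\le t}|X^{\e,n}_r - x_r|^p$ yields
\begin{equation*}
\E\Big[\sup_{s\le T}|X^{\e,n}_s - x_s|^p\Big] \le C_{ABKTp}\big(\E[\NN_p] + \e^p\big).
\end{equation*}
The rate $\E[\NN_p]\le C_{T\xi_1}n^{-p}$ is then taken from Corollary~\ref{Np}. Its mechanism is $s-\pi^n(s)\le \xi^n_{N^n_s+1}$, combined with renewal arguments and the finite exponential moments of $\xi_1$. We do not reprove it here.
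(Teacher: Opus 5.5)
Your proof is correct, but it takes a different route from the paper. The paper never compares $X^{\e,n}$ with $x$ directly. It inserts the noiseless sample-and-hold trajectory $x^n$ of \eqref{randomcontrol} and splits the error by the triangle inequality:
\begin{itemize}
\item The piece $x^n-x$ is bounded pathwise, for every realization of the sampling times, by $C\,\NN_p$ (Lemma \ref{Assympwithoutnoise}). This uses only the deterministic bound $\sup_{r\le T}|x^n_r|\le C_{ABKT}$ of Lemma \ref{ODEbd}.
\item The piece $X^{\e,n}-x^n$ solves a linear sampled system forced only by $\e W$. Since both processes are held at the same times, no sampling-increment term appears, and Gronwall together with the BDG bound on $\sup_{t\le T}|W_t|$ gives $C\e^p$ (Lemma \ref{Assympwithnoise}).
\end{itemize}
So in the paper the two sources of randomness never interact. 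No moment bound on $X^{\e,n}$ is needed, the independence of $W$ and $\{\xi_i\}$ is never used, and the intermediate lemma Lemma \ref{Assympwithoutnoise} is cited again later (e.g.\ in the proof of Lemma \ref{G3Lemma}).

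Your direct comparison instead produces the increment $X^{\e,n}_s-X^{\e,n}_{\pi^n(s)}$ of the noisy process, which mixes both sources. You therefore need two extra ingredients:
\begin{itemize}
\item the pathwise a priori bound $\sup_{r\le T}|X^{\e,n}_r|\le C(1+\e\sup_{r\le T}|W_r|)$;
\item independence of $W$ and $\{\xi_i\}$, to factor $\E\bigl[\sup|W|^p\,\NN_p\bigr]=\E\bigl[\sup|W|^p\bigr]\E[\NN_p]$. Without it, Cauchy--Schwarz would give only $\E[\NN_p^2]^{1/2}$ in place of $\E[\NN_p]$.
\end{itemize}
In exchange, you get a single Gronwall argument built on the same quantity that drives \eqref{eq:mainterm} in the CLT analysis.

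Two small remarks:
\begin{itemize}
\item The conditioning on $\{\xi_i\}$ in your noise step is unnecessary, since $|W_s-W_{\pi^n(s)}|\le 2\sup_{r\le T}|W_r|$.
\item Your restriction to $\e\le 1$ in Steps 1 and 3 is harmless for the stated ``arbitrary $\e>0$''. Because $\NN_p\le T^{p+1}$, the extra term $\e^p\,\E[\NN_p]$ is absorbed into $C\e^p$.
\end{itemize}
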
	
	Our next main aim is to study the fluctuation of $ X_t^{\e,n}$ about $x(t)$ with respect to parameters $\e$ and $n$. For this, we introduce the fluctuation processes under different scaling regimes defined in \eqref{Regimes}. For Regimes 1 and 2, we define the  fluctuations scaled  by $\e$ as
	\begin{equation*}
		Z^{\e,n}_t := \frac{X_t^{\e,n}-x_t}{\e},
		\label{eq:fluct-Z}
	\end{equation*}
	while for Regime 3  we consider fluctuations under the scaling $1/n$ 
	\begin{equation*}
		Q_t^{\e,n} := \frac{X_t^{\e,n}-x_t}{1/n}.
		\label{eq:fluct-Q}
	\end{equation*}
	With the help of  \eqref{contrlUsoln} and \eqref{M2withnoiseforallt}, we obtain
	$$X_t^{\e,n}-x_t= \int_{0}^{t}(A-BK)(X_s^{\e,n}-x_s) ds+BK\int_{0}^{t}\left(X_s^{\e,n}-X^{\e,n}_{\pi^n(s)}\right)ds+\e W_t$$
	Therefore, 
	\begin{equation}\label{eq:mainterm}
		Z^{\e,n}_t=\int_{0}^{t}(A-BK)Z_s^{\e,n} ds+BK\int_{0}^{t}\frac{X_s^{\e,n}-X^{\e,n}_{\pi^n(s)}}{\e}ds+ W_t,
	\end{equation}
	and similarly
	\begin{equation}\label{eq:maintermQ}
		Q_t^{\e,n}=\int_{0}^{t}(A-BK)Q_s^{\e,n} ds+BK\int_{0}^{t}\frac{X_s^{\e,n}-X^{\e,n}_{\pi^n(s)}}{1/n}ds+ \e nW_t.
	\end{equation}
	We expect that as $\epsilon \searrow 0, n \to \infty$, the random fluctuations $Z^{\e,n}_t$ (in Regimes 1,2) and $Q^{\e,n}_t$ (in Regime 3) will converge to some effective fluctuation processes $Z_t$ and $Q_t$ respectively, which are independent of $\e,n$ and $X^{\e,n}_t$. If such an approximation   holds, then we can approximate $X^{\e,n}_t = x_t + \e Z_t + \gamma_{\e, n}$ in Regimes $1,2$ and $X^{\e,n}_t = x_t + \frac 1n Q_t + \lambda_{\e,n}$ in Regime $3$, where $\gamma_{\e, n}$ and $\lambda_{\e,n}$ vanish as $\e\searrow 0$ and $n\to \infty.$
	
	We analyze the two cases separately, beginning with Regime 1 and 2. 
	In  Regimes 1 and 2, since  $\displaystyle \lim_{\substack{\e \searrow 0 \\ n \to \infty}} \frac{1}{n\e}=\cc\in [0,\infty),$ there exists $\e_0\in (0,1)$ such that $\varkappa(\e)<1$ whenever $0<\e<\e_0.$
	In the both Regimes, one of the main challenges comes from the term
	$\displaystyle\int_{0}^{t}\frac{X_s^{\e,n}-X^{\e,n}_{\pi^n(s)}}{\e}ds$
	which is central to understand the behavior of the system $Z_t^{\e,n}$. It is not immediately clear how it evolves over time when $\e$ is very small and $n$ is very large. To get a handle on this, let us suppose that we can show that, in both regimes, the integral converges to a well defined function. Keeping this idea, we introduce a function $\ell(t)$ such that
	\begin{equation}\label{ellDefn}
		\ell(t):=\cc\int_0^t M (A-BK)x(s) ds, 
	\end{equation}
	where $M:=\frac{\E[\xi_1^2]}{2\E[\xi_1]}$ . Further, we define the process $Z=\{ Z_t:t\geq 0\}$ as the unique strong solution of
	\begin{equation}\label{ZtDefn}
		Z_t:=\int_0^t(A-BK)Z_s ds- \cc M BK\int_0^t (A-BK)x(s) ds +W_t,
	\end{equation}
	Now, suppose we can establish that the   the function $\ell(t)$  is such that 
	\begin{align*}
		\lim_{\substack{\e,1/n \to 0\\ 1/{n\e} \to c}} \int_{0}^{t}\frac{X_s^{\e,n}-X^{\e,n}_{\pi^n(s)}}{\e} ds=\ell(t).
	\end{align*}
	Then, in the  Regimes $1$ and $2$, the fluctuation  process $Z^{\e,n}_t$ can be approximated by $Z_t$ for small $\e$ and large $n$. This is the content of our next main result, as presented below. 
	\begin{theorem}[Central Limit Type Theorem]\label{CLTresult}
		Let $x(t)$ denotes the solution of \eqref{contrlUsoln} and let $X^{\e,n}_t$ be the solution to SDE \eqref{controlwithnoise}. Assume that the scaling parameters  fall into Regime $i\in \{1,2\},$ i.e., $\cc\in [0,\infty).$    Then, there exists a number $\e_0\in (0,1)$ such that for every fixed $T>0$ and for all $0<\e<\e_0,$ 
		\begin{align}\label{CLTExpression}
			\E\Big[\sup_{0\leq t\leq T} \left|X_t^{\e,n}-x(t)-\e Z_t\right|\Big] \leq \left[ \cc(n^{-1/2}(n^{-1}+\e)+n^{-1/4})
			+ M\varkappa(\e)+n^{-1/2}
			\right]C_{ABKT\xi_1}.\nonumber
		\end{align}
	\end{theorem}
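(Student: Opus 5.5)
Subtracting \eqref{ZtDefn} from \eqref{eq:mainterm}, I would set $R^{\e,n}_t := Z^{\e,n}_t - Z_t$. The two Brownian terms cancel, so
\begin{equation*}
R^{\e,n}_t=\int_0^t (A-BK)R^{\e,n}_s\,ds + BK\Big(\int_0^t \frac{X^{\e,n}_s-X^{\e,n}_{\pi^n(s)}}{\e}\,ds - \ell(t)\Big),
\end{equation*}
with $\ell$ as in \eqref{ellDefn}. Gronwall's inequality then gives $\sup_{t\le T}|R^{\e,n}_t|\le C_{ABKT}\sup_{t\le T}|\Delta_t|$, where $\Delta_t$ is the bracketed difference. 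Since $X^{\e,n}-x-\e Z=\e R^{\e,n}$, it suffices to show that $\e\,\E\sup_t|\Delta_t|$ is bounded by the stated right-hand side. Everything therefore reduces to a careful decomposition of the sampling integral.

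On $[\pi^n(s),s]$, \eqref{M2withnoiseforallt} gives
\begin{equation*}
X^{\e,n}_s-X^{\e,n}_{\pi^n(s)}=\int_{\pi^n(s)}^s\big(AX^{\e,n}_r-BKX^{\e,n}_{\pi^n(s)}\big)\,dr+\e\big(W_s-W_{\pi^n(s)}\big).
\end{equation*}
I would split the integrand into four pieces. The first is $(A-BK)x(s)\,(s-\pi^n(s))$, the main term. The second is $(A-BK)(x_{\pi^n(s)}-x_s)(s-\pi^n(s))$, which is $O((s-\pi^n(s))^2)$. The third replaces $x$ by $X^{\e,n}$; it is bounded by $(s-\pi^n(s))\sup_r|X^{\e,n}_r-x_r|$ (the $A(X_r-X_{\pi^n(s)})$ correction is of the same type) and is controlled by Theorem \ref{LLN} at order $(n^{-1}+\e)\,(s-\pi^n(s))$. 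The fourth is the noise increment $\e(W_s-W_{\pi^n(s)})$. After dividing by $\e$, the noise piece becomes $\int_0^t (W_s-W_{\pi^n(s)})\,ds$. Using independence of $W$ from $\{\xi_i\}$, I would decompose this over sampling intervals into a sum of independent centered contributions. Doob's maximal inequality and BDG (conditioning on the renewal sequence) give $\E\sup_t \le C(\E\NN_3)^{1/2}\lesssim n^{-1}$, up to the boundary interval. The error-type pieces, divided by $\e$, carry a factor $1/(n\e)\le \cc+\varkappa(\e)$, and this produces the $\cc\, n^{-1/2}(n^{-1}+\e)$ contribution. Here I would apply Cauchy--Schwarz to $\E[\NN_2]^{1/2}$, as provided by Corollary \ref{Np}.

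The main obstacle is the main term, namely the claim that
\begin{equation*}
\frac1\e\int_0^t (A-BK)x(s)\,(s-\pi^n(s))\,ds \longrightarrow \cc M\int_0^t(A-BK)x(s)\,ds
\end{equation*}
uniformly in $t$, with rate $\cc n^{-1/4}+M\varkappa(\e)+n^{-1/2}$. I would write $\frac1\e=n\cdot\frac1{n\e}$ and replace $\frac{1}{n\e}$ by $\cc$, at a cost of $\varkappa(\e)$ times a bound on $n\int_0^T (s-\pi^n(s))\,ds$ of order $M$. This is the source of the $M\varkappa(\e)$ term. It then remains to show that $n\int_0^t f(s)(s-\pi^n(s))\,ds\approx M\int_0^t f(s)\,ds$ with $f=(A-BK)x$ smooth.

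On each interval $[\tau^n_k,\tau^n_{k+1})$ I would freeze $f$ at $\tau^n_k$, at cost $O(n^{-1})$ by Lipschitz continuity. The integral over the interval is then exactly $f(\tau^n_k)(\xi_k)^2/(2n^2)$. Summing and multiplying by $n$ gives $\frac1n\sum_{k\le N^n_t} f(\tau^n_k)\xi_{k+1}^2/2$. I would center this as $\xi^2_{k+1}/2=M\xi_{k+1}+\big(\xi_{k+1}^2/2-M\xi_{k+1}\big)$, where the second summand is mean zero. The first part reconstructs the Riemann sum $\sum_k f(\tau^n_k)\xi^n_{k+1}\approx\int_0^t f$. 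The centered part is a martingale in $k$ stopped at $N^n_t+1$. By Doob's inequality, Wald's identity and the elementary renewal theorem ($\E N^n_T\le CnT$), its sup has $L^2$ norm of order $n^{-1}\sqrt{nT}=O(n^{-1/2})$. The boundary interval containing $t$ and the overshoot $\tau^n_{N^n_t+1}-t$ contribute $O(n^{-1/2})$ by the finite moment generating function assumption; if a cruder moment bound is used, it gives the $n^{-1/4}$ term multiplying $\cc$.

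Collecting all pieces, multiplying by $\e$, and absorbing constants into $C_{ABKT\xi_1}$ yields the estimate for $\e_0$ chosen so that $\varkappa(\e)<1$.
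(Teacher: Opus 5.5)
Your argument is correct up to the small slips noted at the end. Apart from the common first step, it takes a genuinely different and lighter route than the paper. That common step is writing $X^{\e,n}-x-\e Z=\e(Z^{\e,n}-Z)$ and applying Gronwall, which the paper's one-line proof of Theorem~\ref{CLTresult} does tacitly after Propositions~\ref{L1asymptotes} and~\ref{noisebd}.

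\textbf{Decomposition.} The paper solves the SDE explicitly on each sampling interval, using $A^{-1}$. It splits the sampling integral into $\LL^{\e,n}_1=\frac1\e\int_0^t(e^{(s-\pi^n(s))A}-I)(I-A^{-1}BK)X^{\e,n}_{\pi^n(s)}\,ds$ and a state-independent noise part $\LL^{\e,n}_2=\int_0^t e^{sA}(\M_s-\M_{\pi^n(s)})\,ds$, and then expands $\LL^{\e,n}_1-\ell$ into $G_1,\dots,G_5$. You work directly with the integral equation. You keep only $\int_0^t(W_s-W_{\pi^n(s)})\,ds$ as an explicit noise term and push all remaining noise into $\sup_r|X^{\e,n}_r-x_r|$, which Theorem~\ref{LLN} controls. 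As a result, invertibility of $A$ is never used.

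\textbf{Noise term.} The paper uses only first-moment bounds of the type $\E|W_s-W_{\pi^n(s)}|\lesssim\E\sqrt{s-\pi^n(s)}$ and obtains $n^{-1/2}$. Your conditional-Gaussian martingale argument exploits cancellation across intervals and gives $n^{-1}$.

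\textbf{The term the paper calls $G_4$.} This is the biggest difference. Lemma~\ref{G4decomposition} centers $(\xi^n_{k+1})^2$ at its mean. That forces it to replace $N^n_t$ by $nt/\E[\xi_1]$ and $\tau^n_k$ by $k\E[\xi_1]/n$ (Lemmas~\ref{Remainder3} and~\ref{Remainder4}). This in turn requires Donsker's theorems, Lemma~\ref{unifintrem34}, Lemma~\ref{lem:usc} and Lemma~\ref{lem:assistance}, and the rate obtained is only $n^{-1/4}$. You instead center $\xi_{k+1}^2/2$ against $M\xi_{k+1}$, which has mean zero precisely because $M=\E[\xi_1^2]/(2\E[\xi_1])$. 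What remains is the random Riemann sum $M\sum_k f(\tau^n_k)\xi^n_{k+1}\approx M\int_0^t f$, so the random time change never has to be undone. Doob, Wald at the stopping time $N^n_T+1$, and the elementary renewal theorem then give $n^{-1/2}$, needing $f$ only Lipschitz on $[0,T]$.

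\textbf{What each route buys.} The paper's route matches the periodic-sampling bookkeeping term by term and packages $G_4$ as a lemma reused in Section~\ref{section6}. Your centering identity would serve there equally well, with a better rate and weaker hypotheses on $f$.

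\textbf{Small corrections.}
\begin{enumerate}
\item \emph{Sign of $\ell$.} With \eqref{ZtDefn} as printed, the drift is $-\cc MBK\int_0^t(A-BK)x(s)\,ds=-BK\ell(t)$. Subtracting then gives $BK\bigl(\int_0^t\frac{X^{\e,n}_s-X^{\e,n}_{\pi^n(s)}}{\e}\,ds+\ell(t)\bigr)$, not $-\ell(t)$. Your identity corresponds to drift $+BK\ell(t)$, which is the correct limit: the paper itself proves $\LL^{\e,n}_1\to+\ell$, so its argument needs the same sign. Because the theorem's left side carries the factor $\e$ and $\cc\e\le 2\varkappa(\e)+2/n$, the stated inequality holds with either sign.
\item \emph{Rate of the LLN-controlled pieces.} Your Cauchy--Schwarz estimate gives $(\cc+\varkappa(\e))(n^{-1}+\e)$, not $\cc\, n^{-1/2}(n^{-1}+\e)$. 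This is still dominated by the right-hand side by the same inequality. The paper's Lemma~\ref{G1Lemma} gets its extra $n^{-1/2}$ only by writing $\tilde{\NN}_3$ where its Cauchy--Schwarz step actually yields $\tilde{\NN}_2$.
\item \emph{Conditional variance of the noise sum.} It is $\sum_k(\xi^n_{k+1})^3/3$, essentially $\NN_2$ rather than $\NN_3$. The rate $n^{-1}$ you state is nonetheless right.
\end{enumerate}
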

	\begin{rmkk}
		As we know that in Regime 1, $\cc=0$ and speed of $\e$ is very slow compare to $1/n.$ Therefore, if we take $\e \approx \frac{1}{n^{1-\delta}}$ with $0<\delta<1$, then the rate becomes  $M\varkappa + n^{-1/2}=M/(n\e)+n^{-1/2}\approx M n^{\delta-2}+n^{-1/2}\approx n^{-1/2}$.  In term of $\e,$ the rate will be $\e^{\min\left(\frac{1}{2(1-\delta)},\frac{\delta}{1-\delta}\right)}.$ 
		In Regime 2, $\cc$ is a fixed constant, and $\e$ and $1/n$ both converge to $0$  at the approximately  same rate. Consequently, the dominant term in the overall rate is $(M\varkappa + n^{-1/4})$.  Here, the convergence rate is determined by how $\varkappa \to 0$, i.e., by the rate at which $n\e \to 1/\cc$.
	\end{rmkk} 
	
	Further, in Regime 3, a similar result holds; however, in this case,  the process $Q_t^{\e,n}$ converges to a deterministic limit. This occurs because, in this regime, $\e$ decays far more rapidly than $n$ grows. The result can be formulated as follows.
	\begin{theorem}\label{CLTresult2}
		Let $x(t)$ denotes the solution of \eqref{contrlUsoln} and let $X^{\e,n}_t$ be the solution to SDE \eqref{controlwithnoise}. Assume that the scaling parameters  fall into Regime 3, i.e.,  $\cc=\infty.$  Define the process $Q=\{ Q_t:t\geq 0\}$ as the unique strong solution of
		\begin{equation*}
			Q_t=\int_0^t(A-BK)Q_s ds- BK M\int_0^t (A-BK)x(s) ds.
		\end{equation*}
		Then, there exists a number $n_0$ such that for every fixed $T>0$ and for all $0<n_0<n,$ 
		\begin{equation*}
			\E\left[\sup_{0\leq t\leq T} \left|X_t^{\e,n}-x(t)-\frac1n Q_t\right|\right]\leq   \left(\frac{1}{n^{1/4}}+\e \sqrt{n}\right)C_{ABKT\xi_1}.		\end{equation*}		
	\end{theorem}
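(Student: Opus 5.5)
Write $\Delta^{\e,n}_t := X^{\e,n}_t - x(t) - \frac1n Q_t$. Subtracting the equation for $\frac1n Q_t$ from the identity preceding \eqref{eq:maintermQ} gives
\begin{align*}
\Delta^{\e,n}_t = \int_0^t (A-BK)\Delta^{\e,n}_s\, ds + BK\int_0^t \Big[ \big(X^{\e,n}_s - X^{\e,n}_{\pi^n(s)}\big) + \tfrac{M}{n}(A-BK)x(s)\Big] ds + \e W_t .
\end{align*}
Gronwall's inequality applied to $\sup_{s\le t}|\Delta^{\e,n}_s|$ then reduces the claim to bounding the expected supremum of the forcing. The Brownian term contributes $\e\,\E[\sup_{t\le T}|W_t|]\le C\e \le C\e\sqrt n$, so everything rests on showing
\[
\E\Big[\sup_{t\le T}\Big|\int_0^t \big(X^{\e,n}_s - X^{\e,n}_{\pi^n(s)}\big) ds + \tfrac{M}{n}\int_0^t (A-BK)x(s)\, ds\Big|\Big] \le C\Big(n^{-5/4} + \e n^{-1/2}\Big),
\]
since after dividing by the $1/n$ scale this produces exactly $n^{-1/4} + \e\sqrt n$.

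To obtain this estimate I will split the increment on $[\pi^n(s),s)$ using the explicit solution formula:
\[
X^{\e,n}_s - X^{\e,n}_{\pi^n(s)} = \int_{\pi^n(s)}^s \big(AX^{\e,n}_r - BKX^{\e,n}_{\pi^n(s)}\big)\, dr + \e\big(W_s - W_{\pi^n(s)}\big).
\]
\emph{Noise piece.} Write $\int_0^t (W_s - W_{\pi^n(s)})\,ds = \sum_k \int_{\tau^n_k}^{\tau^n_{k+1}\wedge t} (W_s - W_{\tau^n_k})\,ds$. The intervals are independent of $W$. For the summands: conditionally on the sampling times they are independent and centered, with variance of order $(\xi^n_k)^3$. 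Their sum is therefore a martingale in $k$, and Doob's inequality bounds its supremum in $L^2$ by $(\E\sum_{k\le N^n_T+1}(\xi^n_k)^3)^{1/2}\le C(n\cdot n^{-3})^{1/2}=Cn^{-1}$ via Wald's identity. This gives a contribution of $C\e n^{-1}\le C\e n^{-1/2}$; the boundary term at $t$ is handled the same way.

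\emph{Drift piece.} Replace $X^{\e,n}_r$ and $X^{\e,n}_{\pi^n(s)}$ by $x(\pi^n(s))$. The resulting error is at most $\int_0^T (s-\pi^n(s))\sup_{r\le s}|X^{\e,n}_r - x_r|\,ds$ plus the variation of $x$ over a mesh interval. By Cauchy--Schwarz, Theorem \ref{LLN}, and Corollary \ref{Np}, this is $O(n^{-1}(n^{-1}+\e))$. The $\e n^{-1}$ part is absorbed into $\e n^{-1/2}$. What remains is the main term
\[
\int_0^t (s-\pi^n(s))\,(A-BK)x(\pi^n(s))\, ds = \sum_k \tfrac12(\xi^n_k)^2 (A-BK)x(\tau^n_k) + \text{boundary},
\]
and this has to be compared with $\frac{M}{n}\int_0^t (A-BK)x(s)\,ds$.

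\emph{Main obstacle: renewal averaging.} This comparison is the key step. Write $\frac12(\xi^n_k)^2 = \frac{M}{n}\xi^n_k + \frac{1}{n^2}\big(\frac12\xi_k^2 - M\xi_k\big)$, where the bracket has mean zero by the definition of $M$.
\begin{itemize}
\item The first part is a Riemann sum for $\frac Mn\int_0^t (A-BK)x\,ds$, with error $\frac Mn\sum_k (\xi^n_k)^2\sup|x'| = O(n^{-2})$ in expectation.
\item The second part is $n^{-2}$ times a martingale sum of i.i.d.\ centered terms up to the stopping time $N^n_t+1$. Doob's inequality together with Wald (or the elementary renewal theorem, $\E N^n_T\le Cn$) bounds it by $n^{-2}\sqrt n = n^{-3/2}$, after summation by parts to handle the slowly varying weights $x(\tau^n_k)$.
\item The boundary interval at $t$ contributes $\E\sup_k(\xi^n_k)^2$ over $O(n)$ indices, which is $O(n^{-2}\log^2 n)$ using the finite moment generating function. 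Alternatively, a cruder fourth-moment bound gives $n^{-2}(n)^{1/2}=n^{-3/2}$.
\end{itemize}
I expect the weight and boundary bookkeeping to give the stated $n^{-5/4}$: bounding $\E\sup$ of the last-interval term through the Donsker/fourth-moment route yields $n^{-1}\cdot n^{-1/4}$. I will accept that rate rather than optimize it.

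Finally, collect all terms and apply Gronwall with constant $e^{|A-BK|T}$. This yields the stated bound $(n^{-1/4}+\e\sqrt n)C_{ABKT\xi_1}$ for all $n>n_0$, where $n_0$ is chosen so that the Regime-3 relation makes $\e\sqrt n$ the relevant noise scale.
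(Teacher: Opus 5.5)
\textbf{There is a genuine gap: a sign error that your own computation exposes.} Your reduction requires
\[
\E\Big[\sup_{t\le T}\Big|\int_0^t \big(X^{\e,n}_s - X^{\e,n}_{\pi^n(s)}\big)\,ds + \tfrac{M}{n}\int_0^t (A-BK)x(s)\,ds\Big|\Big] = O\big(n^{-5/4}+\e n^{-1/2}\big),
\]
which is a \emph{cancellation}. Your main-term analysis shows the opposite. The Riemann-sum part $\sum_k \frac{M}{n}\xi^n_{k+1}(A-BK)x(\tau^n_k)$ approximates $+\frac{M}{n}\int_0^t (A-BK)x\,ds$. Hence $\int_0^t (s-\pi^n(s))(A-BK)x(\pi^n(s))\,ds=+\frac{M}{n}\int_0^t(A-BK)x(s)\,ds$, up to terms whose expected supremum is $O(n^{-3/2})$. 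So the quantity inside the absolute value is $\frac{2M}{n}\int_0^t (A-BK)x(s)\,ds+o(1/n)$. This is of exact order $1/n$ whenever $BK(A-BK)x\not\equiv 0$, e.g.\ scalar $A=a\neq0$, $BK=b\neq0$, $a\neq b$, $x_0\neq0$. Your ``comparison'' step shows that the \emph{difference} is small, but the reduction needs the \emph{sum} to be small.

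The source of the error is the statement itself. The paper's own decomposition gives $\LL^{\e,n}_1\to+\ell$, because $e^{uA}-I\approx uA$ and $A(I-A^{-1}BK)=A-BK$. So the actual first-order correction solves $Q_t=\int_0^t(A-BK)Q_s\,ds+MBK\int_0^t(A-BK)x(s)\,ds$, with the opposite sign to the stated $Q$. The same issue affects \eqref{ZtDefn}.

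\textbf{How to repair it.} The inequality as literally stated is at the unscaled level, and its right side dominates $n^{-1}+\e$. So it follows at once from Theorem~\ref{LLN} with $p=1$ (using $\E[\NN_1]\le C/n$) together with $\sup_{[0,T]}|Q|<\infty$. In your Gronwall setup, a crude $O(1/n)$ bound on the drift forcing already suffices, for either sign of $Q$, and no renewal averaging is needed.

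If you want the meaningful statement about $\E\sup_{t\le T}|n(X^{\e,n}_t-x(t))-Q_t|$, flip the sign of the $BKM$ term. Your argument then closes, and it is a genuinely different route from the paper's.
\begin{itemize}
\item The paper mirrors the proof of Theorem~\ref{CLTresult}. It uses the $\LL_1,\LL_2$ decomposition from the matrix-exponential formula (hence $A$ invertible), rescaled by $\e n$. The averaging goes through Lemma~\ref{G4decomposition} (renewal Donsker plus uniform integrability), at rate $n^{-1/4}$.
\item Your split $\frac12(\xi^n_{k+1})^2=\frac Mn\xi^n_{k+1}+n^{-2}\big(\frac12\xi_{k+1}^2-M\xi_{k+1}\big)$ reduces the averaging to a random-grid Riemann sum plus a martingale transform. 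Its weights $x(\tau^n_k)$ are already $\sigma(\xi_1,\dots,\xi_k)$-measurable, so no summation by parts is needed. This gives $n^{-1/2}$ with no functional CLT.
\item Your conditional-Gaussian martingale bound gives $\e/n$ for the noise in the increments, better than the paper's $\e/\sqrt n$.
\end{itemize}

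Keep the scales consistent, though. You bounded the Brownian term at the unscaled level but the drift at the $1/n$ scale. At the $1/n$ scale, the term $\e n W_t$ in \eqref{eq:maintermQ} contributes a piece of size about $n\e\,\E\sup_t\big|\int_0^t e^{(A-BK)(t-s)}dW_s\big|$, i.e.\ order $n\e$. So what your argument yields there is $C(n^{-1/2}+n\e)$, not a bound involving $\e\sqrt n$.
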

	The proof of this theorem follows from the proof of Theorem \ref{CLTresult}; therefore, we omit the proof here. We observe that, in this regime, $\cc=\infty$, meaning that $\e n\to 0$ as $\e\searrow 0$ and $n\to\infty.$
	\begin{rmkk} 
		The term M contributing in the effective drift in function \eqref{ellDefn} is the long run time averaged age or long run residual life of the renewal process $\{N_t^n\}_{t \geq 0}$ created by $\{\xi_i^n\}_{i=1}^{\infty}$ (see \cite[Chapter 5]{gallager2013stochastic}).  In  the present renewal process, the time between consecutive sampling is random and given by $\xi_1^n, \xi_2^n, \xi_3^n, \dots$. The \emph{age} at any time is how long it has been since the last sampling, and the \emph{residual life} is how long until the next sampling. Within each interval between two sampling, the age increases linearly from $0$ to the length of the interval, while the residual life decreases linearly from the interval length to $0$. If we plot age or residual life versus time, each interval forms a right angled triangle, with the area under the triangle representing the total accumulated age or residual life in that interval. Dividing this area by the interval length gives the average age or residual life in that interval. Summing over all intervals and averaging over time $[0,T]$, the long run time average of age or residual life converges to
		${\displaystyle
			\frac{\mathbb{E}[X_1^2]}{2\,\mathbb{E}[X_1]}}.
		$
		The triangle picture is just a simple visual tool to understand how the average over time comes from the linear increase or decrease of age and residual life within each interval. For further details on the term $M$, the reader is referred to \cite{gallager2013stochastic}.
	\end{rmkk}
	\begin{rmkk}
		The trajectories $\{Z_t\}_{t \geq 0}$ and $\{Q_t\}_{t \geq 0}$ can be explicitly calculated.  In fact, we  can rewrite $Z_t$ in \eqref{ZtDefn} as 
		\begin{align*}
			dZ_t= (A-BK)Z_tdt-\cc MBK(A-BK)x(t) dt+dW_t, \qquad Z_0=0.
		\end{align*}
		Define $Y_t:=e^{-(A-BK)t}Z_t$ and applying It\^{o} formula, we get
		\begin{align}\label{ZtSoln}
			Z_t=-~\cc MBK\int_0^t e^{(A-BK)(t-s)}(A-BK)x(s) ds+\int_0^t e^{{(A-BK)(t-s)}}dW_s.
		\end{align}
		Thus, the limiting fluctuation process $Z_t$ satisfies a linear SDE of Ornstein-Uhlenbeck (OU) (see \cite{oksendal2003sde}) type with additive noise. In the Regime 1, i.e,  when $\cc=0,$ it is pure OU process while in Regime 2 it has a extra linear drift $-\ell(t).$ Since it has linear drift with deterministic coefficient and nature of noise is additive, the equation \eqref{ZtSoln} admits an unique strong solution and in particular it is a Gaussian process. This justifies referring  the Theorem \ref{CLTresult} as a central limit type theorem. A similar explanation can be given about $Q_t$ and Theorem \ref{CLTresult2}, in fact it is more straightforward since $Q_t$ is a deterministic function. 
	\end{rmkk}
	
	\section{Preliminaries}\label{section3}
	In this section, we mention the preliminaries that are required to prove our main results. Let us now start by presenting certain stochastic tools that will be used in proving our results. 
	
	\subsection{Classical Results from Literature}
	
	The first, Wald's Identity, will be used to simplify the first two moments for stopped sums of i.i.d. random variables.
	\begin{lemma}[Wald's Identity]\label{Waldlemma}\cite[Theorem 5.5.3]{gallager2013stochastic}
		Let $X_0,X_1,X_2,\ldots$ be an i.i.d. sequence of random variables. If $N$ is a stopping time with respect to the filtration generated by $\{X_i\}_{i \geq 0}$, then $$
		\E\left[\sum_{i=0}^{N} X_i\right] = \E[N+1]\E[X_i].
		$$
		Furthermore, suppose that $\E[X_i] = 0$. Then,$$
		\E\left[\left(\sum_{i=0}^N X_i\right)^2\right] = \E[N+1]Var(X_i) = \E[N+1]\E[X_i^2].
		$$
	\end{lemma}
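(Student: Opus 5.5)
The identity is classical and the paper cites it from \cite[Theorem 5.5.3]{gallager2013stochastic}, so I would simply reproduce the standard martingale argument. Throughout I assume, as is implicit in the statement, that $\E[N]<\infty$ and $\E|X_i|<\infty$ (and $\E[X_i^2]<\infty$ for the second part). With the indexing $\sum_{i=0}^N$, the stopping-time property is used in the form: $\{N\ge i\}=\{N\le i-1\}^c\in\sigma(X_0,\dots,X_{i-1})$. Hence the event $\{N\ge i\}$ is independent of $X_i$. The basic device is the rewriting
$$\sum_{i=0}^N X_i=\sum_{i=0}^\infty X_i\,\ind_{\{N\ge i\}}.$$

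\emph{First identity.} Apply the rewriting first to $|X_i|$. By Tonelli and the independence above,
$$\E\Big[\sum_{i\ge0}|X_i|\ind_{\{N\ge i\}}\Big]=\E|X_1|\sum_{i\ge0}\PP(N\ge i)=\E|X_1|\,\E[N+1]<\infty.$$
This integrability justifies exchanging sum and expectation (Fubini) for the signed sum. The same factorization then gives
$$\E\Big[\sum_{i=0}^N X_i\Big]=\sum_{i\ge0}\E[X_i]\,\PP(N\ge i)=\E[X_1]\,\E[N+1].$$

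\emph{Second identity.} Let $\E[X_i]=0$. I would first work with the truncated time $N\wedge m$, so all sums are finite, and set $S_k=\sum_{i=0}^k X_i$.

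Expand $\E[S_{N\wedge m}^2]$ as a double sum of terms $\E\big[X_iX_j\ind_{\{N\ge i\}}\ind_{\{N\ge j\}}\big]$ with $i,j\le m$.
\begin{itemize}
\item \textbf{Cross terms.} For $i<j$, the product $X_i\ind_{\{N\ge i\}}\ind_{\{N\ge j\}}$ is $\sigma(X_0,\dots,X_{j-1})$-measurable and hence independent of $X_j$. Since $\E[X_j]=0$, the cross term vanishes. Integrability here follows from Cauchy--Schwarz and $\E[X^2]<\infty$.
\item \textbf{Diagonal terms.} Each gives $\E[X_i^2]\,\PP(N\ge i)$.
\end{itemize}
Summing, $\E[S_{N\wedge m}^2]=\E[X_1^2]\,\E[(N\wedge m)+1]$.

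The same computation applied to increments gives, for $m'>m$,
$$\E\big[(S_{N\wedge m'}-S_{N\wedge m})^2\big]=\E[X_1^2]\,\E\big[N\wedge m'-N\wedge m\big].$$
This tends to $0$ by monotone convergence, since $\E[N]<\infty$. Hence $S_{N\wedge m}$ is Cauchy in $L^2$. Because $N<\infty$ a.s., it also converges a.s. to $S_N$, so the $L^2$ limit is $S_N$. Letting $m\to\infty$ on both sides gives $\E[S_N^2]=\E[N+1]\,\E[X_1^2]$. Finally, $\mathrm{Var}(X_i)=\E[X_i^2]$ because the mean is zero.

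The only delicate step is this passage $m\to\infty$, i.e.\ identifying the limit of the truncated second moments. I would handle it by the $L^2$-Cauchy argument above rather than by Fatou's lemma, because Fatou yields only an inequality.
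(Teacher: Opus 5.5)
Your proof is correct, and it takes a different route from the paper, which gives no argument of its own. The paper quotes Gallager's Theorem~5.5.3, which is stated for sequences indexed from $1$, and only remarks that the $0$-indexed version follows.

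That reduction is the shift $Y_k:=X_{k-1}$, $J:=N+1$. Since $\{J=k\}=\{N=k-1\}\in\sigma(Y_1,\dots,Y_k)$, $J$ is a stopping trial for $(Y_k)_{k\ge1}$. Moreover $\sum_{i=0}^N X_i=\sum_{k=1}^J Y_k$ and $\E[J]=\E[N+1]$, so both identities transfer at once.

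The citation route is shorter, but it silently imports the textbook's hypotheses, which the displayed lemma omits. Your self-contained argument does the index-$0$ bookkeeping directly: the fact $\{N\ge i\}\in\sigma(X_0,\dots,X_{i-1})$ is precisely the content of the shift. It also makes the hypotheses explicit, and that matters because $\E[N]<\infty$ cannot be dropped. Take i.i.d.\ symmetric $\pm1$ variables and $N=\inf\{k\ge 0: X_0+\dots+X_k=1\}$. Then $\E[S_N]=\E[S_N^2]=1$, whereas $\E[X_0]=0$ and $\E[N+1]\E[X_0^2]=\infty$.

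Your treatment of the second moment is also the right one. Expanding $\bigl(\sum_i X_i\ind_{\{N\ge i\}}\bigr)^2$ and interchanging $\E$ with the double series would need absolute summability of the cross terms, which is not available a priori. Truncating at $N\wedge m$ and identifying the limit through the $L^2$-Cauchy property is what makes the passage $m\to\infty$ rigorous.

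In the paper's applications (Lemma~\ref{Nptilde}, Proposition~\ref{suptosum}, Lemma~\ref{unifint1}) the stopping times are of the form $N^n_T+1$ or $N^1_{nT}$. These have moments of all orders by Lemma~\ref{unifint2}, and $\xi_1$ has a finite moment generating function, so your added assumptions are always satisfied there.
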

	We remark that our version of the identity is stated for the sequence $X_i, i \geq 0$. The analogous cited version is for $X_i, i \geq 1$, and our statement can be easily derived from it.
	
	The elementary renewal theorem gives an asymptotic rate for expected renewal times. 
	\begin{theorem}[The elementary renewal theorem]\cite[Theorem 5.6.2]{gallager2013stochastic}\label{thm:renewal}
		Let $X_1,X_2,\ldots$ be an integrable, positive sequence of i.i.d. random variables. Suppose that $N(t) = \sup\left\{k :\sum_{i=1}^k X_i \leq t\right\}$. Then, $$
		\lim_{t \to \infty} \frac{\E[N(t)]}{t} = \frac 1{\E[X_1]}.
		$$
	\end{theorem}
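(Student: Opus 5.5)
The plan is the classical two-sided argument: Wald's identity (Lemma~\ref{Waldlemma}) gives the lower bound directly, and a truncation of the inter-arrival times gives both the upper bound and the finiteness of $\E[N(t)]$ that Wald needs. Write $\mu:=\E[X_1]\in(0,\infty)$; it is positive because the $X_i$ are positive. Write $S_k:=\sum_{i=1}^k X_i$, so that $N(t)=\sup\{k:S_k\le t\}$.

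\emph{Step 1 (the stopping time).} I first check that $N(t)+1$ is a stopping time for the filtration generated by $X_1,X_2,\dots$. Since the $S_k$ are increasing, $\{N(t)+1=k\}=\{S_{k-1}\le t<S_k\}$, and this event depends only on $X_1,\dots,X_k$. To apply Lemma~\ref{Waldlemma}, which is stated for the index set $i\ge 0$, I relabel $X_{i+1}\mapsto X_i$. This yields
\begin{equation*}
\E\big[S_{N(t)+1}\big]=\E[N(t)+1]\,\mu .
\end{equation*}
For now this is read as an identity in $[0,\infty]$, since finiteness of $\E[N(t)]$ is not yet known; Step 3 supplies it.

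\emph{Step 2 (lower bound).} By definition of $N(t)$ we have $S_{N(t)+1}>t$. Hence $\E[N(t)+1]\,\mu> t$, so $\E[N(t)]/t> 1/\mu-1/t$. Letting $t\to\infty$ gives $\liminf_{t\to\infty}\E[N(t)]/t\ge 1/\mu$.

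\emph{Step 3 (upper bound via truncation).} This is the step I expect to be the main obstacle. The naive bound $S_{N(t)+1}\le t+X_{N(t)+1}$ is useless, because the overshoot $X_{N(t)+1}$ is size-biased and its expectation need not be controlled.

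Fix $b>0$ and set $X_i^b:=\min(X_i,b)$, with partial sums $S^b_k\le S_k$ and counting process $N^b(t)$. Then $N^b(t)\ge N(t)$. The overshoot is now bounded: $S^b_{N^b(t)+1}\le t+b$. The same stopping-time argument as in Step 1 applies to $N^b(t)+1$, so Wald gives $\E[N^b(t)+1]\,\E[X_1^b]\le t+b$. Positivity of $X_1$ ensures $\E[X_1^b]>0$, so in particular $\E[N(t)]\le\E[N^b(t)]<\infty$, which justifies Step 1 a posteriori. Consequently
\begin{equation*}
\frac{\E[N(t)]}{t}\le \frac{t+b}{t\,\E[X_1^b]},\qquad\text{so}\qquad \limsup_{t\to\infty}\frac{\E[N(t)]}{t}\le \frac{1}{\E[\min(X_1,b)]}.
\end{equation*}

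\emph{Step 4 (conclusion).} Let $b\to\infty$. By monotone convergence $\E[\min(X_1,b)]\uparrow\mu$, so the $\limsup$ is at most $1/\mu$. Combined with Step 2, this gives the claimed limit.
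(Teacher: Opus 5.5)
Your proof is correct, and it is the standard argument from the source the paper cites for this result (Gallager, Theorem 5.6.2); the paper does not reprove it. You use Wald's identity on the stopping time $N(t)+1$ for the lower bound, and truncation $\min(X_i,b)$ to bound the overshoot by $b$, which gives both the upper bound and the finiteness of $\E[N(t)]$. One point deserves to be explicit: for nonnegative summands Wald's identity holds in $[0,\infty]$ without assuming the stopping time has finite mean, by Tonelli, because $\{J\ge i\}$ depends only on $X_1,\dots,X_{i-1}$. You need this version in Step 3 for $N^b(t)+1$ too; otherwise deducing $\E[N^b(t)]<\infty$ from Wald would be circular.
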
	
	We also require Donsker's theorem for both i.i.d. random variables and renewal processes. The latter is a functional central limit theorem for the renewal times as the intensity scales linearly.
	\begin{theorem}[Donsker's theorem]\cite[Theorem 16.1]{billingsley1968convergence} \label{usualdonsker} If $X_i, i \geq 1$ are i.i.d. positive random variables with finite variance, then for any $T>0$, as random elements of the Skorokhod space $D[0,T]$ we have 
		$$
		\sum_{i=1}^{\lfloor nt \rfloor} \frac{X_i - \E[X_i]}{\sqrt{nVar(X_1)}} \Rightarrow B_t
		$$
		\textbf{weakly} as $n \to \infty$, where $(B_t)_{t \in [0,T]}$ is a Brownian motion. 
	\end{theorem}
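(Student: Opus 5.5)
We would prove the statement in the classical way: establish convergence of the finite-dimensional distributions, then tightness in $D[0,T]$. Set $\mu=\E[X_1]$, $\sigma^2=Var(X_1)$, $Y_i=(X_i-\mu)/\sigma$, $S_k=\sum_{i=1}^k Y_i$, and $W^n_t=S_{\lfloor nt\rfloor}/\sqrt n$. The $Y_i$ are i.i.d. with mean $0$ and variance $1$. Positivity of the $X_i$ plays no role; only the finite variance is used.

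\textbf{Finite-dimensional distributions.} Fix $0=t_0<t_1<\dots<t_k\le T$. The increments $W^n_{t_j}-W^n_{t_{j-1}}=n^{-1/2}\sum_{i=\lfloor nt_{j-1}\rfloor+1}^{\lfloor nt_j\rfloor}Y_i$ are independent across $j$, since they involve disjoint blocks of the $Y_i$. Each block has $\lfloor nt_j\rfloor-\lfloor nt_{j-1}\rfloor=n(t_j-t_{j-1})+O(1)$ terms. The classical CLT together with Slutsky's lemma, which absorbs the $O(1)$ rounding, gives $W^n_{t_j}-W^n_{t_{j-1}}\Rightarrow \NN(0,t_j-t_{j-1})$. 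Independence then yields joint convergence of the increment vector to independent Gaussians; one may alternatively invoke the Cramér--Wold device. The continuous mapping theorem transfers this to $(W^n_{t_1},\dots,W^n_{t_k})$, whose limit is exactly the law of $(B_{t_1},\dots,B_{t_k})$.

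\textbf{Tightness.} We would use Billingsley's criterion for $D[0,T]$ with a continuous limit: it suffices that for every $\eta>0$,
\begin{equation*}
\lim_{\delta\to0}\limsup_{n\to\infty}\frac1\delta\,\PP\Big(\max_{k\le n\delta}|S_k|\ge \eta\sqrt n\Big)=0,
\end{equation*}
applied uniformly over blocks starting at any index. Stationarity of the increments makes this uniformity automatic. To bound the maximum we would use Ottaviani's (or Etemadi's) maximal inequality. With $m=\lceil n\delta\rceil$ it reads
\begin{equation*}
\PP\Big(\max_{k\le m}|S_k|\ge 2\lambda\sqrt m\Big)\le \frac{\PP(|S_m|\ge\lambda\sqrt m)}{1-\max_{k\le m}\PP(|S_m-S_k|\ge\lambda\sqrt m)},
\end{equation*}
and Chebyshev bounds the denominator below by $1-\lambda^{-2}\ge1/2$ for $\lambda\ge\sqrt2$. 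Taking $\lambda=\eta/(2\sqrt\delta)$, the CLT gives $\limsup_n\PP(|S_m|\ge\lambda\sqrt m)=\PP(|\NN(0,1)|\ge\lambda)\le C\lambda^{-4}=C\delta^2/\eta^4$. Dividing by $\delta$, this tends to $0$ as $\delta\to 0$.

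This step is the main obstacle. A plain Chebyshev or Kolmogorov bound only gives $\PP(\cdot)\lesssim\delta/\eta^2$, which is not $o(\delta)$. The argument therefore needs the combination of the maximal inequality with the Gaussian tail obtained from the CLT at the level of the block sum. If one prefers to avoid this, an alternative is to truncate the $Y_i$ at level $\sqrt n$: the truncated variables have fourth moments of order $o(n)$, so a direct fourth-moment bound applies, and the truncation error is controlled by $\E[Y_1^2\ind_{|Y_1|>c\sqrt n}]\to0$.

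\textbf{Conclusion.} By Prokhorov's theorem every subsequence of $\{W^n\}$ has a weakly convergent further subsequence. The modulus estimate above forces every limit point to be supported on $C[0,T]$. Since finite-dimensional distributions determine laws on $C[0,T]$, every limit point is the Wiener measure. Hence $W^n\Rightarrow B$ in $D[0,T]$, which is the asserted statement.
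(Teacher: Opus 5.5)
Your proposal is correct and is essentially the classical argument behind the paper's citation; the paper gives no proof of its own, only Billingsley. You get finite-dimensional convergence from the CLT, and tightness from a maximal inequality fed by the Gaussian tail that the CLT supplies for the block sum. The only difference from the usual textbook route is inessential: one often proves the $C$ version for the polygonal interpolation and passes to the step process in $D[0,T]$ via $\max_{i\le nT}|X_i-\E[X_1]|/\sqrt{n}\to 0$ in probability, whereas you verify $C$-tightness directly in $D$.

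There is one cosmetic slip. Since $m=\lceil n\delta\rceil\ge n\delta$, the choice $\lambda=\eta/(2\sqrt{\delta})$ gives $2\lambda\sqrt{m}\ge\eta\sqrt{n}$, so the inclusion $\{\max_{k\le m}|S_k|\ge\eta\sqrt{n}\}\subset\{\max_{k\le m}|S_k|\ge 2\lambda\sqrt{m}\}$ fails. Take $\lambda=\eta\sqrt{n}/(2\sqrt{m})$ instead, which tends to $\eta/(2\sqrt{\delta})$; the rest of your estimate then goes through unchanged.
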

	
	\begin{theorem}[Donsker's theorem for renewal processes]\cite[Theorem 17.3]{billingsley1968convergence}\label{donsker} Suppose that $X_i,i=1,2,\ldots$ are i.i.d. random variables with mean $\mu$ and variance $\sigma$. Let $S_m = X_1+X_2+...+X_m$ and $\tau_t = \sup\{m : X_m \leq t\}$. Then, $$
		\frac{nt/\mu - \tau_{nt}}{\sqrt{n}} \Rightarrow \frac{\sigma}{\mu^{3/2}} B_t
		$$
		\textbf{weakly} in the Skorokhod topology on cadlag paths in $\mathbb R_+$, where $B_t$ is a standard Brownian motion.
	\end{theorem}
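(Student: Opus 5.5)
This is the classical result cited from Billingsley, and the proof I would give inverts the partial-sum Donsker theorem (Theorem~\ref{usualdonsker}) through a random time change. I read $\sigma$ as the standard deviation and $\tau_t=\sup\{m: S_m\le t\}$, which is evidently what is meant. It suffices to prove convergence in $D[0,T]$ for each fixed $T>0$.

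\textbf{Step 1: the two ingredients.}
Set $Y_n(s):=\bigl(S_{\lfloor ns\rfloor}-n\mu s\bigr)/(\sigma\sqrt n)$. By Theorem~\ref{usualdonsker}, $Y_n\Rightarrow B$ in $D[0,K]$ for any $K$.

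Next, put $\Phi_n(t):=\tau_{nt}/n$. By the strong law of large numbers, $\tau_{s}/s\to 1/\mu$ almost surely. Since $\Phi_n$ is nondecreasing and the limit $t\mapsto t/\mu$ is continuous, this upgrades to $\sup_{t\le T}|\Phi_n(t)-t/\mu|\to 0$ almost surely.

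\textbf{Step 2: the sandwich.}
By definition $S_{\tau_{nt}}\le nt<S_{\tau_{nt}+1}$, so
\[
0\le nt-S_{\tau_{nt}}<X_{\tau_{nt}+1}.
\]
Hence, uniformly in $t\le T$,
\[
\frac{nt-\mu\tau_{nt}}{\sigma\sqrt n}=-\,Y_n(\Phi_n(t))+R_n(t),\qquad |R_n(t)|\le \frac{\max_{k\le \tau_{nT}+1}X_k}{\sigma\sqrt n}.
\]
On the event $\{\tau_{nT}\le 2nT/\mu\}$, whose probability tends to $1$, the maximum runs over $O(n)$ indices. A finite second moment gives $\max_{k\le Cn}|X_k|/\sqrt n\to0$ in probability, via the union bound $Cn\,\PP(X_1^2>\eta^2 n)\le C\eta^{-2}\E[X_1^2\ind_{X_1^2>\eta^2 n}]\to0$. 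Therefore $\sup_{t\le T}|R_n(t)|\to0$ in probability.

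\textbf{Step 3: the random change of time.}
Because $\Phi_n$ converges to a deterministic limit, Slutsky's lemma gives the joint convergence $(Y_n,\Phi_n)\Rightarrow(B,\phi)$ with $\phi(t)=t/\mu$. The composition map $(y,\varphi)\mapsto y\circ\varphi$ is continuous at every pair with $y$ continuous and $\varphi$ continuous and nondecreasing (Billingsley, Section~14). The continuous mapping theorem then yields $Y_n\circ\Phi_n\Rightarrow B(\cdot/\mu)$.

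Combining this with Step 2 and the symmetry of $B$,
\[
\frac{nt/\mu-\tau_{nt}}{\sqrt n}=\frac{\sigma}{\mu}\cdot\frac{nt-\mu\tau_{nt}}{\sigma\sqrt n}\Rightarrow \frac{\sigma}{\mu}\,B(t/\mu)\overset{d}{=}\frac{\sigma}{\mu^{3/2}}B_t,
\]
where the last identity is Brownian scaling.

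\textbf{Main obstacle.}
The delicate point is Step 3, namely justifying continuity of composition in the Skorokhod topology. I would handle it by exploiting that both limits are continuous: Skorokhod convergence to a continuous limit is equivalent to local uniform convergence. Via Skorokhod representation I can then assume $Y_n\to B$ uniformly almost surely, and $Y_n(\Phi_n)-B(\phi)$ is controlled by $\|Y_n-B\|_\infty$ plus the modulus of continuity of $B$ evaluated at $\|\Phi_n-\phi\|_\infty$.
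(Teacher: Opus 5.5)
Your proposal is correct and is essentially the standard random-time-change proof behind \cite[Theorem 17.3]{billingsley1968convergence}; the paper itself gives no proof and only cites it. The ingredients match: partial-sum Donsker, $\Phi_n(t)=\tau_{nt}/n\to t/\mu$ uniformly a.s., the sandwich $S_{\tau_{nt}}\le nt<S_{\tau_{nt}+1}$ with $\max_{k\le Cn}X_k/\sqrt n\to 0$ in probability, and continuity of composition at continuous limits. There is one harmless sign slip: since $n\Phi_n(t)=\tau_{nt}$ is an integer, $Y_n(\Phi_n(t))=(S_{\tau_{nt}}-\mu\tau_{nt})/(\sigma\sqrt n)$, so the identity in Step 2 holds with $+Y_n(\Phi_n(t))$ rather than $-Y_n(\Phi_n(t))$, and the appeal to the symmetry of $B$ is not needed.
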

	
	We are now ready to prove some estimates which will be used to prove our main results.
	
	\subsection{Key estimates} We begin by establishing bounds on the solutions to equations \eqref{ode1} and \eqref{randomcontrol}.  Recall the functions $x_r$ and $x^n_r$ defined in \eqref{ode1} and \eqref{randomcontrol}. Our next lemma proves that both functions grow slower than a deterministic exponential function in $t$. This is particularly surprising for $x^n_r$, since it removes any stochastic involvement in the growth bound. 
	\begin{lemma} \label{ODEbd}
		Let $L(T) = \sup_{0 \leq r \leq T} |x_r|$  and $L'(T) = \sup_{0 \leq r \leq T} |x^n_r|.$ There exists a constant $C_{ABKT} > 0$ such that
		$$
		\max\{ L(T), L'(T) \} \leq C_{ABKT} \quad \text{for all } T >0.
		$$
	\end{lemma}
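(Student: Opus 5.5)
The plan is to treat the two suprema separately, bounding $L(T)$ directly from the closed form and $L'(T)$ by a Gronwall argument in which the random sampling times drop out entirely.

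For $L(T)$ there is little to do. By \eqref{contrlUsoln} we have $x_r=e^{r(A-BK)}x_0$, hence
\[
|x_r|\le e^{r|A-BK|}|x_0|\le e^{T(|A|+|B||K|)}|x_0|
\]
for all $0\le r\le T$. Since the bound on $x_0$ has been absorbed into the generic constant, this gives $L(T)\le C_{ABKT}$.

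For $L'(T)$ the idea is to work with the integral form of \eqref{M2withoutnoiseforallt}, namely
\[
x^n_t=x_0+\int_0^t\big(Ax^n_s-BKx^n_{\pi^n(s)}\big)ds ,
\]
which holds pathwise for every realization of the sampling times. The solution is continuous, piecewise given by \eqref{randomcontrolsoln}, so $\phi(t):=\sup_{0\le r\le t}|x^n_r|$ is well defined and finite for each fixed $\omega$. The key observation is that $\pi^n(s)\le s$, and therefore $|x^n_{\pi^n(s)}|\le\phi(s)$. This yields
\[
|x^n_t|\le |x_0|+(|A|+|B||K|)\int_0^t\phi(s)\,ds .
\]
Because the right-hand side is nondecreasing in $t$, taking the supremum over $[0,t]$ gives the same inequality for $\phi(t)$. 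Gronwall's inequality then gives $\phi(T)\le |x_0|e^{(|A|+|B||K|)T}$. This bound is deterministic and uniform in $n$ and in $\omega$, which explains the remark that no stochastic quantity appears in the growth bound. One takes $C_{ABKT}$ to be this exponential (times the bound on $x_0$).

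The only delicate point is justifying that $\phi$ is finite and measurable enough for Gronwall to apply. On $[0,T]$ only finitely many sampling times occur almost surely: $N^n_T=N_{nT}<\infty$ a.s. because the $\xi_i$ are positive with finite mean. On each interval the solution is given by the explicit formula \eqref{randomcontrolsoln}, so $x^n$ is continuous and hence bounded on $[0,T]$. Gronwall is then applied pathwise, and no expectation is needed.
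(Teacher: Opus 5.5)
Your proof is correct and follows essentially the same route as the paper. For $L'(T)$ you use the integral form, the bound $|x^n_{\pi^n(s)}|\le \sup_{0\le r\le s}|x^n_r|$ from $\pi^n(s)\le s$, and Gronwall, exactly as the paper does, giving the same deterministic bound $|x_0|e^{(|A|+|B||K|)T}$. The differences are only cosmetic: for $L(T)$ you bound the closed form $e^{r(A-BK)}x_0$ directly instead of applying Gronwall, and you state explicitly that the supremum is finite pathwise (only finitely many sampling times in $[0,T]$ almost surely), which the paper leaves implicit.
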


	\begin{proof}
		For $ x_t$, using \eqref{contrlUsoln}
		\begin{align*}
			|x_t| \leq |x_0| + \int_0^t |(A - BK)x_s|  ds \leq |x_0| + \int_0^t |A - BK|\cdot |x_s|  ds. 
		\end{align*}
		Taking the supremum over $ 0 \leq t \leq T$
		\begin{align*}
			L(T) \leq |x_0| + |A - BK| \int_0^T L(s)  ds
		\end{align*}
		By Gronwall's lemma
		\begin{align*}
			L(T) \leq  C_{ABKT}.
		\end{align*}
		For $x^n_t$, using \eqref{randomcontrol}
		\begin{align*}
			|x^n_t| \leq |x_0| + \int_0^t |Ax^n_s - BK x^n_{\pi^n(s)}|  ds \leq |x_0| + \int_0^t \left( |A|\cdot |x^n_s| + |B|\cdot |K|\cdot |x^n_{\pi^n(s)}| \right) ds.
		\end{align*}
		Since $ \pi^n(s) \leq s $, $ |x^n_{\pi^n(s)}| \leq L'(s) $. Thus
		\begin{align*}
			L'(T) \leq |x_0| + (|A| + |B|\cdot |K|) \int_0^T L'(s)  ds.
		\end{align*}
		By Gronwall's lemma,
		\begin{align*}
			L'(T) \leq |x_0| e^{(|A| + |B| \cdot |K|) T} \leq C_{ABKT}.
		\end{align*}
		Combining constants, the result follows.
	\end{proof}
	
	In order to bound the growth of the stochastic solution $X^{\epsilon,n}_t$ in \eqref{M2withnoiseforallt}, we include the following lemma which is a standard corollary of the BDG inequality.
	
	\begin{lemma}\cite{oksendal2003sde} \label{BDGnoisebd}
		For any $p\in [1,\infty)$, there exists a $ C > 0 $ such that for all $ T \in [0, T_0] $
		\begin{align*}
			\E \left[ \sup_{0 \leq t \leq T} \left| \int_0^t dW_s \right|^{2p} \right] \leq C {T^p} .
		\end{align*}
	\end{lemma}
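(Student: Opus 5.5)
The statement to prove is Lemma~\ref{BDGnoisebd}: for $p\in[1,\infty)$ there is $C>0$ with $\E\big[\sup_{0\le t\le T}|W_t|^{2p}\big]\le C\,T^p$ for all $T\in[0,T_0]$. Note that $\int_0^t dW_s=W_t$ with $W_0=0$.

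I would argue directly from the Burkholder--Davis--Gundy inequality. The process $W_t$ is an $\R^n$-valued continuous martingale with respect to $\{\F_t\}$. Its quadratic variation is $\langle W\rangle_t = n\,t$ for the Euclidean norm; for a single component it is $t$.

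Fix $q=2p\ge 2$. The BDG inequality gives a constant $C_q$, depending only on $q$, such that
\begin{equation*}
\E\Big[\sup_{0\le t\le T}|W_t|^{q}\Big]\le C_q\,\E\big[\langle W\rangle_T^{q/2}\big].
\end{equation*}
If one prefers to work componentwise, apply this to each coordinate $W^{(j)}$. Then combine the coordinates using $|x|^{q}\le n^{q/2-1}\sum_j |x_j|^{q}$ and $\sup_t\sum_j\le\sum_j\sup_t$. The right-hand side is deterministic and equals $C_q (nT)^{p}$. This already gives the claim with $C=C_{2p}\,n^{p}$, which depends only on $p$ and the dimension. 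Neither $T$ nor $T_0$ enters the constant, so the restriction $T\le T_0$ is harmless.

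There is also a more elementary route via Doob's maximal inequality, since $q=2p\ge 2>1$:
\begin{equation*}
\E\Big[\sup_{t\le T}|W_t|^{2p}\Big]\le\Big(\tfrac{2p}{2p-1}\Big)^{2p}\E|W_T|^{2p}.
\end{equation*}
Each coordinate $W_T^{(j)}\sim\mathcal N(0,T)$. By Brownian scaling, $\E|W_T^{(j)}|^{2p}=T^p\,\E|G|^{2p}$ with $G$ standard normal, and this moment is finite. Summing over coordinates as before gives the same bound. Doob applies here because $|W_t|$ is a nonnegative submartingale.

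There is no real obstacle. The only point needing care is tracking the dimension factor when passing from scalar to vector norms. The constant $C$ must not depend on $T$, and the $T^p$ scaling (from $\langle W\rangle_T^{p}$ or from Brownian scaling) is exactly what the statement requires.
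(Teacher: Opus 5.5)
Your proof is correct and matches the paper's approach: the paper gives no argument beyond calling this a standard corollary of the Burkholder--Davis--Gundy inequality, which is exactly your first route. Your Doob-plus-Gaussian-scaling route is an equally valid, more elementary alternative. One notational point: $W$ is added directly to $dX_t\in\R^d$, so its dimension is really the fixed $d$. Write the constant as $C_{2p}d^{p}$ rather than $C_{2p}n^{p}$, so it is not confused with the sampling parameter $n\to\infty$.
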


	
	The next estimate bounds the tail of a matrix exponential sum. It will be used in bounds involving Taylor series expansions.
	\begin{lemma}\label{Expbd}For any $r\geq 0,$ we have
		$$
		\left|\sum_{k=3}^{\infty}\frac{r^kA^k}{k!}\right| \leq \frac{r^3|A|^3}{6}e^{r|A|}.
		$$
	\end{lemma}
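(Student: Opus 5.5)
The plan is to bound the tail series term by term using submultiplicativity of the induced matrix norm, and then to compare the resulting scalar series with the exponential series. Submultiplicativity gives $|A^k|\le |A|^k$ for every $k\ge 0$. Together with the triangle inequality (extended to the convergent series by continuity of the norm), this gives
\begin{equation*}
\left|\sum_{k=3}^{\infty}\frac{r^kA^k}{k!}\right|\le \sum_{k=3}^{\infty}\frac{r^k|A|^k}{k!}.
\end{equation*}
Writing $a:=r|A|\ge 0$, the statement reduces to the scalar inequality $\sum_{k\ge 3} a^k/k!\le \frac{a^3}{6}e^{a}$.

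For the scalar inequality, I will shift the index by setting $k=j+3$:
\begin{equation*}
\sum_{k=3}^{\infty}\frac{a^k}{k!}=a^3\sum_{j=0}^{\infty}\frac{a^j}{(j+3)!}.
\end{equation*}
For $j\ge 0$ we have $(j+3)!=j!\,(j+1)(j+2)(j+3)\ge 6\,j!$, hence $\frac{1}{(j+3)!}\le \frac{1}{6\,j!}$. Since every term is nonnegative (because $a\ge0$), summing gives $a^3\sum_j \frac{a^j}{(j+3)!}\le \frac{a^3}{6}\sum_j\frac{a^j}{j!}=\frac{a^3}{6}e^{a}$, which is the claim.

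No step here is a real obstacle. The only point that needs care is justifying that the norm of the infinite series is bounded by the sum of the norms. This holds because the series converges absolutely in the finite-dimensional normed space $\R^{d\times d}$: the scalar majorant is finite. So I can apply the triangle inequality to the partial sums and then pass to the limit.
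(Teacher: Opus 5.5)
Your proof is correct and follows essentially the same route as the paper: bound the tail term by term using $|A^k|\le|A|^k$, rewrite it as $r^3|A|^3\sum_{j\ge0}\frac{(r|A|)^j}{j!\,(j+1)(j+2)(j+3)}$, and use $(j+1)(j+2)(j+3)\ge 6$ to compare with $e^{r|A|}$. Your remark that absolute convergence justifies moving the norm inside the infinite sum is a small point of rigor that the paper leaves implicit.
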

	\begin{proof}
		Since the norm is submultiplicative,
		\begin{align*}
			\left|\sum_{k=3}^{\infty}\frac{r^kA^k}{k!}\right|  \leq \sum_{k=3}^{\infty} \frac{r^k|A|^k}{k!} \leq r^3|A^3| \sum_{k=0}^{\infty} \frac{r^{k}|A|^{k}}{k! \times (k+1)(k+2)(k+3)}  \leq  \frac{r^3|A|^3}{6} e^{r|A|}.
		\end{align*}
	\end{proof}
	
	We shall repeatedly use Wald's lemma, Lemma~\ref{Waldlemma}. Our next result establishes a stopping time for which it will be used.
	
	\begin{lemma}\label{stoppingtime}
		The random time $N^n_T+1$ is a stopping time with respect to $(\xi^n_{i+1})^p, i=0,1,2,\ldots,k$ and $p \geq 1$.  
	\end{lemma}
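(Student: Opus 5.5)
The natural filtration is $\mathcal{G}_k := \sigma\big((\xi^n_{1})^p,\ldots,(\xi^n_{k})^p\big) = \sigma(\xi_1,\ldots,\xi_k)$, for $k\ge 0$. Here $\mathcal{G}_0$ is the trivial $\sigma$-algebra. The sequence in the statement is indexed by $i=0,1,2,\ldots$ with $i$-th term $(\xi^n_{i+1})^p$, so the first $j$ terms of the sequence generate $\mathcal{G}_j$. The equality of the two $\sigma$-algebras holds because the $\xi_i$ are positive and $p\ge 1$. The map $x\mapsto (x/n)^p$ is therefore a bijection of $(0,\infty)$ onto itself with a Borel inverse, so observing $(\xi^n_i)^p$ is the same as observing $\xi_i$. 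With this identification the claim reduces to showing that, for every $m\ge 1$, the event $\{N^n_T+1 = m\}$ is determined by $\xi_1,\ldots,\xi_m$, i.e.\ lies in $\mathcal{G}_m$. This is exactly the form needed to apply Lemma~\ref{Waldlemma} to the sum $\sum_{i=0}^{N^n_T}(\xi^n_{i+1})^p$.

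Next I would rewrite the event using \eqref{taunk} and \eqref{Nnt}. Since the $\tau^n_k$ are strictly increasing, $N^n_T = k$ holds exactly when $\tau^n_k\le T<\tau^n_{k+1}$. Hence
\begin{equation*}
\{N^n_T+1=m\}=\{\tau^n_{m-1}\le T\}\cap\{\tau^n_{m}>T\}=\Big\{\sum_{i=1}^{m-1}\xi_i\le nT\Big\}\cap\Big\{\sum_{i=1}^{m}\xi_i> nT\Big\}.
\end{equation*}
Both sets are Borel functions of $(\xi_1,\ldots,\xi_m)$, so the event lies in $\mathcal{G}_m$. The case $m=0$ is impossible because $N^n_T\ge 0$. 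So $N^n_T+1$ is a stopping time, possibly $+\infty$ a priori.

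Finally I would check finiteness, $\PP(N^n_T<\infty)=1$, which Wald's identity needs. Since $\xi_1>0$ with $\E[\xi_1]>0$, the strong law gives $\tau_k\to\infty$ almost surely, so $N_{nT}<\infty$ almost surely. In fact $\E[N_{nT}]<\infty$ by Theorem~\ref{thm:renewal}, or by the standard geometric bound, which uses $\PP(\xi_1>a)>0$ for some $a>0$. By \eqref{nnt} this transfers to $N^n_T$.

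The only subtle point I expect is bookkeeping rather than substance. Wald's lemma as stated in Lemma~\ref{Waldlemma} indexes the sequence from $0$, and the stopping index is $N$ rather than $N+1$. So one must be careful that the variables $X_i=(\xi^n_{i+1})^p$, $i=0,\ldots,N$, occurring in the stopped sum are exactly those the event $\{N=k\}$ is allowed to depend on. The key observation is that $\{N^n_T=k\}$ does depend on $\xi_{k+1}=\xi^n_{k+1}\cdot n$, which is $X_k$. This is consistent with the cited version, where the stopping event $\{N=k\}$ may depend on $X_0,\ldots,X_k$ inclusive. That shift is precisely why the statement uses $N^n_T+1$ and the shifted sequence.
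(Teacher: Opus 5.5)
Your argument is correct and is essentially the paper's proof. The paper verifies the equivalent discrete-time condition $\{N^n_T+1\le k\}=\{\tau^n_k>T\}\in\sigma(\xi^n_1,\ldots,\xi^n_k)$ through a single event. You instead verify $\{N^n_T+1=m\}=\{\tau^n_{m-1}\le T<\tau^n_m\}\in\sigma(\xi_1,\ldots,\xi_m)$, which is the same characterization. Your extra checks are not in the paper's proof but are harmless and relevant to the later use of Lemma~\ref{Waldlemma}: that $x\mapsto (x/n)^p$ generates the same filtration, that $N^n_T<\infty$ almost surely, and the index-shift bookkeeping.
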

	\begin{proof}
		In order to prove this, fix $k \in \mathbb N$, and consider the event $\{N^n_T+1 \leq k\}$. This is equivalent to the event $\{N^n_T < k\}$, since $N^n_T$ is integer-valued. On the other hand, by \eqref{Nnt} and \eqref{taunk},
		\begin{align*}
			\{N^n_T+1 \leq k\}=\{N^n_T < k\} =  \{\tau^n_k > T\} = \left\{\sum_{i=0}^{k-1} \xi_{i+1}^n > T\right\} = \left\{\sum_{i=0}^{k-1} ((\xi_{i+1}^n)^p)^{\frac 1p} > T\right\}.
		\end{align*}
		Hence, $\{N^n_T+1 \leq k\}$ depends only on $\xi^n_1, \ldots, \xi^n_k$ which are the first $k$ terms of $(\xi^n_{i+1})^p$, $i=0,1,\ldots,k-1$. This makes it a stopping time.
	\end{proof}
	
	The next estimate, which is of independent interest as a general tool, states that we can estimate the expectations of supremums of i.i.d. sets over various index sets, by bounding them above by sums whose expectations can be taken using Lemma~\ref{Waldlemma}.
	
	\begin{proposition}\label{suptosum}
		Let $X_i, i \geq 0$ be i.i.d. non-negative  random variables, and $\tau$ be a stopping time with respect to $\{X_i\}_{i \geq 0}$. Suppose that $\E[X_i^q]<\infty$ for some $q>1$. Then, $$
		\E\left[\sup_{0 \leq t \leq \tau} X_i\right] \leq \E[\tau+1]^{\frac 1q}\E[X_i^q]^{\frac 1q}. 
		$$
	\end{proposition}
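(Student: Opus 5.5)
The proof bounds the supremum by a sum, applies Hölder's inequality to pass to $q$-th powers, and then evaluates the resulting stopped sum with Wald's identity. Here the statement is read as $\E[\sup_{0\le i\le \tau} X_i]$, the supremum being over the indices $i\in\{0,1,\ldots,\tau\}$.

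\emph{Step 1 (supremum by a sum).} Since the $X_i$ are non-negative and $q>1$, pointwise on $\Omega$ we have
\begin{equation*}
\Big(\sup_{0\le i\le \tau} X_i\Big)^q = \sup_{0\le i\le\tau} X_i^q \le \sum_{i=0}^{\tau} X_i^q .
\end{equation*}

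\emph{Step 2 (Jensen/Hölder).} Because $x\mapsto x^q$ is convex, Jensen's inequality (equivalently Hölder with exponents $q$ and $q/(q-1)$ against the constant $1$) gives
\begin{equation*}
\E\Big[\sup_{0\le i\le\tau} X_i\Big] \le \E\Big[\Big(\sup_{0\le i\le \tau}X_i\Big)^q\Big]^{1/q} \le \E\Big[\sum_{i=0}^{\tau} X_i^q\Big]^{1/q}.
\end{equation*}

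\emph{Step 3 (Wald).} The variables $X_i^q$, $i\ge 0$, are i.i.d. and integrable by hypothesis. Since each $X_i^q$ generates the same $\sigma$-field as $X_i$, the time $\tau$ is also a stopping time for $\{X_i^q\}$; this is the same observation used in Lemma~\ref{stoppingtime}. The first part of Lemma~\ref{Waldlemma} therefore yields
\begin{equation*}
\E\Big[\sum_{i=0}^\tau X_i^q\Big]=\E[\tau+1]\,\E[X_i^q],
\end{equation*}
and taking $1/q$-th powers gives the claimed bound $\E[\tau+1]^{1/q}\E[X_i^q]^{1/q}$.

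\emph{Main obstacle.} The only delicate point is applying Wald's identity when $\E[\tau]$ might be infinite. If $\E[\tau]=\infty$, the right-hand side is infinite and the inequality holds trivially. If $\E[\tau]<\infty$, the first part of Wald's identity for non-negative summands holds by monotone convergence, writing $\sum_{i=0}^{\tau} X_i^q=\sum_{i\ge0} X_i^q\ind_{\{\tau\ge i\}}$. Here $\{\tau\ge i\}=\{\tau\le i-1\}^c$ is determined by $X_0,\ldots,X_{i-1}$ and is therefore independent of $X_i$. So no integrability beyond $\E[X_i^q]<\infty$ is needed.
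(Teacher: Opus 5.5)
Your proof is correct and follows the same route as the paper's: Jensen's inequality to pass to $q$-th powers, non-negativity to bound the supremum by the sum $\sum_{i=0}^{\tau}X_i^q$, and Wald's identity applied to the i.i.d. sequence $\{X_i^q\}$, for which $\tau$ is still a stopping time. Your monotone-convergence justification of Wald's identity for non-negative summands (via $\{\tau\ge i\}\in\sigma(X_0,\ldots,X_{i-1})$) is a worthwhile detail that the paper leaves implicit.
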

	\begin{proof}
		By Jensen's inequality, $$
		\E\left[\sup_{0 \leq t \leq \tau} X_i\right] \leq \E\left[\sup_{0 \leq t \leq \tau} X_i^q\right]^{\frac 1q} \leq \E\left[\sum_{t=0}^{\tau} X_i^q\right]^{\frac 1q},
		$$
		where we used the fact that $X_i$ are non-negative. 
		
		Note that $\{X_i^q\}_{i \geq 0}$ continues to be an i.i.d. sequence which generates the same filtration as $\{X_i\}_{i \geq 0}$. Thus, $\tau$ remains a stopping time with respect to this sequence, and the proof directly follows by Lemma~\ref{Waldlemma}.
	\end{proof}
	
	During our analysis, we shall repeatedly see a family of lower order noise terms generated by random sampling. We establish the lower order growth of these terms.
	\begin{lemma}\label{Nptilde}
		Let 
		\begin{align*}
			\tilde{\NN}_p:= \int_0^T (\xi_{{N_s^{n}+1}}^n)^p e^{({\xi_{{N_s^{n}+1}}^n})\tilde{C}} ds,
		\end{align*}
		where $p>0$ and $\tilde{C}$ is a constant depending upon matrices A,B, K, time T and p. Then, we have 
		$$\E[\tilde{\NN}_p]\leq \frac{1}{n^p}\frac{\E[N_T^n]+2}{n}\E[\xi_1^{p+1} e^{\xi^n_1 \tilde{C}}].$$
	\end{lemma}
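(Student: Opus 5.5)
The plan is to split the integral over $[0,T]$ along the random sampling intervals. On each such interval the integrand is constant, and the resulting random sum can then be handled with Wald's identity (Lemma~\ref{Waldlemma}) and the stopping time of Lemma~\ref{stoppingtime}. For $s\in[\tau^n_k,\tau^n_{k+1})$ we have $N^n_s=k$, so $\xi^n_{N^n_s+1}=\xi^n_{k+1}$ does not depend on $s$ within the interval. Every $s\in[0,T]$ lies in one of the intervals $[\tau^n_k,\tau^n_{k+1})$ with $0\le k\le N^n_T$, and the integrand is nonnegative. Enlarging each piece of the domain to the full interval (whose length is exactly $\xi^n_{k+1}$) therefore gives
\begin{align*}
\tilde{\NN}_p\le \sum_{k=0}^{N^n_T}\int_{\tau^n_k}^{\tau^n_{k+1}}(\xi^n_{k+1})^p e^{\xi^n_{k+1}\tilde C}\,ds=\sum_{k=0}^{N^n_T}(\xi^n_{k+1})^{p+1}e^{\xi^n_{k+1}\tilde C}.
\end{align*}

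Next, write $Y_k:=(\xi^n_{k+1})^{p+1}e^{\xi^n_{k+1}\tilde C}$ for $k\ge0$. This is an i.i.d. sequence, and each $Y_k$ is a measurable function of $\xi_{k+1}$ alone. By the argument of Lemma~\ref{stoppingtime}, $N^n_T+1$ is a stopping time for this sequence, since $\{N^n_T+1\le k\}=\{\tau^n_k>T\}$ depends only on $\xi_1,\dots,\xi_k$, that is, on $Y_0,\dots,Y_{k-1}$. The sum above runs over $k=0,\dots,N^n_T$, which is $\sum_{k=0}^{N}Y_k$ with $N=N^n_T$. Wald's identity is needed in the form where $N+1$ is the stopping time; equivalently, one sums $\sum_{k=1}^{N'}Y_{k-1}$ with $N'=N^n_T+1$ a stopping time. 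This yields
\begin{align*}
\E[\tilde{\NN}_p]\le \E[N^n_T+1]\,\E\big[(\xi^n_1)^{p+1}e^{\xi^n_1\tilde C}\big]=\frac{\E[N^n_T]+1}{n^{p+1}}\E\big[\xi_1^{p+1}e^{\xi^n_1\tilde C}\big],
\end{align*}
using $\xi^n_1=\xi_1/n$. Since $\E[N^n_T]+1\le \E[N^n_T]+2$, this is at most the stated bound $\frac{1}{n^p}\frac{\E[N^n_T]+2}{n}\E[\xi_1^{p+1}e^{\xi^n_1\tilde C}]$. The extra room in the constant covers any off-by-one in the indexing convention.

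The step needing the most care is the stopping-time bookkeeping. Wald's identity as stated in Lemma~\ref{Waldlemma} sums from index $0$ to $N$ and requires $N$ itself to be a stopping time. Here the natural stopping time is $N^n_T+1$, while the last term summed has index $N^n_T$, so I would reindex to $\sum_{i=1}^{N^n_T+1}$ and apply the classical version with stopping time $N^n_T+1$. Integrability is also needed: $\E[\xi_1^{p+1}e^{\xi_1\tilde C/n}]<\infty$ follows from the finite moment generating function assumption, at least for $n$ large enough that $\tilde C/n$ lies in its domain (for instance via Cauchy--Schwarz together with finiteness of all moments). Finiteness of $\E[N^n_T]$ follows from the elementary renewal theorem (Theorem~\ref{thm:renewal}), or directly from standard renewal bounds.
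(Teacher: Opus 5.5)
Your proof is correct and follows the paper's argument. You split $[0,T]$ along the renewal intervals, enlarge the final partial interval $[\tau^n_{N^n_T},T]$ to the full one of length $\xi^n_{N^n_T+1}$, bound $\tilde{\NN}_p$ by $\sum_{k=0}^{N^n_T}(\xi^n_{k+1})^{p+1}e^{\xi^n_{k+1}\tilde C}$, and apply Wald's identity. The only difference is that the paper pads the sum up to index $N^n_T+1$ before invoking Wald, which gives its factor $\E[N^n_T]+2$, whereas your reindexing gives the slightly sharper factor $\E[N^n_T]+1$.
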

	\begin{proof}   
		We have the fact that if $\xi_i$ are i.i.d., then for any measurable function $f$, $f(\xi_i)$ are also i.i.d. Thus, we have
		\begin{align*}
			\int_0^T (\xi_{{N_s^{n}+1}}^n)^p e^{{\xi_{{N_s^{n}+1}}^n \tilde{C}}} ds
			&= \sum_{k=0}^{N_T^n-1} \int_{\tau_k^n}^{\tau_{k+1}^n}(\xi_{k+1}^n)^p e^{\xi_{k+1}^n \tilde{C}} ds+\int_{{\tau_{N_T^n}^n}}^T(\xi_{N_s^n+1}^n)^p e^{\xi_{N_s^n+1}^n \tilde{C}} ds\\
			&= \sum_{k=0}^{N_T^n-1} (\xi_{k+1}^n)^p e^{\xi_{{k+1}}^n \tilde{C}} \int_{\tau_k^n}^{\tau_{k+1}^n} ds+(\xi_{N_T^n+1}^n)^p e^{\xi_{{N_T^n+1}}^n \tilde{C}} \int_{{\tau_{{N_T^n}}^n}}^T ds
			\\& =\sum_{k=0}^{N_T^n-1} (\xi_{k+1}^n)^p e^{\xi_{{k+1}}^n \tilde{C}} (\tau_{k+1}^n-\tau_k^n)+(\xi_{N_T^n+1}^n)^p e^{\xi_{{N_T^n+1}}^n \tilde{C}} (T-\tau_{{N_T^n}}^n) \\
			&\leq \sum_{k=0}^{N_T^n} (\xi_{k+1}^n)^{p+1} e^{\xi_{{k+1}}^n \tilde{C}}\leq  \sum_{k=0}^{N_T^n+1} (\xi_{k+1}^n)^{p+1} e^{\xi^n_{{k+1}} \tilde{C}}.
		\end{align*}
		Taking expectation, and using Lemma~\ref{Waldlemma}, we get the required result. 
	\end{proof}
	
	We remark that as $n \to \infty$, by Theorem~\ref{thm:renewal} the right hand side is asymptotically equivalent to a decay rate of $\frac 1{n^p}\frac{\E[\xi_1^{p+1}]}{\E[\xi_1]}$.  Taking $\tilde{C} = 0$ above we have following  corollary.
	
	\begin{corollary}\label{Np}
		For any $p>0$, define $$
		\NN_p := \int_0^T (s - \pi^n(s))^p ds. 
		$$
		Then, we have
		\begin{equation*}
			\E[\NN_p]\leq \frac{1}{n^p}  \frac{\E[N^n_T+2]}{n} \E[\xi_1]^{p+1},
		\end{equation*}       
	\end{corollary}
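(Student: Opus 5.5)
The plan is to reduce the corollary to Lemma~\ref{Nptilde} with $\tilde C=0$ by a pointwise comparison of integrands. Fix $s\in[0,T]$. By definition of $N^n_s$ we have $\tau^n_{N^n_s}\le s<\tau^n_{N^n_s+1}$, and $\pi^n(s)=\tau^n_{N^n_s}$. Hence
\begin{equation*}
0\le s-\pi^n(s)<\tau^n_{N^n_s+1}-\tau^n_{N^n_s}=\xi^n_{N^n_s+1}.
\end{equation*}
Since $p>0$, raising to the power $p$ and integrating over $[0,T]$ gives
\begin{equation*}
\NN_p\le\int_0^T\big(\xi^n_{N^n_s+1}\big)^p\,ds,
\end{equation*}
and the right-hand side is exactly $\tilde\NN_p$ from Lemma~\ref{Nptilde} with $\tilde C=0$.

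Next I take expectations and invoke Lemma~\ref{Nptilde}. Its proof splits $[0,T]$ into the renewal intervals $[\tau^n_k,\tau^n_{k+1})$ and bounds the last partial interval by a full one. This gives the pathwise bound
\begin{equation*}
\tilde\NN_p\le\sum_{k=0}^{N^n_T+1}\big(\xi^n_{k+1}\big)^{p+1}.
\end{equation*}
Lemma~\ref{stoppingtime} shows that $N^n_T+1$ is a stopping time for the i.i.d.\ sequence $\big((\xi^n_{k+1})^{p+1}\big)_{k\ge0}$, so Wald's identity (Lemma~\ref{Waldlemma}) applies. It yields $\E[N^n_T+2]\,\E[(\xi^n_1)^{p+1}]$.

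Finally, $\xi^n_1=\xi_1/n$, so $\E[(\xi^n_1)^{p+1}]=n^{-(p+1)}\E[\xi_1^{p+1}]$. This produces the prefactor $\frac1{n^p}\cdot\frac{\E[N^n_T+2]}{n}$. The moment $\E[\xi_1^{p+1}]$ is finite because $\xi_1$ has a finite moment generating function.

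The main point of care is the moment factor. The argument naturally gives $\E[\xi_1^{p+1}]$, and Jensen's inequality only gives $\E[\xi_1]^{p+1}\le\E[\xi_1^{p+1}]$, which goes the wrong way for a bound. So I would read $\E[\xi_1]^{p+1}$ in the statement as the $(p+1)$-th moment $\E[\xi_1^{p+1}]$, which is what the remark after Lemma~\ref{Nptilde} uses. With the other reading the bound does not follow from this argument.

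Everything else is routine. The elementary renewal theorem (Theorem~\ref{thm:renewal}) together with \eqref{nnt} gives $\E[N^n_T]/n=\E[N_{nT}]/n\to T/\E[\xi_1]$. Therefore $\E[N^n_T+2]/n$ stays bounded in $n$, and $\E[\NN_p]=O(n^{-p})$, which is the form used in Theorem~\ref{LLN}.
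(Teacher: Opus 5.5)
Your argument is the paper's own. The pointwise bound $s-\pi^n(s)\le\xi^n_{N^n_s+1}$ reduces $\NN_p$ to $\tilde\NN_p$ with $\tilde C=0$, and Lemma~\ref{Nptilde} (the pathwise sum followed by Wald's identity) finishes the proof. You are also right about the moment factor: the argument yields $\E[\xi_1^{p+1}]$, and the literal $(\E[\xi_1])^{p+1}$ version is in fact false in general, because renewal--reward gives $n^p\E[\NN_p]\to T\,\E[\xi_1^{p+1}]/\big((p+1)\E[\xi_1]\big)$ while $n^p$ times the stated bound tends to $T(\E[\xi_1])^p$, so the stated bound fails whenever $\E[\xi_1^{p+1}]>(p+1)(\E[\xi_1])^{p+1}$.
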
 
	which also decays at the rate $\frac{1}{n^p}$ same way as in Lemma \ref{Nptilde}. Observe that, in this case, we have $(s - \pi^n(s)) \leq \xi_{N^n_s + 1}^n.$

	The upgrade from distributional convergence to convergence of moments (for instance, in Theorem~\ref{donsker}) requires uniform integrability. The next two lemmas are necessary to achieve this.
	\begin{lemma}\label{unifint1}
		Let $\MM_k :=\sum_{i=1}^k (\xi_i - \E[\xi_1])= \tau^1_k - \E[\xi_1]k,~ k=1,2,\ldots$ be the discrete time random walk associated to the increments $\xi_i - \E[\xi_1]$. Then, 
		\begin{enumerate}
			\item [(a)] For any stopping time $\tau'$ with respect to $\MM_k$, $\E[\sup_{k \leq \tau'} |\MM_k|^6] \leq C_3\E[\tau'^4]^{\frac 12}\E[\tau'^2]^{\frac 12}$ where $C_3$ depends only upon $\xi_1$.
			\item [(b)] For all even numbers $p$, we have $\E[|\MM_k|^p] \leq C_{p,\xi_1} k^{p/2}$ for all $k>0$ and $C_{p,\xi_1}$ depending only upon $p$ and $\xi_1$.
		\end{enumerate}
	\end{lemma}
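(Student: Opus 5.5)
Part (b) comes first, since it is a classical moment bound for sums of centered i.i.d.\ variables with finite moment generating function. Set $Y_i:=\xi_i-\E[\xi_1]$, so that $\E[Y_i]=0$ and every moment of $Y_i$ is finite. For even $p$, expand $\E[\MM_k^p]=\sum_{i_1,\dots,i_p}\E[Y_{i_1}\cdots Y_{i_p}]$. A term vanishes whenever some index occurs exactly once. So only index patterns in which every distinct index occurs at least twice survive, and such a pattern uses at most $p/2$ distinct indices. The number of these patterns is at most $C_p k^{p/2}$. By Hölder, each surviving term is bounded by $\E[|Y_1|^p]$. Together these give $\E[|\MM_k|^p]\le C_{p,\xi_1}k^{p/2}$. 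Alternatively, one can apply the Marcinkiewicz--Zygmund (discrete BDG) inequality $\E|\MM_k|^p\le C_p\E[(\sum_{i\le k}Y_i^2)^{p/2}]$ and then use Jensen on the normalized sum, which gives $(\sum Y_i^2)^{p/2}\le k^{p/2-1}\sum|Y_i|^p$.

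For part (a), we use that $\MM_k$ is a martingale with respect to the natural filtration of the $\xi_i$, which coincides with that of $\MM_k$, and that $\tau'$ is a stopping time for it. The first step is the discrete BDG inequality for the stopped martingale $\MM_{k\wedge\tau'}$, combined with monotone convergence. This gives
\[
\E\Big[\sup_{k\le\tau'}|\MM_k|^6\Big]\le C\,\E\Big[\Big(\sum_{i=1}^{\tau'}Y_i^2\Big)^3\Big].
\]
Write $S:=\sum_{i\le\tau'}Y_i^2$. Note that $S^3=S\cdot S^2$ and Cauchy--Schwarz would produce a product of square roots, which is the form of the target. The natural route, though, is to bound $S^3\le \tau'^2\sum_{i\le\tau'}|Y_i|^6$ by Hölder over the index set, and then split this with Cauchy--Schwarz in $\omega$:
\[
\E[S^3]\le \E[\tau'^4]^{1/2}\,\E\Big[\Big(\sum_{i\le\tau'}|Y_i|^6\Big)^2\Big]^{1/2}.
\]
It then remains to show $\E[(\sum_{i\le\tau'}|Y_i|^6)^2]\le C\E[\tau'^2]$. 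Write $|Y_i|^6=\mu_6+(|Y_i|^6-\mu_6)$, where $\mu_6:=\E|Y_1|^6$. The square is then at most $2\mu_6^2\tau'^2+2(\sum_{i\le\tau'}(|Y_i|^6-\mu_6))^2$. The second piece is a centered stopped sum, so the second part of Wald's identity (Lemma~\ref{Waldlemma}, shifted to $i\ge1$) bounds its expectation by $\E[\tau']\,\mathrm{Var}(|Y_1|^6)\le C\E[\tau'^2]$, because $\tau'\ge 0$ is integer-valued and hence $\tau'\le\tau'^2$ wherever $\tau'\ge1$. Combining these gives the claim, with $C_3$ depending only on moments of $\xi_1$ up to order $12$. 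These moments are all finite by the MGF assumption.

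The main obstacle is the stopped BDG step, because it needs the stopping time to be relative to the correct filtration and needs the bound to be justified when $\tau'$ is unbounded. I would first prove everything for $\tau'\wedge m$ and let $m\to\infty$ by monotone convergence on both sides. If $\E[\tau'^4]=\infty$ the inequality is trivial. A second concern is the Wald step for the centered sum of $|Y_i|^6-\mu_6$, which requires $\E[\tau']<\infty$. This holds whenever the right-hand side of the claim is finite, and otherwise the claim is again trivial.
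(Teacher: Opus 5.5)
Your proposal is correct and matches the paper's proof of (a) essentially step for step: stopped discrete BDG, Hölder to $\tau'^2\sum_{i\le\tau'}|Y_i|^6$, Cauchy--Schwarz in $\omega$, then centering the sixth powers and applying Wald's second identity; you also add the localization via $\tau'\wedge m$ and the $\E[\tau']<\infty$ check, which the paper leaves implicit. For (b), the paper uses exactly your alternative route (BDG followed by $(\sum_{i\le k}Y_i^2)^{p/2}\le k^{p/2-1}\sum_{i\le k}|Y_i|^p$), and your combinatorial moment expansion is an equally valid, more elementary substitute.
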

	\begin{proof}
		We will use the discrete time BDG inequality (see \cite[page 23]{hall1980martingale} or \cite[page 2]{wood1999rosenthal}). Observe that $\MM_k$ is a sum of independent mean zero random variables, and hence a martingale with respect to its own filtration. Therefore, applying the BDG inequality with $p=6$ for any stopping time $\tau'$, \begin{equation}\label{eq:bdg}
			\E[\sup_{k \leq \tau'} |\MM_k|^6]\leq C_3 \E[\langle \MM_k,\MM_k\rangle_{\tau'}^3]
		\end{equation}
		for a constant $C_3$ independent of $\xi_1$ and $\tau'$, where $\langle \MM_k,\MM_k\rangle_{t}$ is the quadratic variation of $\MM_k$.  Let $t_i=\MM_i-\MM_{i-1} = (\xi_i - \E[\xi_i])$ be the martingale differences. Then, the quadratic variation in discrete time (see \cite[page 23]{hall1980martingale}) is defined as 
		\begin{equation*}
			\E[\langle \MM_k,\MM_k\rangle_{k}] = \E\left(\sum_{i \leq k} t_i^2\right).
		\end{equation*}
		Therefore, we have
		\begin{equation}\label{eq:qv}
			\E\left[\langle \MM_k,\MM_k\rangle_{\tau'}^3\right] = \E\left[\left(\sum_{k \leq \tau'} (\xi_k - \E[\xi_k])^2\right)^3\right].
		\end{equation}
		We will now expand and bound this expectation. Note that by Holder's inequality,
		\begin{equation*}
			\left(\sum_{k \leq \tau'} (\xi_k - \E[\xi_k])^2\right)^3 \leq \tau'^2 \times \sum_{k \leq \tau'} (\xi_k - \E[\xi_k])^6.
		\end{equation*}
		Therefore, we have \begin{align}
			\E\left[\left(\sum_{k \leq \tau'} (\xi_k - \E[\xi_k])^2\right)^3\right]
			\leq & ~\E\left[\tau'^2 \times \sum_{k \leq \tau'} (\xi_k - \E[\xi_k])^6\right] \nonumber \\
			\leq & ~\E[\tau'^4]^{\frac 12} \times \E\left[\left(\sum_{k \leq \tau'} (\xi_k - \E[\xi_k])^6\right)^2\right]^{\frac 12},\label{next}
		\end{align}
		by C-S inequality. We are only left to bound the final term above. Let
		$$
		a = \sum_{k \leq \tau'} (\xi_k - \E[\xi_k])^6 \quad b = \sum_{k \leq \tau'} \E(\xi_k - \E[\xi_k])^6 = \tau' \times \E(\xi_k - \E[\xi_k])^6.
		$$
		Now, observe that $a^2 \leq 4b^2+4(a-b)^2$, and that $a-b = \sum_{k\leq \tau'} [(\xi_k - \E[\xi_k])^6  - \E(\xi_k - \E[\xi_k])^6]$ is a stopped sum of i.i.d. zero mean random variables, on which Wald's identity \ref{Waldlemma} can be applied. Doing so gives
		\begin{align*}
			\E\left[\left(\sum_{k \leq \tau'} (\xi_k - \E[\xi_k])^6\right)^2\right] &\leq4 \E[\tau'^2] [\E[(\xi_k - \E[\xi_k])^6]]^2 + 4 \E[\tau'] \mbox{Var}[(\xi_k - \E[\xi_k])^6]\\
			&\leq C_{\xi_1}(\E[\tau'^2]+\E[\tau'])\le \E[\tau'^2] C_{\xi_1}.
		\end{align*}
		for a constant $C_{\xi_1}$ depending only upon $\xi_1$, where we used the fact that $E[\tau']\leq \E[\tau'^2]$ since $\tau'$ is non-negative integer valued. Combining this with \eqref{eq:bdg}, \eqref{eq:qv} and \eqref{next} finishes the proof.
		
		For the other part, we first remark that for $p=2$, the identity is immediate. For $p>2$ even, recall that $\MM_k$ is a martingale. Applying the discrete BDG inequality as we did in \eqref{eq:bdg},
		\begin{equation}\label{eq:bdgDiscrete}
			\E[|\MM_k|^p]  \leq C_{p} \E[\langle \MM_k,\MM_k\rangle_{k}^{p/2}].
		\end{equation}
		To bound the right hand side, we notice that by Holder's inequality,
		\begin{equation*}
			\E[\langle \MM_k,\MM_k\rangle_{k}^{p/2}] = \E\left[\left(\sum_{i \leq k} t_i^2\right)^{p/2}\right]\leq \E\left[k^{p/2-1}\sum_{i=1}^{k} t_i^p\right]\leq k^{\frac{p}{2}}C_{p\xi_1},
		\end{equation*}
		where we used the linearity of expectation in the last line. Combining this with \eqref{eq:bdgDiscrete} completes the proof.
	\end{proof}
	The preceding result on moments has an immediate application in the next Lemma.
	\begin{lemma}\label{unifint2}
		For any even number $p$, we have $\E[|N^1_t|^p] \leq C_pt^p$ for every $t>0$ and a constant $C_{p,\xi_1}$ depending only on $p$ and $\xi_1$.
	\end{lemma}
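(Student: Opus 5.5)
The plan is to split into the large-time regime $t\ge 1$, where the moment bound for the random walk $\MM_k$ from Lemma~\ref{unifint1}(b) does the work, and the small-time regime $0<t<1$, where the behaviour of the law of $\xi_1$ near $0$ controls everything. Throughout, write $\mu=\E[\xi_1]>0$ and use the layer-cake formula
\begin{equation*}
\E[|N^1_t|^p]=\sum_{k\ge1}\big(k^p-(k-1)^p\big)\PP(N^1_t\ge k)\le p\sum_{k\ge1}k^{p-1}\PP(\tau_k\le t),
\end{equation*}
which holds because $\{N^1_t\ge k\}=\{\tau_k\le t\}$ by \eqref{Nnt}.

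For $t\ge1$, split the sum at $k_0=\lceil 2t/\mu\rceil$. The terms with $k\le k_0$ contribute at most $p\,k_0^{p}\le C_{p,\mu}t^p$, using $t\ge1$ to absorb the ceiling. For $k>k_0$ we have $k\mu-t\ge k\mu/2$, so
\begin{equation*}
\PP(\tau_k\le t)=\PP(\MM_k\le t-k\mu)\le \PP(|\MM_k|\ge k\mu/2)\le \frac{2^q\E|\MM_k|^q}{\mu^q k^q}\le C_{q,\xi_1}k^{-q/2}.
\end{equation*}
Here we apply Markov's inequality with an even $q$ and then Lemma~\ref{unifint1}(b). Choosing $q>2p$ makes $\sum_k k^{p-1}k^{-q/2}$ finite, so this tail is bounded by a constant, which is again at most $C t^p$ since $t\ge1$. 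This gives the claim on $[1,\infty)$.

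For $0<t<1$, the main obstacle is that the right-hand side $Ct^p$ tends to $0$, while $\E[|N^1_t|^p]\ge\PP(\xi_1\le t)$. So the statement, as worded, forces a small-ball estimate $\PP(\xi_1\le t)\le Ct^p$, which must come from the law of $\xi_1$ near the origin. I would first try to extract this from the standing hypotheses, namely positivity of $\xi_1$ and its finite moment generating function.

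If that route works, the rest is mechanical. Independence gives $\PP(\tau_k\le t)\le \PP(\xi_1\le t)^k\le (Ct^p)^k$. For $t$ small enough that $Ct^p\le 1/2$, the series $\sum_k k^{p-1}(Ct^p)^k$ is dominated by its first term, which is $O(t^p)$. For the remaining range $t\in[t_*,1)$, monotonicity of $t\mapsto N^1_t$ together with the $t=1$ case gives $\E|N^1_t|^p\le C\le C t_*^{-p}t^p$.

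I expect the small-ball step to be the genuine difficulty. Positivity and a finite moment generating function do not by themselves give $\PP(\xi_1\le t)\le Ct^p$: a law with an atom arbitrarily close to $0$ violates it. So the proof will have to state explicitly the condition on $\xi_1$ near $0$ under which the bound holds for all $t>0$. For instance, it suffices that $\xi_1$ has a density vanishing to order $p-1$ at $0$, or that $\xi_1\ge\delta>0$ almost surely, in which case $N^1_t=0$ for $t<\delta$ and the small-$t$ regime is trivial. If the paper only needs the regime $t=nT\to\infty$ (as in \eqref{nnt}), the first two paragraphs already suffice for that use.
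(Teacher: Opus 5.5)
Your argument for $t\ge 1$ is the paper's argument: the layer-cake sum over $\PP(\tau_k\le t)$, Markov's inequality with a high even moment of $\MM_k$ (the paper's exponent $2p+2$ plays the role of your $q$), and Lemma~\ref{unifint1}(b). Your cut at $k_0=\lceil 2t/\E[\xi_1]\rceil$ is more careful than the paper's. The paper applies the bound $\PP(\tau_{k+1}\le t)\le C k^{-p-1}$ already from $k+1\approx t/\E[\xi_1]$ onward, where $(k+1)\E[\xi_1]-t$ can be arbitrarily small. It also writes the needed inequality $(k+1)\E[\xi_1]-t\ge\frac12(k+1)\E[\xi_1]$ with the sign reversed.

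Your objection for small $t$ is correct, and the paper's proof does not get around it. Two things happen there:
\begin{itemize}
\item The step $(k+1)^p-k^p\le C_pk^{p-1}$ fails at $k=0$, and it silently drops exactly the term $\PP(\xi_1\le t)$.
\item The tail $C_{p,\xi_1}\sum_k k^{-2}$ in the final display does not depend on $t$.
\end{itemize}
So what is actually established is $\E[|N^1_t|^p]\le C_{p,\xi_1}(1+t^p)$. That is $\le Ct^p$ only for $t$ bounded away from $0$.

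Your phrase about ``an atom arbitrarily close to $0$'' is best made concrete. The constant may depend on the law of $\xi_1$, so a single atom at a fixed $a>0$ with no mass below it does not violate the bound. A clean counterexample under the paper's standing hypotheses is $\xi_1$ uniform on $(0,1)$. It is positive and bounded, so its moment generating function is finite everywhere. Yet $\E[|N^1_t|^p]\ge \PP(\xi_1\le t)=t$, which is not $O(t^p)$ for even $p\ge 2$.

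The lemma is used only in Lemma~\ref{unifintrem34}(i), with $t=nT\ge T$. There $C(1+t^p)\le C_T t^p$, so your large-$t$ argument is all the paper needs. Your conditional small-$t$ argument, via $\PP(\tau_k\le t)\le\PP(\xi_1\le t)^k$ and monotonicity in $t$, is fine under the extra hypothesis you state.
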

	
	\begin{proof}
		We use tail probability representation of moments as
		\begin{align}
			\E[|N^1_{t}|^p] = &p\int_0^{\infty}x^{p-1}\mathbb P(N^1_t \geq x)dx \nonumber\\=& p \sum_{k=0}^{\infty}\int_{k}^{k+1} x^{p-1}\mathbb P(N^1_t \geq k+1)dx \nonumber\\=& \sum_{k=0}^{\infty} \mathbb P(N^1_t \geq k+1) ((k+1)^p - k^p) \nonumber\\
			\leq & C_p\sum_{k=0}^{\infty} \mathbb P(N^1_t \geq k+1)k^{p-1}= C_p\sum_{k=0}^{\infty} \mathbb P(\tau^1_{k+1} \leq t)k^{p-1}.\label{start} 
		\end{align}
		It now suffices to bound $\mathbb P(\tau^1_{k+1}\leq t)$. We do so as follows : observe that if $t < (k+1)\E[\xi_1]$ and $p$ is any natural number, then by Markov's inequality,
		\begin{align*}\mathbb P(\tau^1_{k+1} \leq t)  =& \mathbb P(\tau^1_{k+1} - (k+1)\E[\xi_1]  \leq t - (k+1)\E[\xi_1])\\ \leq & \mathbb P(|\tau^1_{k+1} - (k+1)\E[\xi_1]| \geq (k+1)\E[\xi_1]-t) \\
			\leq &\frac{\E(|\tau^1_{k+1} - (k+1)\E[\xi_1]|^{2p+2}}{((k+1)\E[\xi_1] - t)^{2p+2}} = \frac{\E(|\MM_{k+1}|^{2p+2})}{((k+1)\E[\xi_1] - t)^{2p+2}},
		\end{align*}
		where $\MM_{k} = \tau^1_{k} - k \E[\xi_1]$. By Lemma~\ref{unifint1} we have $$
		\mathbb P(\tau^1_{k+1} \leq t) \leq C\frac{k^{p+1}}{|(k+1)\E[\xi_1] - t|^{2p+2}}
		$$
		for all $k$ such that $(k+1)\E[\xi_1]>t$. In particular, if $(k+1)>\frac{2t}{\E[\xi_1]}$, we have $((k+1)\E[\xi_1] - t)\leq \frac 12 (k+1)\E[\xi_1]$ and therefore $$
		\mathbb P(\tau^1_{k+1} \leq t) \leq C_{p,\xi_1}\frac{k^{p+1}}{(k+1)^{2p+2}} \leq C_{p,\xi_1} k^{-p-1}.
		$$
		We apply the following to \eqref{start} as follows : if $t\geq (k+1)\E[\xi_1]$, then we may bound $\mathbb P(\tau^1_{k+1} \leq t) \leq 1$. Therefore, 
		\begin{align*}
			\E[|N^1_{t}|^p]  &\leq C_p \sum_{k=0}^{\lceil t/\E[\xi_1]-1 \rceil} k^{p-1} +  C_{p,\xi_1}\sum_{\lceil t/\E[\xi_1]-1 \rceil + 1}^{\infty} k^{-p-1+p-1} \\&\leq C_p\sum_{k=0}^{\lceil t/\E[\xi_1]-1 \rceil} k^{p-1} + C_{p,\xi_1} \sum_{k=0}^{\infty} k^{-2} < C_{p,\xi_1}t^p,   
		\end{align*}
		where $C_{p,\xi_1}$ is independent of $t$. This completes the proof.
	\end{proof}
	
	We are now ready to prove Theorem~\ref{LLN}.
	\section{Proof of LLN type Results}\label{section4}
	In this section,  we prove Theorem~\ref{LLN}. For this purpose, we first establish following two supporting lemmas. The theorem will subsequently be derived as a simple consequence.
	\begin{lemma}\label{Assympwithoutnoise}
		Let $ \mathtt{Y}_T = \sup_{0 \leq s \leq T} |x^n_s - x_s|^p $. Then for any $p\geq 1,$ there exists $ C_{ABKTp} > 0, $ depending on $T,p$ and the matrices $A,B$ and $K$ such that
		\begin{align*}
			\mathtt{Y}_T\leq C_{ABKTp}~ {\NN}_p,
		\end{align*}
		where $ {\NN}_p = \int_0^T (s - \pi^n(s))^p ds $ is defined as in Corollary~\ref{Np} and converges to zero.
	\end{lemma}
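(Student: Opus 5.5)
Set $e_t := x^n_t - x_t$ and subtract the integral forms \eqref{contrlUsoln} and \eqref{M2withoutnoiseforallt}:
\begin{align*}
e_t = \int_0^t (A-BK) e_s\, ds + BK\int_0^t \big(x^n_s - x^n_{\pi^n(s)}\big)\, ds .
\end{align*}
The first term gives the Gronwall structure. The second term is the sampling error, and it should be small pointwise in $s$ because $s-\pi^n(s)$ is small.

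The key step is a pathwise bound on $|x^n_s - x^n_{\pi^n(s)}|$. For $s\in[\tau^n_k,\tau^n_{k+1})$ we have $\pi^n(s)=\tau^n_k$, and \eqref{randomcontrol} gives
\begin{align*}
x^n_s - x^n_{\pi^n(s)} = \int_{\pi^n(s)}^s \big(A x^n_r - BK x^n_{\pi^n(r)}\big)\, dr .
\end{align*}
Since $\pi^n(r)\le r\le T$, Lemma~\ref{ODEbd} bounds the integrand by $(|A|+|B||K|)C_{ABKT}$. This is a deterministic constant, which is exactly why that lemma is useful here. Hence
\begin{align*}
|x^n_s - x^n_{\pi^n(s)}| \le C_{ABKT}\,(s-\pi^n(s)).
\end{align*}
Alternatively, the explicit solution \eqref{randomcontrolsoln} together with Lemma~\ref{Expbd} gives the same bound, but the integral form is simpler.

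Next, raise to the power $p$. For $t\le T$, Hölder's inequality in time gives
\begin{align*}
|e_t|^p \le 2^{p-1}|A-BK|^p T^{p-1}\int_0^t |e_s|^p\, ds + 2^{p-1}|BK|^p T^{p-1}\int_0^t |x^n_s-x^n_{\pi^n(s)}|^p\, ds .
\end{align*}
Insert the pointwise bound into the second integral; it is then at most $C\,\NN_p$, with $\NN_p$ as in Corollary~\ref{Np}. Take the supremum over $t\le u$ and set $\mathtt{Y}_u=\sup_{s\le u}|e_s|^p$. This yields
\begin{align*}
\mathtt{Y}_u \le C\int_0^u \mathtt{Y}_s\, ds + C\,\NN_p ,
\end{align*}
where $\NN_p$ is the full integral over $[0,T]$ and so does not depend on $u$. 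Gronwall's lemma, applied pathwise, then gives $\mathtt{Y}_T\le C_{ABKTp}\,\NN_p$. Convergence to zero follows in expectation from Corollary~\ref{Np}.

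The main obstacle I expect is keeping every constant deterministic, that is, independent of the random sampling times. This holds because Lemma~\ref{ODEbd} gives a non-random bound on $x^n$, and because the Gronwall step is carried out with the random quantity $\NN_p$ sitting only in the additive term. A minor technical point is that $x^n$ is continuous, so $x^n_{\tau^n_k}=x^n_{\tau^n_k-}$ and the integral representation on each sampling interval is valid.
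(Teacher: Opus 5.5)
Your proof is correct and takes essentially the same route as the paper. You subtract the integral equations and bound $|x^n_s - x^n_{\pi^n(s)}|\le C_{ABKT}(s-\pi^n(s))$ pathwise via Lemma~\ref{ODEbd}, then apply H\"older in time and close with Gronwall; the only cosmetic difference is that you regroup the drift as $(A-BK)e_s + BK(x^n_s - x^n_{\pi^n(s)})$ before taking $p$th powers, whereas the paper keeps $A e_s + BK(x_s - x^n_{\pi^n(s)})$ and splits the second factor afterwards.
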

	
	\begin{proof}
		Let $t\in [0,T]$ be arbitrary. Subtract \eqref{ode1} from \eqref{randomcontrol} and integrating over $[0,t]$, we get
		\begin{align*}
			x^n_t - x_t = \int_0^t \left( A(x^n_s - x_s) + BK (x_s - x^n_{\pi^n(s)}) \right) ds.
		\end{align*}
		Taking the $p$th power of the absolute value of both sides, and applying first the triangle inequality and then the C-S inequality, we get
		\begin{align}\label{second}
			|x^n_t - x_t|^p \leq C_{Tp}\left[ \int_0^t |A|^p |x^n_s - x_s|^p ds + \int_0^t |B|^p \cdot|K|^p |x_s - x^n_{\pi^n(s)}|^p ds\right].
		\end{align}
		For the second term
		\begin{align*}
			|x_s - x^n_{\pi^n(s)}|^p \leq C_{Tp}\left[ |x_s - x^n_s|^p + |x^n_s - x^n_{\pi^n(s)}|^p\right] \leq C_{Tp}(Y_s + |x^n_s - x^n_{\pi^n(s)}|^p).
		\end{align*}
		Using the \eqref{randomcontrol}, we have
		\begin{align*}
			x^n_s - x^n_{\pi^n(s)} = \int_{\pi^n(s)}^s \left( Ax^n_r - BK x^n_{\pi^n(r)} \right) dr.
		\end{align*}
		Thus
		\begin{align*}
			|x^n_s - x^n_{\pi^n(s)}| \leq \int_{\pi^n(s)}^s \left( |A| |x^n_r| + |B|\cdot |K| |x^n_{\pi^n(r)}| \right) dr.
		\end{align*}
		Since $ \pi^n(r) \leq r \leq s $, by Lemma \ref{ODEbd}, $ |x^n_r| \leq L'(s) \leq C_{ABKT},~ |x^n_{\pi^n(r)}| \leq L'(s)\le C_{ABKT} $. So
		\begin{align*}
			|x^n_s - x^n_{\pi^n(s)}|\le C_{ABKT}\int_{\pi^n(s)}^s ds =  (s - \pi^n(s))C_{ABKT}.
		\end{align*}
		The second integral in \eqref{second} becomes
		\begin{align*}
			\int_0^t |B|^p\cdot |K|^p |x_s - x^n_{\pi^n(s)}|^p ds \leq C_{Tp}\left[\int_0^t |B|^p\cdot |K|^p Y_s ds + \int_0^t |B|^p \cdot|K|^p (s - \pi^n(s))^p  ds\right].
		\end{align*}
		Plugging the above estimate in equation \eqref{second} and taking superimum over $t\in[0,T]$ on both sides, we get
		\begin{align*}
			\mathtt{Y}_T \leq C_{ABKTp}\left(\int_0^T Y_s ds + \int_0^T (s - \pi^n(s))^p  ds\right),
		\end{align*}
		Finally, by Gronwall's lemma, we have
		\begin{align*}
			\mathtt{Y}_T \leq C_{ABKTp}~{\NN}_p.
		\end{align*}
		
	\end{proof}
	
	\begin{lemma}\label{Assympwithnoise} 
		For any $p\geq 1,$ there exists $ C_{ABKTp} > 0$, such that
		\begin{align*}
			\E \left[ \sup_{0 \leq t \leq T} |X_t^{\e, n} - x^n_t|^p \right] \leq C_{ABKTp}~\e^p.
		\end{align*}
	\end{lemma}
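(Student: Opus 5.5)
We compare the integral form \eqref{M2withnoiseforallt} of $X^{\e,n}$ with the integral form of the noiseless sampled system \eqref{M2withoutnoiseforallt}, namely $x^n_t = x_0+\int_0^t (Ax^n_s - BKx^n_{\pi^n(s)})ds$. Set $D_t := X^{\e,n}_t - x^n_t$. Subtracting the two equations gives the pathwise identity
\begin{equation*}
D_t = \int_0^t \bigl(A D_s - BK D_{\pi^n(s)}\bigr) ds + \e W_t ,
\end{equation*}
which holds for every realization of the sampling times. This identity is linear in $D$ and contains no sampling-induced inhomogeneity, since both processes use the same $\pi^n$. That is why the bound should depend only on $\e$ and not on $n$.

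Fix $p\geq 1$ and let $\mathtt{D}_t := \sup_{0\le r\le t}|D_r|^p$. Since $\pi^n(s)\le s$, we have $|D_{\pi^n(s)}|^p \le \mathtt{D}_s$, and likewise $|D_s|^p\le \mathtt{D}_s$. Taking $p$th powers and applying $|a+b|^p\le 2^{p-1}(|a|^p+|b|^p)$ together with Hölder's inequality on $[0,t]$, we obtain pathwise
\begin{equation*}
\mathtt{D}_t \le C_{Tp}\bigl(|A|^p+|B|^p|K|^p\bigr)\int_0^t \mathtt{D}_s\, ds + C_p\,\e^p \sup_{0\le r\le T}|W_r|^p ,\qquad t\in[0,T].
\end{equation*}
Because $t\mapsto \mathtt{D}_t$ is nondecreasing and finite, Gronwall's lemma applied pathwise gives
\begin{equation*}
\mathtt{D}_T \le C_{ABKTp}\,\e^p \sup_{0\le r\le T}|W_r|^p .
\end{equation*}

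Finally we take expectations. Lemma~\ref{BDGnoisebd} with exponent $p/2$ (for $p\ge2$), or Jensen's inequality combined with the case $2p'=2$ (for $1\le p<2$), gives $\E[\sup_{r\le T}|W_r|^p]\le C T^{p/2}$. Absorbing $T^{p/2}$ into the constant yields the claim.

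The only point requiring care is the use of Gronwall with the delayed argument $D_{\pi^n(s)}$. Working with the running supremum $\mathtt{D}$ rather than with $|D_t|^p$ makes the delay harmless. Since everything is done pathwise before taking expectations, the randomness of the sampling times and their dependence structure never enter, so no renewal estimates are needed here. Finiteness of $\mathtt{D}_T$, needed for Gronwall, follows from the continuity of both $X^{\e,n}$ and $x^n$ on $[0,T]$.
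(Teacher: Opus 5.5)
Your argument is correct and follows the paper's proof essentially step by step: subtract the two integral equations, control the delayed term through the running supremum using $\pi^n(s)\le s$, raise to the $p$th power with H\"older, apply Gronwall, and bound $\E[\sup_{r\le T}|W_r|^p]$ via Lemma~\ref{BDGnoisebd}. The only difference is that you apply Gronwall pathwise before taking expectations, and you treat $1\le p<2$ separately via Jensen. The paper instead takes expectations first and applies Gronwall to $t\mapsto\E[\mathtt{S}_t^p]$, which tacitly requires these moments to be finite a priori; your ordering avoids needing that.
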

	
	\begin{proof}
		Subtract \eqref{randomcontrol} from \eqref{controlwithnoise}
		\begin{align*}
			X_t^{\e, n} - x^n_t = \int_0^t A (X_s^{\e, n} - x^n_s) ds - \int_0^t BK \left( X^{\e, n}_{\pi^n(s)} - x^n_{\pi^n(s)} \right) ds + \e \int_0^t dW_s.
		\end{align*}
		By the triangle inequality
		\begin{align*}
			|X_t^{\e, n} - x^n_t| \leq \int_0^t |A| |X_s^{\e, n} - x^n_s| ds + \int_0^t |B|\cdot |K| |X^{\e, n}_{\pi^n(s)} - x^n_{\pi^n(s)}| ds + \e \left| \int_0^t dW_s \right|.
		\end{align*}
		Define $ \mathtt{S}_t = \sup_{0 \leq s \leq t} |X^{\e, n}_s - x^n_s| $. Since $ \pi^n(s) \leq s $, $ |X^{\e, n}_{\pi^n(s)} - x^n_{\pi^n(s)}| \leq \mathtt{S}_s $. Thus
		\begin{align*}
			\mathtt{S}_T \leq \int_0^T (|A| + |B| \cdot|K|) \mathtt{S}_s ds + \e \sup_{0 \leq t \leq T} \left| \int_0^t dW_s \right|.
		\end{align*}
		Taking pth power on both the sides and using H\"{o}lder's inequality, we get
		
		\begin{align*}
			\mathtt{S}_T^p \leq C_{pT} \int_0^T (|A| + |B| \cdot|K|)^p \mathtt{S}_s^p ds + \e^p\sup_{0 \leq t \leq T} \left| \int_0^t dW_s \right|^p.
		\end{align*}
		Taking expectation and applying Lemma \ref{BDGnoisebd}, we have
		\begin{align*}
			\E[\mathtt{S}_T^p] \leq C_{ABKTp} \left(\int_0^T \E[\mathtt{S}_s^p] ds +\e^p\right).
		\end{align*} 
		By Gronwall's lemma, we obtain
		\begin{align*}
			\E[\mathtt{S}_T^p]\leq C_{ABKTp}~\e^p, 
		\end{align*}
		i.e., 
		\begin{align*}\E \left[ \sup_{0 \leq t \leq T} |X_t^{\e, n} - x^n_t|^p \right]\leq C_{ABKTp}~\e^p.\end{align*}	
	\end{proof}
	Now, we are ready to prove Theorem \ref{LLN} with the help of above two Lemmas \ref{Assympwithoutnoise} and \ref{Assympwithnoise}.
	\begin{proof}[{Proof of Theorem \ref{LLN}}]
		By the triangle inequality
		\begin{align*}
			\E \left[ \sup_{0 \leq s \leq T} |X_s^{\e, n} - x_s|^p \right] \leq C_p\left[ \E \left[ \sup_{0 \leq s \leq T} |X_s^{\e, n} - x^n_s|^p \right] + \E \left[ \sup_{0 \leq s \leq T} |x^n_s - x_s|^p \right]\right].
		\end{align*}
		From Lemma \ref{Assympwithoutnoise} and  \ref{Assympwithnoise}, we obtain	
		\begin{align*}
			\E \left[ \sup_{0 \leq s \leq T} |X_s^{\e, n} - x_s|^p \right] \leq C_{ABKTp} {\E[\NN}_p] + C_{ABKTp}~\e^p \leq C_{ABKTp}  \left(\E[\NN_p]+\e^p\right),
		\end{align*}
		which is desired result.		
	\end{proof}	
	
	We now turn our attention to the proof of second principal result of this paper, which is formulated and rigorously stated in Theorem~\ref{CLTresult}. The content of the following section is devoted entirely to this proof.
	\section{Proof of the CLT Theorem \ref{CLTresult}}\label{CLTSection}\label{section5} The main focus of this section is to prove Theorem~\ref{CLTresult} with the help of various auxiliary lemmas. Recall that by \eqref{eq:mainterm}, the key term to analyze is the rescaled fluctuation process $\displaystyle\int_0^t \frac{{X_s^{\e, n}-X_{\pi^n(s)}^{\e, \delta}}}{\e} ds.$  We first proceed by simplifying this expression, decomposing it into a sampling component and a component affected by white noise. To  this end, let $\M=\{\M_t: 0 \le t < \infty\}$ be the process defined by
	\begin{equation}\label{noiseint}
		\M_t := \int_0^t e^{-sA} dW_s =e^{-tA}W_t+\int_0^t e^{-sA}AW_s  ds.
	\end{equation}
	Note that $\M_t$ is a $\{\mathcal F_t\}$-martingale since it is a stochastic integral.
	\begin{lemma}\label{L1L2decomposition}
		For $\e,1/n \in (0,1)$, $t\in [0,T]$, we have 
		\begin{align}\label{FluctuationDecomposition}
			\frac{1}{\e}\int_0^t ({X_s^{\e, n}-X_{\pi^n(s)}^{\e, n}}) ds :=  \LL^{\e,n}_1(t)+ \LL^{\e,n}_2(t),
		\end{align}
		where
		\begin{equation*}
			\LL^{\e,n}_1(t) = \frac{1}{\e}\int_0^t \left[{e^{{(s-\pi^n(s))}A}-I}\right] \left[I-A^{-1}BK\right]X_{\pi^n(s)}^{\e,n} ds,
		\end{equation*}
		and\begin{equation*}
			~\LL^{\e,n}_2(t) = \int_0^t e^{sA}\left(\M_s-\M_{\pi^n(s)}\right)  ds.
		\end{equation*}		
	\end{lemma}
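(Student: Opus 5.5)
The plan is to verify the identity pathwise, one sampling interval at a time, using the explicit variation-of-constants representation of $X^{\e,n}$ given after \eqref{controlwithnoise}. Fix $s\in[0,t]$ and let $k=N^n_s$, so that $s\in[\tau^n_k,\tau^n_{k+1})$ and $\pi^n(s)=\tau^n_k$. On this interval the control term $BKX^{\e,n}_{\pi^n(s)}$ is frozen. The representation therefore gives
\begin{equation*}
X_s^{\e,n}=\Big[e^{(s-\pi^n(s))A}-\int_{\pi^n(s)}^s e^{(s-r)A}BK\,dr\Big]X^{\e,n}_{\pi^n(s)}+\e\int_{\pi^n(s)}^s e^{(s-r)A}\,dW_r .
\end{equation*}
Subtracting $X^{\e,n}_{\pi^n(s)}$ from both sides yields an expression for the increment $X_s^{\e,n}-X^{\e,n}_{\pi^n(s)}$, valid for every $s\in[0,t]$.

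I then treat the deterministic part and the noise part of this increment separately.

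\textbf{Deterministic part.} Since $A$ is invertible, $\int_{\pi^n(s)}^s e^{(s-r)A}\,dr=A^{-1}\big(e^{(s-\pi^n(s))A}-I\big)$. Because $A^{-1}$ commutes with $e^{rA}$, this equals $\big(e^{(s-\pi^n(s))A}-I\big)A^{-1}$. The deterministic part of the increment is therefore
\begin{equation*}
\big(e^{(s-\pi^n(s))A}-I\big)X^{\e,n}_{\pi^n(s)}-\big(e^{(s-\pi^n(s))A}-I\big)A^{-1}BK\,X^{\e,n}_{\pi^n(s)}=\big(e^{(s-\pi^n(s))A}-I\big)\big(I-A^{-1}BK\big)X^{\e,n}_{\pi^n(s)}.
\end{equation*}
Here I must be careful about the order of the factors. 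The commutation needed is only that of $A^{-1}$ with the exponential. The standing assumption that $A$ and $BK$ commute is available, but it is not needed for this step.

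\textbf{Noise part.} Since $e^{sA}$ is deterministic and does not depend on the integration variable $r$, it can be pulled out of the stochastic integral:
\begin{equation*}
\e\int_{\pi^n(s)}^s e^{(s-r)A}\,dW_r=\e\, e^{sA}\int_{\pi^n(s)}^s e^{-rA}\,dW_r=\e\, e^{sA}\big(\M_s-\M_{\pi^n(s)}\big),
\end{equation*}
by the definition \eqref{noiseint} of $\M$. The one subtle point is that the integration limit $\pi^n(s)$ is random. This is harmless, because $\{\xi_i\}$ is independent of $W$. One can argue conditionally on the sampling sequence, or simply note that $\M$ is a continuous process, so $\M_s-\M_{\pi^n(s)}$ is well defined pathwise and coincides with the stochastic integral over $[\pi^n(s),s]$ evaluated at that random time.

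\textbf{Assembly.} I divide the increment by $\e$ and integrate over $s\in[0,t]$. The deterministic part produces $\LL^{\e,n}_1(t)$ and the noise part produces $\LL^{\e,n}_2(t)$, which gives \eqref{FluctuationDecomposition}. The integrands are piecewise continuous in $s$ with finitely many jumps on $[0,t]$ almost surely, since $N^n_t<\infty$ almost surely. So the Lebesgue integrals make sense pathwise, and splitting the integral of the sum into $\LL^{\e,n}_1(t)+\LL^{\e,n}_2(t)$ is legitimate.

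I expect no real obstacle; the argument is bookkeeping. The step needing most care is justifying the identification of $\int_{\pi^n(s)}^s e^{-rA}\,dW_r$ with $\M_s-\M_{\pi^n(s)}$ when the lower limit is random. I would settle this using the independence of the sampling times from $W$ together with the continuity of $\M$.
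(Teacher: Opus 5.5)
Your proposal is correct and follows essentially the same route as the paper: you solve the frozen-control SDE on each sampling interval by variation of constants, use invertibility of $A$ to rewrite the deterministic part as $\bigl(e^{(s-\pi^n(s))A}-I\bigr)\bigl(I-A^{-1}BK\bigr)X^{\e,n}_{\pi^n(s)}$, identify the noise part as $\e\, e^{sA}(\M_s-\M_{\pi^n(s)})$, and integrate over $s$ using that only finitely many sampling intervals meet $[0,t]$ almost surely. Your extra justification of the random lower limit $\pi^n(s)$, via independence of $\{\xi_i\}$ from $W$ and continuity of $\M$, covers a point the paper leaves implicit.
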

	
	\begin{proof}
		Given that $A$ is invertible, we have for any $a<b$
		$$ \int_a^b e^{-sA} ds =-\left( e^{-bA}-e^{-aA}\right)A^{-1},$$
		which gives 
		\begin{align*}
			\int_{\tau_k}^te^{(t-s)A}BK ds&
			=-e^{tA}\left( e^{-tA}-e^{-\tau_k A}\right)A^{-1}BK= \big(e^{(t- \tau_k)A}-I\big)A^{-1}BK.
		\end{align*}
		Therefore,
		\begin{align*}
			e^{(t- \tau_k)A}- \int_{\tau_k}^te^{(t-s)A}BK ds-I=\big(e^{(t- \tau_k)A}-I\big)\big(I-A^{-1}BK\big).
		\end{align*}
		Let $t\in [\tau_k^n, \tau_{k+1}^n)$ for some $k \in \mathbb N$. In this interval, $X_{\pi^n(s)}^{\e,n}=X_{\tau_k^n}^{\e,n}$ is fixed.  From equation \eqref{LSDEwRS}, we have 
		\begin{align*}
			X_t^{\e,n}=X_{\tau_k^n}^{\e,n}e^{(t-{\tau_k^n})A}-\int_{{\tau_k^n}}^t e^{(t-s)A}BKX_{\tau_k^n}^{\e,n}ds+\e\int_{{\tau_k^n}}^te^{(t-s)A}dW_s.
		\end{align*}
		Subtracting $X_{\tau_k^n}^{\e,n},$ on both sides, we get
		\begin{align}\label{E1}
			X_t^{\e,n}-X_{\tau_k^n}^{\e,n} &=  \left[e^{(t-{\tau_k^n})A}-\int_{{\tau_k^n}}^t e^{(t-s)A}BKds -I\right]X_{\tau_k^n}^{\e,n}+\e e^{tA}\int_{{\tau_k^n}}^te^{-sA}dW_s \nonumber \\
			&=\left[\left(e^{(t-{\tau_k^n})A}-I\right)\left(I-A^{-1}BK\right)\right]X_{\tau_k^n}^{\e,n}+\e e^{tA}(\M_t-\M_{\tau_k^n}).
		\end{align}
		Now, for any $t\geq 0,$  we can write
		\begin{align*}
			X_t^{\e,n}-X_{\tau_k^n(t)}^{\e,n}=\sum_{k \geq 0} \ind_{[\tau_k^n,\tau_{k+1}^n)} (t) (X_t^{\e,n}-X_{\tau_k^n(t)}^{\e,n}),~~~~
		\end{align*}
		and so
		\begin{align*}
			\int_{0}^{t}X_s^{\e,n}-X^{\e,n}_{\pi^n(s)}ds&=\sum_{k\geq 0 }\int_0^t \ind_{[\tau_k^n,\tau_{k+1}^n)} (s) (X_s^{\e,n}-X_{\tau_k^n(s)}^{\e,n})ds.
		\end{align*}
		Since $N^n_T < \infty$ almost surely for all $n$ by \eqref{nnt} and Theorem~\ref{thm:renewal}, the sum above is finite a.s. Therefore it can be exchanged with the integral, leading to
		\begin{align*}
			\sum_{k\geq 0 }\int_0^t \ind_{[\tau_k^n,\tau_{k+1}^n)} (s)&\left[\left(e^{(t-{\tau_k^n})A}-I\right)\left(I-A^{-1}BK\right)\right]X_{\tau_k^n}^{\e,n}+\e e^{tA}(\M_t-\M_{\tau_k^n})ds\\
			& \hspace{-1cm}=\int_0^t\left[\left(e^{(t-{\tau_k^n})A}-I\right)\left(I-A^{-1}BK\right)\right]X_{\pi^n(s)}^{\e,n}+\e e^{tA}(\M_t-\M_{\pi^n(s)})ds,
		\end{align*}
		by applying \eqref{E1} and thus proof is complete.
	\end{proof}	
	Now, we provide two key Propositions on $\LL^{\e,n}_1(t)$ and $\LL^{\e,n}_2(t)$.  By combining the results obtained for $\LL_1^{\e,n}$ and $\LL_2^{\e,n}$, we will then be able to establish Theorem \ref{CLTresult}.
	
	\begin{proposition}\label{L1asymptotes}
		There exists  $\e_0$ such that for and $0<\e<\e_0$, we have 
		\begin{align*}
			\E\left[\sup_{0 \le t \le T} |\LL^{\e,n}_1(t)-\ell(t)|\right] 
			\leq 	 \left[ \cc(n^{-1/2}(n^{-1}+\e)+n^{-1/4}) + M \varkappa(\e)\right]C_{ABKT\xi_1}
		\end{align*}
		where the function $\ell(t)$ is given by Definition \eqref{ellDefn}.
	\end{proposition}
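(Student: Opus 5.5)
The plan is to Taylor-expand the matrix exponential, isolate a leading renewal-average term, and then replace the random state by the deterministic one.

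\textbf{Step 1 (Taylor expansion).} Write $e^{(s-\pi^n(s))A}-I=(s-\pi^n(s))A+R_s$, where $|R_s|\le C(s-\pi^n(s))^2e^{(s-\pi^n(s))|A|}$; this follows from Lemma~\ref{Expbd} together with the quadratic term. Since $A$ and $BK$ commute,
\[
A(I-A^{-1}BK)=A-BK,
\]
and hence
\[
\LL^{\e,n}_1(t)=\frac1\e\int_0^t (s-\pi^n(s))(A-BK)X^{\e,n}_{\pi^n(s)}\,ds+\frac1\e\int_0^t R_s(I-A^{-1}BK)X^{\e,n}_{\pi^n(s)}\,ds .
\]
For the remainder, use $\sup_s|X^{\e,n}_s|\le C(1+\e\sup|W|)$, which follows from Lemma~\ref{ODEbd} and Lemma~\ref{Assympwithnoise}. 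Then Cauchy--Schwarz and Lemma~\ref{Nptilde} with $p=2$ give a bound of order $\frac{1}{\e n^2}=\frac1n\cdot\frac1{n\e}\le (\cc+\varkappa(\e))\,n^{-1}$. This is admissible once $\varkappa(\e)<1$, which fixes $\e_0$.

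\textbf{Step 2 (replace the state by $x$).} In the leading term, write
\[
X^{\e,n}_{\pi^n(s)}=x(s)+\bigl(X^{\e,n}_{\pi^n(s)}-X^{\e,n}_s\bigr)+\bigl(X^{\e,n}_s-x(s)\bigr).
\]
The second piece satisfies $|X^{\e,n}_s-X^{\e,n}_{\pi^n(s)}|\le C(s-\pi^n(s))+\e|W_s-W_{\pi^n(s)}|$. Its contribution is therefore bounded by $\frac{1}{\e}\E[\NN_2]+\E\int(s-\pi^n(s))|W_s-W_{\pi^n(s)}|\,ds$. Conditioning on the renewal sequence, which is independent of $W$, the latter is at most $\E[\NN_{3/2}]\sim n^{-3/2}$. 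The third piece is handled by Theorem~\ref{LLN} with Cauchy--Schwarz: it is bounded by $\frac1\e\E[\NN_2]^{1/2}(\E[\NN_2]^{1/2}+\e)$, that is, of order $\frac{1}{n\e}(n^{-1}+\e)$ up to a factor $n^{-0}$. Refining through $\NN_1$ and $\NN_2$ (after multiplying by $1/(n\e)=\cc+O(\varkappa)$) produces the $\cc\,n^{-1/2}(n^{-1}+\e)$ term. All remaining terms are absorbed using $\frac1{n\e}\le\cc+\varkappa(\e)$.

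\textbf{Step 3 (renewal averaging; the main obstacle).} It remains to show
\[
\E\sup_{t\le T}\Bigl|\frac1\e\int_0^t (s-\pi^n(s))f(s)\,ds-\cc M\int_0^t f(s)\,ds\Bigr|\le C\bigl(\cc\,n^{-1/4}+M\varkappa(\e)\bigr),
\]
where $f=(A-BK)x$ is smooth and bounded by Lemma~\ref{ODEbd}. First split $\frac1\e=n\cdot\frac1{n\e}$. Replacing $\frac1{n\e}$ by $\cc$ costs $\varkappa(\e)$ times $\E\sup_t|n\int_0^t(s-\pi^n(s))f\,ds|$, which is bounded using Corollary~\ref{Np}; the constant is tied to $M$ via the elementary renewal theorem. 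The core of the proof is then
\[
\E\sup_t\Bigl|n\int_0^t (s-\pi^n(s))f(s)\,ds-M\int_0^t f(s)\,ds\Bigr|\lesssim n^{-1/4}.
\]
On each renewal interval, $\int_{\tau_k^n}^{\tau^n_{k+1}}(s-\tau_k^n)f(s)\,ds=\tfrac12(\xi^n_{k+1})^2f(\tau^n_k)+O((\xi^n_{k+1})^3)$. This reduces the quantity to
\[
\frac1{2n}\sum_{k<N^n_t}\xi_{k+1}^2f(\tau_k^n)\quad\text{versus}\quad M\int_0^tf .
\]
Next, center $\xi_{k+1}^2-\E\xi_1^2$. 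The centered sum, multiplied by the bounded predictable factor $f(\tau^n_k)$, is a martingale. Doob's inequality, Wald's identity (Lemma~\ref{Waldlemma}) with the stopping time of Lemma~\ref{stoppingtime}, and Lemma~\ref{unifint2} give order $n^{-1/2}$. The remaining piece is $\frac{\E\xi_1^2}{2n}\sum_{k<N^n_t}f(\tau^n_k)$, a Riemann sum over the random grid. Comparing it with $\frac{\E\xi_1^2}{2\E\xi_1}\int_0^tf$ requires controlling $\sup_{t}|\tau^n_{k}-k\E\xi_1/n|$ along $k\le N^n_T$. For this I would use Lemma~\ref{unifint1}(a) (the sixth-moment BDG bound) with Lemma~\ref{unifint2}, together with the boundary term through Proposition~\ref{suptosum}. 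This yields an $n^{-1/2}$ fluctuation, which after a square-root loss in Hölder becomes $n^{-1/4}$. I expect this uniform-in-$t$ comparison of random Riemann sums, with the correct constant $M$, to be the delicate step. Summing Steps 1 to 3 gives the claimed bound.
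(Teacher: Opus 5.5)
Your proposal is correct. Steps~1--2 are essentially the paper's treatment of $G_1,G_2,G_3,G_5$, rearranged: you Taylor-expand against $X^{\e,n}$ using the pathwise bound $\sup_s|X^{\e,n}_s|\le C(1+\e\sup_s|W_s|)$ and the independence of $W$ from the $\xi_i$, and you pass through $X^{\e,n}_s$ instead of $x_{\pi^n(s)}$.

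The real difference is Step~3, which corresponds to the paper's Lemma~\ref{G4decomposition}. The paper keeps the random weights $\xi_{k+1}^2$ in place while it does two replacements. It first trades the random range $k<N^n_t$ for $k<\lceil nt/\E[\xi_1]\rceil$ (Lemma~\ref{Remainder3}). It then trades $f(\tau^n_k)$ for $f(k\E[\xi_1]/n)$ (Lemma~\ref{Remainder4}). Only after that does it center $\xi_{k+1}^2$, over a deterministic range (Lemma~\ref{Remainder5}). Keeping the weights forces it to extract $\sup_k\xi_{k+1}^4$ via Proposition~\ref{suptosum}, which is exactly where the $n^{-1/4}$ comes from. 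It also obtains the fluctuation moments through Donsker's theorems, uniform integrability and Lemma~\ref{lem:usc}.

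You center first, along the random range. This works because $f(\tau^n_k)$ is $\sigma(\xi_1,\dots,\xi_k)$-measurable and $N^n_T+1$ is a stopping time, so Doob plus Wald already absorbs the random upper limit. What remains has deterministic weights $\E[\xi_1^2]/(2n)$, and the grid comparison needs only the moment bounds of Lemmas~\ref{unifint1}(a) and~\ref{unifint2}, with no weak convergence at all.

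The paper's ordering buys a martingale over a deterministic index set, so no optional-stopping argument is needed. Yours is shorter and actually sharper: every piece is $O(n^{-1/2})$. The ``square-root loss in H\"older'' you anticipate does not occur, because no random weight is left to pull out. When comparing $f(\tau^n_k)$ with $f(k\E[\xi_1]/n)$ for $k\le N^n_T$, use $f(\cdot\wedge T)$ so that you never need growth bounds on $f'$ beyond $T$.

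Your split $\frac1\e=n\cdot\frac1{n\e}$ is also more careful than the paper's. It charges $\varkappa(\e)$ to $n\int_0^t(s-\pi^n(s))f\,ds$ and puts $\cc$ in front of the averaging error. The paper's $G_4$ carries the prefactor $\frac1{\e n}=\cc+O(\varkappa(\e))$ yet is bounded by $\cc\,n^{-1/4}$, which is not literally valid when $\cc=0$.

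One sentence in Step~2 should be deleted. No refinement through $\NN_1,\NN_2$ turns $\frac1\e\E[\NN_2]^{1/2}(\E[\NN_2]^{1/2}+\e)\asymp\frac1{n\e}(n^{-1}+\e)$ into $\cc\,n^{-1/2}(n^{-1}+\e)$. That piece is genuinely of order $n^{-1}$ in Regime~2: since $X^{\e,n}_s-x_s\approx\e Z_s$, it behaves like $\int_0^t(s-\pi^n(s))(A-BK)Z_s\,ds\approx\frac Mn\int_0^t(A-BK)Z_s\,ds$.

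The paper's extra $n^{-1/2}$ in Lemma~\ref{G1Lemma} comes from writing $\tilde{\NN}_3$. Its Cauchy--Schwarz step actually produces $\int_0^T(\xi^n_{N^n_s+1})^2e^{2\xi^n_{N^n_s+1}|A|}\,ds$, which is a $\tilde{\NN}_2$ of order $n^{-2}$.

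You do not need the factor. In Regime~1, $\frac1{n\e}(n^{-1}+\e)\le 2\varkappa(\e)$. In Regime~2, once $\varkappa(\e)<\cc/2$ you have $\e<2/(\cc n)$. The term is then $O(n^{-1})$ and is absorbed into $\cc\,n^{-1/4}$ after shrinking $\e_0$.
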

	
	\begin{proposition}
		\label{noisebd}
		For  $0<\e<\e_0$, the term  $\LL^{\e,n}_2$ decays with rate $n^{-1/2}$, i.e., we have 
		\begin{align*}
			\E\left[\sup_{0 \le t \le T} |\LL^{\e,n}_2(t)|\right] \leq \frac{1}{\sqrt{n}}C_{AT\xi_1}.
		\end{align*} 	    
	\end{proposition}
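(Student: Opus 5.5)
We need to bound $\E\sup_t|\LL_2^{\e,n}(t)|$ where $\LL_2^{\e,n}(t)=\int_0^t e^{sA}(\M_s-\M_{\pi^n(s)})\,ds$. The plan is to exploit two facts: the increment $\M_s-\M_{\pi^n(s)}$ lives on a single sampling interval of length at most $\xi^n_{N^n_s+1}$, and $W$ is independent of $\{\xi_i\}$.

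\textbf{Reduction.} First bound the supremum by the integral of the absolute value:
\begin{equation*}
\sup_{0\le t\le T}|\LL^{\e,n}_2(t)|\le e^{T|A|}\int_0^T|\M_s-\M_{\pi^n(s)}|\,ds .
\end{equation*}
It therefore suffices to show $\E\int_0^T|\M_s-\M_{\pi^n(s)}|\,ds\le C_{AT\xi_1}n^{-1/2}$.

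\textbf{Conditioning on the sampling times.} Condition on $\mathcal G:=\sigma(\xi_i:i\ge1)$. Since $W$ is independent of $\mathcal G$, for each fixed $s$ the point $\pi^n(s)$ is $\mathcal G$-measurable. Given $\mathcal G$, the increment $\M_s-\M_{\pi^n(s)}=\int_{\pi^n(s)}^s e^{-rA}dW_r$ is a centered Gaussian vector with covariance $\int_{\pi^n(s)}^s e^{-rA}e^{-rA^\top}dr$. Hence, by Jensen and the It\^o isometry (equivalently Lemma~\ref{BDGnoisebd} applied conditionally),
\begin{equation*}
\E\big[|\M_s-\M_{\pi^n(s)}|\,\big|\,\mathcal G\big]\le \Big(\int_{\pi^n(s)}^s|e^{-rA}|^2dr\Big)^{1/2}\le e^{T|A|}\sqrt d\,(s-\pi^n(s))^{1/2}.
\end{equation*}
To make this rigorous, use Fubini on $\Omega\times[0,T]$ together with joint measurability of $(s,\omega)\mapsto \M_s-\M_{\pi^n(s)}$; this holds because $\pi^n$ is piecewise constant with finitely many jumps a.s.

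\textbf{Integrating out the sampling.} Taking expectations and using Corollary~\ref{Np} with $p=1/2$ gives
\begin{equation*}
\E\int_0^T|\M_s-\M_{\pi^n(s)}|\,ds\le C_{AT}\,\E[\NN_{1/2}]\le C_{AT}\,n^{-1/2}\,\frac{\E[N^n_T]+2}{n}\,\E[\xi_1^{3/2}].
\end{equation*}
If one prefers to avoid the fractional-power constant in Corollary~\ref{Np}, the same bound follows from Lemma~\ref{Nptilde} with $p=1/2$ and $\tilde C=0$, which gives the factor $\E[\xi_1^{3/2}]$ directly.

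\textbf{Main obstacle: uniformity in $n$.} The remaining step is to show that $\frac{\E[N^n_T]+2}{n}$ is bounded uniformly in $n$. By \eqref{nnt}, $N^n_T=N^1_{nT}$, so this follows from Lemma~\ref{unifint2} with $p=2$ and Jensen: $\E[N^1_{nT}]\le C(nT)$. (The elementary renewal theorem would give boundedness only for large $n$.) This yields $C_{AT\xi_1}n^{-1/2}$.

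The delicate part is the conditioning argument: one must justify treating the random endpoint $\pi^n(s)$ as deterministic inside the stochastic integral. This is exactly where the independence of $W$ and $\{\xi_i\}$ is used. Note also that the restriction $\e<\e_0$ plays no role here.
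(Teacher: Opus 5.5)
Your proof is correct. It ends where the paper's does: a bound by $C_{AT}\,\E[\NN_{1/2}]$, followed by the Wald-type estimate of Lemma~\ref{Nptilde}/Corollary~\ref{Np}. You reach that bound by a different route.

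The paper uses the integration-by-parts form \eqref{noiseint} to write $e^{tA}(\M_t-\M_{\pi^n(t)})$ pathwise in terms of $W$ and splits it into three pieces. Two of these are bounded by terms like $\xi^n_{N^n_t+1}e^{\xi^n_{N^n_t+1}|A|}\sup_{s\le t}|W_s|$, which contribute only $O(1/n)$. The third, $e^{\xi^n_{N^n_t+1}|A|}\,|W_t-W_{\pi^n(t)}|$, is the $n^{-1/2}$ term, and the paper splits its expectation into a product by invoking independence of $W$ and $\{\xi_i\}$. You skip the decomposition and apply the It\^o isometry to the Wiener integral $\int_{\pi^n(s)}^s e^{-rA}\,dW_r$, conditionally on $\sigma(\xi_i:i\ge1)$.

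Your route has three advantages. It is shorter. It needs only $\E[\xi_1^{3/2}]<\infty$ rather than exponential moments, since $|e^{-rA}|\le e^{T|A|}$ on $[0,T]$. Most importantly, it supplies the justification the paper's factorization actually requires: $e^{\xi^n_{N^n_s+1}|A|}$ and $|W_s-W_{\pi^n(s)}|$ are not independent, because $\pi^n(s)$ depends on the $\xi_i$. That step is valid only after conditioning on the sampling times, exactly as you do. In return, the paper's route gives a pathwise bound in terms of functionals of $W$, and shows explicitly that the Brownian increment over the current sampling interval is the only $n^{-1/2}$ contribution.

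Two minor points.
\begin{itemize}
\item If $|\cdot|$ is the operator norm, the factor $\sqrt d$ already belongs in your first conditional inequality; alternatively, use the Frobenius norm there.
\item Lemma~\ref{unifint2} as stated cannot hold for small $t$: $\E[(N^1_t)^2]\ge\PP(\xi_1\le t)$, which is of order $t$ for exponential $\xi_1$. What you actually need is a uniform bound on $(\E[N^n_T]+2)/n$. This follows from $\E[N^1_t]\le C(1+t)$, or from the elementary renewal theorem together with $\E[N^1_t]<\infty$ for each fixed $t$. So your conclusion is unaffected, and your remark that the renewal theorem alone gives boundedness only for large $n$ is too pessimistic.
\end{itemize}
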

	
	\begin{proof}[Proof of Theorem \ref{CLTresult}]
		The Theorem can be proved by combining the Propositions \ref{L1asymptotes} and \ref{noisebd}.
	\end{proof}

	In the following subsections \ref{subsection5.1} and \ref{subsection5.2}, we will focus on establishing Propositions \ref{L1asymptotes} and \ref{noisebd} respectively. We begin in the following subsection with the term $\LL_1^{\e,n}$, which represents the random sampling part and leads to Proposition \ref{L1asymptotes}. Since its limiting behavior cannot be derived in a straightforward manner, we decompose $\LL_1^{\e,n}$ into several auxiliary terms. We then demonstrate that this sequence converges to the function $\ell$. In the subsequent subsection, we turn to $\LL_2^{\e,n}$ and establish that it vanishes in the limit and so establish Proposition \ref{noisebd}. Accordingly, we  now proceed with a detailed analysis of the random sampling component.
	
	\subsection{The Decomposition of Random Sampling Part: Proof of Proposition \ref{L1asymptotes}}\label{subsection5.1}
	
	We decompose $\LL^{\e,n}_1$  in the following way:
	\begin{align}
		\LL^{\e,n}_1=&\frac{1}{\e n} \int_0^t \left(\frac{e^{s - \pi^n(s) A} - I}{1/n}\right) (I-A^{-1}BK) X^{\e,n}_{\pi^n(s)} ds\nonumber \\\label{G1}
		=& \frac{1}{\e n}\int_0^t \left(\frac{e^{s - \pi^n(s) A} - I}{1/n}\right) (I-A^{-1}BK) \left(X^{\e,n}_{\pi^n(s)}  - x_{\pi^n(s)}\right)ds \\\label{G2}
		+& \frac{1}{\e n}\int_0^t \left(\frac{e^{s - \pi^n(s) A} - I - (s - \pi^n(s)) A}{1/n}\right) (I-A^{-1}BK)x_{\pi^n(s)}ds \\\label{G3}
		+& \frac{1}{\e n}\int_0^t \left(\frac{(s-\pi^n(s))A}{1/n}(I-A^{-1}BK)(x_{\pi^n(s)} - x(s)) \right) ds   \\\label{G4}
		+& \frac{1}{\e n}\int_0^t \left(\frac{(s-\pi^n(s))A}{1/n} - MA\right)(I-A^{-1}BK) x(s)ds \\\label{G5}
		+& \left(\frac{1}{\e n} - \cc\right) \int_0^t M(A-BK) x(s)ds \\\label{limit}
		+& \cc\int_0^t M (A-BK)x(s)ds\nonumber\\
		=:&\sum_{i=1}^{5}G_i+ \cc\int_0^t M (A-BK)x(s)ds=\sum_{i=1}^{5}G_i+\ell(t). \nonumber 
	\end{align}
	That is,
	\begin{equation}\label{L1limit}
		\LL^{\e,n}_1(t)-\ell(t)= \sum_{i=1}^{5}G_i,  
	\end{equation}
	where the functions $G_i, i=1,2,\ldots, 5$ are defined by expressions \eqref{G1}--\eqref{G5}, respectively. We now proceed to evaluate each of the quantities 
	$G_i$ individually and, through a careful and detailed analysis, derive the following results.
	\begin{lemma}\label{G1Lemma} The term $G_1$ decays at the rate $n^{-\frac 12}(n^{-1}+\epsilon)$, i.e.,
		\begin{align*}
			\E\left[\sup_{0\leq t\leq T} \left|G_1(t)\right|\right]\leq\frac 1{\e} 
			C_{ABKT} \left(\sqrt{\E[\NN_2] + \e^2 }\right) \sqrt{\E\tilde{ \NN}_3}\leq \cc~ n^{-1/2}(n^{-1}+\e)C_{ABKT\xi_1}.
		\end{align*}
	\end{lemma}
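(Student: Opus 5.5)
The plan is to bound $G_1$ pathwise by a product of a ``sampling'' factor and a ``noise'' factor, and then split the two with Cauchy--Schwarz on $\Omega\times[0,T]$. Write $r_s:=s-\pi^n(s)$. By definition \eqref{G1},
\[
G_1(t)=\frac1\e\int_0^t\big(e^{r_sA}-I\big)(I-A^{-1}BK)\big(X^{\e,n}_{\pi^n(s)}-x_{\pi^n(s)}\big)\,ds .
\]
From the power series, as in the proof of Lemma~\ref{Expbd}, we have $|e^{rA}-I|\le r|A|e^{r|A|}$. Since $0\le r_s\le \xi^n_{N^n_s+1}$ (the observation after Corollary~\ref{Np}), the integrand is dominated uniformly in $t$. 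This gives
\[
\sup_{0\le t\le T}|G_1(t)|\le \frac{C_{ABK}}{\e}\int_0^T \xi^n_{N^n_s+1}e^{|A|\xi^n_{N^n_s+1}}\,\big|X^{\e,n}_{\pi^n(s)}-x_{\pi^n(s)}\big|\,ds .
\]

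Next I will take expectations and apply Cauchy--Schwarz with respect to $d\PP\otimes ds$. The noise factor is $\E\int_0^T|X^{\e,n}_{\pi^n(s)}-x_{\pi^n(s)}|^2ds$. It is at most $T\,\E[\sup_{s\le T}|X^{\e,n}_s-x_s|^2]$, and Theorem~\ref{LLN} with $p=2$ bounds this by $C_{ABKT}(\E[\NN_2]+\e^2)$. This produces the factor $\sqrt{\E[\NN_2]+\e^2}$.

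The sampling factor is $\E\int_0^T(\xi^n_{N^n_s+1})^2e^{2|A|\xi^n_{N^n_s+1}}ds$. I will treat it exactly as in the proof of Lemma~\ref{Nptilde}. Decompose $[0,T]$ into the renewal intervals $[\tau^n_k,\tau^n_{k+1})$. On each interval the integrand is constant, so integrating over the interval contributes one extra power of $\xi^n_{k+1}$. This yields the stopped sum $\sum_{k\le N^n_T+1}(\xi^n_{k+1})^3e^{2|A|\xi^n_{k+1}}$. Lemma~\ref{stoppingtime} and Wald's identity (Lemma~\ref{Waldlemma}) then apply, and the finiteness of the moment generating function of $\xi_1$ keeps $\E[\xi_1^3e^{C\xi_1/n}]$ bounded uniformly in $n$. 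This cubic quantity is what I record as $\E\tilde\NN_3$ in the statement, and it gives the middle inequality $\E\sup_t|G_1|\le \e^{-1}C_{ABKT}\sqrt{\E[\NN_2]+\e^2}\,\sqrt{\E\tilde\NN_3}$.

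For the final rate I will insert the moment bounds. Corollary~\ref{Np} gives $\E[\NN_2]\le Cn^{-2}$, so $\sqrt{\E[\NN_2]+\e^2}\le C(n^{-1}+\e)$. For the sampling factor, Lemma~\ref{Nptilde} gives $\E\tilde\NN_3\le n^{-3}\frac{\E[N^n_T]+2}{n}C_{\xi_1}$. Since $\E[N^n_T]=\E[N^1_{nT}]\le CnT$ by \eqref{nnt} and Lemma~\ref{unifint2} (or Theorem~\ref{thm:renewal}), this is $\le C_{T\xi_1}n^{-3}$, so $\sqrt{\E\tilde\NN_3}\le C n^{-3/2}$. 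Dividing by $\e$ gives
\[
\frac{n^{-3/2}(n^{-1}+\e)}{\e}=\frac1{n\e}\,n^{-1/2}(n^{-1}+\e).
\]
Finally, $\frac1{n\e}\le \cc+\varkappa(\e)$, and for $\e<\e_0$ we have $\varkappa(\e)<1$. In Regime 2 this is comparable to $\cc$, and the constant absorbs the rest, which yields the stated rate $\cc\,n^{-1/2}(n^{-1}+\e)C_{ABKT\xi_1}$.

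The step I expect to be the main obstacle is the exponent bookkeeping in the sampling factor. A naive Cauchy--Schwarz pairs $r_s^2$ with $ds$, which by itself only gives $\E\tilde\NN_2\sim n^{-2}$ and loses the extra $n^{-1/2}$. The decay must come from charging each renewal interval its length $\xi^n_{k+1}$, so that the cubic moment appears with the $n^{-3}$ scaling. I will check carefully that the interval-by-interval summation and the Wald step produce $n^{-3}$ rather than $n^{-2}$. If they do not, the fallback is to keep $\sup_s|X^{\e,n}_s-x_s|$ outside the integral and apply Hölder with exponents $(4/3,4)$ instead of $(2,2)$. That still yields a power gain over $n^{-1}$, using the higher moments from Theorem~\ref{LLN} and Lemma~\ref{Nptilde}.
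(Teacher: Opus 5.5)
\textbf{The gap.} Your outline follows the paper's proof step for step: the bound $|e^{rA}-I|\le r|A|e^{r|A|}$, then Cauchy--Schwarz to separate the LLN factor of Theorem~\ref{LLN} from a sampling factor, then Lemma~\ref{Nptilde} for that factor. The problem is the step meant to produce the extra $n^{-1/2}$, which fails.

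The sampling factor you actually obtain is $\E\int_0^T(\xi^n_{N^n_s+1})^2e^{2|A|\xi^n_{N^n_s+1}}\,ds=\E\tilde\NN_2$, with $\tilde C=2|A|$. Charging each renewal interval its length is exactly how Lemma~\ref{Nptilde} dominates $\tilde\NN_2$ by the stopped sum of cubes. Wald then gives
\[
\E\sum_{k=0}^{N^n_T+1}(\xi^n_{k+1})^3e^{2|A|\xi^n_{k+1}}=\E[N^n_T+2]\,n^{-3}\,\E\big[\xi_1^3e^{2|A|\xi_1/n}\big]\approx\frac{T\,\E[\xi_1^3]}{\E[\xi_1]}\,n^{-2},
\]
because there are about $nT/\E[\xi_1]$ intervals. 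The $n^{-3}$ you quote is Lemma~\ref{Nptilde} with $p=3$, i.e.\ the bound for $\tilde\NN_3$, which is dominated by a sum of fourth powers. Relabelling the sum of cubes as $\tilde\NN_3$ counts the interval length twice.

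Done correctly, your argument gives $\E\sup_t|G_1(t)|\le\frac{C}{\e}(n^{-1}+\e)\,n^{-1}=\frac{C}{n\e}(n^{-1}+\e)$, and nothing better. The paper's own proof makes the identical slip: its Cauchy--Schwarz step produces $\int_0^T(\xi^n_{N^n_s+1})^2e^{2\xi^n_{N^n_s+1}|A|}\,ds=\tilde\NN_2$, which it then writes as $\tilde\NN_3$.

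\textbf{The stated rate is not the true size of $G_1$.} Neither your H\"older fallback nor any other estimate can recover it. Write $r_s=s-\pi^n(s)$. Then $e^{r_sA}-I\approx r_sA$, $A(I-A^{-1}BK)=A-BK$, and $n r_s$ averages to $M$ (the renewal-reward mechanism behind Lemma~\ref{G4decomposition}). So to leading order
\[
G_1(t)\approx\frac{M}{n\e}\int_0^t(A-BK)z_s\,ds .
\]
Its noise part is about $\frac Mn\int_0^t(A-BK)U_s\,ds$, where $U$ is the Ornstein--Uhlenbeck limit of $(X^{\e,n}-x^n)/\e$. This part is genuinely of order $n^{-1}$ when $A\neq BK$. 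The claimed bound, reading $\cc$ as $\frac1{n\e}$, is of order $n^{-3/2}$ in Regimes 1--2.

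What does hold is the bound $\frac{C}{n\e}(n^{-1}+\e)$. It still suffices for $G_1\to0$ in Theorem~\ref{CLTresult}, but with the rate $\frac{1}{n\e}(n^{-1}+\e)$ rather than the stated one. Finally, your last step, absorbing $\frac1{n\e}$ into $\cc$, is invalid in Regime 1, where $\cc=0$. The prefactor has to stay as $\frac1{n\e}\le\cc+\varkappa(\e)$.
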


	\begin{lemma}\label{G2estimates}
		The term $G_2(t)$ decays at the rate $n^{-1}$, i.e.,
		\begin{align*}
			\E\left[\sup_{0 \leq t \leq T} \left|G_2(t)\right|\right]\leq \frac n\e C_{ABKT}\E[\tilde{\NN}_3] \leq \cc n^{-1}C_{ABKT\xi_1}.
		\end{align*}
	\end{lemma}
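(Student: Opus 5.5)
The estimate for $G_2$ is a second-order Taylor bound, integrated against the renewal structure. First I write $G_2$ without the artificial $1/n$ normalisations:
\begin{equation*}
G_2(t)=\frac{1}{\e}\int_0^t \Big(e^{(s-\pi^n(s))A}-I-(s-\pi^n(s))A\Big)(I-A^{-1}BK)\,x_{\pi^n(s)}\,ds .
\end{equation*}
This integrand is deterministic given the sampling times and involves no noise. So the supremum over $t\in[0,T]$ is bounded by the integral of the absolute value over $[0,T]$. That removes the supremum at no cost.

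Next I bound the matrix factor. Write $r=s-\pi^n(s)$. Then
\begin{equation*}
e^{rA}-I-rA=\frac{r^2A^2}{2}+\sum_{k\ge3}\frac{r^kA^k}{k!},
\end{equation*}
and Lemma~\ref{Expbd} controls the tail. Together this gives $|e^{rA}-I-rA|\le C_A\, r^2 e^{r|A|}$. One could also use the cruder bound $|e^{rA}-I-rA|\le C_A r^2(1+r)e^{r|A|}$, which yields $r^3$ terms after using $r\le 1+r$ suitably. For the paper's formulation with $\tilde{\NN}_3$, I will keep the bound in the form $C_A(r^2+r^3)e^{r|A|}$. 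Since $r\le \xi^n_{N^n_s+1}$, this is at most $C(\xi^n_{N^n_s+1})^{2}(1+\xi^n_{N^n_s+1})e^{\xi^n_{N^n_s+1}|A|}$. The remaining factors are handled by Lemma~\ref{ODEbd}: $|x_{\pi^n(s)}|\le C_{ABKT}$, and $|I-A^{-1}BK|$ is a constant. Hence
\begin{equation*}
\sup_{t\le T}|G_2(t)|\le \frac{C_{ABKT}}{\e}\big(\tilde{\NN}_2+\tilde{\NN}_3\big),
\end{equation*}
with $\tilde C=|A|$ in Lemma~\ref{Nptilde}.

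Then I take expectations and apply Lemma~\ref{Nptilde}:
\begin{equation*}
\E[\tilde{\NN}_p]\le n^{-p}\,\frac{\E[N^n_T]+2}{n}\,\E[\xi_1^{p+1}e^{\xi^n_1|A|}].
\end{equation*}
I control each factor separately.
\begin{itemize}
\item By \eqref{nnt} and the elementary renewal theorem (Theorem~\ref{thm:renewal}), or directly by Lemma~\ref{unifint2}, $(\E[N^n_T]+2)/n=(\E[N^1_{nT}]+2)/n\le C_{T\xi_1}$ uniformly in $n$.
\item $\E[\xi_1^{p+1}e^{\xi_1|A|/n}]\le \E[\xi_1^{p+1}e^{\xi_1|A|}]<\infty$ by the finite moment generating function assumption.
\end{itemize}
Hence $\E[\tilde{\NN}_2]\le Cn^{-2}$ and $\E[\tilde{\NN}_3]\le Cn^{-3}$, so $\E\sup|G_2|\le C\e^{-1}n^{-2}$. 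This matches the displayed intermediate bound: since $\tilde{\NN}_3$ scales as $n^{-3}$, the factor $\frac{n}{\e}\E[\tilde{\NN}_3]$ is also of order $\e^{-1}n^{-2}$.

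Finally I convert to the regime rate. Writing $\frac{1}{\e n^2}=\frac{1}{n\e}\cdot\frac1n$ and using $\frac{1}{n\e}\le \cc+\varkappa(\e)\le \cc+1$ for $\e<\e_0$, I get a bound $C(\cc+1)n^{-1}$. This gives the stated $\cc\, n^{-1}C_{ABKT\xi_1}$, up to absorbing the $+1$, which is needed when $\cc=0$, into the constant.

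The main obstacle is getting a moment bound that is uniform in $n$ despite the random, possibly large, gap $\xi^n_{N^n_s+1}$ inside the exponential. This is exactly what Lemma~\ref{Nptilde} and the MGF assumption handle. The only subtle point is the inspection paradox: the interval covering $s$ is size-biased. Lemma~\ref{Nptilde} deals with it by converting the time integral into a stopped sum $\sum_{k\le N^n_T+1}(\xi^n_{k+1})^{p+1}e^{\tilde C\xi^n_{k+1}}$, to which Wald's identity applies, because $N^n_T+1$ is a stopping time by Lemma~\ref{stoppingtime}.
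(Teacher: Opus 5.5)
Your proof is correct and is essentially the paper's argument. Both bound the Taylor remainder $e^{rA}-I-rA$, control $x_{\pi^n(s)}$ by Lemma~\ref{ODEbd}, and reduce $\E\sup_{t\le T}|G_2(t)|$ to $\e^{-1}$ times the Wald sum $\E\sum_{k\le N^n_T+1}(\xi^n_{k+1})^3e^{\xi^n_{k+1}|A|}=O(n^{-2})$. The paper integrates the remainder exactly over each renewal interval using $A^{-1}$ and Lemma~\ref{Expbd}, whereas you bound it pointwise in $s$ and let Lemma~\ref{Nptilde} do the interval decomposition.

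Your bookkeeping is if anything the more accurate one. The paper's intermediate quantity $\frac n\e\E[\tilde{\NN}_3]$ agrees with your $\e^{-1}(\E[\tilde{\NN}_2]+\E[\tilde{\NN}_3])$ only in order, so matching it in order is all the paper's own chain supports.

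One correction: the $+1$ cannot be absorbed into a constant that multiplies $\cc$. What you prove, and what is true, is $\E\sup_{t\le T}|G_2(t)|\le C(\cc+\varkappa(\e))n^{-1}\le C(1+\cc)n^{-1}$. The literal bound $\cc\,n^{-1}C_{ABKT\xi_1}$ in the statement fails when $\cc=0$.
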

	
	\begin{lemma}\label{G3Lemma} The term $G_3$ decays at the rate $n^{-1}$, i.e.,
		\begin{align*}
			\E [\sup_{0\leq t\leq T}|G_3(t)|]\leq \frac{C_{ABKT}}{\e} \E[{\NN}_2]\leq \cc n^{-1}C_{ABKT\xi_1}.
		\end{align*}
	\end{lemma}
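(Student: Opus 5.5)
The plan is to bound $G_3$ pointwise in $t$ by a quantity that does not depend on $t$. Then take the supremum, then the expectation, and finally use Corollary~\ref{Np} together with the regime assumption.

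\emph{Pointwise bound.} After cancelling the factors $\frac{1}{\e n}$ and $\frac{1}{1/n}$, we have
\begin{equation*}
G_3(t)=\frac1\e\int_0^t (s-\pi^n(s))A(I-A^{-1}BK)\big(x_{\pi^n(s)}-x(s)\big)\,ds .
\end{equation*}
By \eqref{contrlUsoln},
\begin{equation*}
x(s)-x_{\pi^n(s)}=\int_{\pi^n(s)}^s (A-BK)x_r\,dr .
\end{equation*}
Lemma~\ref{ODEbd} gives $\sup_{r\le T}|x_r|\le C_{ABKT}$, so
\begin{equation*}
|x(s)-x_{\pi^n(s)}|\le C_{ABKT}\,(s-\pi^n(s)).
\end{equation*}
The matrix factor satisfies $|A(I-A^{-1}BK)|=|A-BK|$, which is a constant. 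Putting these together, for every $t\in[0,T]$,
\begin{equation*}
|G_3(t)|\le \frac{C_{ABKT}}{\e}\int_0^T (s-\pi^n(s))^2\,ds=\frac{C_{ABKT}}{\e}\,\NN_2 .
\end{equation*}
The right-hand side is independent of $t$. Taking the supremum over $t\in[0,T]$ and then expectations gives the first inequality of the lemma. No stochastic analysis is needed here, because the bound is pathwise.

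\emph{Rate.} Corollary~\ref{Np} with $p=2$ gives
\begin{equation*}
\E[\NN_2]\le \frac{1}{n^2}\,\frac{\E[N^n_T]+2}{n}\,C_{\xi_1}.
\end{equation*}
By \eqref{nnt}, $N^n_T=N^1_{nT}$. Lemma~\ref{unifint2} (or Theorem~\ref{thm:renewal}) gives $\E[N^1_{nT}]\le C_{\xi_1}nT$, so $\frac{\E[N^n_T]+2}{n}\le C_{T\xi_1}$ uniformly in $n\ge1$. Hence $\E[\NN_2]\le C_{T\xi_1}n^{-2}$, and
\begin{equation*}
\frac{1}{\e}\E[\NN_2]\le \frac{1}{n\e}\,n^{-1}C_{T\xi_1}.
\end{equation*}

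\emph{Main obstacle.} The only delicate point is replacing $\frac{1}{n\e}$ by $\cc$. Since $\varkappa(\e)<1$ for $\e<\e_0$, we have $\frac{1}{n\e}\le \cc+1$. This yields a bound of the form $(\cc+1)n^{-1}C$. When $\cc>0$ this is absorbed into $\cc\, n^{-1}C_{ABKT\xi_1}$ after enlarging the constant. When $\cc=0$ (Regime 1), the literal factor $\cc$ in the statement should be read as $\frac{1}{n\e}\le \cc+\varkappa(\e)$. In that case I would record the estimate as $(\cc+\varkappa(\e))n^{-1}C$, which still vanishes and is dominated by the other terms in Proposition~\ref{L1asymptotes}.
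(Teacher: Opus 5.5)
Your proof is correct and follows a different, more elementary route than the paper's. The two arguments diverge at one step, and your version avoids a weak point in the paper's.

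\textbf{How the routes differ.} The paper keeps $|x_{\pi^n(s)}-x(s)|$ as a separate factor. It pulls out $\sup_t|x_{\pi^n(t)}-x(t)|$, applies Cauchy--Schwarz in $\omega$, and then bounds $\E\sup_t|x_{\pi^n(t)}-x(t)|^2$ by $C\,\E[\NN_2]$, citing Lemma~\ref{Assympwithoutnoise}. That lemma, however, controls $x^n-x$, not $x_{\pi^n(\cdot)}-x(\cdot)$. Moreover, a supremum over $t$ of the gap $t-\pi^n(t)$ is governed by the largest inter-sampling time. Its second moment is not $O(\E[\NN_2])$ when $\xi_1$ is unbounded: for exponential gaps it is of order $(\log n)^2n^{-2}$, while $\E[\NN_2]\asymp n^{-2}$.

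\textbf{What your route buys.} You use the pathwise Lipschitz bound $|x(s)-x_{\pi^n(s)}|\le C_{ABKT}(s-\pi^n(s))$ inside the integral. This puts the second factor of the gap under the $ds$-integral and gives $\sup_t|G_3(t)|\le \frac{C}{\e}\NN_2$ pathwise. No supremum moment and no Cauchy--Schwarz are needed. So your argument actually establishes the first inequality as stated, whereas the paper's chain, as written, does not quite.

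\textbf{The second inequality.} Your remark is right. What is proved is
$\frac{C}{\e}\E[\NN_2]\le\frac{1}{n\e}\,n^{-1}C_{ABKT\xi_1}\le(\cc+\varkappa(\e))\,n^{-1}C_{ABKT\xi_1}$.
The paper replaces $\frac1{n\e}$ by $\cc$ without comment. That is literally false in Regime~1, where the bound would read $0$ but $G_3\not\equiv0$ in general. For $\cc>0$ you can avoid a $\cc$-dependent constant by shrinking $\e_0$ so that $\varkappa(\e)\le\cc$. This gives the factor $2\cc$ with a constant independent of $\cc$.
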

	\begin{lemma}\label{G4Lemma}
		The term $G_4$ decays at the rate $n^{-\frac 14}$, i.e.,
		\begin{align*}
			\E \left[\sup_{0\leq t\leq T}\left|G_4(t)\right|\right]\leq \cc  n^{-1/4}C_{ABKT\xi_1 }.
		\end{align*} 
	\end{lemma}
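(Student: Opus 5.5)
The key simplification is that $A(I-A^{-1}BK)=A-BK$. So, writing $f(s):=(A-BK)x(s)$,
\[
G_4(t)=\frac{1}{\e n}\int_0^t \bigl(n(s-\pi^n(s))-M\bigr) f(s)\,ds .
\]
Since $\frac{1}{\e n}\to\cc$, I choose $\e_0$ so that $\frac{1}{\e n}\le \cc+\varkappa(\e)\le \cc+1$ for $\e<\e_0$. This accounts for the prefactor $\cc$ in the bound; in Regime 1 the $\varkappa(\e)$ part can be absorbed, or the argument below gives $\frac{1}{\e n}\cdot O(n^{-1/2})$ directly. By \eqref{contrlUsoln} and Lemma \ref{ODEbd}, $f$ is $C^1$ on $[0,T]$ with $\sup|f|+\sup|f'|\le C_{ABKT}$.

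The plan is to remove $f$ by integration by parts. Set
\[
H_n(t):=\int_0^t \bigl(n(s-\pi^n(s))-M\bigr)\,ds .
\]
Then
\[
\int_0^t \bigl(n(s-\pi^n(s))-M\bigr) f(s)\,ds=H_n(t)f(t)-\int_0^t H_n(s)f'(s)\,ds ,
\]
which gives
\[
\sup_{t\le T}|G_4(t)|\le (\cc+1)(1+T)\,C_{ABKT}\sup_{t\le T}|H_n(t)| .
\]
So it suffices to show $\E\sup_{t\le T}|H_n(t)|\le C_{T\xi_1}n^{-1/2}$, which is stronger than the required $n^{-1/4}$.

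Next I rescale time via $u=ns$ and use \eqref{nnt}. The age process $u-\pi^1(u)$ rises linearly on each renewal interval, and its integral over the $i$-th interval is exactly $\xi_i^2/2$. Hence, with $L=nt$ and $k=N^1_{L}$,
\[
nH_n(t)=\int_0^{L}\bigl(u-\pi^1(u)-M\bigr)du=\sum_{i=1}^{k}\Bigl(\tfrac{\xi_i^2}{2}-M\xi_i\Bigr)+R(L),\qquad |R(L)|\le \tfrac{\xi_{k+1}^2}{2}+M\xi_{k+1}.
\]
The summands $Y_i:=\xi_i^2/2-M\xi_i$ are i.i.d. and mean zero exactly because $M=\E[\xi_1^2]/(2\E[\xi_1])$. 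This is the point where the constant $M$ enters. Therefore
\[
\sup_{t\le T}n|H_n(t)|\le \max_{k\le N^1_{nT}+1}\Bigl|\sum_{i\le k}Y_i\Bigr|+\max_{k\le N^1_{nT}+1}\bigl(\xi_k^2+M\xi_k\bigr).
\]

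To bound the first term I treat $S_k=\sum_{i\le k}Y_i$ as a martingale in the filtration of the $\xi_i$. By Lemma \ref{stoppingtime} (the same argument works for the filtration of the $\xi_i$), $\tau:=N^1_{nT}+1$ is a stopping time. Doob's $L^2$ maximal inequality applied to the stopped martingale $S_{k\wedge\tau}$, followed by Wald's second identity (Lemma \ref{Waldlemma}), gives
\[
\E\max_{k\le\tau}|S_k|\le 2\bigl(\E[\tau]\,\E[Y_1^2]\bigr)^{1/2}\le C_{\xi_1}\sqrt{nT+1},
\]
using $\E[N^1_{nT}]\le C(nT+1)$ from Lemma \ref{unifint2} or Theorem \ref{thm:renewal}; finiteness of the moment generating function gives all the needed moments of $\xi_1$. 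For the boundary maximum I use Proposition \ref{suptosum} with $q=2$, which gives a bound of order $(\E[\tau+1])^{1/2}\,\E[\xi_1^4]^{1/2}\le C\sqrt{n}$. Dividing by $n$ yields $\E\sup_t|H_n(t)|\le C_{T\xi_1}n^{-1/2}\le C_{T\xi_1}n^{-1/4}$, and hence the lemma.

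I expect the main obstacle to be the bookkeeping with the random index $N^1_{nT}+1$. One has to make sure that Doob's inequality is applied to a genuinely stopped martingale, and that $\tau$ is a stopping time for the filtration generated by the pairs $(\xi_i,\xi_i^2)$; this is the content of Lemma \ref{stoppingtime}. One also has to control the partial last interval uniformly in $t$. If the integration-by-parts route causes trouble, for instance through the $\varkappa$-dependence of the prefactor, the fallback is a blocking argument. There I would freeze $f$ on blocks of length $h=n^{-1/2}$, paying $\sup|f'|\,h\,\E\int_0^T|n(s-\pi^n(s))-M|\,ds\le Ch$ by Corollary \ref{Np}, and apply the martingale bound on each block. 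This loses a square root and produces exactly the stated rate $n^{-1/4}$.
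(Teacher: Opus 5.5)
Your proof is correct, and it takes a genuinely different route from the paper's. The paper deduces this lemma from Lemma \ref{G4decomposition}, which it proves through the chain of Lemmas \ref{Remainder1}--\ref{Remainder6}:
\begin{itemize}
\item a Taylor expansion of $f$ on each renewal interval reduces the integral to $\sum_{k<N^n_t}\frac n2(\xi^n_{k+1})^2f(\tau^n_k)$;
\item the random index range $N^n_t$ is replaced by $nt/\E[\xi_1]$, and the random nodes $\tau^n_k$ by $k\E[\xi_1]/n$, using Donsker's theorem, the uniform integrability of Lemma \ref{unifintrem34}, the semicontinuity Lemma \ref{lem:usc} and the exponential-moment bound of Lemma \ref{lem:assistance};
\item $\xi_{k+1}^2$ is centered at its mean by Doob's inequality, and a Riemann sum is compared with $\int_0^t Mf$.
\end{itemize}
The rate $n^{-1/4}$ is lost in the two Donsker steps. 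There, pulling $\sup_k\xi_k^2$ out of the sums costs $\E[\sup_{k\lesssim n}\xi_k^8]^{1/4}=O(n^{1/4})$.

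You instead remove $f$ by integration by parts. You then recognize $nH_n$ as a renewal--reward process whose rewards $\xi_i^2/2-M\xi_i$ are exactly centered by the definition of $M$. Centering each reward by $M\xi_i$, rather than by the constant $\E[\xi_1^2]/2$, is the key difference. The random number of summands never has to be replaced by a deterministic one: the sum stopped at $N^1_{nT}+1$ is still a martingale, and Doob's inequality plus Wald's identity control it directly. Also, $f$ is never evaluated at random times.

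Your route buys several things:
\begin{itemize}
\item a short, fully elementary proof;
\item the sharper rate $n^{-1/2}$, which would upgrade the $\cc\, n^{-1/4}$ term of Theorem \ref{CLTresult} to $\cc\, n^{-1/2}$;
\item weaker assumptions: only $\E[\xi_1^4]<\infty$ and $f$ Lipschitz on $[0,T]$, with constant proportional to $\sup|f|+T\sup|f'|$.
\end{itemize}
In particular, it gives the estimate of Lemma \ref{G4decomposition} for any deterministic Lipschitz $f$. That would cover $D_2c(y_s,y_s)\,c(y_s,y_s)$ in Lemma \ref{M2M2} without the mollification used there. The paper's decomposition mainly offers a step-by-step account of how each source of sampling randomness contributes; it is not needed for the estimate.

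One caveat concerns the prefactor. What you actually prove is $\E[\sup_{t\le T}|G_4(t)|]\le(\cc+\varkappa(\e))\,n^{-1/2}C_{ABKT\xi_1}$. For $\cc>0$ and $\varkappa(\e)<1$ this is at most $\cc\, n^{-1/4}$ times a $\cc$-dependent constant. In Regime 1, however, the stated right-hand side is $0$, so the $\varkappa(\e)$ part cannot be ``absorbed'' there; it belongs with the $M\varkappa(\e)$ term of Theorem \ref{CLTresult}. The paper's proof has the same defect, since it silently replaces $1/(\e n)$ by $\cc$. So this is a flaw in the statement, not in your argument.

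Finally, the blocking fallback is unnecessary. The integration by parts is legitimate pathwise, because $H_n$ is absolutely continuous and $f$ is $C^1$.
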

	
	\begin{lemma}\label{G5estimates} The term 
		$G_5$ decays at the rate $\varkappa(\epsilon)$. That is, there exists a $\e_0>0$ such that for any $0<\e<\e_0$, we have 
		\begin{align*}
			\sup_{0\leq t\leq T} |G_5(t)| \leq M\varkappa(\e)C_{ABKT}.
		\end{align*}
	\end{lemma}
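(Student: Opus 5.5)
The statement to prove is Lemma~\ref{G5estimates}, and the plan is to use that $G_5$ is purely deterministic. By \eqref{G5},
\begin{equation*}
G_5(t)=\Big(\frac{1}{\e n}-\cc\Big)\int_0^t M(A-BK)x(s)\,ds .
\end{equation*}
The prefactor is bounded in absolute value by $\varkappa(\e)$ from \eqref{kappa}, and the integral involves only the deterministic trajectory $x$ of \eqref{contrlUsoln}. So no expectation or probabilistic tool is needed: the whole estimate reduces to bounding a deterministic integral uniformly in $t\in[0,T]$.

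The steps are as follows.

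\emph{Step 1: choice of $\e_0$.} In Regimes 1 and 2 we have $\varkappa(\e)\to 0$, as observed after \eqref{kappa}. Hence there is $\e_0\in(0,1)$ with $\varkappa(\e)<1$ for $0<\e<\e_0$. In particular $\varkappa(\e)$ is finite there, which is the only role $\e_0$ plays in this lemma.

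\emph{Step 2: bounding the integral.} For $t\le T$,
\begin{equation*}
\Big|\int_0^t M(A-BK)x(s)\,ds\Big|\le M\,|A-BK|\int_0^T|x(s)|\,ds\le M\,|A-BK|\,T\,L(T).
\end{equation*}
By Lemma~\ref{ODEbd}, $L(T)\le C_{ABKT}$.

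\emph{Step 3: conclusion.} Combining the two steps and taking the supremum over $t$ gives
\begin{equation*}
\sup_{0\le t\le T}|G_5(t)|\le M\varkappa(\e)\,|A-BK|\,T\,C_{ABKT}.
\end{equation*}
The factor $|A-BK|\,T$ is absorbed into the generic constant, yielding $M\varkappa(\e)C_{ABKT}$. The constant $M=\E[\xi_1^2]/(2\E[\xi_1])$ is kept explicit, and it is finite because $\xi_1$ has a finite moment generating function.

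There is essentially no obstacle. The only point requiring care is that the bound is deterministic and holds pathwise, so it holds a fortiori in expectation, consistent with how it enters Proposition~\ref{L1asymptotes}.
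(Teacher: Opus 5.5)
Your proposal is correct and follows essentially the same route as the paper. You bound the deterministic prefactor by $\varkappa(\e)$, bound the integral by $M\,T\,|A-BK|\sup_{0\le s\le T}|x(s)|$, and invoke Lemma~\ref{ODEbd}; you are in fact slightly more careful than the paper in putting absolute values on $\frac{1}{\e n}-\cc$.
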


	\begin{proof}[Proof of Proposition \ref{L1asymptotes}] By combining the results obtained in Lemmas \ref{G1Lemma}--\ref{G5estimates}, we get the required result.
	\end{proof}    
	Of the aforementioned lemmas, Lemmas~\ref{G1Lemma}--\ref{G3Lemma}  are established by suitably adapting the analytical framework developed in \cite{dhama2020approximation}. Their proofs require careful modifications to accommodate the presence of the random sampling terms.  This extension is feasible since the quantity $M$ associated with the random sampling mechanism remains unaffected in these estimates. In contrast, the proof of Lemma~\ref{G4Lemma} is considerably more involved and is carried out through a sequence of carefully structured steps. The proof of \ref{G5estimates} follows directly from \eqref{kappa} and \eqref{contrlUsoln}.
	
	We now proceed to prove Lemmas \ref{G1Lemma}–\ref{G5estimates}. 
	Each of these lemmas provides the necessary estimate for one of the quantities $G_i$. We therefore treat these terms separately and derive the required estimates step by step.  For clarity, the proofs are organized into the distinct subsubsections, with each subsubsection devoted exclusively to the analysis of a single  $G_i$. The proof of Lemma \ref{G4Lemma}, however, is more technically involved and will therefore be proved after completing the proof of Lemmas \ref{G5estimates}.

	\subsubsection{The \texorpdfstring{$G_1$}{G1} Term}\label{G1sub}
	We now prove Lemma~\ref{G1Lemma}. In a nutshell, the key observation is that $(X^{\epsilon,n}_{\pi^n(s)} - x_{\pi^n(s)})$ decays fast enough by Theorem~\ref{LLN}, for the term to converge to $0$.
	
	\begin{proof}[\textbf{The proof of Lemma \ref{G1Lemma}}:~]
		Let  us define $$f^n(s):=\left(\frac{e^{s - \pi^n(s) A} - I}{1/n}\right).$$
		We have $s-\pi^n(s)=s-\tau_{N_s^n}^n$. Using the fact that  $\tau_{N_s^{n}}^n\leq s\leq \tau_{N_s^{n}+1}^n,$  we have $s-\tau_{N_s^{n}}^n\leq  \tau_{N_s^{n}+1}^n-\tau_{N_s^{n}}^n=\xi_{N_s^{n}+1}^n$, and therefore, we get that 
		\begin{align}\label{eq:fn}
			|f^n(s)|=n\left|\int_0^{s-\pi^n(s)}Ae^{rA} dr\right|\leq n |A|\int_0^{\xi_{N_s^{n}+1}^n}|e^{rA}|dr\leq n \xi_{N_s^{n}+1}^n |A|e^{{\xi_{N_s^{n}+1}^n |A|}}. 
		\end{align}
		Consider the term $G_1$  defined in equation \eqref{G1} and let $z_s = X^{\e,n}_{\pi^n(s)}  - x_{\pi^n(s)}$. Then, $G_1$ becomes
		$$
		G_1(t) = \frac 1{\e n} \int_0^t f^n(s) (I-A^{-1}BK)z_s ds.
		$$
		Taking the supremum over $t\in[0,T]$ of the absolute value of $G_1$, and then taking expectation, we obtain
		\begin{align*}
			\E\Big[\sup_{0 \leq t \leq T}|G_1(t)|\Big] &\leq \frac 1{\e n}\E\left[\sup_{0 \leq t \leq T}\left|\int_0^t f^n(s) (I-A^{-1}BK) z_s ds\right|\right] \\
			& \leq \frac 1{\e n} \E\left[\sup_{0 \leq t \leq T} \int_0^t |f^n(s)| \cdot |I-A^{-1}BK| \cdot |z_s| ds\right] \\
			& \leq \frac 1{\e n} \E\left[\sup_{0 \leq t \leq T} \left(|I-A^{-1}BK|\sup_{0 \leq r \leq t} |z_r| \int_0^t |f^n(s)| ds\right)\right]\\
			& \leq \frac 1{\e} |I-A^{-1}BK|\cdot|A| \E\left[\left(\sup_{0 \leq t \leq T} |z_t|\right) \int_0^T |f^n(s)| ds\right] \\
			& \leq \frac 1{\e} |I-A^{-1}BK|\cdot|A| \left[\E\left(\sup_{0 \leq t \leq T} |z_t|\right)^2\right]^{\frac 12}\left[\E\left(\int_0^T \xi^n_{N_s^{n}+1}e^{\xi^n_{N_s^{n}+1}|A|} ds\right)^2\right]^{\frac 12}
		\end{align*}
		by using  C-S inequality and \eqref{eq:fn}. Let us focus on second term of the last expression, applying C-S, we get 
		\begin{align*}
			\left(\int_0^T \xi_{N_s^{n}+1}^n e^{{\xi_{N_s^{n}+1}^n |A|}} ds\right)^2 \leq T\int_0^T \left(\xi_{N_s^{n}+1}^n\right)^2 e^{{2\xi_{N_s^{n}+1}^n |A|}} ds.
		\end{align*}
		For the first term, from Theorem~\ref{LLN}, for $p=2$, we have that 
		\begin{align*}
			\E\sup_{0\leq t\leq T}|z_t|^2 =\E\sup_{0\leq t\leq T}|X^{\e,n}_{\pi^n(s)}  - x_{\pi^n(s)}|^2\leq C_{ABKT} \left(\E (\NN_2)+\e^2 \right).
		\end{align*} 
		With above results and calculations we have
		\begin{align*}
			\frac 1{\e} |I-A^{-1}BK||A| \E\left[\left(\sup_{0 \leq t \leq T} |z_t|\right)^2\right]^{\frac 12} &\E\left[\left(\int_0^T \xi^n_{N_s^{n}+1}e^{\xi^n_{N_s^{n}+1}|A|} ds\right)^2\right]^{\frac 12} \\
			\leq & \frac 1{\e} C_{ABKT}\left(\ E[\NN_2] + \e^2 \right)^{\frac12} (\E[\tilde{ \NN}_3])^{\frac12}.
		\end{align*}
		Therefore, we have 
		\begin{align*}
			\E\left[\sup_{0\leq t\leq T} |G_1(t)|\right]\leq \frac 1{\e} 
			C_{ABKT} \left(\ E[\NN_2] + \e^2 \right)^{\frac12} (\E[\tilde{\NN}_3])^{\frac12}\le \cc n^{-1/2}(n^{-1}+\e)C_{ABKT\xi_1},
		\end{align*} by Lemma \ref{Nptilde} and its Corollary \ref{Np}.
	\end{proof}

	\subsubsection{The \texorpdfstring{$G_2$}{G2} Term}\label{G2sub}
	In this section, we prove Lemma~\ref{G2estimates} which provides an estimate for the expression  \eqref{G2}. The key idea is that the term $$
	\frac{e^{(s - \pi^n(s)) A} - I - (s - \pi^n(s)) A}{1/n},
	$$
	by virtue of being the remainder of a Taylor series, decays fast enough in $n$ by Lemma~\ref{Expbd}. 
	
	\begin{proof}[\textbf{Proof of Lemma~\ref{G2estimates}}]
		We have 
		\begin{align*}\label{G22}
			G_2(t)= \frac{1}{\e n}\int_0^t \left(\frac{e^{(s - \pi^n(s)) A} - I - (s - \pi^n(s)) A}{1/n}\right) (I-A^{-1}BK)x_{\pi^n(s)}ds
		\end{align*}
		Letting
		\begin{equation*}
			g^n(s) := \left( \frac{e^{s A} - I - sA}{1/n}\right) (I - A^{-1} BK).
		\end{equation*}
		It can be easily seen that
		\begin{align*}
			\int_0^t &\left(\frac{e^{s - \pi^n(s) A} - I - (s - \pi^n(s)) A}{1/n}\right) (I-A^{-1}BK)x_{\pi^n(s)}ds \nonumber\\&=   \sum_{k=0}^{N^n_t - 1} \left( \int_0^{\xi^n_{k+1}} g^n(s)  ds \right) x_{\tau^n_k} + \left( \int_0^{t - \pi^n(t)} g^n(s)  ds \right) x_{\pi^n(t)}\nonumber\\
			&\leq \sup_{0 \leq t \leq T}\left[ \sum_{k=0}^{N_t^n - 1} \left|\int_0^{\xi^n_{k+1}} g^n(s)ds\right| |x_{\tau^n_k}|+ \left|\int_0^{t - \pi^n(t)} g^n(s)  ds\right| |x_{\pi^n(t)}|\right].
		\end{align*}
		A direct calculation yields
		\begin{equation}\label{gn}
			\begin{aligned}
				\int_0^{r} g^n(s)  ds & = nA^{-1} \left( e^{rA} - I - r A -\frac{r^2A^2}{2}\right) (I - A^{-1} BK) \\\text{or}, \quad
				\left| \int_0^r g^n(s)  ds \right| 
				&\leq n |A^{-1}|\cdot \left| e^{rA} - I - r A -\frac{r^2A^2}{2}\right|\cdot |I - A^{-1} BK| \\
				&=n |A^{-1}|\cdot \left| \sum_{k=3}^{\infty}\frac{r^kA^k}{k!}\right|\cdot |I - A^{-1} BK|
				\\&\leq n \frac{r^3|A|^3}{6} |A^{-1} |\cdot e^{r|A|}\cdot |I - A^{-1} BK|.
			\end{aligned}
		\end{equation}
		In equation \eqref{gn}, we have used Lemma \ref{Expbd}. Let us put $ r=\xi_{k+1}^{n}$ to get 
		\begin{align}\label{gn1}
			\left|\int_0^{\xi^n_{k+1}} g^n(s)  ds \right| \leq nC_{ABK}\left[\frac{(\xi^n_{k+1})^3}{6}e^{\xi^n_{k+1}|A|} \right].
		\end{align}
		Now putting $r= t - \pi^n(t)$, we get
		\begin{equation}\label{gn2}
			\left| \int_0^{t - \pi^n(t)} g^n(s)  ds \right| 
			\leq nC_{ABK}\left[\frac{(t - \pi^n(t))^3}{6}e^{(
				t - \pi^n(t))|A|} \right].
		\end{equation}
		Using the estimates \eqref{gn1} and \eqref{gn2} 
		\begin{align}\label{gn3}
			\sup_{0\leq t \leq T}&\left|\int_0^t \left(\frac{e^{s - \pi^n(s) A} - I - (s - \pi^n(s)) A}{1/n}\right) (I-A^{-1}BK)x_{\pi^n(s)}ds\right| \nonumber\\
			\leq & ~ nC_{ABKT}\sup_{0 \leq t \leq T} \left[\sum_{k=0}^{N_t^n - 1} \frac{(\xi_{k+1}^n)^3}{6} e^{\xi_{k+1}^n|A|} + \frac{(t - \pi^n(t))^3}{6}e^{(t - \pi^n(t))|A|}\right]\sup_{0 \leq t \leq T} |x(t)|\nonumber\\
			\leq & ~ nC_{ABKT} \sum_{k=0}^{N^n_t} {(\xi_{k+1}^n)^3}e^{\xi_{k+1}^n |A|},
		\end{align}
		by Lemma \ref{ODEbd}. Now, \eqref{gn3} and  Lemma  \eqref{Nptilde} assures that
		\begin{align*}
			\E\left[\sup_{0\leq t \leq T}\left|G_2(t)\right|\right]\leq  \frac n\e C_{ABKT} \sum_{k=0}^{N^n_t} {(\xi_{k+1}^n)^3}e^{\xi_{k+1}^n |A|}\leq  \frac n\e C_{ABKT}\E[\tilde{\NN}_3]\leq \cc n^{-1} C_{ABKT\xi_1}.
		\end{align*}
	\end{proof}

	\subsubsection{The \texorpdfstring{$G_3$}{G3} Term}\label{G3sub}	
	In this section, we prove Lemma~\ref{G3Lemma} which provides an estimate for the expression \eqref{G3}. The key idea is controlling the term $|x_{\pi^n(s)} - x_s|$ using Lemma~\ref{Assympwithoutnoise}.
	\begin{proof}[\textbf{Proof of Lemma \ref{G3Lemma}}] From equation \eqref{G3}, we have
		\begin{align*}
			\E \Big[\sup_{0\leq t\leq T} |G_3(t)|\Big] & \leq \frac{|A-BK|}{\e n} \E \left[\int_0^T\frac{(s-\pi^n(s))}{\frac 1n} \cdot |x_{\pi^n(s)} - x(s))| ds\right] \\
			& \leq \frac{|A-BK|}{\e } \E \left[\sup_{0 \leq t \leq T} \left(|x_{\pi^n(t)} - x(t))|\right)\int_0^T(s-\pi^n(s)) ds\right] \\
			& \leq \frac{|A-BK|}{\e }\left( \E \left[\sup_{0 \leq t \leq T} \left(|x_{\pi^n(t)} - x(t))|^2\right)\right]\right)^{\frac 12}\left(\E \left[\left(\int_0^T(s-\pi^n(s)) ds\right)^2\right]\right)^{\frac 12}\\
			& \leq \frac{|A-BK|}{\e }\left( \E \left[\sup_{0 \leq t \leq T} \left(|x_{\pi^n(t)} - x(t))|^2\right)\right]\right)^{\frac 12}\left(\E \left[\int_0^T(s-\pi^n(s))^2 ds\right]\right)^{\frac 12}. 
		\end{align*}
		by applying the C-S inequality twice.  Now, for first term, we can use the Lemma \ref{Assympwithoutnoise} for $p=2$, which gives
		\begin{align*}
			\E\sup_{0\leq t\leq T}\left|x_{\pi^n(s)} - x(s)\right|^2\leq C_{ABKT}\E [{\NN}_2].
		\end{align*} 
		For the second term, by applying Corollary~\ref{Np}, we ultimately obtain
		\begin{align*}
			\E \left[\sup_{0\leq t\leq T}|G_3(t)|\right]\leq \frac{C_{ABKT}}{\e} \E[{\NN}_2]\le \cc n^{-1} C_{ABKT\xi_1}.
		\end{align*}
	\end{proof}
	
	\subsubsection{The \texorpdfstring{$G_5$}{G5} Term}\label{G5sub}	
	\begin{proof}[\textbf{Proof of Lemma \ref{G5estimates}}]
		We have
		\[G_5=\left(\frac{1}{\e n} - c\right) \int_0^t M(A-BK) x(s)ds.\]
		Therefore,
		\begin{align*}
			\sup_{0\leq t\leq T} |G_5(t)|
			&\leq \left(\frac{1}{\e n} - c\right)  \left[\sup_{0 \leq t \leq T} \left|\int_0^t M(A-BK) x(s) ds\right| \right ] \\
			&\leq \left(\frac{1}{\e n} - c\right)M|A-BK|  \left[  \int_0^T \left| x(s)\right|ds \right]\\
			&\leq \left(\frac{1}{\e n} - c\right)MT|A-BK|  \left[ \sup_{0 \leq t \leq T} \left| x(s)\right|ds \right].
		\end{align*}
		There exists $\e_0>0$, such that for any $0<\e<\e_0,$ we obtain 
		\begin{equation*}
			\sup_{0\leq t\leq T} |G_5(t)|\leq C_{ABKT}M\varkappa(\e),     \end{equation*} by applying Definition \ref{kappa} together with Lemma \ref{ODEbd}.
	\end{proof}
	
	Now, at last, we are left only with Lemma \ref{G4Lemma}, which we will address in the following subsubsection.
	\subsubsection{The \texorpdfstring{$G_4$}{G4} Term}\label{G4sub}
	The proof of Lemma~\ref{G4Lemma} consists of several intermediate steps. To elaborate on these steps, it will be  convenient to recall the definition of $G_4$ from \eqref{G4}:
	\begin{equation*}
		G_4(t)=\frac{1}{\e n}\int_0^t \left(\frac{(s-\pi^n(s))A}{1/n} - MA\right)(I-A^{-1}BK) x_s ds.
	\end{equation*}
	The key idea is that replacing $(I-A^{-1}BK) x_s$ by any smooth bounded function and to obtain the following lemma.
	\begin{lemma}\label{G4decomposition}
		Let $f(x_s):= f(s)$ be a smooth, and  bounded function whose second derivative grows at most exponentially in time . Then, for any finite $T<\infty$, we have
		\begin{align}
			\E\left[ \sup_{0\leq t\leq T}\left| \int_0^t \frac{(s-\pi^n(s))}{1/n} f(s)ds- \int_0^t M f(s)ds\right|\right] \leq n^{-1/4}C_{\xi_1fT}.
		\end{align}
		
	\end{lemma}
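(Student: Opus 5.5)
We prove the statement by decomposing $[0,t]$ along the random sampling grid and reducing everything to centred stopped sums of i.i.d.\ variables. These are controlled by Wald's identity (Lemma~\ref{Waldlemma}), the maximal inequality of Lemma~\ref{unifint1}, and the moment bounds of Lemma~\ref{unifint2}. A Taylor step handles the variation of $f$ inside each sampling interval. The rate $n^{-1/4}$ should come from taking square roots twice: once from Cauchy--Schwarz on a maximal inequality, once from balancing blocks.

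\textbf{Step 1 (freezing $f$).} For $t\in[0,T]$ write
\[
\int_0^t n(s-\pi^n(s))f(s)\,ds=\sum_{k=0}^{N^n_t-1}\int_{\tau^n_k}^{\tau^n_{k+1}} n(s-\tau^n_k)f(s)\,ds+R_t,
\]
where $R_t$ is the contribution of the last, incomplete interval. We have $|R_t|\le n\|f\|_\infty\,(\xi^n_{N^n_t+1})^2$. Its supremum over $t\le T$ is bounded by $n^{-1}\|f\|_\infty\sup_{k\le N^n_T+1}\xi_{k}^2$. By Proposition~\ref{suptosum} with some $q>1$ and Lemma~\ref{unifint2}, this has expectation $O(n^{-1}\,n^{1/q})$, which is $o(n^{-1/4})$.

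On each full interval, replace $f(s)$ by $f(\tau^n_k)$. The error is at most $n\|f'\|_\infty(\xi^n_{k+1})^3$. Summed over $k\le N^n_T$, Wald's identity gives an expectation of order $n\cdot n^{-3}\,\E[N^n_T+1]=O(n^{-1})$. The frozen interval contributes $\tfrac n2 (\xi^n_{k+1})^2 f(\tau^n_k)=\tfrac1{2n}\xi_{k+1}^2 f(\tau^n_k)$. Similarly, $\int_0^t Mf(s)\,ds=\sum_{k<N^n_t} M\xi^n_{k+1}f(\tau^n_k)+O(\cdot)$, with errors of the same type. It therefore remains to bound
\[
\E\sup_{m\le N^n_T}\Big|\frac1n\sum_{k=0}^{m-1}\Big(\tfrac12\xi_{k+1}^2-M\xi_{k+1}\Big)f(\tau^n_k)\Big|.
\]

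\textbf{Step 2 (martingale part).} Put $\eta_k:=\tfrac12\xi_k^2-M\xi_k$. By the definition of $M$ we have $\E\eta_k=0$. Since $f(\tau^n_k)$ is $\sigma(\xi_1,\dots,\xi_k)$-measurable, the sum $\sum_{k<m}\eta_{k+1}f(\tau^n_k)$ is a martingale in $m$. Apply Doob's $L^2$ maximal inequality at the stopping time $N^n_T+1$ (Lemma~\ref{stoppingtime}), then Wald/optional stopping for the quadratic variation. This yields
\[
\E\sup_m|\cdot|^2\le 4\|f\|_\infty^2\,\E[\eta_1^2]\,\E[N^n_T+1]\le Cn .
\]
Since the sum is multiplied by $1/n$, Cauchy--Schwarz gives the bound $Cn^{-1/2}$. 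The finite moment generating function guarantees $\E[\eta_1^2]<\infty$.

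\textbf{Main obstacle.} The randomness of $N^n_T$ enters together with the fact that $f(\tau^n_k)$ is evaluated on the random grid. If the direct martingale argument leaves residual terms, the fallback is a blocking argument. Split $[0,T]$ into about $n^{1/2}$ deterministic blocks of length $n^{-1/2}$, and freeze $f$ at block endpoints at cost $O(n^{-1/2}\|f'\|_\infty)$. On each block, compare the random count with its mean via Donsker for renewal processes (Theorem~\ref{donsker}), upgraded to moment convergence through Lemmas~\ref{unifint1}--\ref{unifint2}. Summing the $n^{1/2}$ block errors, each of size $O(n^{-3/4})$ in $L^1$, gives the stated $n^{-1/4}$. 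The exponential growth of $f''$ only enters through constants $C_{\xi_1 fT}$ for fixed $T$.
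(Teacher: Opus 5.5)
Your proof is correct, and it takes a genuinely different and shorter route than the paper's.

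\textbf{The paper's route.} It centres $\tfrac n2(\xi^n_{k+1})^2$ by its mean $\E[\xi_1^2]/(2n)$ and aims for a deterministic Riemann sum. This forces two separate replacements: the random index $N^n_t$ by $nt/\E[\xi_1]$ (Lemma~\ref{Remainder3}), and the random grid $\tau^n_k$ by $k\E[\xi_1]/n$ (Lemma~\ref{Remainder4}). Both go through Donsker's theorem, the uniform integrability of Lemma~\ref{unifintrem34} and the semicontinuity Lemma~\ref{lem:usc}, and each costs $n^{-1/4}$. Evaluating $f(\tau^n_k)$ for $k$ up to $\lceil nT/\E[\xi_1]\rceil$, possibly beyond $T$, is also where the exponential-growth hypothesis on $f''$ is used.

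\textbf{Your route.} You discretize $\int_0^t Mf\,ds$ on the same random grid and centre $\tfrac12\xi_{k+1}^2$ by $M\xi_{k+1}$. This has mean zero exactly because $M=\E[\xi_1^2]/(2\E[\xi_1])$. As a result, the renewal-count and grid fluctuations cancel between the two integrals, and only the martingale $\tfrac1n\sum_{k<m}\eta_{k+1}f(\tau^n_k)$ is left. Doob's $L^2$ inequality, applied to this martingale stopped at $N^n_T+1$, together with Wald's identity for the bracket, bounds it by $Cn^{-1/2}$.

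\textbf{Comparison.} Your argument is elementary: no functional CLT and no uniform integrability. It needs only $f\in C^1[0,T]$ and $\E[\xi_1^4]<\infty$. Every evaluation point $\tau^n_k$ with $k\le N^n_T$ lies in $[0,T]$, so the growth condition on $f''$ is never used. It also gives the stronger rate $n^{-1/2}$. What the paper's decomposition buys is a term-by-term picture of how each source of sampling randomness disappears, but at the worse rate $n^{-1/4}$.

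\textbf{Two clean-ups.} First, your ``Main obstacle'' paragraph and the blocking fallback are unnecessary. The direct martingale argument leaves no residual terms: all Step~1 errors are $O(n^{-1})$ or $O(n^{-1+1/q})$. You can simply conclude with $Cn^{-1/2}\le Cn^{-1/4}$. Second, your closing remark about exponential growth entering the constants is inaccurate, since in your argument it does not enter at all.
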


	\begin{rmkk}
		Let $x_t$ denote the solution of system \eqref{ode1} with initial condition $x_0$. Then one example of such an $f(s)=f(x_s)$ is $ f(x_s) = (I-A^{-1}BK)x_s = (I -A^{-1}BK)e^{(A-BK)s}x_0 := 
		f(s).$
		In what follows, we apply Taylor’s theorem to the mapping $s \mapsto f(x_s)$. 
		For notational simplicity, we treat $f(x_s)$ as a function depending only on time,  that is, we write $f(x_s)=f(s)$, with the understanding that the time dependence  arises through the trajectory $x_s$. This convention will be used throughout without further mention.
		
	\end{rmkk}
	
	The proof of Lemma \ref{G4decomposition} is based on a suitable decomposition of the integral\\
	$\displaystyle\int_0^t n{(s-\pi^n(s))} f(s)ds$ into several terms that will be treated separately. In place of working with the full expression at this stage, we focus on outlining the main steps of the decomposition argument and the techniques involved. Each term in the decomposition is estimated separately and finally by combining all the estimates obtained  for the individual terms, we conclude the proof of Lemma \ref{G4decomposition} and consequently proof of Lemma \ref{G4Lemma}.
	
	\textbf{Sketch of the   Proof of Lemma \ref{G4decomposition}.}~~We now present the sketch of the proof. Before explaining the argument, we point out  that the first step involves a Taylor expansion. By the second order mean value theorem, for each $n\in \mathbb N, k \in \mathbb Z_+$ and $r>0$ there exists $\eta_k \in [\tau^n_k,\tau^n_{k+1}]$ such that
	\begin{equation}
		\int_0^{r} nsf(\tau^n_k+s)ds = n\frac{r^2}{2}f(\tau^n_k) + \frac{nr^3}{6}(2f'(\eta_k)+\tilde{\eta}_k f''(\eta_k)),\label{eq:taylorexpl}
	\end{equation}
	where $\tilde{\eta}_k=\eta_k-\tau_k^n, f' = \frac{df}{ds}$ and $f'' = \frac{d^2f}{ds^2}$. The integral ~ $\displaystyle\int_0^t n{(s-\pi^n(s))} f(s)ds$, for any $t\in [0,T]$ will asymptotically simplify in the following way.
	Below, $\eta, \eta_k \in [0,T]$ are random variables arising from remainder terms in the respective Taylor expansions, while terms that asymptotically converge to $0$ are indicated in square brackets, along with the corresponding lemma stating this.  In the second line below, we used \eqref{eq:taylorexpl} for $r = \xi^n_k$ and $t-\pi^n(t)$.
	
	\begin{align*}
		&\int_0^t \frac{(s-\pi^n(s))}{\frac 1n} f(s)ds \nonumber \\
		=& \sum_{k=0}^{N^n_t-1} \int_{0}^{\xi_{k+1}^n} nsf(\tau_k^n+s)ds + \int_0^{t-\pi^n(t)} nsf(\pi^n(t)+s)ds \nonumber \\
		=& \sum_{k=0}^{N^n_t-1} \frac{n(\xi^n_{k+1})^2}{2}f(\tau_k^n) +\frac{n(t-\pi^n(t))^2}{2}f(\pi^n(t))  \nonumber\\
		+&\left[\frac{n(\xi^n_{k+1})^3}{6}(2f'(\eta_k)+\tilde{\eta}_k  f''(\eta_k))+\frac{n(t-\pi^n(t))^3}{6}(2f'(\eta)+(\eta - \pi^n(t)) f''(\eta))\right]_{\to 0,~ \text{by Lemma~\ref{Remainder1}}}\\ 
		\approx& \sum_{k=0}^{N^n_t-1} \frac{n}{2}(\xi_{k+1}^n)^2 f(\tau_k^n) + \left[\frac{n}{2}(t-\pi_n(t))^2 f(\pi^n(t))\right]_{\to 0,~ \text{by Lemma~\ref{Remainder2}}}\\
		\approx &\sum_{k=0}^{N^n_t-1} \frac{n}{2}(\xi_{k+1}^n)^2 f(\tau_k^n) \\
		= & \sum_{k=0}^{nt/\E[\xi_1]-1} \frac{n}{2}(\xi_{k+1}^n)^2 f(\tau_k^n) + \left[\sum_{k \in (nt/\E[\xi_1]-1, N^n_t-1)}  \frac{n}{2}(\xi_{k+1}^n)^2 f(\tau_k^n)\right]_{\to 0,~ \text{by Lemma~\ref{Remainder3}}} \nonumber \\
		\approx & \sum_{k=0}^{nt/\mathbb {E}[\xi]-1} \frac{n}{2}(\xi_{k+1}^n)^2 f(\tau_k^n)\\
		= &  \sum_{k=0}^{nt/\E[\xi_1]-1} \frac{n}{2}(\xi_{k+1}^n)^2 f\left(\frac{k\E[\xi_1]}{n}\right) \\&\hspace{2.5cm}+ \left[\sum_{k=0}^{\lceil nt/\E[\xi_1] \rceil-1} \frac{n}{2}(\xi^n_{k+1})^2 \left(f(\tau^n_k) - f\left(\frac{k\E[\xi_1]}{n}\right)\right)\right]_{\to 0,~ \text{by Lemma~\ref{Remainder4}}}  \nonumber\\
		\approx & \sum_{k=0}^{nt/\E[\xi_1]-1} \frac{n}{2}(\xi_{k+1}^n)^2 f\left(\frac{k\E[\xi_1]}{n}\right)\\
		= & \sum_{k=0}^{nt/\E[\xi_1]-1} \frac{n}{2}\E(\xi_{k+1}^n)^2 f\left(\frac{\E[\xi_1]k}{n}\right)  \\&\hspace{2.3cm}+\left[\sum_{k=0}^{\lceil nt/\E[\xi_1]\rceil- 1} \frac{n}{2} \left((\xi^n_{k+1})^2 -\E[(\xi^n_{k+1})^2]\right)f\left(\frac{k\E[\xi_1]}{n}\right)\right]_{\to 0,~ \text{by Lemma~\ref{Remainder5}}}\nonumber\\
		\approx &  \sum_{k=0}^{nt/\E[\xi_1]-1} \frac{n}{2}\E(\xi_{k+1}^n)^2 f\left(\frac{k\E[\xi_1]}{n}\right)\nonumber\\
		\approx &\int_0^{t} Mf(s)ds~\text{ by Lemma~\ref{Remainder6}.}
	\end{align*}

	The next step is to do detailed examination of the decomposition introduced above. Our objective is to derive precise estimates for each individual term and to demonstrate that the contributions appearing in big brackets are negligible in the asymptotic regimes under consideration. This will be accomplished through a sequence of auxiliary results, namely Lemmas~ \ref{Remainder1}-\ref{Remainder6}. Once these estimates are established and combined, the proof of Lemma~\ref{G4decomposition} will follow as a direct and immediate consequence. 
	
	We now proceed by analyzing those bracket terms one by one. The first contribution is treated in Lemma~\ref{Remainder1}. Fundamentally, the asymptotic decay of this term is due to the appearance of the cubes of the inter-renewal times $(\xi^n_{k+1})^3$. It also arises as a Taylor remainder, which naturally leads to the following result.
	\begin{lemma}\label{Remainder1}
		Let $t \leq T$, $\eta_k \in (\tau^n_k, \tau^n_{k+1})$ for each $k \in 0,1,\ldots,N_t^n-1$, $\eta \in (t-\pi^n(t), t)$. Then,
		\begin{align*}
			\E\Bigg[\sup_{0 \leq t \leq T} \Bigg|\sum_{k=0}^{N^n_t - 1}\frac{n(\xi^n_{k+1})^3}{6}(2&f'(\eta_k)+\tilde{\eta}_k f''(\eta_k)) 
			\nonumber\\&+ \frac{n(t-\pi^n(t))^3}{6}(2f'(\eta) + (\eta -\pi^n(t))f''(\eta))\Bigg|\Bigg]
			\leq \frac{C_{\xi_1 f T}}{n}. 
		\end{align*}
	\end{lemma}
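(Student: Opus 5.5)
The plan is to bound the quantity inside the supremum pathwise by a single nonnegative random sum that no longer depends on $t$, and then take expectations with Wald's identity (Lemma~\ref{Waldlemma}) together with Lemma~\ref{stoppingtime}, as in the proofs of Lemma~\ref{Nptilde} and Lemma~\ref{G2estimates}.

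\textbf{Uniform bound on the Taylor factors.} The hypothesis says that $f'$ and $f''$ grow at most exponentially; for $f(s)=(I-A^{-1}BK)e^{(A-BK)s}x_0$ they are in fact continuous on $[0,T]$. Since every $\eta_k$ and $\eta$ lies in $[0,T]$ (for $k\le N^n_t-1$ we have $\tau^n_{k+1}\le t\le T$), we get
\[
\sup_{0\le r\le T}\big(|f'(r)|+|f''(r)|\big)\le C_{fT}.
\]
Next, $\tilde\eta_k=\eta_k-\tau^n_k\le \xi^n_{k+1}$ and $\eta-\pi^n(t)\le t-\pi^n(t)\le \xi^n_{N^n_t+1}$. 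Each of these is at most $T$, or at most $\xi^n_{k+1}$. Hence the coefficient $2f'(\cdot)+(\cdot)f''(\cdot)$ is bounded by $C_{fT}(2+T)$ in both the summed terms and the boundary term.

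\textbf{Pathwise majorant.} For every $t\le T$ we have $N^n_t\le N^n_T$ and $t-\pi^n(t)\le \xi^n_{N^n_t+1}$. The $t$-dependent boundary term is one of the summands $(\xi^n_{k+1})^3$ with $k=N^n_t\le N^n_T$. Therefore
\[
\sup_{0\le t\le T}|\cdots|\le n\,C_{fT}\sum_{k=0}^{N^n_T}(\xi^n_{k+1})^3 = \frac{C_{fT}}{n^2}\sum_{k=0}^{N^n_T}\xi_{k+1}^3 .
\]
The equality uses $\xi^n_i=\xi_i/n$.

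\textbf{Expectation and renewal bound.} By Lemma~\ref{stoppingtime}, $N^n_T+1$ is a stopping time for the sequence $(\xi_{i+1})^3$. Wald's identity then gives
\[
\E\Big[\sum_{k=0}^{N^n_T}\xi_{k+1}^3\Big]=\E[N^n_T+1]\,\E[\xi_1^3].
\]
Here $\E[\xi_1^3]<\infty$ because $\xi_1$ has a finite moment generating function. Finally, $N^n_T=N^1_{nT}$ by \eqref{nnt}. Lemma~\ref{unifint2} with $p=2$ and Jensen give $\E[N^1_{nT}]\le C_{\xi_1}nT$; alternatively, the elementary renewal theorem (Theorem~\ref{thm:renewal}) gives the same linear growth for large $n$. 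Putting these together, the expectation is at most $C_{fT}\,n^{-2}(C_{\xi_1}nT+1)\le C_{\xi_1fT}/n$.

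\textbf{Main obstacle.} The only delicate point is measurability: the $\eta_k$ are random and depend on the whole path, so one cannot apply Wald directly to terms involving $f'(\eta_k)$. This is exactly why we first replace every $f$-dependent factor by the deterministic constant $C_{fT}$. After that, only $(\xi_{k+1})^3$ stopped at $N^n_T$ remains, and Wald applies. The other point to check is a uniform-in-$n$ linear bound on $\E[N^1_{nT}]$ in place of a purely asymptotic one; Lemma~\ref{unifint2} supplies this.
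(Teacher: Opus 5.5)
Your proposal is correct and follows the paper's proof essentially step for step: you bound the Taylor factors $2f'+\tilde\eta f''$ by a constant $C_{fT}$ using $\eta_k,\eta\in[0,T]$, majorize pathwise by $n C_{fT}\sum_k(\xi^n_{k+1})^3$ up to a $t$-independent random index, and finish with Wald's identity and the linear growth of $\E[N^1_{nT}]$. The differences are only cosmetic: the paper stops the sum at $N^n_T+1$ (obtaining $\E[N^n_T]+2$) and appeals to the elementary renewal theorem, whereas you stop at $N^n_T$ and use a bound on $\E[N^1_{nT}]$ that holds uniformly in $n$, which is slightly cleaner for a non-asymptotic inequality.
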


	\begin{proof}
		First, note that $\eta_k,\eta \in [0,T]$. Therefore, $\max\{|2f'(\eta_k) + \tilde{\eta}_k f''(\eta_k)|, |2f'(\eta) + (\eta - \pi^n(t)) f''(\eta)|\} \leq C_{fT}$ for some constant $C_{fT}$ depending only on $f$ and $T$. Using this, for any $t \in [0,T]$,
		\begin{align}
			&\left|\sum_{k=0}^{N^n_t - 1}\frac{n(\xi^n_{k+1})^3}{6}(2f'(\eta_k)+\tilde{\eta}_k f''(\eta_k)) + \frac{n(t-\pi^n(t))^3}{6}(2f'(\eta) + (\eta -\pi^n(t))f''(\eta))\right| \nonumber \\
			&\leq C_{fT}\left(\sum_{k=0}^{N^n_t - 1}\frac{n(\xi^n_{k+1})^3}{6} + \frac{n(t-\pi^n(t))^3}{6}\right)\leq C_{fT} \sum_{k=0}^{N^n_t}\frac{n (\xi^n_{k+1})^3}{6} \leq C_{fT} \sum_{k=0}^{N^n_T+1} \frac{n (\xi^n_{k+1})^3}{6}.\label{rem11}
		\end{align}    
		By Lemma~\ref{Waldlemma} and the fact that $\xi^n_k = \frac{\xi_k}{n}$, \begin{equation}
			\E\left[\sum_{k=0}^{N_T^n+1} \frac{n(\xi^n_{k+1})^3}{6}\right] = \frac{\E[\xi_1^3]}{n}\frac{\E[N_T^n]+2}{6n}.\label{rem12}
		\end{equation}
		By Theorem~\ref{thm:renewal}, the second term converges to $\frac{1}{\E[\xi_1]}$, while the first term converges to $0$. When combined with \eqref{rem11} and \eqref{rem12}, this completes the proof. 
	\end{proof}	
	
	The next bound is essentially proved using the idea that when $n$ is large, every inter-renewal interval is, on average, of size $\frac {\E[\xi_1]}{n}$. We will use Proposition~\ref{suptosum}.
	\begin{lemma}\label{Remainder2}
		We have 
		$$
		\E\left[\sup_{ 0 \leq t \leq T} \frac{n}{2} \left|\left(t-\pi^n(t)\right)^2 f(\pi^n(t))\right|\right] \leq \frac{C_{\xi_1 f T}}{n^{1/2}}. 
		$$

	\end{lemma}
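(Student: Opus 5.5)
First, the factor $f(\pi^n(t))$ is disposed of by boundedness: since $\pi^n(t)\in[0,T]$ and $f$ is bounded on $[0,T]$, we have $|f(\pi^n(t))|\le \sup_{0\le s\le T}|f(s)|=:C_{fT}$. It therefore suffices to show
\begin{equation*}
\E\Big[\sup_{0\le t\le T}\tfrac n2 (t-\pi^n(t))^2\Big]\le \frac{C_{\xi_1T}}{n^{1/2}}.
\end{equation*}

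Next, the age is replaced by the length of the current inter-renewal interval. As in the proof of Lemma~\ref{G1Lemma}, $0\le t-\pi^n(t)\le \xi^n_{N^n_t+1}$. For $t\le T$ we have $N^n_t\le N^n_T$, so
\begin{equation*}
\sup_{0\le t\le T}(t-\pi^n(t))^2\le \sup_{0\le k\le N^n_T}(\xi^n_{k+1})^2=\frac1{n^2}\sup_{0\le k\le N^n_T}\xi_{k+1}^2 .
\end{equation*}
The quantity to bound thus becomes $\frac{1}{2n}\E[\sup_{0\le k\le N^n_T}\xi_{k+1}^2]$.

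The key step is to control the expected maximum of squared inter-arrival times over a random index set, and Proposition~\ref{suptosum} is designed for exactly this. I would apply it to the i.i.d. nonnegative sequence $X_k:=\xi_{k+1}^2$, $k\ge0$, with the stopping time $\tau=N^n_T+1$. That $\tau$ is a stopping time follows from Lemma~\ref{stoppingtime}, since powers generate the same filtration; enlarging the index range from $N^n_T$ to $N^n_T+1$ only increases the supremum. Taking $q=2$ gives
\begin{equation*}
\E\Big[\sup_{0\le k\le N^n_T+1}\xi_{k+1}^2\Big]\le \E[N^n_T+2]^{1/2}\,\E[\xi_1^4]^{1/2}.
\end{equation*}
Here $\E[\xi_1^4]<\infty$ because $\xi_1$ has a finite moment generating function.

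Finally, $\E[N^n_T]=\E[N^1_{nT}]\le C_{\xi_1}nT$ for all $n\ge1$. This follows from \eqref{nnt} together with Lemma~\ref{unifint2} (take $p=2$ and apply Jensen), or from Theorem~\ref{thm:renewal} combined with the bound for small $n$. Hence $\E[N^n_T+2]^{1/2}\le C_{\xi_1T}\,n^{1/2}$. Combining the three displays gives
\begin{equation*}
\frac{C_{fT}}{2n}\,C_{\xi_1T}\,n^{1/2}=\frac{C_{\xi_1fT}}{n^{1/2}},
\end{equation*}
which is the claimed bound.

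The main obstacle is the supremum over $t$. A naive bound by the sum $\sum_k (\xi^n_{k+1})^2$ would give only $O(1)$ after multiplying by $n$, and so would not decay. The point of using Proposition~\ref{suptosum} with $q=2$ is to pay only $\E[\tau]^{1/q}\sim n^{1/2}$ rather than $\E[\tau]\sim n$. This is also why the rate is exactly $n^{-1/2}$. A larger $q$ would improve the rate to $n^{-1+1/q}$, but $q=2$ suffices for the stated bound.
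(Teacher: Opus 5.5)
Your proposal is correct and follows the paper's proof essentially step for step. You bound $f(\pi^n(t))$ on $[0,T]$ and dominate the age $t-\pi^n(t)$ by the current inter-renewal time; the paper does the same thing via the rescaling $\pi^n(t)=\frac1n\pi^1(nt)$. You then apply Proposition~\ref{suptosum} to $\xi_k^2$ with $q=2$ and conclude from $\E[N^1_{nT}]=O(n)$ via Theorem~\ref{thm:renewal}, exactly as the paper does.

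Your remark that the renewal theorem must be supplemented by a bound uniform in $n\ge 1$ (or replaced by Lemma~\ref{unifint2} at $t=nT\ge T$) makes explicit a point the paper leaves implicit.
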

	\begin{proof}
		Since $t \leq T$, $\pi^n(t) \leq T$. Therefore, there is a constant $C_{fT}>0$ such that \begin{equation}\label{rem21}
			\sup_{0 \leq t \leq T} \frac{n}{2} \left(t-\pi^n(t)\right)^2 |f(\pi^n(t))| \leq C_{fT}\frac{n}{2} \sup_{0 \leq t \leq T}\left(t-\pi^n(t)\right)^2.
		\end{equation}
		By \eqref{nnt}, $$
		\pi^n(t) = \pi^n(t) = \frac 1n \tau^1_{N^n_t} = \frac 1n \tau^1_{N^1_{nt}} = \frac 1n\pi^1(nt).
		$$
		Now, for any $t \in [0,T]$, $$
		\frac{n}{2} \left(t-\pi^n(t)\right)^2 = \frac{(nt - n\pi^n(t))^2}{2n} = \frac{(nt - \pi^1(nt))^2}{2n}.
		$$
		By the above equality, the C-S inequality and Proposition~\ref{suptosum} with $X_i=\xi_i^2$ and $q=2$,
		\begin{multline*}
			\E\left[\sup_{0 \leq t \leq T} \frac n2 (t-\pi^n(t))^2\right] = \E\left[\sup_{0 \leq t \leq T} \frac{(nt-\pi^1(nt))^2}{2n}\right] \\ \leq \frac 1{2n} \E\left[\sup_{k \leq N^1_{nT} + 1} (\xi_{k})^2 \right] \leq \frac 1{2n}\sqrt{\E[\xi_1^4]}\sqrt{\E[N^1_{nT}]+2}.
		\end{multline*}
		By Theorem~\ref{thm:renewal}, $\sqrt{\E[N^1_{nT}]+2}$ grows at the rate $\sqrt{n}$, which ensures that the right hand side decays to $0$ at the rate $\frac 1{\sqrt{n}}$. Combining the above equation with \eqref{rem21}, the result follows.
	\end{proof}
	
	Lemmas~\ref{Remainder3} and~\ref{Remainder4} are slightly harder to prove and to make them easier to treat we introduce the following auxiliary results : Lemma \ref{unifintrem34} - Lemma \ref{lem:usc}. The first of these establishes the uniform integrability condition.
	
	\begin{lemma}\label{unifintrem34}
		The following bounds hold for the renewal process $N^1_t$ and arrival process $\tau^n_k :$
		\begin{enumerate}
			\item [(i)] We have \begin{equation}\label{unifint}
				\E\left[\sup_{0 \leq t \leq T} \left|\frac{nt/\E[\xi_1] - N^1_{nt}}{\sqrt{n}}\right|^6\right] \leq C_{T\xi_1}. 
			\end{equation}
			In particular, $\left\{\sup_{0 \leq t \leq T} \left|\frac{nt/\E[\xi_1] - N^1_{nt}}{\sqrt{n}}\right|^4 : n \in \mathbb N\right\}$ is uniformly integrable.
			\item[(ii)] We have 
			\begin{equation}\label{unifint3}
				\E\left[\left(\frac{\E[\xi_1]}{n}\sum_{k=0}^{\lceil nT/\E[\xi_1] \rceil-1}\left|\frac{\tau_k- k\E[\xi_1]}{Var(\xi_1)\sqrt{n}}\right|\right)^{6}\right] < C_{T\xi_1}.
			\end{equation}
			In particular, $\displaystyle\left\{\left(\frac{\E[\xi_1]}{n}\sum_{k=0}^{\lceil nT/\E[\xi_1] \rceil-1}\left|\frac{\tau_k- k\E[\xi_1]}{Var(\xi_1)\sqrt{n}}\right|\right)^{4}  : n \in \mathbb N\right\}$ is uniformly integrable.
		\end{enumerate}
	\end{lemma}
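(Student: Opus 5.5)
The plan is to reduce both statements to moment bounds for the centred random walk $\MM_k=\tau^1_k-k\E[\xi_1]$ from Lemma~\ref{unifint1}, using only the renewal sandwich $\tau^1_{N^1_s}\le s<\tau^1_{N^1_s+1}$. Write $\mu:=\E[\xi_1]$ throughout.

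\emph{Part (i).} For every $s\ge 0$ I decompose
\begin{equation*}
\mu N^1_s - s = \bigl(\tau^1_{N^1_s}-s\bigr) - \MM_{N^1_s}.
\end{equation*}
Since $0\le s-\tau^1_{N^1_s}\le \xi_{N^1_s+1}$, this gives the pathwise bound
\begin{equation*}
\bigl|s/\mu - N^1_s\bigr| \le \frac1\mu\bigl(|\MM_{N^1_s}| + \xi_{N^1_s+1}\bigr).
\end{equation*}
Taking $s=nt$ with $t\le T$, and using that $N^1_{nt}\le N^1_{nT}$, I get
\begin{equation*}
\sup_{0\le t\le T}\bigl|nt/\mu-N^1_{nt}\bigr| \le \frac1\mu\Bigl(\sup_{k\le \tau'}|\MM_k| + \sup_{k\le \tau'}\xi_k\Bigr),
\qquad \tau':=N^1_{nT}+1 .
\end{equation*}
By Lemma~\ref{stoppingtime} (with $n=1$ and horizon $nT$), $\tau'$ is a stopping time for the $\xi$'s, and hence for the filtration of $\MM_k$.

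For the first piece I apply Lemma~\ref{unifint1}(a):
\begin{equation*}
\E\Bigl[\sup_{k\le\tau'}|\MM_k|^6\Bigr]\le C_3\,\E[\tau'^4]^{1/2}\,\E[\tau'^2]^{1/2}.
\end{equation*}
Lemma~\ref{unifint2} with $p=4$ and $p=2$, together with $(a+1)^p\le 2^{p-1}(a^p+1)$, gives $\E[\tau'^4]\le C(nT)^4+C$ and $\E[\tau'^2]\le C(nT)^2+C$. Hence the right-hand side is at most $C_{T\xi_1}n^3$ for $n\ge1$.

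For the overshoot piece I bound the supremum by the sum and use Wald's identity (Lemma~\ref{Waldlemma}):
\begin{equation*}
\E\Bigl[\sup_{k\le\tau'}\xi_k^6\Bigr]\le \E\Bigl[\sum_{k\le \tau'}\xi_k^6\Bigr]= \E[\tau'+1]\,\E[\xi_1^6]\le C_{T\xi_1}\,n ,
\end{equation*}
which is finite because $\xi_1$ has a moment generating function. Combining the two pieces via $(a+b)^6\le 2^5(a^6+b^6)$ and dividing by $n^3$ yields \eqref{unifint}.

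Uniform integrability of the fourth powers then follows from the de la Vallée-Poussin criterion: if $Y_n\ge 0$ and $\sup_n \E[Y_n^{6/4}]<\infty$, then $\{Y_n\}$ is uniformly integrable.

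\emph{Part (ii).} Note that $\tau_k-k\mu=\MM_k$, and set $m:=\lceil nT/\mu\rceil$, so that $\mu m/n\le T+\mu$. Jensen's (power-mean) inequality for the uniform average over $k=0,\dots,m-1$ gives
\begin{equation*}
\Bigl(\frac{\mu}{n}\sum_{k=0}^{m-1}|\MM_k|\Bigr)^6=\Bigl(\frac{\mu m}{n}\Bigr)^6\Bigl(\frac1m\sum_{k<m}|\MM_k|\Bigr)^6\le (T+\mu)^6\,\frac1m\sum_{k<m}|\MM_k|^6 .
\end{equation*}
Taking expectations and applying Lemma~\ref{unifint1}(b) with $p=6$ gives $\E|\MM_k|^6\le C_{6,\xi_1}k^3\le C\,m^3$. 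Dividing by $(\mathrm{Var}(\xi_1)\sqrt n)^6$, the whole expression is bounded by $C\,m^3/n^3\le C_{T\xi_1}$, which is \eqref{unifint3}. Uniform integrability of the fourth powers follows by the same de la Vallée-Poussin argument as in (i).

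\emph{Main obstacle.} The delicate step is (i). There the supremum runs over a continuum of times and stops at the random index $N^1_{nT}$, so Lemma~\ref{unifint1}(b), a fixed-$k$ bound, does not apply directly. The fix is the sandwich decomposition above, which converts the continuous supremum into a supremum over indices up to the stopping time $\tau'$. This is exactly what lets me use the stopped BDG bound of Lemma~\ref{unifint1}(a), fed by the polynomial moment bounds of Lemma~\ref{unifint2}. I must also check that the extra $+1$ in $\tau'$ and the overshoot term $\xi_{N^1_s+1}$ contribute only lower order in $n$, which the estimates above confirm: the overshoot gives order $n$ against the main term's $n^3$.
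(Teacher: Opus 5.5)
Your proposal is correct and follows the paper's proof essentially step for step. For (i) it uses the same sandwich bound $|nt/\E[\xi_1]-N^1_{nt}|\le \E[\xi_1]^{-1}\bigl(\xi_{N^1_{nt}+1}+|\MM_{N^1_{nt}}|\bigr)$, then Lemma~\ref{unifint1}(a) with Lemma~\ref{unifint2} for the random-walk term and a Wald/sum bound for the overshoot; for (ii) it uses H\"older/Jensen plus Lemma~\ref{unifint1}(b). Your version is in fact slightly more careful than the paper in two places: you apply the stopped BDG bound with $N^1_{nT}+1$, which is a genuine stopping time, whereas the paper uses $N^1_{nT}$, which is not; and you make explicit the de la Vall\'ee-Poussin step behind the two ``in particular'' uniform-integrability claims.
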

	\begin{proof}
		We first prove part (i). Observe that 
		\begin{align*}
			\left|nt/\E[\xi_1] - N^1_{nt}\right|& \leq \left|nt/\E[\xi_1] - \tau_{N^1_{nt}}/\E[\xi_1]\right| + \left|\tau_{N^1_{nt}}/\E[\xi_1] - N^1_{nt}\right| \\
			&\leq \frac 1{\E[\xi_1]}(|\xi_{N^1_{nt}+1}| + |\tau_{N^1_{nt}} - \E[\xi_1]N^1_{nt}|).
		\end{align*}
		By the triangle inequality, and the inequality $(a+b)^6 \leq 64a^6+64b^6$ we have $$
		\sup_{0 \leq t \leq T}\left|nt/\E[\xi_1] - N^1_{nt}\right|^6 \leq \frac{64}{\E[\xi_1]^6}\left(\sup_{0 \leq k \leq N^1_{nT}+1}\left|\xi_k\right|^6 + \sup_{0 \leq k \leq N^1_{nT}}\left|\tau_{k} - k\E[\xi_1]\right|^6\right).
		$$
		Taking expectations over both sides and dividing by $n^{3}$, it suffices to show that \begin{gather}
			n^{-3}\E\sup_{0 \leq k \leq N^1_{nT}+1}\left|\xi_k\right|^6 < C_{T \xi_1} \label{partone}\\
			n^{-3}\E \sup_{0 \leq k \leq N^1_{nT}}\left|\tau_{k} - k\E[\xi_1]\right|^6 < C_{T\xi_1},\label{parttwo}
		\end{gather}
		for a constant $C_{T\xi_1}$ independent of $n$. For the first, we use Proposition~\ref{suptosum} with $q=1$ to get
		$$
		n^{-3}\E\sup_{0 \leq k \leq N^1_{nT}+1}\left|\xi_k\right|^6 \leq n^{-3}\E[N^1_{nT}+2] \E[(\xi_k)^6] = n^{-2}\E[(N^1_{nT}+2)/nT] \E[(\xi_k)^6].
		$$
		By Theorem~\ref{thm:renewal}, this term decays to $0$ at the rate $n^{-2}$. For the second term, we use Lemma \ref{unifint1} (a) on $M_k = \tau_{k} - k\E[\xi_1]$ and Lemma \ref{unifint2} to see that $$
		n^{-3}\E \sup_{0 \leq k \leq N^1_{nT}}\left|M_k\right|^6 \leq n^{-3}\E[(N^1_{nt})^4]^{\frac 12} \E[(N^1_{nt})^2]^{\frac 12} \leq C_{T\xi_1}.
		$$
		for some constant $C$ independent of $n$. Thus, \eqref{partone} and \eqref{parttwo} are proved, showing that \eqref{unifint} holds.
		
		Next, we prove part (ii). Recalling that $M_k = \tau^1_k - k \E[\xi_1]$ we have $$
		\E\left[\frac{\E[\xi_1]^{6}}{n^{6}}\left(\sum_{k=0}^{\lceil nT/\E[\xi_1] \rceil-1}\left|\frac{\tau_k- k\E[\xi_1]}{Var(\xi_1)\sqrt{n}}\right|\right)^{6}\right] = \frac {\E[\xi_1]^{6}}{Var(\xi_1)^6n^{9}} \E\left[\left(\sum_{k=0}^{\lceil nT/\E[\xi_1]\rceil - 1} |M_k|\right)^{6}\right]. 
		$$
		It is sufficient to focus on the expectation. By Holder's inequality, $$
		\left(\sum_{k=0}^{\lceil nT/\E[\xi_1]\rceil - 1} |M_k|\right)^{6} \leq (\lceil nT/\E[\xi_1]\rceil)^5\left(\sum_{k=0}^{\lceil nT/\E[\xi_1]\rceil - 1} |M_k|^6\right).
		$$
		By the Lemma \ref{unifint1} (b), the expectation of the left hand side is bounded by a constant times $n^5 \sum_{k=0}^{\lceil nT/\E[\xi_1]\rceil - 1} k^3$, which is at most a constant times $n^9$. Therefore, \eqref{unifint3} follows.
	\end{proof}
	
	Next, we include the proof of Lemma~\ref{lem:assistance} which will be used in both Lemmas~\ref{Remainder3} and ~\ref{Remainder4}.
	
	\begin{lemma}\label{lem:assistance} Let $\alpha\geq 0$ be a constant, then there exists a constant $C>0$ independent of $n$ such that  $$
		\E\left[e^{2\alpha\tau^n_{\lceil nT/\E[\xi_1]\rceil}}\right]^{\frac 12} \leq C_{\alpha T\xi_1}.
		$$
	\end{lemma}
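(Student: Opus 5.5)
Set $m := \lceil nT/\E[\xi_1]\rceil$. The key observation is that $\tau^n_m = \frac1n\sum_{i=1}^{m}\xi_i$ is a sum of a \emph{deterministic} number of i.i.d.\ variables. So no stopping-time or Wald argument is needed here; independence alone lets the moment generating function factorize. This is exactly where the standing assumption that $\xi_1$ has a finite moment generating function enters.

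\textbf{Step 1: factorize.} By independence,
\begin{equation*}
\E\left[e^{2\alpha\tau^n_{m}}\right]=\prod_{i=1}^{m}\E\left[e^{\frac{2\alpha}{n}\xi_i}\right]=\left(\phi\!\left(\tfrac{2\alpha}{n}\right)\right)^{m},\qquad \phi(\lambda):=\E[e^{\lambda\xi_1}].
\end{equation*}
Since $\phi$ is finite on a neighbourhood of the origin, the argument $2\alpha/n$ lies inside that neighbourhood for all large $n$. The finitely many small $n$ are treated separately at the end.

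\textbf{Step 2: control $\log\phi$ near $0$.} This is the only real step. The function $\phi$ is smooth at $0$ with $\phi(0)=1$ and $\phi'(0)=\E[\xi_1]$. Hence for $0\le\lambda\le\lambda_0$ we have $\log\phi(\lambda)\le \E[\xi_1]\lambda + C\lambda^2$, with $C$ bounded by $\sup_{[0,\lambda_0]}\phi''$. A cruder bound suffices: $\log\phi(\lambda)\le \phi(\lambda)-1\le \lambda\,\phi'(\lambda_0)$ by convexity and the mean value theorem. Writing $K_0:=\phi'(\lambda_0)=\E[\xi_1e^{\lambda_0\xi_1}]<\infty$, this gives
\begin{equation*}
\E\left[e^{2\alpha\tau^n_{m}}\right]\le \exp\!\left(m\cdot \tfrac{2\alpha}{n}K_0\right)\le \exp\!\left(2\alpha K_0\left(\tfrac{T}{\E[\xi_1]}+1\right)\right),
\end{equation*}
using $m/n\le T/\E[\xi_1]+1$.

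\textbf{Step 3: small $n$ and conclusion.} For the finitely many $n$ with $2\alpha/n>\lambda_0$, the expectation $\E[e^{2\alpha\tau^n_m}]$ could a priori be infinite if the moment generating function is only finite near $0$. I will handle this in one of two ways:
\begin{itemize}
\item read the standing hypothesis that $\xi_1$ has a finite moment generating function as finiteness on all of $[0,\infty)$, so each of these finitely many terms is finite; or
\item restrict the statement to $n\ge n_0:=\lceil 2\alpha/\lambda_0\rceil$, which is harmless because all later uses are asymptotic in $n$.
\end{itemize}
Taking square roots and the maximum over the finitely many exceptional cases gives $C_{\alpha T\xi_1}$, independent of $n$.

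The main obstacle is this domain-of-finiteness issue for small $n$. Step 2 itself is routine convexity.
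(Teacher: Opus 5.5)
Your proof is correct and has the same skeleton as the paper's. Both factorize $\E[e^{2\alpha\tau^n_m}]=\phi(2\alpha/n)^m$ by independence, bound $\log\phi(2\alpha/n)$ by a constant times $1/n$, and finish with $m/n\le T/\E[\xi_1]+1$. The difference is in the middle step.

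\textbf{The paper's middle step.} It applies Jensen's inequality to the concave map $x\mapsto x^{1/n}$ with $X=e^{2\alpha\xi_1}$, giving $\phi(2\alpha/n)\le \phi(2\alpha)^{1/n}$. This is the chord bound for the convex function $\log\phi$ between $0$ and $2\alpha$. It yields $\E[e^{2\alpha\tau^n_m}]^{1/2}\le \E[e^{2\alpha\xi_1}]^{T/(2\E[\xi_1])+1/2}$ for every $n\ge 1$ at once, with no exceptional $n$ and no need for $\phi'$.

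\textbf{Your middle step.} You use the tangent slope $\phi'(\lambda_0)$ at a point $\lambda_0$ strictly inside the domain where $\phi$ is finite.

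\textbf{What your version buys.} It needs only local finiteness of the moment generating function. The paper's argument needs $\E[e^{2\alpha\xi_1}]<\infty$, which for large $\alpha$ is a genuine extra hypothesis. Without it the lemma as stated can fail for small $n$, so your restriction to $n\ge n_0$ is then the correct formulation.

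\textbf{Where the small-$n$ issue comes from.} The obstacle you flag is an artifact of choosing $\lambda_0$ small. If the moment generating function is finite on $[0,\infty)$, take $\lambda_0=2\alpha$ directly. Then $2\alpha/n\le\lambda_0$ for all $n\ge 1$, and no separate case is needed; this is exactly what the paper's one-line Jensen step achieves. Using the chord bound $\log\phi(\lambda)\le \frac{\lambda}{\lambda_0}\log\phi(\lambda_0)$ instead of the tangent slope would also spare you checking that $\phi'(\lambda_0)<\infty$.
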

	\begin{proof}
		Recall that $\tau^n_{\lceil nT/E[\xi_1]\rceil} = \frac 1n\sum_{k=0}^{\lceil nT/E[\xi_1]-1 \rceil} \xi_k$ is a sum of $\lceil \frac{nT}{\E[\xi_1]}\rceil $ i.i.d. copies of $\frac{\xi_k}{n}$. By Jensen's inequality, $\E[X^{\frac 1n}] \leq [\E X]^{\frac 1n}$ for all $n \geq 1$. Thus, we have
		\begin{align}
			\E\left[e^{2\alpha\tau^n_{\lceil nT/\E[\xi_1]\rceil}}\right]^{\frac 12} = \E\left[e^{\frac{2\alpha}{n}\xi_1}\right]^{\frac 12\lceil\frac {nT}{\E[\xi_1]}\rceil}& \leq \E[e^{2\alpha \xi_1}]^{\frac 1{2n}\lceil\frac {nT}{\E[\xi_1]}\rceil} \leq \E[e^{2 \alpha \xi_1}]^{ \left(\frac{T}{2\E[\xi_1]}+\frac 12\right)}.
		\end{align} 
		This completes the proof.
	\end{proof}
	
	Finally, we include the proof of the rather technical Lemma~\ref{lem:usc} which will be used in Lemma \ref{Remainder4}. Let $D([0, T])$ be the Skorokhod space, i.e., $$ D([0,T]) = \{x:[0,T] \to \R \;|\; x \text{ is right continuous and has left limits}\}.$$ Then we have the following result.
	
	\begin{lemma}\label{lem:usc}
		Suppose that $X_n \Rightarrow X$ as random elements on $D([0, T])$. Then, if $(\sup_{t \in [0,T]} X_n(t))_{n \geq 1}$ is uniformly integrable and $\E[\sup_{t \in [0,T]} X] < \infty$, we have
		\begin{equation*}
			\limsup_{n \to \infty} \E\left[\sup_{t \in [0,T]} X_n\right] \leq \E\left[\sup_{t\in [0,T]} X\right]. 
		\end{equation*}
	\end{lemma}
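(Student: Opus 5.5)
The plan is to reduce the statement to a standard consequence of weak convergence: the functional $\Phi(x):=\sup_{t\in[0,T]} x(t)$ is continuous on $D([0,T])$ with the Skorokhod $J_1$ topology. This holds because $J_1$ convergence $x_n\to x$ means there are time changes $\lambda_n$ (increasing homeomorphisms of $[0,T]$ with $\|\lambda_n-\mathrm{id}\|_\infty\to0$) such that $\|x_n\circ\lambda_n - x\|_\infty\to 0$. Since $\sup_t x_n(t)=\sup_t x_n(\lambda_n(t))$, we get $|\Phi(x_n)-\Phi(x)|\le \|x_n\circ\lambda_n-x\|_\infty\to0$. It also helps that elements of $D([0,T])$ are bounded, so $\Phi$ is finite.

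\textbf{Step 1: pass the weak convergence to the suprema.} By the continuous mapping theorem, $Y_n:=\sup_{t\in[0,T]}X_n(t)\Rightarrow Y:=\sup_{t\in[0,T]}X(t)$ as real random variables.

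\textbf{Step 2: truncate.} For $K>0$, let $h_K(y):=\max(\min(y,K),-K)$, which is bounded and continuous. Weak convergence gives $\E[h_K(Y_n)]\to\E[h_K(Y)]$. Now write
\[
\E[Y_n]=\E[h_K(Y_n)]+\E[(Y_n-K)\ind_{\{Y_n>K\}}]+\E[(Y_n+K)\ind_{\{Y_n<-K\}}].
\]
The last term is nonpositive. The middle term is at most $\sup_m\E[|Y_m|\ind_{\{|Y_m|>K\}}]$, which tends to $0$ as $K\to\infty$ by the assumed uniform integrability. Taking $\limsup_n$ therefore gives
\[
\limsup_n\E[Y_n]\le \E[h_K(Y)]+\sup_m\E\bigl[|Y_m|\ind_{\{|Y_m|>K\}}\bigr].
\]

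\textbf{Step 3: let $K\to\infty$.} Uniform integrability together with Fatou's lemma along a Skorokhod representation gives $\E|Y|\le\sup_n\E|Y_n|<\infty$, consistent with the hypothesis. Hence $h_K(Y)\to Y$ with $|h_K(Y)|\le|Y|$, and dominated convergence yields $\E[h_K(Y)]\to\E[Y]$. This proves the claim. In fact uniform integrability gives equality, but the inequality is all that is needed.

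\textbf{Main obstacle.} The only delicate point is the continuity of $\Phi$ under $J_1$, specifically that the time change does not alter the supremum over the closed interval $[0,T]$. This requires $\lambda_n$ to be a bijection of $[0,T]$ onto itself, which holds by the definition of the Skorokhod metric on $D([0,T])$. If $\E[Y]$ were $-\infty$ one would need care, but the integrability hypothesis excludes this.
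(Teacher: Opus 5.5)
Your proof is correct and follows essentially the same route as the paper: both use the time-change characterization of $J_1$ convergence to control the supremum functional, truncate, and then remove the truncation using uniform integrability. The only difference is in the middle step. You prove full continuity of $\sup_{t\in[0,T]}$ and apply the continuous mapping theorem to a bounded two-sided truncation, which also yields equality in the limit and makes the hypothesis $\E[\sup_t X(t)]<\infty$ redundant. The paper proves only upper semicontinuity and uses the portmanteau bound for upper semicontinuous functions bounded above, with the one-sided truncation $\min\{L,\cdot\}$. Because $\min\{L,\sup_t X(t)\}\le \sup_t X(t)$, this lets the paper skip your dominated-convergence step on the limit.
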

	
	\begin{proof}
		We start with the claim that the functional $\displaystyle\mathcal{T}y := \sup_{t \in [0,T]} y(t)$ is upper semicontinuous (usc) on $[0,T]$. For this, it is sufficient to prove that if $y_n \to y$ in $D[0,T]$, then $\displaystyle\limsup_{n \to \infty} \mathcal{T}y_n \leq \mathcal{T}y$. Following \cite[Page 112, second paragraph]{billingsley1968convergence}, $y_n \to y$ implies the existence of a sequence of functions $\lambda_n : [0,T] \to [0,T]$ which are strictly increasing continuous bijections, such that $\displaystyle y_n(\lambda_n(t)) \to y_n(t)$ and $\lambda_n(t) \to t$ \emph{uniformly} in $t$.
		
		Let $\eta>0$ be arbitrary. By the above paragraph, there exists $N$ such that $y_n(\lambda_n(t)) < y(t)+\eta$ for all $t\in [0,T]$ and $n>N$. In particular, since $\lambda_n$ are bijections, we have $$
		\mathcal{T}y_n = \mathcal{T}(y_n\circ \lambda_n) \leq \mathcal{T} (y \circ \lambda_n)+\eta = \mathcal{T}y+\eta
		$$
		whenever $n>N$. Since $\eta$ was arbitrary, this instantly implies our claim.
		
		For an arbitrary $L > 0$, consider the functional $\mathcal{T}_L(y) =\min\{L, \mathcal{T}y\}$. Since the pointwise minimum of usc functions remains usc, it follows that $\mathcal{T}_L(y)$ is a bounded, usc function on $[0,T]$. By \cite[Problem 7, Chapter 2]{billingsley1968convergence}, it follows that \begin{equation}\label{eq:ui0}
			\limsup_{n \to \infty}\E[\mathcal{T}_L X_n] \leq \E[\mathcal{T}_L X].
		\end{equation}
		
		By the uniform integrability condition, for all $\eta>0$ there exists $K \in \N$ such that\\ $\E[|\mathcal{T}X_n| 1_{\{|\mathcal{T}X_n| > K\}}] < \eta$ for all $n \in \N$. This implies that \begin{equation}\label{eq:ui0.5}
			\E[\mathcal{T} X_n - \mathcal{T}_K X_n] = \E[\mathcal{T} X_n 1_{\{\mathcal{T}X_n > K\}}] < \eta
		\end{equation}
		for all $n\in \mathbb N$. Now, for any arbitrary $L>0$ and $n \in \mathbb N$,
		\begin{align*}
			\E[\mathcal{T}X_n] = \E\left[\mathcal{T}X_n - \mathcal{T}_L X_n\right] + \E\left[\mathcal{T}_L X_n  - \mathcal{T}_L X\right] + \E\left[\mathcal{T}_L X - \mathcal{T} X\right] + \E\left[\mathcal{T} X\right]. 
		\end{align*}
		We take the limit-superior on both sides as $n \to \infty$. For any $\eta>0$, the first term can be made smaller than $\eta$ by taking $L$ large enough as in \eqref{eq:ui0.5}. The second term has non-positive limit superior for any $L$ by \eqref{eq:ui0}. The third term is clearly non-positive. Thus, taking $L \to \infty$ we obtain 
		$\limsup_{n \to \infty} E\left[\mathcal{T} X_n\right] \leq \E\left[\mathcal{T} X\right],$ as desired.
	\end{proof}
	
	
	We are now ready to address Lemma \ref{Remainder3}, which starts with a collection of approximation results for the sum $
	\sum_{k=0}^{N_t^n - 1} \frac{n}{2} \left(\xi^n_{k+1}\right)^2 f(\tau^n_k).
	$
	Our first lemma replaces the random sum by a deterministic sum, by noting that $N^n_t-1 \approx \frac{nt}{\E[\xi_1]}-1$ by Theorem~\ref{thm:renewal}. Therefore, values of $k$ in between $N^n_t-1$ and $\frac{nt}{\E[\xi_1]}-1$ are expected to contribute negligibly to the sum since they are very few indices in number. This is what Lemma~\ref{Remainder3} establishes. Throughout the next proofs, the interval $(a,b)$ is used to denote the set $\{x \in \mathbb{R} : \min(a,b) \leq x \leq \max(a,b)\}$.
	
	\begin{lemma}\label{Remainder3}
		We have 
		\begin{align*}
			\E\left[\sup_{ 0 \leq t \leq T} \left|\sum_{k \in (nt/\E[\xi_1]-1,  N^n_t-1) \cap \mathbb{Z}} \frac{n}{2} (\xi^n_{k+1})^2 f(\tau^n_k)\right|\right]\leq C_{\xi_1 fT} n^{-\frac {1}{4}}.
		\end{align*}
	\end{lemma}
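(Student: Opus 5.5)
The sum in Lemma~\ref{Remainder3} has at most $|nt/\E[\xi_1] - N^n_t|+2$ terms. Each term is $\frac n2(\xi^n_{k+1})^2 f(\tau^n_k) = \frac{1}{2n}\xi_{k+1}^2 f(\tau^n_k)$. The plan is to bound the number of terms with the Donsker-type scaling of Lemma~\ref{unifintrem34}(i), and to bound the size of each term by a maximum of $\xi_k^2$ controlled through Proposition~\ref{suptosum}.

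First, since $f$ is bounded on the relevant range, I take $|f(\tau^n_k)|\le C_f$. If only growth control were available, I would instead use $|f(\tau^n_k)|\le C e^{\alpha\tau^n_k}$ and Lemma~\ref{lem:assistance} with one more Cauchy--Schwarz; I assume boundedness as stated in Lemma~\ref{G4decomposition}. The indices $k$ range up to $K_T:=\max(\lceil nT/\E[\xi_1]\rceil, N^n_T)$. For every $t\le T$ this gives
\begin{equation*}
\Big|\sum_{k \in (nt/\E[\xi_1]-1,  N^n_t-1)} \tfrac{1}{2n}\xi_{k+1}^2 f(\tau^n_k)\Big| \le \frac{C_f}{2n}\Big(\sup_{0\le s\le T}\big|ns/\E[\xi_1]-N^1_{ns}\big|+2\Big)\sup_{k\le K_T+1}\xi_k^2 .
\end{equation*}
Here I used $N^n_t=N^1_{nt}$ from \eqref{nnt}.

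Second, I take expectations and apply Cauchy--Schwarz. For the count factor, Lemma~\ref{unifintrem34}(i) together with Jensen gives
\begin{equation*}
\E\Big[\sup_{s\le T}\big|ns/\E[\xi_1]-N^1_{ns}\big|^2\Big]^{1/2}\le C_{T\xi_1}\sqrt n .
\end{equation*}
For the maximum factor, $\E[\sup_{k\le K_T+1}\xi_k^4]^{1/2}$ is bounded via Proposition~\ref{suptosum} (as a sum bound with $X_i=\xi_i^4$, or with $q=2$ on $\xi_i^8$). I bound $K_T+1\le N^1_{nT}+\lceil nT/\E[\xi_1]\rceil+1$. Both pieces are handled by Wald's identity (Lemma~\ref{Waldlemma}), since $N^1_{nT}+1$ is a stopping time (Lemma~\ref{stoppingtime}) and the deterministic index is trivially one. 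The elementary renewal theorem (Theorem~\ref{thm:renewal}) then gives order $n$ for the expected number of terms. The result is
\begin{equation*}
\E\Big[\sup_{k\le K_T+1}\xi_k^4\Big]^{1/2}\le \big(C n\,\E[\xi_1^8]\big)^{1/4}\le C n^{1/4},
\end{equation*}
where finiteness of $\E[\xi_1^8]$ comes from the finite moment generating function. Combining the two factors gives the bound $\frac{C}{n}\cdot\sqrt n\cdot n^{1/4}=Cn^{-1/4}$, which is exactly the claimed rate.

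The main obstacle is the first step. The sup over $t$ must be pulled inside uniformly, and the index range must be made deterministic enough that Proposition~\ref{suptosum} applies. The random upper index $N^n_T$ is not a stopping time for every relevant filtration once it is combined with the deterministic index. I would handle this by dominating the maximum by the sum $\sum_{k\le N^1_{nT}+1}\xi_k^8+\sum_{k\le \lceil nT/\E[\xi_1]\rceil}\xi_k^8$ before taking expectations, so that Wald's identity applies to each piece separately. The remaining care is the endpoint bookkeeping, namely the ``$+2$'' terms and the rounding of $nt/\E[\xi_1]$; these only contribute $O(n^{-3/4})$.
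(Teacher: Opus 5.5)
Your argument is correct and follows the paper's skeleton. You dominate the sum by (number of indices)$\,\times\frac{1}{2n}\max_k\xi_{k+1}^2$ and split the expectation by Cauchy--Schwarz. The maximum contributes $n^{1/4}$ via Proposition~\ref{suptosum}, Wald's identity and the elementary renewal theorem, and the count contributes $\sqrt n$.

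The routes differ in two places.

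\textbf{The count.} The paper goes through Donsker's theorem for renewal processes, the uniform integrability in Lemma~\ref{unifintrem34}(i) and the semicontinuity Lemma~\ref{lem:usc}. This only yields a $\limsup$ bound in terms of $\E[\sup_{t\le T}|B_t|^4]^{1/4}$. You instead use the sixth-moment estimate of Lemma~\ref{unifintrem34}(i) directly, which by Jensen gives $\E[\sup_{s\le T}|ns/\E[\xi_1]-N^1_{ns}|^{q}]^{1/q}\le C\sqrt n$ for every $q\le 6$ and every $n$. That is shorter, non-asymptotic, and makes the weak-convergence machinery unnecessary for this lemma.

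\textbf{The control of $f$.} The paper does not use boundedness of $f$. Indices between $N^n_t-1$ and $\lceil nt/\E[\xi_1]\rceil-1$ can have $\tau^n_k>T$, so the paper bounds $|f(\tau^n_k)|\le C_Te^{\alpha\tau^n_{\lceil nT/\E[\xi_1]\rceil}}$. It then removes this factor with Lemma~\ref{lem:assistance} and a second Cauchy--Schwarz, ending with $\E[\sup|\cdot|^4]^{1/4}$ and $\E[\sup_k\xi_{k+1}^8]^{1/4}$. Your shortcut is legitimate under the literal hypothesis of Lemma~\ref{G4decomposition}. However, in the actual application $f(s)=(A-BK)e^{(A-BK)s}x_0$ is in general only exponentially bounded: bounded on $[0,T]$, not on the random range reached by $\tau^n_k$. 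So your stated fallback is the version really needed. It is exactly the paper's route and still gives $n^{-1/4}$, since the moment bound controls the fourth moment of the count too.

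Two small remarks:
\begin{itemize}
\item The $n^{1/4}$ for the maximum genuinely needs the $q=2$ form $\E[\sup_k\xi_k^4]\le\E[\sum_k\xi_k^8]^{1/2}$. The crude bound $\sup_k\xi_k^4\le\sum_k\xi_k^4$ gives only $n^{1/2}$ and destroys the rate.
\item The stopping-time concern is moot. For deterministic $m$, $\max(N^1_{nT}+1,m)$ is itself a stopping time, though your domination by two separate sums works equally well.
\end{itemize}
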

	
	\begin{proof}
		Note that if $k \in (nt/\E[\xi_1]-1, N^n_t - 1) \cap \mathbb{Z}_+$ then $k \leq \max\{\lceil nt/\E[\xi_1]\rceil -1, N^n_t-1\}$. So, we have for $t \leq T$,
		$$
		\tau^n_{k} \leq \max\{\tau^n_{\lceil nt/\E[\xi_1]\rceil}, \pi^n(t)\} \leq \max\{\tau^n_{\lceil nt/\E[\xi_1] \rceil}, T\} \leq  \tau^n_{\lceil nT/\E[\xi_1] \rceil}+ T. 
		$$
		%
		%
		%
		Since $f''$ has at most exponential growth, so does $f$. 
		Therefore, for some $\alpha>0$, $$
		|f(\tau^n_k)| \leq Ce^{\alpha\tau^n_k}  \leq C_Te^{\alpha\tau^n_{\lceil nT/\E[\xi_1] \rceil}}.
		$$
		Using this we get 
		\begin{align}
			\E&\left[\sup_{ 0 \leq t \leq T} \left|\sum_{k \in (nt/\E[\xi_1]-1, N^n_t-1) \cap \mathbb{Z}_+} \frac{n}{2} (\xi^n_{k+1})^2 f(\tau^n_k)\right|\right] \nonumber \\
			&\leq   \E\left[C_{T}e^{\alpha\tau^n_{\lceil nT/\E[\xi_1] \rceil}}\sup_{0 \leq t \leq T} \sum_{k \in (nt/\E[\xi_1]-1,N^n_t-1)\cap \mathbb{Z}_+} \frac{n}{2}(\xi^n_{k+1})^2\right] \nonumber \\
			&\leq C_{T}\E\left[e^{2\alpha\tau^n_{\lceil nT/\E[\xi_1]\rceil}}\right]^{\frac 12} \E\left[\left(\sup_{0 \leq t \leq T} \sum_{k \in (nt/\E[\xi_1]-1,N^n_t-1) \cap \mathbb{Z}_+} \frac{n}{2}(\xi^n_{k+1})^2\right)^2\right]^{\frac 12} \label{eq:main}
		\end{align}
		by the C-S inequality. The first term is bounded independent of $n$, by Lemma~\ref{lem:assistance}:
		\begin{equation}
			\E\left[e^{2\alpha\tau^n_{\lceil nT/\E[\xi_1]\rceil}}\right]^{\frac 12} \leq C_{T\alpha}.\label{eq:one}
		\end{equation}
		We now focus our attention on the other term in \eqref{eq:main}, where we begin by noting that \begin{align}
			&\E\left[\left(\sup_{0 \leq t \leq T} \sum_{k \in (nt/\E[\xi_1]-1,N^n_t-1) \cap \mathbb {Z}_+} \frac{n}{2}(\xi^n_{k+1})^2\right)^2\right]^{\frac 12}\nonumber \\
			\leq & \frac 1{2n} \E\left[\sup_{0 \leq t \leq T} \left|nt/\E[\xi_1] - N^n_t\right|^2 \sup_{k \in (nt/\E[\xi_1] - 1, N^n_t- 1) \cap \mathbb Z_+}(\xi_{k+1})^{4} \right]^{\frac 12}.\label{eq:three}
		\end{align}		
		Now, suppose that $0 \leq t \leq T$ and $k \in (nt/\E[\xi_1]-1, N^n_t-1) \cap \mathbb{Z}$. Then, observe that $$k \leq \max\{\lceil nt/\E[\xi_1]\rceil-1 , N^1_{nt}-1\} \leq \max\{\lceil nT/\E[\xi_1]\rceil-1, N^1_{nT}-1\},$$
		where we used \eqref{nnt}. Therefore, $$
		\sup_{k \in (nt/\E[\xi_1]-1, N^1_{nt}-1) \cap \mathbb {Z_+}} (\xi_{k+1})^{4} \leq \sup_{k \in (0,\max\{\lceil nT/\E[\xi_1] \rceil-1, N^1_{nT}-1\}) \cap \mathbb Z_+} (\xi_{k+1})^{4}.
		$$
		Now, splitting the supremums and applying the C-S inequality in \eqref{eq:three},
		\begin{align}
			\frac{1}{2n}\E&\left[\sup_{0 \leq t \leq T} \left|nt/\E[\xi_1] - N^1_{nt}\right|^2 \sup_{k \in (nt/\E[\xi_1] - 1, N^n_t- 1) \cap \mathbb{Z}_+}(\xi_{k+1})^{4} \right]^{\frac 12}\nonumber \\
			& \leq \frac{1}{2n}\E\left[\left(\sup_{0 \leq t \leq T} \left|nt/\E[\xi_1] - N^1_{nt}\right|^2\right)\left( \sup_{k \in (0,\max\{\lceil nT/\E[\xi_1]\rceil - 1, N^1_{nT}- 1\})}(\xi_{k+1})^{4}\right) \right]^{\frac 12}\nonumber \\
			& \leq\frac{1}{2n} \E\left[\left(\sup_{0 \leq t \leq T} \left|nt/\E[\xi_1] -N^1_{nt}\right|^{4}\right)\right]^{\frac 1{4}}\E\left[\left( \sup_{k \in (0,\max\{\lceil nT/\E[\xi_1] \rceil - 1, N^1_{nT}- 1\}) \cap \mathbb{Z}}(\xi_{k+1})^{8}\right) \right]^{\frac 1{4}}\label{eq:four}.
		\end{align}	      
		To bound the first term in \eqref{eq:four}, we note that by Donsker's Theorem~\ref{donsker} and the continuous mapping theorem \cite[Theorem 5.1]{billingsley1968convergence}, 
		$$
		\left|\frac{nt/\E[\xi_1] - N^1_{nt}}{\sqrt{n}}\right|^4 \Rightarrow \frac{Var(\xi_1)^4}{\E[\xi_1]^{12}}|B_t|^4,
		$$
		where $B_t$ is a Brownian motion (in distribution). By Lemma~\ref{unifintrem34} (i) and Lemma~\ref{lem:usc} we have 
		\begin{align}
			\limsup_{n \to \infty} \E\left[\left(\sup_{0 \leq t \leq T} \left|\frac{nt/\E[\xi_1] -N^1_{nt}}{\sqrt{n}}\right|^{4}\right)\right]^{\frac 1{4}}
			\leq \frac{Var(\xi_1)}{\E[\xi_1]^3}\E\left[\sup_{0 \leq t \leq T} |B_t|^{4}\right]^{\frac 1{4}},\label{eq:five}
		\end{align}
		Thus, the first term in \eqref{eq:four}   grows at the rate $\sqrt{n}$ as $n \to \infty$.
		
		For the other term, we use Proposition~\ref{suptosum} with $\frac{nT}{\E[\xi_1]}+N^1_{nT}+1$ and $p=1$. Indeed,
		\begin{align}
			\E\left[ \sup_{k \in (0,\max\{ nT/\E[\xi_1], N^1_{nT}- 1\}) \cap \mathbb{Z}_+}(\xi_{k+1})^{8}\right]^{\frac 1{4}}& 
			\leq\E\left[ \sup_{k \leq nT/\E[\xi_1] + N^1_{nT}+ 1} (\xi_{k+1})^{8}\right]^{\frac 1{4}} \nonumber\\
			&\leq  \E[\xi_{1}^{8}]^{\frac 1{4}} \left(\E\left[nT/\E[\xi_1] + N^1_{nT}+ 1\right]\right)^{\frac 1{4}}.\label{eq:last}
		\end{align}	
		This term grows at the rate $n^{\frac 14}$ as $n \to \infty$, by Theorem~\ref{thm:renewal}. Combining all the numbered equations \eqref{eq:main}-\eqref{eq:last}, we obtain that $$
		\E\left[\sup_{ 0 \leq t \leq T} \left|\sum_{k \in (nt/\E[\xi_1]-1, N^n_t-1) \cap \mathbb{Z}} \frac{n}{2} (\xi^n_{k+1})^2 f(\tau^n_k)\right|\right] \leq C_{\xi fT}~ n^{-1+\frac 12+\frac 14} = C_{\xi fT}~ n^{-\frac 14}
		$$
		from where the result follows.
	\end{proof}
	
	The proof of next  Lemma \ref{Remainder4} involves handling $f(\tau^n_k)$. Note that we expect $\tau^n_k \approx \frac{k\E[\xi_1]}{n}$. Thus, the mean value theorem can be used to bound $f(\tau^n_k)-f\left(\frac{k\E[\xi_1]}{n}\right)$ in terms of $\tau^n_k - k\E[\xi_1]/n$. These key ideas form the crux of the next bound.
	\begin{lemma}\label{Remainder4}
		We  have \begin{align*}
			\E\left[\sup_{0 \leq t \leq T} \left|\sum_{k=0}^{\lceil nt/\E[\xi_1] \rceil-1} \frac{n}{2}(\xi^n_{k+1})^2 \left(f(\tau^n_k) - f(k\E[\xi_1]/n)\right) \right|\right]\le C_{T\xi_1}n^{-1/4}.
		\end{align*}
	\end{lemma}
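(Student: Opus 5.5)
Set $K_n:=\lceil nT/\E[\xi_1]\rceil$. The plan follows the pattern of Lemma~\ref{Remainder3}: pull out a uniform bound on the random weights, bound the function increments by the mean value theorem, and control the centred walk $\MM_k=\tau^1_k-k\E[\xi_1]$ using the uniform integrability in Lemma~\ref{unifintrem34}(ii).

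\textbf{Step 1: remove the supremum.} Bounding the summands by their absolute values gives, for every $t\le T$,
\[
\Big|\sum_{k=0}^{\lceil nt/\E[\xi_1]\rceil-1}\frac n2(\xi^n_{k+1})^2\big(f(\tau^n_k)-f(k\E[\xi_1]/n)\big)\Big|\le \frac1{2n}\sum_{k=0}^{K_n-1}\xi_{k+1}^2\,\big|f(\tau^n_k)-f(k\E[\xi_1]/n)\big| .
\]
So the supremum over $t$ can be dropped at once.

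\textbf{Step 2: mean value theorem.} Since $f'$ grows at most exponentially (as $f''$ does), for some $\alpha>0$
\[
\big|f(\tau^n_k)-f(k\E[\xi_1]/n)\big|\le C e^{\alpha\max(\tau^n_k,\,k\E[\xi_1]/n)}\,\frac{|\MM_k|}{n}.
\]
For $k\le K_n$ we have $\tau^n_k\le \tau^n_{K_n}$ and $k\E[\xi_1]/n\le T+\E[\xi_1]$. Hence the exponential factor is at most $C_Te^{\alpha\tau^n_{K_n}}$, uniformly in $k$.

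\textbf{Step 3: separate the random weights.} The quantity to bound is now at most
\[
\frac{C_T}{n^2}\,\E\Big[e^{\alpha\tau^n_{K_n}}\sup_{k\le K_n}\xi_{k+1}^2\sum_{k=0}^{K_n-1}|\MM_k|\Big].
\]
Apply H\"older's inequality with exponents $(2,4,4)$:
\begin{itemize}
\item The factor $\E[e^{2\alpha\tau^n_{K_n}}]^{1/2}$ is bounded uniformly in $n$ by Lemma~\ref{lem:assistance}.
\item The factor $\E[\sup_{k\le K_n}\xi_{k+1}^8]^{1/4}$ is at most $(K_n\E[\xi_1^8])^{1/4}\le Cn^{1/4}$, by the argument of Proposition~\ref{suptosum} (here with a deterministic index).
\item The factor $\E[(\sum_{k<K_n}|\MM_k|)^4]^{1/4}$ is controlled by Lemma~\ref{unifint1}(b) via H\"older, as in Lemma~\ref{unifintrem34}(ii). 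This gives $\E(\sum_{k<K_n}|\MM_k|)^4\le K_n^3\sum_k C k^2\le Cn^6$, so this factor is at most $Cn^{3/2}$.
\end{itemize}

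\textbf{Step 4: combine.} Multiplying the three factors gives a total bound of
\[
C\,n^{-2}\cdot n^{1/4}\cdot n^{3/2}=C\,n^{-1/4},
\]
which is the claimed rate.

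\textbf{Main obstacle.} The delicate point is the moment estimate on $\sum_k|\MM_k|$, which must come out at exactly order $n^{3/2}$. The diffusive scaling $\E|\MM_k|\sim\sqrt k$ is essential: a crude bound $|\MM_k|\le \tau^1_k+k\E[\xi_1]$ would give order $n^2$, and the resulting estimate $n^{1/4}$ would not decay at all. Lemma~\ref{unifint1}(b) supplies the needed even-moment bound, so using it through H\"older with exponent $4$ closes the argument.

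If one wants the sharper Donsker-type constant instead, Lemma~\ref{unifintrem34}(ii) together with Theorem~\ref{usualdonsker} identifies $n^{-3/2}\sum_k|\MM_k|$ asymptotically with $\int_0^{T}|B_s|\,ds$ up to constants. The moment bound above already suffices for the stated rate.
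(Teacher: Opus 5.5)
Your proof is correct and follows the paper's skeleton. You extend the sum to $K_n-1$ to drop the supremum, apply the mean value inequality with the exponential factor dominated by $e^{\alpha\tau^n_{K_n}}$, and split the expectation by H\"older with exponents $(2,4,4)$, which is the same as the paper's two applications of Cauchy--Schwarz. Lemma~\ref{lem:assistance} handles the exponential, and the sup-by-sum bound gives $\E[\sup_k\xi_{k+1}^8]^{1/4}\le Cn^{1/4}$.

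The one real difference is how you obtain $\E[(\sum_{k<K_n}|\MM_k|)^4]^{1/4}\le Cn^{3/2}$. The paper gets the equivalent $\sqrt n$ growth of $\E[(\sum_k|\tau^n_k-k\E[\xi_1]/n|)^4]^{1/4}$ through a weak-convergence argument. It uses Donsker's theorem (Theorem~\ref{usualdonsker}) and the generalized continuous mapping theorem to show the normalized Riemann sum converges weakly to $\int_0^T|B_t|\,dt$. It then upgrades this to convergence of fourth moments via the uniform integrability in Lemma~\ref{unifintrem34}(ii).

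You instead combine the power-mean inequality with $\E|\MM_k|^4\le Ck^2$ from Lemma~\ref{unifint1}(b). That is exactly the computation inside the proof of Lemma~\ref{unifintrem34}(ii). Your route shows the weak-convergence step is not needed for the rate; indeed the uniform sixth-moment bound of Lemma~\ref{unifintrem34}(ii) already implies the fourth-moment bound by Lyapunov's inequality. It also gives an estimate valid for every $n$, whereas convergence of moments alone only controls large $n$. What the paper's route adds is the identification of the limiting constant as a multiple of $\E[(\int_0^T|B_t|\,dt)^4]^{1/4}$, which the lemma does not require.
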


	\begin{proof}
		For every $k=0,1,\ldots,\lceil nt/\E[\xi_1] \rceil-1$ and $0 \leq t \leq T$, observe that $$
		\max\{\tau^n_k,k\E[\xi_1]/n\} \leq \max\{\tau^n_{\lceil nt/\E[\xi_1]\rceil},t\} \leq T + \tau^n_{\lceil nT/\E[\xi_1]\rceil}.
		$$
		Recall that $f'$ grows at most exponentially, because $f''$ does so. By the mean value inequality,$$
		|f(\tau^n_k) - f(k\E[\xi_1]/n)| \leq C \left|\tau^n_k - k\E[\xi_1]/n\right| e^{\alpha \tau^n_{\lceil nT/\E[\xi_1]\rceil }}
		$$
		for some $\alpha>0$. This implies that
		\begin{align*}
			\sup_{0 \leq t \leq T}&\left|\sum_{k=0}^{\lceil nt/\E[\xi_1] \rceil-1} \frac{n}{2}(\xi^n_{k+1})^2 \left(f(\tau^n_k) - f(k\E[\xi_1]/n)\right) \right| \\ &\leq C\sup_{0 \leq t \leq T}\left(\sum_{k=0}^{\lceil nt/\E[\xi_1]\rceil -1} \frac{n}{2}(\xi^n_{k+1})^2|\tau^n_k- k\E[\xi_1]/n|\right)e^{\alpha\tau^n_{\lceil nT/\E[\xi_1] \rceil}} \\&
			\leq  C\left(\sum_{k=0}^{\lceil nT/\E[\xi_1]\rceil-1} \frac{n}{2}(\xi^n_{k+1})^2|\tau^n_k- k\E[\xi_1]/n|\right)e^{\alpha\tau^n_{\lceil nT/\E[\xi_1]\rceil}}.
		\end{align*}	
		We take the expectation above, and apply the C-S inequality to separate the terms.\begin{align}
			\E&\left[\left(\sum_{k=0}^{\lceil nT/\E[\xi_1]\rceil-1} \frac{n}{2}(\xi^n_{k+1})^2|\tau^n_k- k\E[\xi_1]/n|\right)e^{\alpha\tau^n_{\lceil nT/\E[\xi_1]}\rceil}\right] \nonumber \\
			& \leq  C \E\left[\left(\sum_{k=0}^{\lceil nT/\E[\xi_1]\rceil-1} \frac{n}{2}(\xi^n_{k+1})^2|\tau^n_k- k\E[\xi_1]/n|\right)^2\right]^{\frac 12}\E\left[e^{2\alpha\tau^n_{\lceil nT/\E[\xi_1]\rceil}}\right]^{\frac 12}.\label{rem41}
		\end{align}	
		By Lemma~\ref{lem:assistance}, \begin{equation}\label{rem42}
			\E\left[e^{2\alpha\tau^n_{\lceil nT/\E[\xi_1]\rceil}}\right]^{\frac 12} \leq C_{\alpha T}
		\end{equation}
		is bounded independent of $n$ (where we note that $\alpha = |A-BK|$ for the linear case). To bound the first term in \eqref{rem41}, we remove the factor $\sup_{ k \leq \lceil nT/\E[\xi_1]\rceil+1} \xi_{k+1}^2$ from the sum.
		\begin{align}
			\E&\left[\left(\sum_{k=0}^{\lceil nT/\E[\xi_1]\rceil-1} \frac{n}{2}(\xi^n_{k+1})^2\left|\tau^n_k- k\E[\xi_1]/n\right|\right)^2\right]^\frac12 
			\nonumber \\
			&=\frac 1{2n} \E\left[\left(\sum_{k=0}^{\lceil nT/\E[\xi_1]\rceil-1}(\xi_{k+1})^2|\tau^n_k- k\E[\xi_1]/n|\right)^2\right] ^\frac12\nonumber\\
			&\leq   \frac 1{2n} \E\Bigg[\left(\sum_{k=0}^{\lceil nT/\E[\xi_1]\rceil-1}|\tau^n_k- k\E[\xi_1]/n|\right)^2\left(\sup_{k\leq \lceil nT/\E[\xi_1] \rceil-1} (\xi_{k+1})\right)^{4}\Bigg]^{\frac 12} \nonumber \\
			&\leq \frac 1{2n} \E\left[\left(\sum_{k=0}^{\lceil nT/\E[\xi_1] \rceil-1}|\tau^n_k- k\E[\xi_1]/n|\right)^{4}\right]^{\frac 1{4}} \E\left[\sup_{k\leq \lceil nT/\E[\xi_1]\rceil-1} (\xi_{k+1})^8\right]^{\frac 1{4}}\label{rem43},
		\end{align} 
		where we applied the C-S inequality in the last step. The second term is controlled by applying Proposition~\ref{suptosum} with $q=1$, $X_i=\xi_{k+1}^8$ and noticing that $\lceil nT/\E[\xi_1]+1 \rceil$ is constant: 
		\begin{equation}
			\E\left[\left(\sup_{k\leq \lceil nT/\E[\xi_1]-1 \rceil} (\xi_{k+1})\right)^{8}\right]^{\frac 1{4}} \leq \E\left[\sum_{k=0}^{ \lceil nT/\E[\xi_1]+1 \rceil} (\xi_{k+1})^{8}\right]^{\frac 1{4}}
			\leq C_{T}\E\left[(\xi_{1})^{8}\right]^{\frac 14}n^{\frac 14}.\label{rem44}
		\end{equation}		
		The first term is handled by the usual Donsker's Theorem~\ref{usualdonsker}, from which we know that $$
		\frac{\tau_{\lfloor nt \rfloor} - \lfloor nt\rfloor \E[\xi_1]}{Var(\xi_1) \sqrt{n}} \Rightarrow B_t
		$$
		on $D([0,T])$, where $B_t$ is a Brownian motion (in distribution). We will now use the "generalized" continuous mapping theorem, see \cite[Theorem 1.11.1]{wellner2013weak}. Note that every function in $D([0,T])$ is Riemann integrable since it has only countably many discontinuities. Therefore, any sequence of Riemann sums of such a function converge to the integral of the function. That is, for any $g \in D([0,T])$, $$
		\frac{\E[\xi_1]}{n}\sum_{k=0}^{\lceil nT/\E[\xi_1] \rceil} \left|g\left(\frac{k}{n}\right)\right| \to \int_0^T |g(t)| dt.
		$$
		Applying the generalized continuous mapping theorem,
		\begin{equation}
			\left(\frac{\E[\xi_1]}{n}\sum_{i=1}^{\lfloor nT/\E[\xi_1]\rfloor}\left|\frac{\tau_{k} -  k \E[\xi_1]}{Var(\xi_1) \sqrt{n}} \right|\right)^{4}\Rightarrow \left(\int_0^T |B_t| dt\right)^{4}.\label{cmp}
		\end{equation}
		To strengthen this result into  convergence of expectations, observe that \begin{equation*}\E\left[\left(\sum_{k=0}^{nT/\E[\xi_1]-1}|\tau^n_k- k\E[\xi_1]/n|\right)^{4}\right]^{\frac 1{4}} = \sqrt{n}\frac{Var(\xi_1)}{\E[\xi_1]}\E\left[\left(\frac{\E[\xi_1]}{n}\sum_{k=0}^{\lceil nT/\E[\xi_1] \rceil-1}\left|\frac{\tau_k- k\E[\xi_1]}{Var(\xi_1)\sqrt{n}}\right|\right)^{4}\right]^{\frac 1{4}}.
		\end{equation*}
		By Lemma~\ref{unifintrem34} (ii),  \eqref{cmp} can be upgraded to convergence of moments. In particular,        
		$$
		\E\left[\left(\frac{\E[\xi_1]}{n}\sum_{k=0}^{\lceil nT/\E[\xi_1] \rceil-1}\left|\frac{\tau_k- k\E[\xi_1]}{Var(\xi_1)\sqrt{n}}\right|\right)^{4}\right]^{\frac 1{4}} \to \E\left[\left(\int_0^T |B_t| dt\right)^{4}\right]^{\frac 1{4}}.
		$$
		Therefore, it follows that this particular term grows at the rate $\sqrt{n}$ as $n \to \infty$. Thus
		\begin{align*}
			\E\left[\left(\sum_{k=0}^{\lceil nT/\E[\xi_1]\rceil-1} \frac{n}{2}(\xi^n_{k+1})^2|\tau^n_k- k\E[\xi_1]/n|\right)^2\right]^\frac12 \leq n^{-1/4}C_{T\xi_1}.
		\end{align*}
		Combining the above equation with the numbered equations \eqref{rem41}-\eqref{cmp}, the result follows.
	\end{proof}
	
	We shall now resolve the final step of this transition, which is the replacement of $(\xi^n_{k+1})^2$ by its expectation $\E[(\xi^n_1)^2]$. Once this is complete, we will have a deterministic Riemann integral to work with.
	
	\begin{lemma}\label{Remainder5}
		We have, for $\sigma = Var(\xi_1^2)$, that $$
		\E\left[\sup_{0 \leq t \leq T} \left|\sum_{k=0}^{\lceil nt/\E[\xi_1]\rceil- 1} \frac{n}{2} \left((\xi^n_{k+1})^2 -\E[(\xi^n_{k+1})^2]\right)f\left(\frac{k\E[\xi_1]}{n}\right)\right|\right] 
		\leq \frac{C\sigma_1}{n}\left(1+\frac{nT}{\E[\xi_1]}\right)^{\frac{1}{2}}.
		$$
	\end{lemma}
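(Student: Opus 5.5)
The sum in Lemma~\ref{Remainder5} is a martingale transform with deterministic weights, so the plan is to bound it with Doob's maximal inequality. Write $\xi^n_{k+1}=\xi_{k+1}/n$, so that
\[
\frac n2\bigl((\xi^n_{k+1})^2-\E[(\xi^n_{k+1})^2]\bigr)=\frac{1}{2n}\bigl(\xi_{k+1}^2-\E[\xi_1^2]\bigr)=:\frac{1}{2n}\zeta_{k+1}.
\]
The variables $\zeta_{k+1}$ are i.i.d., centred, and have variance $\sigma=Var(\xi_1^2)$, which is finite because $\xi_1$ has a moment generating function. Set $m(t):=\lceil nt/\E[\xi_1]\rceil-1$, $K:=m(T)$, and
\[
S_j:=\sum_{k=0}^{j}\zeta_{k+1}f\left(\frac{k\E[\xi_1]}{n}\right),\qquad j=0,1,\ldots,K.
\]
The quantity inside the expectation is then $\frac{1}{2n}\sup_{0\le t\le T}|S_{m(t)}|$.

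The first step is to note that $t\mapsto m(t)$ is nondecreasing and takes values in $\{-1,0,\ldots,K\}$, with the convention $S_{-1}=0$. Hence
\[
\sup_{0\le t\le T}|S_{m(t)}|\le \max_{0\le j\le K}|S_j|,
\]
and the supremum over continuous time becomes a maximum over a deterministic discrete index set. The weights $f(k\E[\xi_1]/n)$ are deterministic. For $k\le K$ their arguments satisfy $k\E[\xi_1]/n\le T+\E[\xi_1]/n\le T+\E[\xi_1]$, and $f$ is bounded there by some $C_{fT}$ (in fact $f$ is bounded by hypothesis). Consequently $(S_j)_j$ is a square-integrable martingale with respect to the filtration generated by $\xi_1,\xi_2,\ldots$, because its increments are independent centred variables multiplied by constants.

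The second step applies Doob's $L^2$ maximal inequality together with Jensen's inequality and orthogonality of the increments:
\[
\E\Bigl[\max_{j\le K}|S_j|\Bigr]\le \E\Bigl[\max_{j\le K}|S_j|^2\Bigr]^{1/2}\le 2\,\E[|S_K|^2]^{1/2}=2\Bigl(\sigma\sum_{k=0}^{K}f(k\E[\xi_1]/n)^2\Bigr)^{1/2}\le 2C_{fT}\sqrt{\sigma}\,(K+1)^{1/2}.
\]
Since $K+1=\lceil nT/\E[\xi_1]\rceil\le 1+nT/\E[\xi_1]$, multiplying by $\frac{1}{2n}$ gives the bound $\frac{C\sigma_1}{n}\bigl(1+\frac{nT}{\E[\xi_1]}\bigr)^{1/2}$ with $\sigma_1=\sqrt{\sigma}$ and $C$ depending on $f$ and $T$. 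This is of order $n^{-1/2}$.

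The only delicate point is the first step. The supremum over $t$ must reduce to a maximum over a fixed, deterministic range of partial sums, and that holds precisely because the upper summation index $\lceil nt/\E[\xi_1]\rceil-1$ is deterministic, unlike $N^n_t$. Once that is in place, Doob's inequality applies directly and no renewal-theoretic input such as Lemma~\ref{Waldlemma} is needed. I would also check that the constant in Doob's inequality is absorbed into $C$, and that the index bookkeeping ($k$ from $0$ to $K$) matches the sum as written.
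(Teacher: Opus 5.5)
Your proposal is correct and is essentially the paper's proof. The paper likewise forms the martingale $S_m=\sum_{k=1}^{m}\bigl(\xi_k^2-\E[\xi_k^2]\bigr)f\bigl((k-1)\E[\xi_1]/n\bigr)$ with deterministic weights, reduces the supremum over $t$ to a maximum over $m\le\lceil nT/\E[\xi_1]\rceil$, and applies Cauchy--Schwarz together with Doob's $L^2$ maximal inequality and $\E|S_m|^2\le mC\,Var(\xi_1^2)$, implicitly taking $\sigma_1=\sqrt{\sigma}$ exactly as you do.
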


	\begin{proof} 
		Define   \begin{equation}\label{rem51} S_{m} = \sum_{k=1}^{m} \left((\xi_{k})^2 -\E[(\xi_{k})^2]\right)f\left(\frac{(k-1)\E[\xi_1]}{n}\right)\end{equation} and $S_{0}=0$. Then $S_{m}$ is a martingale with respect to $\mathcal{F}_m=\sigma(\xi_1, \xi_2, ....\xi_{m})$. By the C-S inequality and Doob's maximal inequality, 
		\begin{equation}\label{eq:rem52} \E\left[\sup_{0 \leq m \leq  \lceil \frac {nT}{\E[\xi_1]}\rceil}|S_m|\right]\leq \sqrt{ \E\left[\sup_{0 \leq m \leq  \lceil \frac {nT}{\E[\xi_1]}\rceil}|S_m|^2\right]}\leq 2\sqrt{\E\left[|S_{\lceil \frac {nT}{\E[\xi_1]}\rceil}|^2\right]}.\end{equation}
		Let $X_k = (\xi_{k})^2 - \E[(\xi_k)^2]$. For any $1 \leq m \leq \lceil \frac{nT}{\E[\xi_1]}\rceil$, by \eqref{rem51} we have
		\begin{align}
			\E|S_m|^2&=\sum_{k=1}^{m} \left|f\left(\frac{(k-1)\E[\xi_1]}{n}\right)\right|^2\E[(X_{1}^2)]
			\leq m C_{ABKT}Var(\xi_1^2).\label{eq:rem53}
		\end{align}
		Combining equations \eqref{rem51}-\eqref{eq:rem53},
		\begin{align*}
			\frac 1{2n}\E&\left[\sup_{0 \leq t \leq T} \left|\sum_{k=0}^{\lceil nt/\E[\xi_1]\rceil- 1} \left((\xi_{k+1})^2 -\E[(\xi_{k+1})^2]\right)f\left(\frac{k\E[\xi_1]}{n}\right)\right|\right]\\
			&\hspace{1.5cm} = \frac 1{2n}\E \left[\sup_{0 \leq m \leq \lceil \frac {nT}{\E[\xi_1]}\rceil} \left|S_m\right| \right]
			\leq \frac{C_{ABKT}\sigma_1 \sqrt{\lceil \frac {nT}{\E[\xi_1]}\rceil}}{n}
			\leq \frac{C\sigma_1}{n}\left(1+\frac{nT}{\E[\xi_1]}\right)^{\frac{1}{2}},
		\end{align*}
		whence the result follows. Thus, this term will decay at rate $1/\sqrt{n}.$
	\end{proof}
	The final lemma establishes the accuracy of the Riemann integral approximation. Although the argument is standard, we include the proof to maintain completeness of the presentation.
	\begin{lemma}\label{Remainder6} As $n\to \infty, $
		we have   
		\begin{align*}
			\E \left[\sup_{0\leq t\leq T}\left| \sum_{k=0}^{nt/\E[\xi_1]} \frac{n}{2}\mathbb{E}(\xi_{k+1}^n)^2 f(\E[\xi_1]k/n)-\int_0^{t} Mf(s)ds\right|\right]\le \frac{1}{n}C_{\xi_1MT}.
		\end{align*}
		Note that the expectation is over a deterministic quantity, hence irrelevant.
	\end{lemma}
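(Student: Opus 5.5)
The sum is deterministic, so the task is a quantitative Riemann-sum estimate. I will compute the coefficient exactly and then compare the sum with the integral through a left-endpoint approximation. Since $\xi^n_{k+1}=\xi_{k+1}/n$, each summand equals
\[
\frac{n}{2}\,\E[(\xi^n_{k+1})^2]\,f\Big(\frac{\E[\xi_1]k}{n}\Big)=\frac{\E[\xi_1^2]}{2n}\,f\Big(\frac{\E[\xi_1]k}{n}\Big)=M\,h\,f(kh),
\]
with mesh $h:=\E[\xi_1]/n$, because $M=\E[\xi_1^2]/(2\E[\xi_1])$ and $\E[\xi_1^2]/(2n)=M\E[\xi_1]/n=Mh$. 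The sum is therefore $M$ times the left Riemann sum $\sum_{k=0}^{K_t} h f(kh)$, where $K_t=\lfloor nt/\E[\xi_1]\rfloor$ (the range $k\le nt/\E[\xi_1]$ is read with integer $k$). Its nodes satisfy $K_t h\le t<(K_t+1)h$.

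For each fixed $t\in[0,T]$ I split the error as
\[
M\sum_{k=0}^{K_t-1}\int_{kh}^{(k+1)h}\big(f(kh)-f(s)\big)\,ds \;+\; M h f(K_t h)\;-\;M\int_{K_t h}^{t} f(s)\,ds .
\]
For the first piece I use the mean value theorem on each cell: $|f(kh)-f(s)|\le h\sup_{[0,T+1]}|f'|$. The hypothesis of Lemma \ref{G4decomposition} gives at most exponential growth of $f''$, hence of $f'$, so this supremum is a finite constant $C_{fT}$. Summing over at most $T/h+1$ cells bounds the first piece by $M C_{fT}(T+h)h$. The two boundary pieces each involve an interval of length at most $h$, and together they are bounded by $2Mh\sup_{[0,T+1]}|f|$.

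None of these bounds depends on $t$, so the supremum over $t\in[0,T]$ is at most $C_{\xi_1 M T}\,h = C_{\xi_1 MT}\E[\xi_1]/n$, with $f$-dependence absorbed into the constant as elsewhere in the paper. Because the quantity is deterministic, the expectation does nothing, and the claimed $1/n$ rate follows.

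The only delicate point is bookkeeping. The upper summation index must match the convention used in the preceding lemmas, where the upper index is $\lceil nt/\E[\xi_1]\rceil-1$ rather than $\lfloor\cdot\rfloor$. With either convention, the discrepancy between the last node and $t$ is at most a fixed number of mesh cells, so it is absorbed by the boundary estimate above.
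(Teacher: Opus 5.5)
Your proof is correct and is essentially the paper's argument. You write the sum as $M\sum_k f(s_k)\,\Delta s$ with $\Delta s=\E[\xi_1]/n$ and $s_k=k\,\Delta s$, and you bound each cell by the mean value inequality, which gives an $O(1/n)$ error uniformly in $t\in[0,T]$. The only difference is that you explicitly bound the last partial cell, $M h f(K_t h)-M\int_{K_t h}^{t} f(s)\,ds$, by $2Mh\sup|f|$. The paper instead silently identifies $\sum_{k}\int_{s_k}^{s_{k+1}} f(s)\,ds$ with $\int_0^t f(s)\,ds$, so your version closes that small gap without changing the $1/n$ rate.
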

	
	\begin{proof}
		We have 
		$$  \sum_{k=0}^{nt/\E[\xi_1]} \frac{n}{2}\E(\xi_{k+1}^n)^2 f(\E[\xi_1]k/n)=\sum_{k=0}^{nt/\E[\xi_1]} \frac{\E(\xi_{1})^2 }{2n}f(\E[\xi_1]k/n).
		$$
		Let $\Delta s=\E[\xi_1]/n$ and $s_k=\E[\xi_1]k/n.$ Then $\frac{\E{[\xi_1^2]}}{2n}=\frac{\E{[\xi_1^2]}}{2\E[\xi_1]}\Delta s=M\Delta s,$ and so
		$$\sum_{k=0}^{nt/\E[\xi_1]} \frac{\E(\xi_{1})^2 }{2n}f(\E[\xi_1]k/n)=\sum_{k=0}^{nt/\E[\xi_1]} \frac{\E(\xi_{1})^2 }{2n}f(s_k)=\sum_{k=0}^{nt/\E[\xi_1]} Mf(s_k)\Delta s.$$
		Let $g(s) = \int_{s_k}^s f(r)dr \in C^2[s_k,s_{k+1}]$, with $g'' = f'$. Since $\sup_{t\in [0,T]} \|f'\| < \infty$, we have
		$$\left|f(s_k)\Delta s-\int_{s_{k}}^{s_{k+1}} f(s) ds\right|\leq \|f'\|_{L^\infty} (\Delta s)^2/2. $$
		using the mean value inequality. Summing over  $0 \leq k \leq nt/\E[\xi_1]$ we have
		\begin{align*}
			\left|\sum_{k=0}^{nt/\E[\xi_1]} Mf(s_k)\Delta s-M\int_{0}^{t} f(s) ds\right|&\leq \|f'\|_{L^\infty} M \sum_{k=0}^{nt/\E[\xi_1]} (\Delta s)^2/2\\&\leq M\|f'\|_{L^\infty}\frac{nt}{\E[\xi_1]} \left(\frac{\E[\xi_1]}{n}\right)^2\le \frac{M t\|f'\|_{L^\infty}}{n}\E[\xi_1].
		\end{align*}
		Taking supremum over time $t\in [0,T],$ we get
		\begin{equation*}
			\sup_{t\in [0,T]}\left|\sum_{k=0}^{nt/\E[\xi]} Mf(s_k)\Delta s-M\int_{0}^{t} f(s) ds\right|\le \frac{1}{n}C_{\xi_1MT},
		\end{equation*}     
		completing the proof.
	\end{proof}
	
	Finally, the proofs of Lemma~\ref{G4decomposition} and \ref{G4Lemma} are straightforward.
	
	\begin{proof}[\textbf{Proof of the Lemma \ref{G4decomposition} and \ref{G4Lemma}}] Combining results of Lemmas \ref{Remainder1} - \ref{Remainder6}, we get the Lemma  \ref{G4decomposition}. Finally, observe that for $f(t)=(A-BK)x(t)=(A-BK)e^{(A-BK)t}x_0$, all the assumptions of Lemma \ref{G4decomposition} are satisfied. Hence, Lemma \ref{G4decomposition} applies to this function $f$, with constants depending on the matrices $A,B,K$ and on the time $T$. As a result, invoking the estimate from Lemma \ref{G4decomposition}, we obtain Lemma \ref{G4Lemma} as a direct corollary.
	\end{proof}

	\subsection{The Noise Part: Proposition \ref{noisebd}}\label{subsection5.2}
	
	In this section, we prove Proposition~\ref{noisebd}. The proof follows by bounding $\LL^{\e,n}_2(t)$ using functionals of the Brownian motion $W_s$.
	
	\begin{proof}[Proof of Proposition \ref{noisebd}]
		We have 
		$$\LL^{\e,n}_2(t) = \int_0^t e^{sA}\left(\M_s-\M_{\pi^n(s)}\right)  ds.$$	
		For any $t\geq 0$, we have \[\M_t-\M_{\pi^n(t)}=\int_{\pi^n(t)}^t e^{-sA}dW_s. \] 
		From \eqref{noiseint}, we have
		\[ e^{tA}(\M_t-\M_{\pi^n(t)})=W_t-e^{(t-\pi^n(t))A} W_{\pi^n(t)}+e^{tA}\int_{\pi^n(t)}^t e^{-sA}AW_s ds.\]
		By adding and subtracting $e^{(t-\pi^n(t))A}W_t$ on right hand side, we have 
		\begin{align}\label{noisedecomposition}
			|e^{tA}(\M_t-\M_{\pi^n(t)})|\leq \left|I-e^{(t-\pi^n(t))A}\right|&\sup_{0\leq s\leq t}|W_s|+e^{(t-\pi^n(t))|A|}|W_t-W_{\pi^n(t)}| \nonumber\\
			&+e^{t|A|} \left(\int_{\pi^n(t)}^t e^{-s|A|}|A|ds\right)\sup_{0\leq s\leq t}|W_s|.
		\end{align}
		Now, we will find estimates for each term  of right hand sides of \eqref{noisedecomposition}. Let us focus on first term.
		\begin{align}\label{nd1}
			\left|I-e^{(t-\pi^n(t))A}\right|=\left| \int_0^{(t-\pi^n(t))} e^{sA}|A| ds\right|\le(t-\pi^n(t))e^{(t-\pi^n(t))|A|}|A|\le\xi_{N_t^n+1}^ne^{(\xi_{N_t^n+1}^n)|A|}|A|. 
		\end{align}
		For the next term, a simple bound holds:
		\begin{align}\label{nd2}
			e^{(t-\pi^n(t))|A|}|W_t-W_{\pi^n(t)}|\leq e^{\xi_{N_t^n+1}^n|A|}|W_t-W_{\pi^n(t)}|. 
		\end{align}
		For the last term, we have
		\begin{align}\label{nd3}
			\int_{\pi^n(t)}^t e^{-s|A|}|A|ds=e^{t|A|}-e^{\pi^n(t)|A|}&=e^{t|A|}(1-e^{-(t-\pi^n(t))|A|})\nonumber\\&\leq e^{t|A|}(t-\pi^n(t))|A|\leq \xi_{N_t^n+1}^ne^{t|A|}|A|.
		\end{align}
		Using estimates \eqref{nd1}, \eqref{nd2} and \eqref{nd3} in \eqref{noisedecomposition}, we have 
		\begin{align}\label{4.16}
			|e^{tA}(M_t-M_{\pi^n(t)})|\leq \xi_{N_t^n+1}^ne^{(\xi_{N_t^n+1}^n)|A|}|A|\sup_{0\leq s\leq t}|W_s|&+e^{\xi_{N_t^n+1}^n|A|}|W_t-W_{\pi^n(t)}|\nonumber\\& + \xi_{N_t^n+1}^ne^{2t|A|}|A|\sup_{0\leq s\leq t}|W_s|.
		\end{align}
		Integrating with respect to time $0\leq s\leq t,$  
		\begin{align*}
			|\LL^{\e,n}_2(t)|\leq |A|\int_0^t \xi_{N_t^n+1}^ne^{(\xi_{N_s^n+1}^n)|A|}\sup_{0\leq r\leq s}|W_r| &ds+\int_0^te^{\xi_{N_s^n+1}^n|A|}|W_s-W_{\pi^n(s)}|ds\\&+ e^{2t|A|}|A|\int_0^t \xi_{N_s^n+1}^n \sup_{0\leq r\leq s}|W_r|ds.
		\end{align*}
		Taking the supremum over $t\in [0,T]$ and then the expectation,
		\begin{align*}
			\E\left[\sup_{0 \le t \le T} |\LL^{\e,n}_2(t)|\right]
			&\leq |A|\int_0^T \E\left[\xi_{N_s^n+1}^ne^{(\xi_{N_s^n+1}^n)|A|}\right] \E\left[\sup_{0\leq r\leq s}|W_r| \right]ds
			\\ &+\int_0^T\E\left[e^{\xi_{N_s^n+1}^n|A|}\right]\E\left[|W_s-W_{\pi^n(s)}|\right]ds\\ &+e^{2t|A|}|A|\int_0^T \E\left[\xi_{N_s^n+1}^n\right] \E\left[\sup_{0\leq r\leq s}|W_r|\right]ds\\
			&\leq \frac{C_{AT\xi_1}}{n}\left(\int_0^T \sqrt{s} ds
			+n\int_0^T \E\sqrt{(s-\pi^n(s))} ds+ \int_0^T \sqrt{s}ds\right)\\
			&\leq  C_{AT\xi_1}\E[\NN_{1/2}].
		\end{align*}
		Since $\E[\NN_{1/2}] \leq \frac{C_{\xi_1}}{\sqrt{n}}$ by Corollary \ref{Np} and so, we have
		\begin{align*}
			\E\left[\sup_{0 \le t \le T} |\LL^{\e,n}_2(t)|\right] \leq 
			\frac{C_{AT\xi_1}}{\sqrt{n}}.
		\end{align*}
	\end{proof}


	
	\section{Generalization to the Nonlinear Case}\label{section6}
	This section is motivated by the work presented in \cite{dhama2025asymptotic}, where the author has worked on fast periodic sampling perturbed by Poisson random measure. In the present section, with the help of Lemma \ref{G4decomposition}, we extend the result of \cite{dhama2025asymptotic} to a more general sampling setting by considering a nonlinear hybrid system subject to random sampling but driven by white noise.  More precisely, we consider that	
	\begin{align}\label{GSDE}
		dY^{\e,n}_t = c\left(Y^{\e,n}_{t}, Y^{\e,n}_{{\pi^n}}\right)dt + \e \sigma(Y^{\e,n}_{t}) dW_t 
		\quad Y^{\e,n}_0=y_0,
	\end{align}		
	where the functions $c: \R^d \times \R^d \to \R^d$ and $\sigma: \R^d \to \R^{d \times d}$ are assumed to be measurable and to satisfy appropriate regularity conditions. The process $W$  represent an independent Brownian motion. Observe that equation \eqref{GSDE} can naturally be interpreted as a random perturbation of an underlying nonlinear control system.
	\begin{equation}\label{nonlineareqn1}
		\dot{y}=c'(y,u), \quad y(0)= y_0 \in \R^d
	\end{equation}where $ c':\R^d\times \R^d \to \R^d$, with a feedback control law $u=\kappa(y)$, for some appropriate function $\kappa.$
	In this case, the drift function $c$ in \eqref{GSDE} is obtained from $c'$ 
	by absorbing the feedback control into the dynamics, namely, $c(y,z) := c'(y,\kappa(z)).$ 
	The argument $z = Y^{\e,n}_{\pi^n}$ in \eqref{GSDE}
	reflects a sample and hold implementation of the feedback control, 
	where the control is updated only at the random time instants as discussed in Section 2. 
	Therefore, in equation \eqref{GSDE}, the sampling effect is through the term $Y^{\e,n}_{{\pi^n}}$. Here, we also note that the dynamics of the random sampled counterpart of equation \eqref{nonlineareqn1} can be expressed as 
	\begin{equation}\label{GRODE}
		\dot{y}^{n}_t =  c(y_t^{n},y_{{\pi^n}}^{n}), \quad y_0^n= y_0 \in \R^d
	\end{equation} which is also a fully non linear equation. Our aim in this section is to analyze  hybrid system \eqref{GSDE} and to understand how the presence of random sampling and small external noise affect the behavior of solution. Now, we  state the hypotheses that will be used in the subsequent analysis.
	\begin{hypp}[Lipschitz continuity]\label{LipCont}
		There exists a positive constant $C$ such that for any $x_1, x_2, z_1, z_2 \in \R^d,$ we have
		\begin{equation*}
			\begin{aligned}
				|c(x_1,x_2)-c(z_1,z_2)|& \le C(|x_1-z_1|+|x_2-z_2|),~~~~\text{and}\\
				|\sigma(x_1)-\sigma(x_2)|&\le C |x_1-x_2|.
			\end{aligned}
		\end{equation*}			
	\end{hypp}
	From Hypothesis \ref{LipCont}, we observe that there exists a positive constant $C$ such that for any $x, z \in \R^d,$ we have
	\begin{equation}\label{growthofnoisecoefficeients}
		\begin{aligned}
			|c(x,z)| & \le C(1+|x|+|z|),~~~~\text{and}\quad 
			|\sigma(z)| &\le C(1+|z|).
		\end{aligned}			
	\end{equation}

	\begin{hypp}[Boundedness and linear growth of derivatives]\label{Derivative}
		For the vectors $x=(x_1,\cdots, x_n), y=(y_1,\cdots,y_n)\in \R^d$ and $c:\R^d \times \R^d \to \R^d,$ define
		$$D_1 c(x,y) := \left(\frac{\partial c_i}{\partial x_k}(x,y)\right)_{1\le i,k\le n}
		\quad \text{and} \quad
		D_2 c(x,y) := \left(\frac{\partial c_i}{\partial y_k}(x,y)\right)_{1\le i,k\le n},$$  
		be the Jacobian matrices of $f$ with respect to the first and second variables, respectively.
		Second-order derivatives are defined component wise by
		$$
		D_1^2c:=D_{11} c := D_1(D_1 c), \qquad
		D_{12} c := D_1(D_2 c), \qquad
		D_2^2c:=D_{22} c := D_2(D_2 c),
		$$
		and higher-order derivatives, such as $D_2^3 f$, are understood in the same manner. We have following assumptions. We assume that\begin{equation*}
			\begin{aligned}
				&\left|D_1c(x,y)\right|\le C(1+|y|),\quad \left|D_2c(x,y)\right| \le C,\quad\left|D_1D_2c(x,y)\right|\le C, \\
				& \left|D_1^2c(x,y)\right| \le C(1+|y|), \quad  and \quad \left|D_2^2c(x,y)\right|\le C.   \end{aligned}
		\end{equation*}
	\end{hypp}

	We establish the law of large numbers and the central limit theorem in the presence of random sampling for this setting also. The proof of the LLN follows the general strategy of the linear case, but requires additional arguments to control the contributions arising from the multiplicative noise term. For the CLT, we build on the framework introduced in \cite{dhama2025asymptotic}, suitably adapting it to accommodate the random sampling terms. In particular, to deal with the main complicated fluctuation term in the CLT ( Proposition \ref{MainTermApprox} ), we use Lemma~\ref{G4decomposition}, which simplifies the computations and makes them comparatively easier than the approach in \cite{dhama2025asymptotic}.
	
	We are now ready to establish our first main result of this section. But before that, we make a small yet important remark.
	\begin{rmkk}\label{rmkk1}
		We observe that by using an argument analogous to that in Lemma \ref{ODEbd} and applying   \eqref{growthofnoisecoefficeients}, one can get that
		\begin{gather}
			\sup_{ 0 \leq t \leq T}|y_t|<C_T,\quad \quad
			\sup_{ 0 \leq t \leq T}|y_t^n|<C_T.\label{boundonYandYn}
		\end{gather}
	\end{rmkk}
	\subsection{LLN  Result for Nonlinear Case}The first main result shows that when $\e$ is small and  $n$ is large, the stochastic process $Y^{\e,n}_t $ behaves deterministically. In particular,  $Y^{\e,n}_t \to y_t$ uniformly in $L^p(\Omega),~ p \geq 1$ in all the Regime.

	\begin{theorem}[Law of Large Numbers Type Result]\label{LLNExtension}
		Let $Y_t^{\e,n}$ and $y_t$ denote the respective solutions to equations \eqref{GSDE} and \eqref{nonlineareqn1}. Then, for any fixed $0<T<\infty$ and any $1\le p< \infty$, there exists a positive constant $C$, depending  on $T$ and $\xi_1$ only, such that for any $\e>0$ and $n \in \mathbb{N},$ we have
		\begin{equation*}
			\E\left[\sup_{0\le t \le T}|Y_t^{\e,n}-y_t|^p\right]\le 
			(\e^p + n^{-p} )C_{T\xi_1}.
		\end{equation*}
	\end{theorem}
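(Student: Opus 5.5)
Mirroring the linear case (Theorem~\ref{LLN}), we split through the intermediate sampled deterministic trajectory $y^n_t$ solving \eqref{GRODE}:
\[
\E\Big[\sup_{0\le t\le T}|Y^{\e,n}_t-y_t|^p\Big]\le C_p\Big(\E\Big[\sup_{0\le t\le T}|Y^{\e,n}_t-y^n_t|^p\Big]+\E\Big[\sup_{0\le t\le T}|y^n_t-y_t|^p\Big]\Big).
\]
The first term should be $O(\e^p)$ and the second $O(n^{-p})$. Here $y_t$ is the ideal closed loop solution $\dot y_t=c(y_t,y_t)$.

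\emph{Deterministic part.} We write
\[
y^n_t-y_t=\int_0^t\big[c(y^n_s,y^n_{\pi^n(s)})-c(y_s,y_s)\big]ds .
\]
Hypothesis~\ref{LipCont} bounds the integrand by $C\big(2|y^n_s-y_s|+|y^n_s-y^n_{\pi^n(s)}|\big)$. By \eqref{growthofnoisecoefficeients} and Remark~\ref{rmkk1},
\[
|y^n_s-y^n_{\pi^n(s)}|\le\int_{\pi^n(s)}^s|c(y^n_r,y^n_{\pi^n(r)})|dr\le C_T(s-\pi^n(s)).
\]
Taking $p$-th powers, applying H\"older, and then Gronwall as in Lemma~\ref{Assympwithoutnoise} gives $\sup_{t\le T}|y^n_t-y_t|^p\le C_T\NN_p$ pathwise. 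Corollary~\ref{Np} then yields $\E[\NN_p]\le C_{T\xi_1}n^{-p}$, since $\E[N^n_T]/n$ is bounded uniformly in $n$ by Theorem~\ref{thm:renewal}.

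\emph{Stochastic part.} We set $D_t=Y^{\e,n}_t-y^n_t$ and $\mathtt S_t=\sup_{s\le t}|D_s|$. Then
\[
D_t=\int_0^t\big[c(Y^{\e,n}_s,Y^{\e,n}_{\pi^n(s)})-c(y^n_s,y^n_{\pi^n(s)})\big]ds+\e\int_0^t\sigma(Y^{\e,n}_s)dW_s .
\]
Lipschitz continuity bounds the drift difference by $C(|D_s|+|D_{\pi^n(s)}|)\le 2C\mathtt S_s$, because $\pi^n(s)\le s$.

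For the martingale term we use BDG and then \eqref{growthofnoisecoefficeients}:
\[
\E\sup_{t\le T}\Big|\int_0^t\sigma(Y^{\e,n}_s)dW_s\Big|^p\le C_p\E\Big(\int_0^T|\sigma(Y^{\e,n}_s)|^2ds\Big)^{p/2}\le C_{pT}\Big(1+\int_0^T\E|Y^{\e,n}_s|^pds\Big)
\]
for $p\ge2$; the case $1\le p<2$ follows by Jensen. Since $|Y^{\e,n}_s|\le|y^n_s|+|D_s|$ and $\sup_{s\le T}|y^n_s|\le C_T$ by \eqref{boundonYandYn}, this gives
\[
\E[\mathtt S_T^p]\le C_{pT}\int_0^T\E[\mathtt S_s^p]ds+\e^pC_{pT}\Big(1+\int_0^T\E[\mathtt S_s^p]ds\Big).
\]
Gronwall, using $\e<1$, then yields $\E[\mathtt S_T^p]\le C_{pT}\e^p$.

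The main obstacle is justifying this Gronwall step. It needs an a priori finite moment $\E[\sup_{t\le T}|Y^{\e,n}_t|^p]<\infty$, and the constant must not depend on $n$ or on the random grid. I would first handle this by noting that $\pi^n(s)\le s$ lets us bound the delayed argument by the running supremum pathwise, so the random sampling times never enter the constants. Finiteness then follows by a standard localization: stop at $\inf\{t:|Y^{\e,n}_t|\ge R\}$, run the same estimate for the stopped process, and let $R\to\infty$ via Fatou. The growth constants in \eqref{growthofnoisecoefficeients} carry no dependence on $n$. Combining the two parts proves Theorem~\ref{LLNExtension} with constant $C_{T\xi_1}$ (also depending on $p$ and on the Lipschitz constants).
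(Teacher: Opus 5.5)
Your proposal is correct and follows essentially the same route as the paper. It makes the triangle split through $y^n$, proves a pathwise Lipschitz--Gronwall bound $\sup_{t\le T}|y^n_t-y_t|^p\le C_T\NN_p$ closed by Corollary~\ref{Np}, and proves a Lipschitz--BDG--Gronwall bound $\E[\sup_{t\le T}|Y^{\e,n}_t-y^n_t|^p]\le C\e^p$.

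The only difference is how you control the moments of $Y^{\e,n}$. The paper quotes the a priori bound \eqref{Ynbound} up front, while you absorb it into the Gronwall step via $|Y^{\e,n}_s|\le|y^n_s|+|D_s|$ and justify finiteness by localization. Your version thereby avoids the stray constant and the misprinted drift $c(y^n_s,y^n_s)$ in the paper's \eqref{nl4}. Your restriction $\e<1$ matches the paper's standing assumption $\e\in(0,1)$. It is genuinely needed: with multiplicative noise the bound cannot hold uniformly for large $\e$.
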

	\begin{proof} 
		In order to tackle the multiplicative noise term, we need to first establish the  moment bounds for the process $Y_{t}^{\e,n}$. This can be established  easily for any $p\geq 1$ by using standard techniques for estimating moments of solutions to stochastic differential equations (see \cite{oksendal2003sde}) and linear growth conditions \eqref{growthofnoisecoefficeients}, yielding
		\begin{align}\label{Ynbound}	 
			\E\left[\sup_{0\le s \le T}|Y_s^{\e,n}|^p\right]\le C_{pT}, \quad \E\left[\sup_{t\in [0, T]}\left|\int_0^t \sigma(Y_{s}^{\e,n}) dW_s\right|^p\right] \leq C_{pT}.
		\end{align}
		
		We shall now proceed as in the proof of Theorem~\ref{LLN} by using the triangle inequality. That is, from equations \eqref{GSDE} and \eqref{GRODE}, for any $p\geq 1$ we have
		\begin{align*}
			|Y_t^{\e,n}-y_t^n|^p\leq & 2^{p-1} \left|\int_0^tc(Y_s^{\e,n},Y_{\pi^{n}(s)}^{\e,n})-c(y_s^n,y_s^n)ds\right|^p+2^{p-1}\e^p\left|\int_0^t \sigma(Y_s^{\e,n})dW_s\right| ^p.
		\end{align*}
		Using Hypothesis \ref{LipCont}, conditions \eqref{growthofnoisecoefficeients}, taking sup over time, expectation on both the sides, and using \eqref{Ynbound}, we get
		\begin{align}
			\E\sup_{t\in [0,T]}|Y_t^{\e,n}-y_t^n|^p\leq  C_p \E\sup_{t\in [0,T]}\int_0^t (1+|Y_s^{\e,n}-y_s^n|^p+|Y_{\pi^n(s)}^{\e,n}-y_s^n|^p )ds
			+\e^pC_{pT},\label{nl4}.
		\end{align}
		Using the fact that $\pi^n(t) \leq t,$ we have $$\E \left[\sup_{ 0 \leq t \leq T}|Y_{\pi^{n}(t)}^{\e,n}-y_t^n|\right]\leq \E \left[\sup_{ 0 \leq t \leq T}|Y_{t}^{\e,n}-y_t^n|\right]. $$ Finally, applying Gronwall's inequality to \eqref{nl4} 
		\begin{align}\label{ApproxG2}
			\E\left[\sup_{ 0 \leq t \leq T}|Y_t^{\e,n}-y_t^n|^p\right]\leq \e^p C_{pT}.
		\end{align}
		Next,  using Hypothesis \ref{LipCont} we have for any $1\leq p< \infty$ that
		\begin{align*}
			|y_t^n-y_t|^p\leq& \left|\int_0^tc(y_s^n,y_{\pi^{n}(s)}^n)-c(y_s,y_s)ds\right|^p\leq C_{Tp} \int_0^t\left(|V_s|^p+(s-\pi^n(s))^pC_T\right)ds,
		\end{align*}
		where $ V_s=\sup_{0\leq r\le s}|y_s^n-y_s|$.~Taking the supremum and expectation on both the sides, then applying the Gronwall's inequality, for any $1\leq p< \infty,$ we get
		\begin{align}\label{ApproxG1}
			\E[\sup_{ 0 \leq t \leq T}|y_t^n-y_t|^p]\leq \E[\NN_p] C_T.
		\end{align}
		Combining estimates \eqref{ApproxG2} and \eqref{ApproxG1} using the triangle inequality,
		\begin{align*}
			\E\left[\sup_{ 0 \leq t \leq T}|Y_t^{\e,n}-y_t^n|^p\right]&\leq C_p\E\left[\sup_{ 0 \leq t \leq T}|Y_t^{\e,n}-y_t^n|^p\right]+C_p\E[\sup_{ 0 \leq t \leq T}|y_t^n-y_t|^p]\\&\leq C_{Tp}(\E([\NN_p])+\e^p)\le (\e^p+n^{-p})C_{Tp\xi_1}
		\end{align*}
		by Corollary \ref{Np}, for any $1\leq p< \infty$.
	\end{proof}
	

	\subsection{CLT Result for Nonlinear Case}This subsection is devoted to establishing CLT-type results for the generalized setting. Our analysis is covering Regimes 1 and 2, while the corresponding result for Regime 3 can be obtained by following the similar arguments as in the linear case.
	In Regimes 1 and 2, let us define the rescaled fluctuation process
	\begin{equation*}\label{fluctuation}
		\mathsf{Z}^{\e,n}_t := \frac{Y^{\e,n}_t - y_t}{\e}. 
	\end{equation*}	Here, we note that the coarser parameter $\e$ is used to rescale the stochastic quantity $(Y_t^{\e,n}-y_t).$ To get more insight into the rescaled process $\mathsf{Z}^{\e,n}_t,$ we get by \eqref{GSDE} and \eqref{nonlineareqn1} that
	\begin{equation}\label{fluctuation1}
		\mathsf{Z}^{\e,n}_t=\frac1\e\int_0^t \left\{c\left(Y^{\e,n}_{s}, Y^{\e,n}_{{\pi^n(s)}}\right)-c(y_s,y_s)\right\} ds + \int_0^t \sigma(Y^{\e,n}_{s})dW_s.
	\end{equation}
	Applying Taylor's theorem (see \cite{edwards2012advanced}), we obtain
	\begin{align*}
		\mathsf{Z}^{\e,n}_t=\int_0^t \left\{D_1c(y_s,y_s)+D_2c(y_s,y_s)\right\}\mathsf{Z}^{\e,n}_{s} ds + \int_0^t D_2c(y_s,y_s)&\left(\frac{Y^{\e,n}_{{\pi^n(s)}}-Y_{s}^{\e,n}}{\e} \right)ds\nonumber\\&+ \int_0^t \sigma(Y^{\e,n}_{s})dW_s+{\mathrm R}_t^{\e,n},
	\end{align*}	
	\text{where}
	\begin{align}
		{\mathrm R}_t^{\e,n}:= \int_0^t \Bigg[\frac{c\left(Y^{\e,n}_{s}, Y^{\e,n}_{{\pi^n(s)}}\right)-c(y_s,y_s)}{\e} - D_1c(y_s,&y_s)\mathsf{Z}^{\e,n}_{s} - D_2c(y_s,y_s)\mathsf{Z}^{\e,n}_{s}\nonumber \\& -D_2c(y_s,y_s)\left(\frac{Y^{\e,n}_{{\pi^n(s)}}-Y_{s}^{\e,n}}{\e} \right)\Bigg]ds. \label{Rten}
	\end{align}
	Our goal is to describe the limiting behavior of the fluctuation process $\mathsf{Z}^{\e,n}_t$ as $\e\searrow 0 $ and $n\to \infty.$ For this purpose, we define a function
	\begin{align}\label{ellg}
		\ell_g(t):={\cc M}\int_0^t D_2c(y_s,y_s)\cdot c(y_s,y_s)  ds.
	\end{align}
	Suppose, we are able to show that ${\mathrm R}^{\e,n}_t= \mathcal{O}(\e^2 + n^{-2})$, and
	\begin{align*}
		\int_0^t D_2c(y_s,y_s)\left(\frac{Y^{\e,n}_{{\pi^n(s)}}-Y_{s}^{\e,n}}{\e} \right)ds\to \ell_g(t)
	\end{align*}
	as $\e\searrow 0$ and $n\to \infty.$	 Then the process $\mathsf{Z}^{\e,n}_t$ converges to a limiting process  $\mathsf{Z}=\{\mathsf{Z}_t: t \ge 0\}$ which is uniquely defined as the solution of the stochastic differential equation given below 
	\begin{align}\label{Zsoln}
		\mathsf{Z}_t:= \!\int_0^t\{D_1c(y_s,y_s)+D_2c(y_s,y_s)\} \mathsf{Z}_{s}  ds +{\cc}M \int_0^t D_2c(y_s,y_s)\cdot c(y_s,y_s)  ds \!+ \!\!\int_0^t \sigma(y_s) dW_s.
	\end{align}
	This argument is the central focus of this subsection. We analyze this approximation in Regimes 1 and 2 in the following theorem, which  constitutes the second main result of this section.
	\begin{theorem}[Central Limit Theorem Type Result]\label{CLTExtension2}
		Let $y_t$ and $Y_t^{\e,n}$ denote the solutions of \eqref{nonlineareqn1} and \eqref{GSDE}, respectively. Furthermore, let $\mathsf{Z}^{\e,n}_t$ and $\mathsf{Z}_t$ be defined by \eqref{fluctuation1} and \eqref{Zsoln}, respectively. Suppose that we are in Regime $i \in \{1,2\}$, i.e., $\displaystyle{\lim_{\e \searrow 0, n\to \infty}1/(n\e) = \cc \in [0,\infty)}$. Then, for any fixed $T \in (0,\infty)$, there exists a positive constant $C$ independent of $\e$ and $n$, and there exists $\e_0>0$ such that for $0<\e<\e_0$, we have
		\begin{equation*}\label{FCLT}
			\E\left[\sup_{0 \le t \le T} |\mathsf{Z}^{\e,n}_t - \mathsf{Z}_t|\right] 
			\le  [\cc (n^{-1/4}+\e)+ \varkappa(\e)+ \e+n^{-1/2}]C_{MT\xi_1}.
		\end{equation*}
	\end{theorem}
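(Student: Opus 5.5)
We subtract \eqref{Zsoln} from the Taylor-expanded form of \eqref{fluctuation1} and write $\mathsf{Z}^{\e,n}_t-\mathsf{Z}_t$ as a linear drift term plus three error terms. Put $\mathcal{D}(s):=D_1c(y_s,y_s)+D_2c(y_s,y_s)$. Then
\begin{align*}
\mathsf{Z}^{\e,n}_t-\mathsf{Z}_t=\int_0^t \mathcal{D}(s)(\mathsf{Z}^{\e,n}_s-\mathsf{Z}_s)\,ds+\mathcal{E}_1(t)+\mathcal{E}_2(t)+{\mathrm R}^{\e,n}_t ,
\end{align*}
with
\begin{align*}
\mathcal{E}_1(t)&=\int_0^t D_2c(y_s,y_s)\frac{Y^{\e,n}_{\pi^n(s)}-Y^{\e,n}_s}{\e}\,ds-\ell_g(t),\\
\mathcal{E}_2(t)&=\int_0^t(\sigma(Y^{\e,n}_s)-\sigma(y_s))\,dW_s .
\end{align*}
The sign of $\ell_g$ will need care. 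The sampling gap $Y_{\pi^n(s)}-Y_s$ has drift $-(s-\pi^n(s))\,c$, so the limit carries a minus sign, exactly as in the linear case. Either $\ell_g$ enters with a minus or \eqref{Zsoln} already absorbs it; I will keep consistent orientation throughout.

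Since $\mathcal{D}$ is bounded on $[0,T]$ (Hypothesis \ref{Derivative} together with \eqref{boundonYandYn}), Gronwall's lemma applied to the supremum reduces the theorem to bounding $\E\sup_{t\le T}$ of each error term.

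\emph{The noise term $\mathcal{E}_2$.} By BDG, Hypothesis \ref{LipCont} and Theorem \ref{LLNExtension} with $p=2$,
\[
\E\sup_{t\le T}|\mathcal{E}_2(t)|\le C\big(\E\sup_{s\le T}|Y^{\e,n}_s-y_s|^2\big)^{1/2}\le C(\e+n^{-1}).
\]
This accounts for the $\e$ and part of the $n^{-1/2}$ in the stated rate.

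\emph{The remainder ${\mathrm R}^{\e,n}$.} Use a second-order Taylor expansion of $c$ around $(y_s,y_s)$. The first-order terms cancel by construction, because $\frac{Y_{\pi^n(s)}-y_s}{\e}=\mathsf{Z}^{\e,n}_s+\frac{Y_{\pi^n(s)}-Y_s}{\e}$. What is left is $\e^{-1}$ times quadratic terms in $Y_s-y_s$ and $Y_{\pi^n(s)}-y_s$, with coefficients controlled by the second-derivative bounds of Hypothesis \ref{Derivative}. These coefficients have linear growth in $|y|$, which is harmless by \eqref{Ynbound}. Hölder and Theorem \ref{LLNExtension} with $p=4$ bound the terms in $|Y_s-y_s|^2$ by $C(\e^2+n^{-2})/\e$. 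The terms in $|Y_{\pi^n(s)}-Y_s|^2$ are handled by
\[
|Y_{\pi^n(s)}-Y_s|\le C(s-\pi^n(s))(1+\sup|Y|)+\e\Big|\int_{\pi^n(s)}^s\sigma\,dW\Big| .
\]
This yields $\e^{-1}\E[\NN_2]+\e\,\E[\NN_1]\lesssim \e^{-1}n^{-2}+\e n^{-1}$. Since $1/(n\e)\to\cc$, we have $\e^{-1}n^{-2}\lesssim(\cc+\varkappa)n^{-1}$, so the remainder is $O(\e+n^{-1})$.

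\emph{The sampling term $\mathcal{E}_1$, which I expect to be the main obstacle.} First write
\[
Y^{\e,n}_s-Y^{\e,n}_{\pi^n(s)}=\int_{\pi^n(s)}^s c(Y_r,Y_{\pi^n(r)})\,dr+\e\int_{\pi^n(s)}^s\sigma(Y_r)\,dW_r .
\]
\begin{itemize}
\item[(a)] The stochastic piece, divided by $\e$, is treated as in Proposition \ref{noisebd}. Its conditional second moment is at most $C(s-\pi^n(s))$, so its contribution is $O(\E[\NN_{1/2}])=O(n^{-1/2})$. To keep the supremum in $t$ inside, I would use Fubini to rewrite the $ds$-integral of the stochastic piece as a stochastic integral, or simply pass the absolute value inside.
\item[(b)] In the drift piece, replace $c(Y_r,Y_{\pi^n(r)})$ by $c(y_s,y_s)$. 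By Lipschitz continuity and Theorem \ref{LLNExtension}, the error is $\e^{-1}\E[\NN_1\sup(|Y-y|+|y_r-y_s|)]\lesssim\e^{-1}n^{-1}(\e+n^{-1})$, which is $\lesssim(\cc+\varkappa)(\e+n^{-1})$.
\item[(c)] What remains is $\frac{1}{n\e}\int_0^t n(s-\pi^n(s))f(s)\,ds$ with $f(s)=D_2c(y_s,y_s)c(y_s,y_s)$. Lemma \ref{G4decomposition} applies, with smoothness of $f$ along $y$ coming from Hypothesis \ref{Derivative} and the ODE. It shows this equals $\frac{M}{n\e}\int_0^t f\,ds$ up to $\frac{1}{n\e}C n^{-1/4}$.
\item[(d)] Finally, replacing $1/(n\e)$ by $\cc$ costs $M\varkappa(\e)$ exactly as for $G_5$.
\end{itemize}

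Collecting (a)–(d) gives $\E\sup|\mathcal{E}_1|\lesssim\cc(n^{-1/4}+\e)+\varkappa+n^{-1/2}$, where $\e_0$ is chosen so that $\varkappa<1$. Gronwall's lemma then yields the stated bound.
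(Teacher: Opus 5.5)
Your route is essentially the paper's. You use Gronwall on the linearized drift, Proposition~\ref{sigmadifference} for $\mathcal E_2$, and Proposition~\ref{RemainderTerms} for ${\mathrm R}^{\e,n}$. For the sampling term (Proposition~\ref{MainTermApprox}) you split off the stochastic piece $\J_3^{\e,n}$, make a Lipschitz replacement of the drift, apply Lemma~\ref{G4decomposition}, and finish with a $\varkappa$-correction. There are two improvements over the paper. First, your one-step replacement of $c(Y_r,Y_{\pi^n(r)})$ by $c(y_s,y_s)$, keeping the deterministic weight $D_2c(y_s,y_s)$, is cleaner than the paper's chain $\J_1^{\e,n},\I_1^{\e,n},I_2^2$, which passes through $D_2c(Y_{\pi^n(s)},Y_{\pi^n(s)})$ and back. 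Second, your remainder order $O(\e+n^{-1}+\e^{-1}n^{-2})$ is the correct one. The bound $O(\e^2+n^{-2})$ recorded in Proposition~\ref{RemainderTerms} loses the prefactor $1/\e$. Your order still fits inside the stated rate.

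The sign you left open must be decided, and it goes against \eqref{Zsoln} as printed; \eqref{Zsoln} does not ``absorb'' it. The Taylor expansion produces $\int_0^tD_2c(y_s,y_s)\,\e^{-1}(Y^{\e,n}_{\pi^n(s)}-Y^{\e,n}_s)\,ds$. Your steps (a)--(d) show instead that $\int_0^tD_2c(y_s,y_s)\,\e^{-1}(Y^{\e,n}_s-Y^{\e,n}_{\pi^n(s)})\,ds\to\ell_g(t)$; Proposition~\ref{MainTermApprox} does the same, after silently reversing the orientation. Hence, with \eqref{Zsoln} as written, your $\mathcal E_1\to-2\ell_g$, and the stated bound fails whenever $\cc>0$.

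Concretely, take $d=1$, $c(x,z)=-z$, $\sigma\equiv1$, $y_0\neq0$. Then $\E[\mathsf Z^{\e,n}_t]=\e^{-1}\E[y^n_t-y_t]$ and $n\,\E[y^n_t-y_t]\to-Mt\,y_t$, so $\E[\mathsf Z^{\e,n}_t]\to-\cc Mt\,y_t$. But \eqref{Zsoln} gives $\E[\mathsf Z_t]=+\cc Mt\,y_t$. What your argument actually proves is the theorem with $-\cc M\int_0^tD_2c(y_s,y_s)c(y_s,y_s)\,ds$ in \eqref{Zsoln}, so state it that way. Do not lean on the linear case for confirmation. There \eqref{ZtDefn} carries $-BK\ell(t)$, i.e.\ (with $D_2c=-BK$) the same $+\ell_g$ as \eqref{Zsoln}. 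By contrast, \eqref{eq:mainterm} together with \eqref{ellDefn} yields $+BK\ell(t)$.

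One smaller repair is needed in (c). Hypothesis~\ref{Derivative} gives no third derivatives of $c$. So $f(s)=D_2c(y_s,y_s)c(y_s,y_s)$ is only locally Lipschitz along $y$, with an exponentially growing constant, and need not be twice differentiable as Lemma~\ref{G4decomposition} requires. This is harmless: Lemmas~\ref{Remainder1}--\ref{Remainder6} use nothing beyond such a Lipschitz bound, and a first-order expansion in Lemma~\ref{Remainder1} already gives $O(n^{-1})$. You should say so explicitly. The paper mollifies instead, but as written that does not help, because the constant $C_{\xi_1fT}$ would then depend on the mollification scale through $f''$.
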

	The proof of Theorem~\ref{CLTExtension2} is based on a collection of intermediate results. We begin by presenting  the necessary propositions and lemmas. Once these are stated, we combine them to conclude the proof of the theorem at the end of this subsection. The proof of those intermediate results will be provided in next subsection.
	\begin{proposition}\label{sigmadifference}		
		We have 
		\begin{gather*}
			\E\left[\sup_{0 \le t \le T}\left|\int_0^t \{\sigma(Y_{s}^{\e,n})-\sigma(y_{s})\} dW_s\right|\right] \le C_T[\E[\NN_2]+\e^2]^\frac12\le [\e+n^{-1}]C_{\xi_1 T}.
		\end{gather*}
	\end{proposition}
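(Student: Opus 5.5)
Bound the supremum of the stochastic integral by its second moment. Write $\mathcal{I}_t := \int_0^t \{\sigma(Y_{s}^{\e,n})-\sigma(y_{s})\}\,dW_s$, a continuous square-integrable martingale: by Hypothesis \ref{LipCont}, \eqref{growthofnoisecoefficeients}, \eqref{Ynbound} and Remark \ref{rmkk1}, the integrand is progressively measurable with finite second moment uniformly on $[0,T]$. By Jensen (Cauchy--Schwarz), Doob's $L^2$ maximal inequality (or BDG with exponent $2$) and the It\^o isometry,
\begin{equation*}
\E\Big[\sup_{0\le t\le T}|\mathcal{I}_t|\Big]\le \Big(\E\Big[\sup_{0\le t\le T}|\mathcal{I}_t|^2\Big]\Big)^{1/2}\le 2\Big(\E\int_0^T |\sigma(Y_s^{\e,n})-\sigma(y_s)|^2\,ds\Big)^{1/2}.
\end{equation*}

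Next apply the Lipschitz bound $|\sigma(Y_s^{\e,n})-\sigma(y_s)|\le C|Y_s^{\e,n}-y_s|$ from Hypothesis \ref{LipCont}, and estimate the integral by $T\,\E[\sup_{0\le s\le T}|Y_s^{\e,n}-y_s|^2]$. Split $Y^{\e,n}-y = (Y^{\e,n}-y^n)+(y^n-y)$ exactly as in the proof of Theorem \ref{LLNExtension}: estimate \eqref{ApproxG2} with $p=2$ gives $\e^2C_T$ for the first piece, and \eqref{ApproxG1} with $p=2$ gives $C_T\E[\NN_2]$ for the second. Hence the right-hand side above is at most $C_T[\E[\NN_2]+\e^2]^{1/2}$, which is the first inequality.

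For the second inequality, invoke Corollary \ref{Np} with $p=2$: $\E[\NN_2]\le n^{-2}\,\frac{\E[N^n_T]+2}{n}\,C_{\xi_1}$. By \eqref{nnt} and the elementary renewal theorem (Theorem \ref{thm:renewal}), $\frac{\E[N^n_T]+2}{n}$ is bounded uniformly in $n$ by a constant $C_{T\xi_1}$; for a bound valid for every $n$ rather than only asymptotically, I would use Lemma \ref{unifint2} with $p=2$ and Jensen. Then $\E[\NN_2]\le C_{T\xi_1}n^{-2}$, and $\sqrt{a+b}\le\sqrt a+\sqrt b$ gives $[\E[\NN_2]+\e^2]^{1/2}\le (n^{-1}+\e)C_{\xi_1T}$.

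No step is a serious obstacle. The one point needing care is that the constants in \eqref{ApproxG2} and \eqref{ApproxG1} come out independent of $n$ and $\e$. This rests on the uniform moment bound \eqref{Ynbound} and on the pathwise bound \eqref{boundonYandYn}, which is used to control $|y^n_s-y^n_{\pi^n(s)}|\le C_T(s-\pi^n(s))$ in deriving \eqref{ApproxG1}. Since both are already recorded, the argument reduces to combining them with Doob and the It\^o isometry.
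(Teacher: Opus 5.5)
Your proposal is correct and is essentially the paper's argument. The paper also reduces, via Cauchy--Schwarz and BDG (your Doob-plus-It\^o-isometry step is the exponent-$2$ case of this), to $C\big[\E\sup_{0\le t\le T}|Y^{\e,n}_t-y_t|^2\big]^{1/2}$ using Hypothesis \ref{LipCont}, and then invokes Theorem \ref{LLNExtension} (exactly the combination of \eqref{ApproxG2} and \eqref{ApproxG1} you use) together with Corollary \ref{Np}; your use of Lemma \ref{unifint2} to bound $(\E[N^n_T]+2)/n$ for every $n$, not just asymptotically, is a small gain in rigor over the paper's one-line proof.
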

	
	\begin{proposition}\label{RemainderTerms}
		Let ${\mathrm R}_t^{\e,n}$ be defined as in \eqref{Rten}.
		Then, for any fixed $T>0,$ there exists $C_T>0$ such that
		\begin{equation*}
			\E\left[\sup_{0\le t \le T}|{\mathrm R}_t^{\e,n}|\right] \le C_T(\e^{4}+ \E[\NN_4])^{\frac12}\leq (\e^2+n^{-2})C_{T\xi_1}. 
		\end{equation*}
	\end{proposition}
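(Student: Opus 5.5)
The plan is a second-order Taylor expansion of the drift around the deterministic pair $(y_s,y_s)$, followed by moment bounds from Theorem~\ref{LLNExtension} and Corollary~\ref{Np}. Write $\Delta_s:=Y^{\e,n}_s-y_s=\e\mathsf{Z}^{\e,n}_s$ and $\Gamma_s:=Y^{\e,n}_{\pi^n(s)}-y_s=\Delta_s+(Y^{\e,n}_{\pi^n(s)}-Y^{\e,n}_s)$. The integrand of \eqref{Rten} then equals
\[
\frac1\e\Big[c(y_s+\Delta_s,\,y_s+\Gamma_s)-c(y_s,y_s)-D_1c(y_s,y_s)\Delta_s-D_2c(y_s,y_s)\Gamma_s\Big].
\]
So $\mathrm R^{\e,n}_t$ is exactly $\frac1\e\int_0^t$ of the second-order Taylor remainder of $c$ at $(y_s,y_s)$.

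\textbf{Step 1 (pointwise remainder bound).} By Taylor's theorem with integral remainder along the segment from $(y_s,y_s)$ to $(Y^{\e,n}_s,Y^{\e,n}_{\pi^n(s)})$, the remainder is bounded by
\[
C\big(|D_1^2c|+|D_1D_2c|+|D_2^2c|\big)\big(|\Delta_s|^2+|\Gamma_s|^2\big).
\]
Hypothesis~\ref{Derivative} gives $|D_1^2c(x,y)|\le C(1+|y|)$, with the other second derivatives bounded. On the segment the second argument is at most $|y_s|+|\Gamma_s|\le C_T+|Y^{\e,n}_{\pi^n(s)}|+C_T$, using Remark~\ref{rmkk1}. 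Hence the integrand is bounded by
\[
\frac{C}{\e}\Big(1+\sup_{r\le T}|Y^{\e,n}_r|\Big)\big(|\Delta_s|^2+|\Gamma_s|^2\big).
\]

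\textbf{Step 2 (moments).} Take $\sup_t$ and expectation, and apply Cauchy--Schwarz to split off $\big(1+\sup_{r\le T}|Y^{\e,n}_r|\big)$. Its second moment is bounded by \eqref{Ynbound}. What remains is $\big(\E\sup_s|\Delta_s|^4+\E\sup_s|\Gamma_s|^4\big)^{1/2}$. Theorem~\ref{LLNExtension} with $p=4$ controls the $\Delta$ part by $C_T(\e^4+\E[\NN_4])$. For $\Gamma$, I use $|\Gamma_s|\le\sup_r|\Delta_r|+|Y^{\e,n}_s-Y^{\e,n}_{\pi^n(s)}|$. I then split $Y^{\e,n}_s-Y^{\e,n}_{\pi^n(s)}$ into $y^n_s-y^n_{\pi^n(s)}$, which is bounded by $C_T(s-\pi^n(s))$ via the linear growth \eqref{growthofnoisecoefficeients}, plus differences of $Y^{\e,n}-y^n$, which are controlled by \eqref{ApproxG2}. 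Integrating in $s$ instead of taking the supremum lets the sampling part enter as $\int_0^T(s-\pi^n(s))^4ds=\NN_4$, exactly as in Lemma~\ref{Assympwithoutnoise}. Corollary~\ref{Np} then gives $\E[\NN_4]\le C n^{-4}$, so $(\e^4+\E[\NN_4])^{1/2}\le(\e^2+n^{-2})C_{T\xi_1}$.

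\textbf{Main obstacle.} The difficulty is the prefactor $1/\e$ in \eqref{Rten}. Steps 1--2 carried out naively yield only $\e^{-1}(\e^4+\E[\NN_4])^{1/2}$. The full stated order $\e^2+n^{-2}$ therefore requires removing one factor of $\e$.

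My first attempt is to split $\Gamma_s=\Delta_s+(Y^{\e,n}_{\pi^n(s)}-Y^{\e,n}_s)$ inside the quadratic form. The pure $\Delta$-quadratic piece is $\e^{-1}|\Delta_s|^2=\e|\mathsf Z^{\e,n}_s|^2$, which is small because $\mathsf Z^{\e,n}$ has bounded moments. The sampling pieces involve $(s-\pi^n(s))^2$, and $\int_0^T(s-\pi^n(s))^2ds/\e$ is of order $n^{-2}/\e=O(\cc\, n^{-1})$ in Regimes 1--2, since there $1/\e\le(\cc+1)n$.

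If this bookkeeping still does not give the stated power, I will record precisely which power of $\e$ and $n$ the argument yields. The intermediate quantity $(\e^4+\E[\NN_4])^{1/2}$ will be kept as in the statement, so that the final combination in Theorem~\ref{CLTExtension2}, where only decay matters, is unaffected.
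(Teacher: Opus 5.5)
\textbf{The step you call the main obstacle cannot be carried out: the proposition is false as stated, including its first inequality.} Your Steps 1--2 are correct, and they are the same Taylor-remainder argument the paper has in mind (its proof is a one-line appeal to Taylor's formula, Hypothesis~\ref{Derivative} and Proposition~4.4 of Dhama's jump paper). What those steps prove is
\[
\E\Big[\sup_{0\le t\le T}|\mathrm R^{\e,n}_t|\Big]\le \frac{C_T}{\e}\big(\e^4+\E[\NN_4]\big)^{1/2}\le C_{T\xi_1}\Big(\e+\frac{1}{n\e}\cdot\frac1n\Big).
\]
No bookkeeping removes the factor $1/\e$, and your own refined split shows why. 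The piece $\e^{-1}|\Delta_s|^2=\e|\mathsf Z^{\e,n}_s|^2$ is of exact order $\e$, not $o(\e)$, because $\mathsf Z^{\e,n}$ converges to a nondegenerate Gaussian process. A smaller point: the sampling piece is of order $(n\e)^{-1}n^{-1}$. In Regime 1 this equals $\varkappa(\e)n^{-1}$, not $O(\cc\, n^{-1})=0$.

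\textbf{Counterexample.} Take $d=1$, $\sigma\equiv1$, $y_0=0$ and $c(x,y)=\phi(x)$ with $\phi(x)=\sqrt{1+x^2}$. Hypotheses~\ref{LipCont} and~\ref{Derivative} hold, since $|\phi'|\le1$, $0<\phi''\le1$ and $D_2c\equiv0$. Then $y_t=\sinh t$, $Y^{\e,n}$ does not depend on $n$, and \eqref{Rten} reduces to
\[
\mathrm R^{\e,n}_T=\frac1\e\int_0^T\big[\phi(Y^{\e,n}_s)-\phi(y_s)-\phi'(y_s)(Y^{\e,n}_s-y_s)\big]ds=\frac\e2\int_0^T\phi''(\theta_s)\,|\mathsf Z^{\e,n}_s|^2\,ds\ \ge 0,
\]
where $\theta_s$ lies between $y_s$ and $Y^{\e,n}_s$. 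The noise is additive, so Gronwall gives pathwise uniform convergence of $\mathsf Z^{\e,n}$ on $[0,T]$ to $\mathsf Z_t=\int_0^t\tanh(s)\,\mathsf Z_s\,ds+W_t$. Fatou's lemma then yields
\[
\liminf_{\e\to0}\e^{-1}\E[\mathrm R^{\e,n}_T]\ge\frac12\int_0^T\cosh^{-3}(s)\,\E|\mathsf Z_s|^2\,ds>0 .
\]
Hence $\E[\sup_t|\mathrm R^{\e,n}_t|]\ge c_T\,\e$ for small $\e$. In Regimes 1--2, however, the claimed bound $(\e^2+n^{-2})C_{T\xi_1}$ is $O(\e^2)$, because $n^{-1}<(\cc+1)\e$ for $\e<\e_0$. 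The paper's citation simply does not account for the $1/\e$ built into \eqref{Rten}.

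\textbf{What to do instead.} Do not keep the intermediate quantity $(\e^4+\E[\NN_4])^{1/2}$ as in the statement, since that bound is itself wrong. State and prove the corrected estimate displayed above: in Regimes 1--2, $\E[\sup_t|\mathrm R^{\e,n}_t|]\le C\big(\e+(\cc+\varkappa(\e))n^{-1}\big)$. The example shows this is sharp in its $\e$-part. It is also all that Theorem~\ref{CLTExtension2} needs, because that theorem's rate already contains $\e+n^{-1/2}$.
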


	\begin{proposition}\label{MainTermApprox}
		Let $y_t$ and $Y_t^{\e,n}$  solve \eqref{nonlineareqn1} and \eqref{GSDE}, respectively. Then, for any fixed $T>0,$ there exists a positive constant $C_T$ such that for any $0< \e <\e_0$ 
		\begin{align*}
			\E\left[ \sup_{0 \le t \le T}\Big|\int_0^t D_2c(y_s,y_s)\frac{Y_{s}^{\e, n}-Y_{\pi^n(s)}^{\e, n}}{\e} ds - \ell_g(t)\Big| \right] \le [\cc (n^{-1/4}+\e)+ \varkappa(\e)+n^{-1/2}]C_{MT\xi_1}
		\end{align*}
		which converges to zero as $\e\searrow 0$ and $n\to \infty$ with the rate depending on the value of $\cc\in [0, \infty).$
	\end{proposition}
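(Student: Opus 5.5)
We need to prove Proposition \ref{MainTermApprox}. The plan is to mimic the linear decomposition of $\LL^{\e,n}_1$, but with the explicit exponential formula replaced by the SDE increment. On each interval $[\pi^n(s),s]$, equation \eqref{GSDE} gives
\begin{equation*}
Y^{\e,n}_s-Y^{\e,n}_{\pi^n(s)}=\int_{\pi^n(s)}^s c\big(Y^{\e,n}_r,Y^{\e,n}_{\pi^n(r)}\big)\,dr+\e\int_{\pi^n(s)}^s\sigma(Y^{\e,n}_r)\,dW_r .
\end{equation*}
For $r\in[\pi^n(s),s]$ we have $\pi^n(r)=\pi^n(s)$. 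Write $\Gamma(s):=D_2c(y_s,y_s)$. Then
\begin{equation*}
\int_0^t \Gamma(s)\frac{Y_{s}^{\e,n}-Y_{\pi^n(s)}^{\e,n}}{\e}\,ds = H_1(t)+H_2(t)+H_3(t)+H_4(t)+\ell_g(t).
\end{equation*}
The four terms are as follows.

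\begin{itemize}
\item $H_1$ is the noise term $\int_0^t\Gamma(s)\int_{\pi^n(s)}^s\sigma(Y^{\e,n}_r)\,dW_r\,ds$.
\item $H_2$ is $\frac1\e\int_0^t\Gamma(s)\int_{\pi^n(s)}^s\big[c(Y^{\e,n}_r,Y^{\e,n}_{\pi^n(s)})-c(y_s,y_s)\big]\,dr\,ds$.
\item $H_3$ is $\frac1{\e n}\int_0^t\big(n(s-\pi^n(s))-M\big)\Gamma(s)c(y_s,y_s)\,ds$.
\item $H_4$ is $\big(\frac1{\e n}-\cc\big)M\int_0^t\Gamma(s)c(y_s,y_s)\,ds$.
\end{itemize}

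$H_4$ is bounded exactly as $G_5$ in Lemma \ref{G5estimates}. It uses the boundedness of $D_2c$ (Hypothesis \ref{Derivative}) and the bound \eqref{boundonYandYn} on $y$, and gives $M\varkappa(\e)C_T$.

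For $H_1$ I would follow the proof of Proposition \ref{noisebd}. Since $\Gamma$ is bounded, it suffices to control $\int_0^T\E\big|\int_{\pi^n(s)}^s\sigma(Y^{\e,n}_r)\,dW_r\big|\,ds$. Conditioning on the sampling sequence, which is independent of $W$, and applying It\^o's isometry gives the bound
\begin{equation*}
C\int_0^T\E\big[(s-\pi^n(s))^{1/2}\big]\sup_r\E\big[|\sigma(Y^{\e,n}_r)|^2\big]^{1/2}\,ds\le C\,\E[\NN_{1/2}].
\end{equation*}
Here \eqref{growthofnoisecoefficeients} and \eqref{Ynbound} control $\sigma(Y^{\e,n}_r)$. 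By Corollary \ref{Np} the result is $O(n^{-1/2})$. Note that the supremum in $t$ is harmless because we bound the integral of absolute values.

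For $H_2$ I would use Hypothesis \ref{LipCont}:
\begin{equation*}
|c(Y^{\e,n}_r,Y^{\e,n}_{\pi^n(s)})-c(y_s,y_s)|\le C\Big(\sup_{u\le T}|Y^{\e,n}_u-y_u|+\sup_{|u-v|\le s-\pi^n(s)}|y_u-y_v|\Big).
\end{equation*}
The second term is at most $C(s-\pi^n(s))$ by \eqref{boundonYandYn}. This gives
\begin{equation*}
|H_2|\le \frac{C}{\e}\Big[\sup_u|Y^{\e,n}_u-y_u|\,\NN_1+\NN_2\Big].
\end{equation*}
I would then apply Cauchy--Schwarz together with Theorem \ref{LLNExtension} (for $p=2$), Lemma \ref{Nptilde} and Corollary \ref{Np}. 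This yields
\begin{equation*}
\E\sup_t|H_2|\le \frac{C}{\e}\big[(\e+n^{-1})n^{-1}+n^{-2}\big]\le \cc\,(\e+n^{-1})C,
\end{equation*}
using $\frac{1}{\e n}\le \cc+\varkappa(\e)\le \cc+1$ for $\e<\e_0$. This matches the $\cc\e$ part of the claimed rate.

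The main obstacle is $H_3$. Writing $\frac1{\e n}=\cc+\big(\frac1{\e n}-\cc\big)$, it reduces to bounding
\begin{equation*}
\E\sup_t\Big|\int_0^t\big(n(s-\pi^n(s))-M\big)f(s)\,ds\Big|, \qquad f(s):=\Gamma(s)c(y_s,y_s).
\end{equation*}
This is precisely Lemma \ref{G4decomposition}, so it remains to check its hypotheses: $f$ must be smooth and bounded, with second derivative of at most exponential growth. Differentiating along $\dot y=c(y,y)$, the derivatives $f'$ and $f''$ involve $D_1c$, $D_2c$, $D_1D_2c$, $D_1^2c$, $D_2^2c$ evaluated at $(y_s,y_s)$. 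Their growth bounds in Hypothesis \ref{Derivative} and the bound \eqref{boundonYandYn} make these derivatives bounded on $[0,T]$. A possible gap is that $f''$ also needs third derivatives of $c$, which the paper implicitly allows (``$D_2^3$'' is mentioned); I would assume them bounded similarly. Lemma \ref{G4decomposition} then gives $\cc n^{-1/4}C+\varkappa(\e)n^{-1/4}C$. Summing the bounds on $H_1,\dots,H_4$ yields the stated rate.
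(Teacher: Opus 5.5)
Your proof is correct in substance and uses the same key tool as the paper, Lemma~\ref{G4decomposition}, for the renewal averaging. The bookkeeping is different.

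\textbf{Decomposition.} The paper writes the left-hand side as $\J_1^{\e,n}+\J_2^{\e,n}+\J_3^{\e,n}$ and freezes the deterministic drift at $c(y_{\pi^n(r)},y_{\pi^n(r)})$. It then has to split $\J_2^{\e,n}$ into $\I_1^{\e,n}$ and $\I_2^{\e,n}$, and split $\I_2^{\e,n}-\ell_g$ into three more pieces: a $\varkappa(\e)$ term, a term comparing evaluation at $\pi^n(s)$ with evaluation at $s$, and the Lemma~\ref{G4decomposition} term. By centring directly at $c(y_s,y_s)$, you fold all the Lipschitz comparisons ($\J_1^{\e,n}$, $\I_1^{\e,n}$, and $I_2^2$ in the proof of Lemma~\ref{M2M2}) into the single term $H_2$. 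That term needs only Hypothesis~\ref{LipCont}, not Lemma~\ref{MVTresult}. The exact prefactor $\frac1{\e n}$ in $H_3$ also makes the split $\cc+(\frac1{\e n}-\cc)$ transparent.

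\textbf{Noise term.} You condition on the $\xi_i$ and obtain $\E[\NN_{1/2}]$. The paper uses It\^o isometry plus Cauchy--Schwarz and obtains $(\E[\NN_2])^{1/4}$. Both are $O(n^{-1/2})$. Yours needs the second-moment bound on $Y^{\e,n}$ to hold conditionally on the sampling sequence, uniformly in its realisation. This is true, because the Gronwall argument only uses $\pi^n(s)\le s$.

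\textbf{A slip in $H_2$.} What you actually get is $\E\sup_t|H_2|\le\frac{C}{\e n}(\e+2n^{-1})\le[\cc(\e+n^{-1})+\varkappa(\e)]C$. Your stated bound $\cc(\e+n^{-1})C$ is wrong, since it would vanish when $\cc=0$. The correct bound is still absorbed into the stated rate, and the paper makes the same slip.

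\textbf{The regularity issue you flag.} The paper avoids needing $f''$ by mollifying $D_2c(y_s,y_s)c(y_s,y_s)$ at scale $\e$ before applying Lemma~\ref{G4decomposition}. You would instead add a third-derivative assumption on $c$, which is not in Hypotheses~\ref{LipCont}--\ref{Derivative}. Neither step is needed.
\begin{itemize}
\item In the proof of Lemma~\ref{G4decomposition}, $f''$ enters only through the second-order expansion in Lemma~\ref{Remainder1}.
\item The first-order bound $\bigl|\int_0^r ns\,f(\tau^n_k+s)\,ds-\tfrac{nr^2}{2}f(\tau^n_k)\bigr|\le\tfrac{nr^3}{3}\sup|f'|$ gives the same $O(n^{-1})$ after Wald's identity.
\item The remaining steps use only growth bounds on $f$ and $f'$.
\end{itemize}
For your $f$, Hypothesis~\ref{Derivative} together with $|\dot y_s|=|c(y_s,y_s)|\le C(1+|y_s|)$ gives $|f'(s)|\le C(1+|y_s|)^2$. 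Lemmas~\ref{Remainder3}--\ref{Remainder4} evaluate $f$ at $\tau^n_k$, which may exceed $T$. So check this growth on $[0,\infty)$ via $|y_s|\le Ce^{Cs}$, not only boundedness on $[0,T]$ as you wrote.

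This observation also repairs the mollification route. As written there, the constant in Lemma~\ref{G4decomposition} depends on $\sup|\rho_\e''|$, which blows up as the mollification scale shrinks.
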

	We establish Proposition \ref{MainTermApprox} by decomposing the proof into the following sequence of intermediate lemmas, each addressing a key component of the argument.	
	\begin{lemma}\label{Mterms}
		Let $Y_t^{\e,n}$ be the solution of equation \eqref{GSDE}. Then, for any fixed $T>0$, $t\in[0,T],$ and $\e,n>0,$ we have 
		\begin{equation}\label{M1234}
			\int_0^t  D_2c(y_s,y_s) \frac{Y_{s}^{\e, n}-Y_{\pi^n(s)}^{\e, n}}{\e} ds = \sum_{i=1}^{3}{\J}_i^{\e,n}(t),\quad \text{where}
		\end{equation} 
		\begin{equation*}
			\begin{aligned}
				{\J}_1^{\e,n} & :=   \int_0^t  D_2c(y_s,y_s) \int_{\pi^n(s)}^s \frac{c(Y_{r}^{\e,n}, Y_{\pi^n(r)}^{\e, n})-c(Y_{\pi^n(r)}^{\e, n}, Y_{\pi^n(r)}^{\e, n})}{\e}  dr  ds\\&\hspace{3cm}
				+ \int_0^t  D_2c(y_s,y_s) \int_{\pi^n(s)}^s \frac{c(Y_{\pi^n(r)}^{\e, n}, Y_{\pi^n(r)}^{\e, n})-c(y_{\pi^n(r)}, y_{\pi^n(r)})}{\e}  dr  ds, \\
				{\J}_2^{\e,n}   & := \int_0^t  D_2c(y_s,y_s) \int_{\pi^n(s)}^s \frac{c(y_{\pi^n(r)}, y_{\pi^n(r)})}{\e}  dr   ds, \\ 
				{\J}_3^{\e,n}  & :=  \int_0^t  D_2c(y_s,y_s) \int_{\pi^n(s)}^s \sigma(Y_{r}^{\e,n}) dW_r  ds.
			\end{aligned}
		\end{equation*}
	\end{lemma}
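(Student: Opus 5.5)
This identity is exact and purely algebraic; no estimates are needed. I would start from the integral form of \eqref{GSDE} on a single sampling interval. Fix $s\in[0,t]$. Since $\pi^n(s)\le s$ and $\pi^n(r)=\pi^n(s)$ for every $r\in[\pi^n(s),s]$, integrating \eqref{GSDE} from $\pi^n(s)$ to $s$ gives, almost surely,
\begin{equation*}
Y_{s}^{\e,n}-Y_{\pi^n(s)}^{\e,n}=\int_{\pi^n(s)}^s c\big(Y_r^{\e,n},Y_{\pi^n(r)}^{\e,n}\big)\,dr+\e\int_{\pi^n(s)}^s\sigma(Y_r^{\e,n})\,dW_r .
\end{equation*}
This holds because $Y^{\e,n}$ is an It\^o process whose drift is adapted, so the difference of the integral representation at the two times $s$ and $\pi^n(s)$ equals the integral over $[\pi^n(s),s]$. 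The one subtlety is that $\pi^n(s)$ is random. It is, however, independent of $W$, so I would condition on the sampling sequence $\{\xi_i\}$, or equivalently enlarge the filtration by $\sigma(\xi_i:i\ge1)$ at time $0$. Under that enlarged filtration $W$ is still a Brownian motion, and the stochastic integral over $[\pi^n(s),s]$ is well defined as $\int_0^s-\int_0^{\pi^n(s)}$.

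Next I would divide by $\e$ and telescope the drift integrand. Writing $a_r=Y^{\e,n}_r$, $b_r=Y^{\e,n}_{\pi^n(r)}$ and $u_r=y_{\pi^n(r)}$,
\begin{equation*}
c(a_r,b_r)=\big[c(a_r,b_r)-c(b_r,b_r)\big]+\big[c(b_r,b_r)-c(u_r,u_r)\big]+c(u_r,u_r).
\end{equation*}
The first two brackets, after division by $\e$, form the integrand of ${\J}_1^{\e,n}$. The third term gives ${\J}_2^{\e,n}$. The noise contribution $\e\int\sigma\,dW/\e$ gives ${\J}_3^{\e,n}$.

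Finally I would multiply by the deterministic matrix $D_2c(y_s,y_s)$ and integrate over $s\in[0,t]$. To justify splitting the integral into three, I would check that each inner integral is a.s. measurable and locally integrable in $s$. The Lipschitz bounds and linear growth \eqref{growthofnoisecoefficeients}, together with the moment bounds \eqref{Ynbound}, Remark \ref{rmkk1} and the boundedness of $D_2c$ from Hypothesis \ref{Derivative}, make each inner term integrable on $[0,T]$. For the stochastic term, $s\mapsto\int_{\pi^n(s)}^s\sigma\,dW$ is a difference of a continuous process and a piecewise-constant one evaluated at a jump grid, so it is c\`adl\`ag and hence measurable. Linearity of the $ds$-integral then yields \eqref{M1234}.

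The main (mild) obstacle is this measurability and adaptedness of the stochastic integral with the random lower limit $\pi^n(s)$. Conditioning on the independent renewal sequence handles it. Everything else is bookkeeping.
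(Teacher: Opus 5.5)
Your proposal is correct and follows the paper's proof. You integrate \eqref{GSDE} over $[\pi^n(s),s]$, telescope the drift through $c(Y^{\e,n}_{\pi^n(r)},Y^{\e,n}_{\pi^n(r)})$ and $c(y_{\pi^n(r)},y_{\pi^n(r)})$, and split the $ds$-integral by linearity; your filtration-enlargement and measurability remarks add care the paper omits, and your telescoping correctly gives ${\J}_2^{\e,n}$ with a plus sign, where the paper's displayed proof has a stray minus (a typo, since the statement itself has $+{\J}_2^{\e,n}$).
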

	Now, our next step is to  show that the terms $\E[\sup_{0 \le t \le T}|{\J}_1^{\e,n}(t)|],\quad \sup_{0 \le t \le T}|{\J}_2^{\e,n}(t)-\ell_g(t)|$ and  $\E[\sup_{0 \le t \le T}|{\J}_3^{\e,n}(t)|]$   are small. 
	
	\begin{lemma}\label{M1}
		Let ${\J}_1^{\e,n}(t)$ be defined as in equation \eqref{M1234}. Then, for any fixed $T>0,$ there exists a positive constant $C_{T\xi_1}$ such that for any $ \e \in (0,\e_0),$  we have
		\begin{align*}
			\E\left[\sup_{0 \le t \le T}\left|{\J}_1^{\e,n}(t)\right|\right]  \le\cc\left[\e+n^{-1}\right]C_{T\xi_1}.
		\end{align*}	
	\end{lemma}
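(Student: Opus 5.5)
We bound the two pieces of $\J_1^{\e,n}$ separately, writing $\J_1^{\e,n}=\J_{11}^{\e,n}+\J_{12}^{\e,n}$ in the order they appear in Lemma~\ref{Mterms}.

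\textbf{Common reduction.} By Hypothesis~\ref{Derivative}, $|D_2c|\le C$, so $D_2c(y_s,y_s)$ is a bounded factor. Both inner integrals run over $[\pi^n(s),s]$, an interval of length at most $\xi^n_{N^n_s+1}$. We then pull out a supremum of the integrand and use
\[
\int_0^T (s-\pi^n(s))\,ds=\NN_1 .
\]
This gives, pathwise,
\[
\sup_{t\le T}|\J_{1i}^{\e,n}(t)|\le \frac{C}{\e}\,\NN_1\,\sup_{r\le T}\Delta_i(r),
\]
with
\[
\Delta_1(r)=\bigl|c(Y^{\e,n}_r,Y^{\e,n}_{\pi^n(r)})-c(Y^{\e,n}_{\pi^n(r)},Y^{\e,n}_{\pi^n(r)})\bigr|,\qquad
\Delta_2(r)=\bigl|c(Y^{\e,n}_{\pi^n(r)},Y^{\e,n}_{\pi^n(r)})-c(y_{\pi^n(r)},y_{\pi^n(r)})\bigr| .
\]
We take expectations and apply Cauchy--Schwarz. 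By Corollary~\ref{Np} and $\NN_1^2\le T\NN_2$ (Cauchy--Schwarz in $s$), we have $\E[\NN_1^2]^{1/2}\le C n^{-1}$.

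\textbf{Term $\J_{12}$.} Hypothesis~\ref{LipCont} gives $\Delta_2(r)\le 2C\sup_{u\le T}|Y^{\e,n}_u-y_u|$. Theorem~\ref{LLNExtension} with $p=2$ then yields $\E[\sup\Delta_2^2]^{1/2}\le C(\e+n^{-1})$. Hence
\[
\E\sup|\J_{12}|\le \frac{C}{\e n}(\e+n^{-1})\le C\,\frac{1}{n\e}(\e+n^{-1}) .
\]
For $\e<\e_0$ we have $1/(n\e)\le \cc+\varkappa(\e)\le \cc+1$. This gives the stated form: in Regime 2 the factor $\cc$ appears, and in Regime 1 the bound is absorbed into $\varkappa$. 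It is consistent with how Lemmas~\ref{G1Lemma}--\ref{G3Lemma} were written.

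\textbf{Term $\J_{11}$, the main obstacle.} Here Lipschitz gives only $\Delta_1(r)\le C|Y^{\e,n}_r-Y^{\e,n}_{\pi^n(r)}|$. This increment contains the noise contribution $\e\int_{\pi^n(r)}^r\sigma(Y^{\e,n}_u)\,dW_u$, whose supremum over $r\le T$ is of order $\e\,n^{-1/2}$ only up to logarithms, and a drift contribution bounded by $\xi^n_{N^n_r+1}(1+\sup|Y^{\e,n}|)$. Rather than taking a crude supremum, we keep the increment inside the integral:
\[
|\J_{11}(t)|\le \frac{C}{\e}\int_0^T \xi^n_{N^n_s+1}\sup_{\pi^n(s)\le r\le s}|Y^{\e,n}_r-Y^{\e,n}_{\pi^n(s)}|\,ds .
\]
The drift part produces $\frac1\e\tilde\NN_2$-type terms weighted by $(1+\sup|Y^{\e,n}|)$. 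Using Cauchy--Schwarz with \eqref{Ynbound} and Lemma~\ref{Nptilde}, these are $O\bigl(\frac{1}{\e n^2}\bigr)=O\bigl(\frac{1}{n\e}\,n^{-1}\bigr)$.

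The stochastic part we handle conditionally on the sampling sequence, which is independent of $W$. For fixed $s$ we apply BDG on the interval $[\pi^n(s),s]$ together with the linear growth of $\sigma$ and \eqref{Ynbound}. This gives
\[
\E\Bigl[\sup_{\pi^n(s)\le r\le s}\Bigl|\int_{\pi^n(s)}^r\sigma(Y^{\e,n}_u)\,dW_u\Bigr|\,\Big|\,\xi\Bigr]\le C(s-\pi^n(s))^{1/2}.
\]
So this part contributes at most $\frac{\e}{\e}\,C\,\E\int_0^T(\xi^n_{N^n_s+1})^{3/2}ds\le C\,n^{-3/2}\cdot n\cdot n^{-1}$. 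By Lemma~\ref{Nptilde} with $p=3/2$ this is $\le C n^{-3/2}\le C\e\cdot\frac{1}{n\e}$. Collecting the terms gives $\cc(\e+n^{-1})C_{T\xi_1}$. The delicate points are exchanging the conditioning with the integral over $s$ and making sure no supremum of Brownian increments over many random intervals appears, since such a supremum would cost a logarithm.
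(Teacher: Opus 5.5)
Your proof is correct. For $\J_{12}$ it coincides with the paper's argument: Lipschitz, Theorem~\ref{LLNExtension} with $p=2$, and Cauchy--Schwarz against $\NN_1$.

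For $\J_{11}$ you take a genuinely different route. The paper never expands $Y^{\e,n}_r-Y^{\e,n}_{\pi^n(r)}$ through the SDE. Instead it inserts the deterministic path, $|Y^{\e,n}_r-Y^{\e,n}_{\pi^n(r)}|\le 2\sup_{u\le T}|Y^{\e,n}_u-y_u|+|y_r-y_{\pi^n(r)}|$. Then $\J_{11}$ is handled by the same Cauchy--Schwarz computation as $\J_{12}$ and costs $\frac1\e(\e+n^{-1})n^{-1}$.

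This shows that what you call the main obstacle is not one. Bounding the Brownian increments crudely through the global LLN error, which is $O(\e)$ in $L^2$, costs only $\frac1\e\cdot\e\cdot n^{-1}=n^{-1}$. That is already within the target, and no supremum of Brownian increments over many short intervals ever appears.

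Your drift/martingale split, with BDG applied for each fixed $s$, is nonetheless valid. The cleanest justification of the conditioning is to enlarge the filtration at time $0$ by $\sigma(\xi_1,\xi_2,\dots)$. By independence this keeps $W$ a Brownian motion, and it makes $\pi^n(s)$ and $\xi^n_{N^n_s+1}$ initially measurable. After that you can either note that the Gronwall moment bound holds uniformly in the sampling times, or simply apply H\"older with the unconditional bound \eqref{Ynbound} and $\E[\tilde{\NN}_3]$.

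Your route buys two things:
\begin{itemize}
\item A sharper estimate for this piece: $O\bigl(\frac{1}{\e n^2}+n^{-3/2}\bigr)$ instead of $O\bigl(\frac{1}{\e n^2}+n^{-1}\bigr)$.
\item By keeping the increment inside the time integral, it avoids the paper's loose step of bounding $\E\sup_r|y_r-y_{\pi^n(r)}|^2$ by $\E[\NN_2]$. A maximum of inter-sampling gaps is not controlled by their time average.
\end{itemize}
Since $\J_{12}$ already costs $\frac{1}{n\e}(\e+n^{-1})$, however, the sharpening does not improve the lemma. The paper's route is shorter and needs no conditioning.

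Your remark on $\cc$ is also accurate. Both arguments actually prove $\frac{1}{n\e}(\e+n^{-1})C\le(\cc+\varkappa(\e))(\e+n^{-1})C$. The literal factor $\cc$ in the statement, which would force the right-hand side to vanish in Regime~1, is a shorthand the paper uses silently.
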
		
	We next decompose ${\J}_2^{\e,n}(t)$ defined in \eqref{M1234} as follows.
	
	\begin{equation}\label{M2decomposition}
		\begin{aligned}
			{\J}_2^{\e,n}(t) &= {\I}_1^{\e,n}(t)+ {\I}_2^{\e,n}(t), \quad \text{where}\\
			{\I}_1^{\e,n}(t) &:= \int_0^t\{D_2c(y_s,y_s)-D_2c(Y_{\pi^n(s)}, Y_{\pi^n(s)})\}\int_{\pi^n(s)}^s \frac{c(y_{\pi^n(r)}, y_{\pi^n(r)})}{\e}  dr  ds, \\
			{\I}_2^{\e,n}(t)& := \int_0^t D_2c(y_{\pi^n(r)}, y_{\pi^n(r)})\int_{\pi^n(s)}^s \frac{c(Y_{\pi^n(s)}, Y_{\pi^n(s)})}{\e}  dr  ds.
		\end{aligned}
	\end{equation}

	\begin{lemma}\label{M2M1}
		Let ${\I}_1^{\e,n}(t)$ be defined as in equation \eqref{M2decomposition}. Then, 
		\begin{equation*}
			\E\left[\sup_{0 \le t \le T}\left|{ \I}_1^{\e,n}(t)\right|\right]\le
			\cc[\e+n^{-1}]C_{T\xi_1}.
		\end{equation*}
	\end{lemma}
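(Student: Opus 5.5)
The plan is to bound the integrand pointwise by a power of the sampling gap $s-\pi^n(s)$ and then use Corollary~\ref{Np}. Since $\pi^n(r)=\pi^n(s)$ for $r\in[\pi^n(s),s]$, only the deterministic trajectory $y$ and the sampling times enter ${\I}_1^{\e,n}$. I read the second argument of $D_2c$ in \eqref{M2decomposition} as $y_{\pi^n(s)}$, since the typography there is ambiguous. So the only randomness in ${\I}_1^{\e,n}$ comes from the renewal grid.

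\textbf{Step 1: the $D_2c$ difference.} By Hypothesis~\ref{Derivative}, $|D_1D_2c|\le C$ and $|D_2^2c|\le C$, so $D_2c$ is globally Lipschitz in both arguments. This gives
\begin{equation*}
|D_2c(y_s,y_s)-D_2c(y_{\pi^n(s)},y_{\pi^n(s)})|\le C|y_s-y_{\pi^n(s)}|.
\end{equation*}
By \eqref{growthofnoisecoefficeients} and Remark~\ref{rmkk1}, $|\dot y_r|=|c(y_r,y_r)|\le C(1+2\sup_{r\le T}|y_r|)\le C_T$. Hence $|y_s-y_{\pi^n(s)}|\le C_T(s-\pi^n(s))$.

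\textbf{Step 2: the inner integral.} Using the same growth bound and Remark~\ref{rmkk1},
\begin{equation*}
\Big|\int_{\pi^n(s)}^s \tfrac{1}{\e}c(y_{\pi^n(r)},y_{\pi^n(r)})\,dr\Big|\le \tfrac{C_T}{\e}(s-\pi^n(s)).
\end{equation*}

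\textbf{Step 3: combine.} Multiplying the two bounds, integrating over $[0,t]$ and taking the supremum over $t\le T$ gives
\begin{equation*}
\sup_{0\le t\le T}|{\I}_1^{\e,n}(t)|\le \frac{C_T}{\e}\int_0^T (s-\pi^n(s))^2\,ds=\frac{C_T}{\e}\NN_2.
\end{equation*}
Taking expectations and applying Corollary~\ref{Np} with $p=2$, together with Theorem~\ref{thm:renewal} to control $\E[N^n_T]/n$ uniformly in $n$, yields
\begin{equation*}
\E\Big[\sup_{0\le t\le T}|{\I}_1^{\e,n}(t)|\Big]\le \frac{C_{T\xi_1}}{\e n^2}=\frac{1}{n\e}\cdot\frac{C_{T\xi_1}}{n}.
\end{equation*}

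\textbf{Step 4: absorbing $1/(n\e)$.} For $0<\e<\e_0$ we have $\varkappa(\e)<1$, so $\frac{1}{n\e}\le \cc+\varkappa(\e)\le \cc+1$. This gives the bound $(\cc+1)n^{-1}C_{T\xi_1}$. In Regime 2 ($\cc>0$) this is at most $\cc\, n^{-1}C_{T\xi_1}\le \cc[\e+n^{-1}]C_{T\xi_1}$, absorbing $\cc+1\le C\cc$ into the constant.

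\textbf{Main obstacle.} The only delicate point is Regime 1 ($\cc=0$). There the literal factor $\cc$ in the statement cannot carry the bound. The argument instead gives $\frac{1}{n\e}\cdot\frac{C}{n}\le \varkappa(\e)\,n^{-1}C$, which is still $o(n^{-1})$. This is harmless for Theorem~\ref{CLTExtension2}, which already contains the error terms $\varkappa(\e)$ and $n^{-1/2}$. I would state the constant as $(\cc+\varkappa(\e))$, or equivalently $(\cc+1)$ for $\e<\e_0$, which is how the factor $\cc$ should be understood in this lemma and in Lemmas~\ref{G1Lemma}--\ref{G3Lemma}.
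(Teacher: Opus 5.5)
\textbf{Gap: you analyse a different quantity.} Everything rests on your reading of \eqref{M2decomposition}. In the paper, the unadorned $Y_{\pi^n(s)}$ is the stochastic process $Y^{\e,n}_{\pi^n(s)}$ with its superscripts dropped, not $y_{\pi^n(s)}$. The paper's proof bounds $|D_2c(y_s,y_s)-D_2c(Y_{\pi^n(s)},Y_{\pi^n(s)})|\le K|Y_{\pi^n(s)}-y_s|$ via Lemma~\ref{MVTresult} and then applies Theorem~\ref{LLNExtension}. That is where the $\e^2$ under the square root comes from, and hence the $\e$ in $\cc[\e+n^{-1}]$. The $I_2^2$ term in the proof of Lemma~\ref{M2M2} treats $Y$ the same way.

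For that object your Step~1 breaks down. Pathwise, $|Y^{\e,n}_{\pi^n(s)}-y_s|$ is not $O(s-\pi^n(s))$, because it contains the noise-driven deviation of order $\e$. So $\I_1^{\e,n}$ is not a function of the renewal grid alone, and the pathwise bound $\frac{C_T}{\e}\NN_2$ is not available. The symptom is that your final estimate contains no $\e$, while the stated bound does. Your reading is not absurd: it makes the printed $\I_2$ consistent once $r$ is replaced by $s$, and it would split $\J_2$ more cleanly. But then Lemma~\ref{M2M2} would have to be re-proved for that $\I_2$, so it is a different lemma from the one stated here.

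\textbf{How to repair it.} Use $|Y^{\e,n}_{\pi^n(s)}-y_s|\le\sup_{r\le T}|Y^{\e,n}_r-y_r|+C_T(s-\pi^n(s))$ together with your Step~2 to get
\begin{equation*}
\sup_{0\le t\le T}|\I_1^{\e,n}(t)|\le\frac{C_T}{\e}\Big(\sup_{r\le T}|Y^{\e,n}_r-y_r|\,\NN_1+\NN_2\Big).
\end{equation*}
Then separate the random factors by Cauchy--Schwarz in $\omega$. Using $\NN_1^2\le T\NN_2$, Theorem~\ref{LLNExtension} with $p=2$, and Corollary~\ref{Np}, you get $\E[\sup_{r\le T}|Y^{\e,n}_r-y_r|\,\NN_1]\le C(\e+n^{-1})n^{-1}$. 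This yields $\frac{1}{n\e}(\e+n^{-1})C_{T\xi_1}$, which is the paper's bound before its last step. The paper reaches it with H\"older exponents $2,4,4$ and $\NN_4$ instead, to the same effect.

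\textbf{Your Step~4 remark.} This observation is correct and applies to the paper as well. Its final line silently replaces $1/(n\e)$ by $\cc$. For $\cc=0$ the honest prefactor is $\cc+\varkappa(\e)$, not $\cc$.
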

	
	\begin{lemma}\label{M2M2}
		Let ${\I}_2^{\e,n}(t)$ be defined as in equation \eqref{M2decomposition} with $\ell_g(t)$ as given in \eqref{ellg}. Then, for any $T>0,$ there exists a positive constant $C_T>0$ such that for any $0<\e< \e_0$, we have 
		\begin{equation*}
			\sup_{0 \le t \le T}\left|{\I}_2^{\e,n}(t)-\ell_g(t)\right| \le [\cc (n^{-1/4}+\e)+ \varkappa(\e)]	C_{TM\xi_1}.
		\end{equation*}
	\end{lemma}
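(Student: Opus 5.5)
The plan is to reduce $\I_2^{\e,n}$ to the form treated in Lemma~\ref{G4decomposition} and then split off the $\cc$-correction exactly as was done for $G_4$ and $G_5$ in the linear case. (I read the $Y_{\pi^n(s)}$ in \eqref{M2decomposition} as the deterministic $y_{\pi^n(s)}$, which is what makes $\J_2=\I_1+\I_2$ an identity.)

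\textbf{Step 1: rewrite $\I_2^{\e,n}$.} On $r\in[\pi^n(s),s]$ we have $\pi^n(r)=\pi^n(s)$. Hence the inner integral equals $(s-\pi^n(s))\,c(y_{\pi^n(s)},y_{\pi^n(s)})/\e$, and
\[
\I_2^{\e,n}(t)=\frac{1}{\e n}\int_0^t \frac{s-\pi^n(s)}{1/n}\, h(\pi^n(s))\,ds, \qquad h(u):=D_2c(y_u,y_u)\,c(y_u,y_u).
\]
By Remark~\ref{rmkk1}, Hypotheses~\ref{LipCont} and \ref{Derivative}, and $\dot y=c(y,y)$, the function $h$ is bounded on $[0,T]$ together with $h'$ and $h''$. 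The $h''$ bound needs third derivatives of $c$; if necessary I would assume this, or only use $h'$, as discussed below.

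\textbf{Step 2: move from $\pi^n(s)$ to $s$.} I replace $h(\pi^n(s))$ by $h(s)$. The error is at most
\[
\frac{C}{\e n}\int_0^T n\,(s-\pi^n(s))^2\,ds=\frac{C}{\e}\,\NN_2 .
\]
By Corollary~\ref{Np} its expectation is bounded by $C/(\e n^2)\le \cc\, n^{-1}C$ for $\e<\e_0$, since $1/(n\e)\le \cc+1$. This step is the analogue of $G_3$.

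\textbf{Step 3: apply Lemma~\ref{G4decomposition}.} For $f=h$ the lemma gives
\[
\E\sup_{t\le T}\Big|\int_0^t n(s-\pi^n(s))h(s)\,ds-M\int_0^t h(s)\,ds\Big|\le n^{-1/4}C .
\]
Multiplying by $1/(\e n)\le \cc+\varkappa(\e)$ produces the $\cc\, n^{-1/4}$ term, plus a lower-order $\varkappa(\e)\,n^{-1/4}$.

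\textbf{Step 4: the $\cc$-correction.} The remaining piece is $(\frac{1}{\e n}-\cc)M\int_0^t h\,ds$. It is bounded by $\varkappa(\e)\,MT\sup|h|$, exactly as for $G_5$ in Lemma~\ref{G5estimates}.

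\textbf{Assembling the bound.} Steps 2--4 give $\cc(n^{-1/4}+n^{-1})+\varkappa(\e)$, which fits inside the stated rate. The extra $\cc\,\e$ in the statement leaves room if one prefers to compare with $Y^{\e,n}$ via Theorem~\ref{LLNExtension}.

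\textbf{Two caveats.}
\begin{itemize}
\item The claim is stated without an expectation. Since $\I_2$ is random through $\pi^n$, I would prove it in expectation, which is what Proposition~\ref{MainTermApprox} needs.
\item \emph{Main obstacle:} verifying the regularity hypotheses of Lemma~\ref{G4decomposition} for $h$, namely the $h''$ control used in the Taylor step of Lemma~\ref{Remainder1}. I would first check whether $h'$ alone suffices by redoing \eqref{eq:taylorexpl} with a first-order expansion. That yields a remainder $\sum_k n(\xi^n_{k+1})^3\,\sup|h'|$, which is still $O(n^{-1})$ by Wald's identity (Lemma~\ref{Waldlemma}). So the two derivatives of $c$ granted by Hypothesis~\ref{Derivative} should suffice.
\end{itemize}
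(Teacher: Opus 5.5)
Your argument is sound, but it takes a different route from the paper's for the key step.

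\textbf{How the two routes differ.} The paper splits $\I_2^{\e,n}-\ell_g$ into three pieces:
\begin{itemize}
\item $\varkappa(\e)$ times the whole random integral;
\item $\cc\int_0^t n(s-\pi^n(s))\,[D_2c(Y^{\e,n}_{\pi^n(s)},Y^{\e,n}_{\pi^n(s)})\,c(y_{\pi^n(s)},y_{\pi^n(s)})-h(s)]\,ds$, bounded via Lemma~\ref{MVTresult}, Cauchy--Schwarz and Theorem~\ref{LLNExtension};
\item $\cc\int_0^t(n(s-\pi^n(s))-M)\,h(s)\,ds$, handled by replacing $h$ with a mollification $\rho_\e$ (cost $C\e$) and then invoking Lemma~\ref{G4decomposition} for some admissible $f$.
\end{itemize}
You instead transplant the linear-case structure $G_3,G_4,G_5$: a shift $h(\pi^n(s))\to h(s)$ costing $\frac{C}{\e}\E[\NN_2]$, a direct application of Lemma~\ref{G4decomposition} to $h$, and a $G_5$-type $\varkappa(\e)$ term.

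\textbf{What your route buys.} A first-order expansion in \eqref{eq:taylorexpl} already makes the remainder in Lemma~\ref{Remainder1} $O(n^{-1})$, and the other remainder lemmas use only $f$ and $f'$. So $h\in C^1$, which Hypothesis~\ref{Derivative} provides, suffices and no smoothing is needed. This is cleaner than the paper's mollifier step, which leaves implicit how the admissible $f$ relates to $\rho_\e$ and how the constant of Lemma~\ref{G4decomposition} depends on it. One detail to add: Lemmas~\ref{Remainder3} and \ref{Remainder4} evaluate $f$ at $\tau^n_k$ possibly beyond $T$. You therefore need at-most-exponential growth of $h,h'$ on $[0,\infty)$, not just boundedness on $[0,T]$. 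This does hold, since $|y_u|\le Ce^{Cu}$ and $|h'(u)|\le C(1+|y_u|)^2$.

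\textbf{The reading of $Y_{\pi^n(s)}$.} Your reason for the deterministic reading is not valid. The identity $\J_2=\I_1+\I_2$ holds for any evaluation point of $D_2c$, provided $\I_1$ and $\I_2$ use the same one. The paper means $Y^{\e,n}_{\pi^n(s)}$: its proofs of Lemmas~\ref{M2M1} and \ref{M2M2} invoke Theorem~\ref{LLNExtension} together with its $\e^2$ term. Under that reading your Step~1 is false as written, and you must additionally bound $\frac1\e\int_0^t(s-\pi^n(s))\,[D_2c(Y^{\e,n}_{\pi^n(s)},Y^{\e,n}_{\pi^n(s)})-D_2c(y_{\pi^n(s)},y_{\pi^n(s)})]\,c(y_{\pi^n(s)},y_{\pi^n(s)})\,ds$. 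By Lemma~\ref{MVTresult}, Cauchy--Schwarz, Theorem~\ref{LLNExtension} and Corollary~\ref{Np}, this is at most $\frac{C}{\e}(\e+n^{-1})(\E[\NN_2])^{1/2}\le(\cc+\varkappa(\e))(\e+n^{-1})C$. That accounts for the $\cc\,\e$ in the statement. Your one-line remark points at this, but it is a required step, not an optional one.

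\textbf{A small slip in Step~2.} The bound $\frac{C}{\e n^2}$ is $(\cc+\varkappa(\e))\,n^{-1}C$, not $\cc\,n^{-1}C$; the latter vanishes in Regime~1. The correct bound still fits the stated rate, and Step~3 already uses the right inequality $1/(\e n)\le\cc+\varkappa(\e)$.
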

	
	\begin{lemma}\label{M3}
		Let ${\J}_3^{\e,n}(t)$ be defined as in equation \eqref{M1234}. Then, for any fixed $T>0,$ there exists a positive constant $C_T$ such that for any $\e,n>0$, we have 
		\begin{equation*}
			\E\left[\sup_{0 \le t \le T}\left|{\J}_3^{\e,n}(t)\right|\right]		
			\le n^{-1/2}C_{T\xi_1}.
		\end{equation*}
	\end{lemma}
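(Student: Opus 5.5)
The aim is to bound $\J_3^{\e,n}(t)=\int_0^t D_2c(y_s,y_s)\int_{\pi^n(s)}^s \sigma(Y^{\e,n}_r)\,dW_r\,ds$ directly. We first pull out the deterministic factor and then control the local stochastic integral over each sampling interval. This is essentially the argument of Proposition~\ref{noisebd}, but with the multiplicative coefficient handled through moment bounds rather than an explicit formula.

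\emph{Step 1: remove the deterministic factor.} By Hypothesis~\ref{Derivative}, $|D_2c|\le C$, so
\[
\sup_{0\le t\le T}|\J_3^{\e,n}(t)|\le C\int_0^T \Big|\int_{\pi^n(s)}^s \sigma(Y^{\e,n}_r)\,dW_r\Big|\,ds .
\]
Taking expectations, it suffices to bound $\E\big|\int_{\pi^n(s)}^s \sigma(Y^{\e,n}_r)\,dW_r\big|$ for each fixed $s\in[0,T]$ and then integrate in $s$.

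\emph{Step 2: condition on the sampling times.} The sampling sequence $\{\xi_i\}$ is independent of $W$. Conditionally on $\mathcal G:=\sigma(\xi_i:i\ge1)$, the point $\pi^n(s)$ is deterministic, and $Y^{\e,n}$ solves an SDE with a deterministic sampling grid. Hence the stochastic integral is a genuine It\^o integral over $[\pi^n(s),s]$, and the It\^o isometry together with the Cauchy--Schwarz inequality gives
\[
\E\Big[\Big|\int_{\pi^n(s)}^s \sigma(Y^{\e,n}_r)\,dW_r\Big|\,\Big|\,\mathcal G\Big]\le \Big(\int_{\pi^n(s)}^s \E\big[|\sigma(Y^{\e,n}_r)|^2\,\big|\,\mathcal G\big]dr\Big)^{1/2}.
\]
By the linear growth bound \eqref{growthofnoisecoefficeients} and the moment bound \eqref{Ynbound}, the inner conditional expectation is bounded by a constant $C_T$. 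For \eqref{Ynbound}, the Gronwall argument is pathwise in the grid, so it holds conditionally on $\mathcal G$ with a constant independent of the grid; one should check this. The right-hand side is therefore at most $C_T(s-\pi^n(s))^{1/2}$.

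\emph{Step 3: integrate and apply the renewal estimates.} Taking the outer expectation and integrating in $s$ gives
\[
\E\Big[\sup_{0\le t\le T}|\J_3^{\e,n}(t)|\Big]\le C_T\,\E\int_0^T(s-\pi^n(s))^{1/2}ds=C_T\,\E[\NN_{1/2}].
\]
Corollary~\ref{Np} then gives $\E[\NN_{1/2}]\le n^{-1/2}C_{T\xi_1}$, since $\E[N^n_T+2]/n$ is bounded by Theorem~\ref{thm:renewal}. This yields the claimed bound.

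\textbf{Main obstacle.} The delicate point is Step 2, namely justifying the It\^o isometry on an interval with a random left endpoint. Because $\pi^n(s)$ is not a stopping time for $W$'s filtration in an obvious way, conditioning on the independent $\sigma$-algebra $\mathcal G$ is the route we would take, and we need \eqref{Ynbound} to hold uniformly over sampling grids. If the conditional version proves awkward, a fallback is to write $\int_{\pi^n(s)}^s=\int_0^s-\int_0^{\pi^n(s)}$. The difference can then be controlled by the modulus of continuity of the martingale $\int_0^\cdot\sigma(Y^{\e,n}_r)dW_r$, using BDG together with $\sup_s(s-\pi^n(s))\le\sup_{k\le N^n_T+1}\xi^n_k$ and Proposition~\ref{suptosum}. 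This route would possibly produce a slightly worse rate.
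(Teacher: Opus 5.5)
Your proof is correct and gets the same rate $n^{-1/2}$, but by a somewhat different route than the paper.

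The paper stays in $L^2$ throughout. It applies Cauchy--Schwarz in $s$ to remove $D_2c$, then Jensen, then the unconditional It\^o isometry. A second Cauchy--Schwarz in $\omega$ splits $\E\int_0^T\int_{\pi^n(s)}^s|\sigma(Y^{\e,n}_r)|^2\,dr\,ds$ into $\big(\E\int_0^T\sup_{r\le s}|\sigma(Y^{\e,n}_r)|^4ds\big)^{1/2}(\E[\NN_2])^{1/2}$. This ends with $C_T(\E[\NN_2])^{1/4}\le C n^{-1/2}$.

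You instead work in $L^1$ in time and decouple the sampling from the noise by conditioning on $\mathcal G=\sigma(\xi_i:i\ge1)$. That gives the grid-wise bound $C_T(s-\pi^n(s))^{1/2}$, hence $C_T\E[\NN_{1/2}]$, the same quantity the paper uses for Proposition~\ref{noisebd}. Your intermediate bound is marginally sharper, since by H\"older $\E[\NN_{1/2}]\le T^{3/4}(\E[\NN_2])^{1/4}$.

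Your route also makes explicit a point the paper passes over. Its It\^o isometry for the integrand $\sigma(Y^{\e,n}_r)\ind_{(\pi^n(s),s]}(r)$ also needs the filtration $\F_t\vee\mathcal G$. Under that filtration $W$ is still a Brownian motion by independence, and $\pi^n(s)$ is $\F_0$-measurable. So your ``main obstacle'' is settled by exactly this enlargement, and the fallback is unnecessary.

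The price of your route is the conditional moment bound $\E[\sup_{r\le T}|Y^{\e,n}_r|^2\mid\mathcal G]\le C_T$, uniform over grids; the paper needs only the unconditional \eqref{Ynbound}. That bound does hold, because the BDG--Gronwall argument behind \eqref{Ynbound} sees the grid only through $|Y^{\e,n}_{\pi^n(s)}|\le\sup_{r\le s}|Y^{\e,n}_r|$. You should state this explicitly rather than leave it as something to check.
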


	\begin{proof}[Proof of Theorem \ref{CLTExtension2}] 
		By combining the conclusions of Propositions~\ref{sigmadifference}–\ref{MainTermApprox}, and using the estimates established therein, we obtain the desired result.
	\end{proof}
	

	
	\subsubsection{Proofs of Propostions \ref{sigmadifference}-\ref{MainTermApprox} and Lemmas \ref{Mterms}-\ref{M3}.}
	
	\begin{proof}[Proof of Proposition \ref{sigmadifference}] Using C-S inequality, BDG inequality, Hypothesis~\ref{LipCont}  and Theorem \ref{LLNExtension}, we get
		\begin{align*}
			\E\left[\sup_{0 \le t \le T}\left|\int_0^t \{\sigma(Y_{s}^{\e,n})-\sigma(y_{s})\} dW_s\right|\right] \le    C\left[\E \sup_{0 \le t \le T }|Y_{t}^{\e,n}-y_t|^2\right]^\frac12\le& C_T \left[\E[\NN_2]+\e^2\right]^\frac12\\
			&\le [\e+n^{-1}]C_{T\xi_1}.
		\end{align*}			
	\end{proof}
	\begin{proof}[Proof of Proposition \ref{RemainderTerms}]
		The proof can be obtained by  directly following \cite[Proposition 4.4]{dhama2025asymptotic} by using Taylor's formula and Hypotheses \ref{Derivative}.
	\end{proof}
	
	\begin{proof}[Proof of Proposition \ref{MainTermApprox}] By putting together the results proved in Lemmas \ref{Mterms}–\ref{M3}, we obtain the required result.          
	\end{proof}
	
	\begin{proof}[Proof of Lemma \ref{Mterms}]
		From \eqref{GSDE} we have
		\begin{align*}			
			\int_0^t D_2c(y_s,y_s)\frac{Y_{s}^{\e, n}-Y_{\pi^n(s)}^{\e, n}}{\e} ds  =\int_0^t D_2c(y_s,y_s) &\int_{\pi^n(s)}^s \frac{c(Y_{r}^{\e,n}, Y_{\pi^n(r)}^{\e, n})}{\e}  dr  ds \\& + \int_0^t D_2c(y_s,y_s) \int_{\pi^n(s)}^s \sigma(Y_{r}^{\e,n}) dW_r  ds.
		\end{align*}
		Writing \begin{multline*}c(Y_{r}^{\e,n}, Y_{\pi^n(r)}^{\e, n})=c(Y_{r}^{\e,n}, Y_{\pi^n(r)}^{\e, n})-c(Y_{\pi^n(r)}^{\e, n}, Y_{\pi^n(r)}^{\e, n})\\+c(Y_{\pi^n(r)}^{\e, n}, Y_{\pi^n(r)}^{\e, n}) +c(y_{\pi^n(r)}, y_{\pi^n(r)})-c(y_{\pi^n(r)}, y_{\pi^n(r)}) \end{multline*} in the second term of the right hand side of the above equation, we get
		\begin{equation*}
			\begin{aligned}
				\frac 1\e\int_0^t  D_2c(y_s,y_s) \big[{Y_{s}^{\e, n}-Y_{\pi^n(s)}^{\e, n}}\big] ds &= \int_0^t D_2c(y_s,y_s) \int_{\pi^n(s)}^s \frac{c(Y_{r}^{\e,n}, Y_{\pi^n(r)}^{\e, n})-c(Y_{\pi^n(r)}^{\e, n}, Y_{\pi^n(r)}^{\e, n})}{\e}  dr  ds\\
				&  +\int_0^t D_2c(y_s,y_s) \int_{\pi^n(s)}^s \frac{c(Y_{\pi^n(r)}^{\e, n}, Y_{\pi^n(r)}^{\e, n})-c(y_{\pi^n(r)}, y_{\pi^n(r)})}{\e}  dr  ds \\
				& - \int_0^t D_2c(y_s,y_s) \int_{\pi^n(s)}^s \frac{c(y_{\pi^n(r)}, y_{\pi^n(r)})}{\e}  dr  ds\\
				&+ \int_0^t D_2c(y_s,y_s) \int_{\pi^n(s)}^s \sigma(Y_{r}^{\e,n}) dW_r  ds.
			\end{aligned}
		\end{equation*}
		The right hand side of the above equation is easily recognized as the sum of ${\J}_i^{\e,n}(t), 1 \le i \le 3.$
	\end{proof}

	\begin{proof}[Proof of Lemma \ref{M1}]
		Recalling the definition of ${\J}_1^{\e,n}(t)$ from \eqref{M1234}, we have
		\begin{align*}
			{\J}_1^{\e,n}(t) &= \int_0^t D_2c(y_s,y_s) \int_{\pi^n(s)}^s \frac{c(Y_{r}^{\e,n}, Y_{\pi^n(r)}^{\e, n})-c(Y_{\pi^n(r)}^{\e, n}, Y_{\pi^n(r)}^{\e, n})}{\e}  dr  ds\\
			& \hspace{1cm} + \int_0^t D_2c(y_s,y_s) \int_{\pi^n(s)}^s \frac{c(Y_{\pi^n(r)}^{\e, n}, Y_{\pi^n(r)}^{\e, n})-c(y_{\pi^n(r)}, y_{\pi^n(r)})}{\e}  dr  ds\\& := J_1^{\e,n}(t) + J_2^{\e,n}(t).
		\end{align*}		For $J_1^{\e,n}(t),$ using Hypothesis \ref{LipCont}, we have
		\begin{equation*}
			\begin{aligned}
				|J_1^{\e,n}(t)| 
				& \lesssim \frac{1}{\e}\int_0^t\int_{\pi^n(s)}^{s}\sup_{0\le r \le s}|Y_r^{\e,n}-y_r+y_r-y_{\pi^n(r)}+y_{\pi^n(r)}-Y_{\pi^n(r)}^{\e,n}| dr  ds\\
				& \lesssim \frac{1}{\e}\int_0^t\int_{\pi^n(s)}^{s}\sup_{0\le r \le s}|Y_r^{\e,n}-y_r| dr  ds + \frac{1}{\e}\int_0^t\int_{\pi^n(s)}^{s}\sup_{0\le r \le s}|y_r-y_{\pi^n(r)}| dr  ds.
			\end{aligned}
		\end{equation*}
		Taking supremum on both side over time and then taking expectation on both sides, we get
		\begin{align*}
			\E\left[\sup_{0\le t \le T}|J_1^{\e,n}(t)|\right] \lesssim \frac{1}{\e}\E\left[\int_0^T \sup_{0\le r \le s}\left\{|Y_r^{\e,n}-y_r|+|y_r-y_{\pi^n(r)}|\right\} (s-\pi^n(s))   ds \right].				
		\end{align*}
		Applying C-S inequality, we get
		\begin{align*}
			\E\left[\sup_{0\le t \le T}|J_1^{\e,n}(t)|\right] \lesssim & \frac{1}{\e}\left[\E\sup_{0\le r \le s}|Y_r^{\e,n}-y_r|^2\right]^\frac12\left[\E\int_0^T  (s-\pi^n(s))^2   ds \right]^\frac12  \\
			&+\frac1\e\left[\E\sup_{0\le r \le s}|y_r-y_{\pi^n(r)}|^2\right]^\frac12 \left[\E \int_0^T (s-\pi^n(s))^2  dr  ds \right]^\frac12\\
			&\leq \frac{1}{\e}\left[(\e^2+\NN_2)^{\frac12}+(\NN_2)^{\frac12}\right]\left(\E[\NN_2]\right)^{\frac12}C_T\\
			&\le \cc\left[\e+n^{-1}\right]C_{T\xi_1}.	\end{align*}		
		Similarly, for $J_2^{\e,n}(t),$ by similar calculations to those above, we obtain 
		\begin{equation*}
			\E\left[\sup_{0\le t \le T}|J_2^{\e,n}(t)|\right] \le\cc\left[\e+n^{-1}\right]C_{T\xi_1}.
		\end{equation*}
	\end{proof}		      
	Before proving Lemma \ref{M2M1} and \ref{M2M2}, we need the following  fluctuation bound which can be establish by using mean value theorem and Hypothesis  \ref{Derivative}.
	\begin{lemma}\label{MVTresult} Let the function $c$ satisfies the  Hypothesis  \ref{Derivative}. Then,  there exists a constant $K>0$, such that  
		\begin{equation*}
			\left|D_2c(Y_{\pi^n(s)}, Y_{\pi^n(s)})-D_2c(y_s,y_s)\right|\leq K|Y_{\pi^n(s)}-y_s|.
		\end{equation*}
	\end{lemma}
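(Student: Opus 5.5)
The plan is to apply the mean value inequality to the composite map $\Phi(z):=D_2c(z,z)$ along the segment between $y_s$ and $Y_{\pi^n(s)}$. (Here $Y_{\pi^n(s)}$ denotes $Y^{\e,n}_{\pi^n(s)}$, or whatever point is substituted; the bound is purely deterministic in its two arguments.) By the chain rule, the Jacobian of $\Phi$ at $z$ is
\[
D\Phi(z)=D_1D_2c(z,z)+D_2^2c(z,z),
\]
where both terms are read as third-order tensors acting on the increment $h\in\R^d$. Note that only derivatives of $D_2c$ appear; the growing derivatives $D_1c$ and $D_1^2c$ never enter. This is exactly why the statement targets $D_2c$.

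The key steps are as follows.

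\emph{Step 1.} For $z_1,z_2\in\R^d$, set $\gamma(\theta)=z_2+\theta(z_1-z_2)$ for $\theta\in[0,1]$. By the fundamental theorem of calculus applied componentwise,
\[
\Phi(z_1)-\Phi(z_2)=\int_0^1 \bigl[D_1D_2c(\gamma(\theta),\gamma(\theta))+D_2^2c(\gamma(\theta),\gamma(\theta))\bigr](z_1-z_2)\,d\theta .
\]
Using the integral form avoids the failure of the pointwise mean value theorem for vector and matrix valued maps.

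\emph{Step 2.} Invoke Hypothesis~\ref{Derivative}, which gives $|D_1D_2c|\le C$ and $|D_2^2c|\le C$ uniformly on $\R^d\times\R^d$. Then
\[
|\Phi(z_1)-\Phi(z_2)|\le 2C|z_1-z_2|.
\]
Taking $z_1=Y_{\pi^n(s)}$ and $z_2=y_s$ yields the claim with $K=2C$, up to the norm-equivalence constant between the tensor norm and the induced matrix norm $|\cdot|$, which depends only on $d$.

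The only point needing care is the notational one. The hypothesis controls $D_1D_2c=D_1(D_2c)$, whereas the chain rule for $\Phi$ also requires the derivative of $D_2c$ in its second slot, namely $D_2^2c$; this is also assumed bounded. If one worried that the mixed derivative in the other order were needed, equality of mixed partials (given $C^2$ regularity) resolves it. Since the bounds are uniform and involve no growth factor like $(1+|y|)$, no moment estimates such as \eqref{Ynbound} are required, and the constant $K$ is deterministic and independent of $\e$, $n$ and $s$.
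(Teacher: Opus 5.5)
Your proposal is correct and is essentially the paper's argument: set $\Phi(z)=D_2c(z,z)$, use the chain rule $D\Phi(z)=D_1D_2c(z,z)+D_2^2c(z,z)$, and bound both terms uniformly by Hypothesis~\ref{Derivative} to get $K=2C$. The only difference is that you use the integral form along the segment, whereas the paper appeals to a pointwise multivariate mean value theorem with an intermediate point $\eta_s$; that pointwise equality does not hold for matrix-valued maps, so your version is the rigorous rendering of the same idea.
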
 
	
	\ignore{\begin{proof}We will prove it as following. Define the mapping
			$$
			\Phi:\mathbb{R}^n\to\mathbb{R}^n, \qquad 
			\Phi(z):=D_{2}c(z,z).$$
			To estimate $\Phi(Y_{\pi^n(s)})-\Phi(y_s)$, we apply the multivariate Mean Value
			Theorem which says that there exists
			$\eta_s = y_s + \theta\,(Y_{\pi^n(s)}-y_s)$ for some  $\theta\in(0,1),$
			such that
			$$
			\Phi(Y_{\pi^n(s)})-\Phi(y_s)
			= D\Phi(\eta_s)\,(Y_{\pi^n(s)}-y_s),
			$$
			where $D$ is total derivative. Hence,
			\begin{align}
				\label{MVT}
				\bigl|\Phi(Y_{\pi^n(s)})-\Phi(y_s)\bigr|
				\le |D\Phi(\eta_s)|\; |Y_{\pi^n(s)}-y_s|.
			\end{align}
			Since $\Phi(z)=D_2c(z,z)$, the chain rule gives
			$$
			D\Phi(z)=D_1D_2c(z,z)+D_2D_2c(z,z).
			$$
			By Hypothesis~\ref{Derivative},
			$$
			|D_1D_2c(x,y)|\le C,
			\qquad
			|D_2D_2c(x,y)|\le C.
			$$
			Therefore,
			$$
			|D\Phi(z)|\le C+C = 2C,
			\qquad \forall\, z\in\mathbb{R}^n.
			$$	
			Substituting this into the MVT estimate \eqref{MVT}, yields
			$$
			\bigl|\Phi(Y_{\pi^n(s)})-\Phi(y_s)\bigr|
			\le 2C\, |Y_{\pi^n(s)}-y_s|,
			$$
			and therefore
			$$
			\bigl|D_2c(Y_{\pi^n(s)},Y_{\pi^n(s)}) - D_2c(y_s,y_s)\bigr|
			\le 2C\,|Y_{\pi^n(s)}-y_s|.$$					
	\end{proof}	}		
	
	
	\begin{proof}[Proof of Lemma \ref{M2M1}] We have 		
		
		$${\I}_1^{\e,n}(t) = \int_0^t\{D_2c(y_s,y_s)-D_2c(Y_{\pi^n(s)}, Y_{\pi^n(s)})\}\int_{\pi^n(s)}^s \frac{c(y_{\pi^n(r)}, y_{\pi^n(r)})}{\e}  dr  ds.	$$
		Applying C-S inequality repeatedly, we get for any $t \in [0,T]$ that
		\begin{align*}
			|{\I}_1^{\e,n}(t)|\leq& \frac1\e \left(\int_0^t |D_2c(y_s,y_s)-D_2c(Y_{\pi^n(s)}, Y_{\pi^n(s)})|^2 ds\right)^{\frac12} \nonumber\\
			&\hspace{1cm}\times\left(\int_0^t |c(y_{\pi^n(r)},y_{\pi^n(r)})|^4 ds\right)^\frac14\left(\int_0^t (s-\pi^n(s))^4ds\right)^\frac14
			\nonumber\\
			&\leq\frac1\e \left(\int_0^T |Y_{\pi^n(s)}-y_s|^2 ds\right)^{\frac12}\left(\int_0^T (1+|y_{\pi^n(r)}|^4) ds\right)^\frac14 \left(\int_0^T (s-\pi^n(s))^4ds\right)^\frac14,
		\end{align*}
		by using Lemma \ref{MVTresult} and \eqref{growthofnoisecoefficeients}. Taking the supremum over $t \in [0,T]$ on the left, and taking expectation the on both sides, using Jenson inequality, applying Theorem \ref{LLNExtension}, we get
		\begin{equation*}
			\E[\sup_{ 0 \leq t \leq T}|{\I}_1^{\e,n}(t)|]\leq \frac1\e (\e^2+\E[\NN_2])^\frac12 \left(\E[\NN_4]\right)^\frac14 C_T
			= \cc(\e+n^{-1})C_{\xi_1T}.
		\end{equation*}		
	\end{proof}	
	\begin{proof}[Proof of Lemma \ref{M2M2}] 	Recalling the definition of $\ell_g(t)$ and  ${ \I}_2^{\e,n}(t)$  from Definition \ref{ellg} and   equations\eqref{M2decomposition}, respectively, we have
		\begin{align*}
			&{\I}_2^{\e,n}(t)-\ell_g(t) \\&= \int_0^t D_2c(Y_{\pi^n(s)}, Y_{\pi^n(s)})\int_{\pi^n(s)}^s \frac{c(y_{\pi^n(r)}, y_{\pi^n(r)})}{\e}  dr  ds
			-{\cc M}\int_0^t D_2c(y_s,y_s)\cdot c(y_s,y_s)  ds\\
			&=\left(\frac{1}{n\e}-\cc\right)\int_0^tD_2c(Y_{\pi^n(s)}, Y_{\pi^n(s)})\cdot c(y_{\pi^n(r)}, y_{\pi^n(r)}) \frac{(s-\pi^n(s))}{1/n}ds\\
			&+\cc \int_0^t \left(D_2c(Y_{\pi^n(s)}, Y_{\pi^n(s)})\cdot{c(y_{\pi^n(s)}, y_{\pi^n(s)})}-D_2c(y_s,y_s)\cdot c(y_s,y_s)\right) \frac{(s-\pi^n(s))}{1/n}ds\\
			&+\cc\int_0^t \left(\frac{(s-\pi^n(s))}{1/n}-M\right)D_2c(y_s,y_s)\cdot c(y_s,y_s) ds. 
		\end{align*}
		Therefore,			\begin{align*}
			\big|{\I}_2^{\e,n}(t)&-\ell_g(t)\big| \\
			&\leq \left|\frac{1}{n\e}-\cc\right|\left|\int_0^t \left(D_2c(Y_{\pi^n(s)}, Y_{\pi^n(s)})\cdot{c(y_{\pi^n(s)}, y_{\pi^n(s)})}\right)\frac{(s-\pi^n(s))}{1/n}ds\right|
			\\& +\cc\left|\int_0^t \left(D_2c(Y_{\pi^n(s)}, Y_{\pi^n(s)})\cdot{c(y_{\pi^n(s)}, y_{\pi^n(s)})}-D_2c(y_s,y_s)\cdot c(y_s,y_s)\right) \frac{(s-\pi^n(s))}{1/n}ds\right|
			\\&+\cc \left|\int_0^t \left(\frac{(s-\pi^n(s))}{1/n}-M\right)D_2c(y_s,y_s)\cdot c(y_s,y_s)\right|\\&
			:=I_2^1+I_2^2+I_2^3.
		\end{align*}
		Noting that $\varkappa(\e)=\left|\frac{1}{n\e}-\cc\right|$, we have
		\begin{align*}
			I_2^1=&	\varkappa(\e)\left|\int_0^t \left(D_2c(Y_{\pi^n(s)}, Y_{\pi^n(s)})\cdot{c(y_{\pi^n(s)}, y_{\pi^n(s)})}\right)\frac{(s-\pi^n(s))}{1/n}ds\right|\\&\leq \varkappa(\e) K\int_0^t(1+|y_{\pi^n(s)}|)\frac{(s-\pi^n(s))}{1/n}ds,
		\end{align*}
		which gives
		\begin{align*}
			\E\left[\sup_{ 0\leq t \leq T}|I_2^1(t)|\right]&\leq \varkappa(\e) nK \left(\E\int_0^T (s-\pi^n(s))^2ds\right)^{\frac12}C_T=\varkappa(\e) nK\left(\E\NN_2\right)^{\frac12}C_T \le \varkappa(\e)C_{T\xi_1}.
		\end{align*}			
		Now, by C-S inequality, we obtain	
		\begin{align*}
			\E&\left[\sup_{ 0 \leq t \leq T}|I_2^2|\right]\\
			&\leq n\cc \left(\E\int_0^T \left(D_2c(Y_{\pi^n(s)}, Y_{\pi^n(s)})\cdot{c(y_{\pi^n(s)}, y_{\pi^n(s)})}-D_2c(y_s,y_s)\cdot c(y_s,y_s)\right)^2ds\right)^{\frac12}\\	&\hspace{9cm}\times\left(\E\int_0^T  (s-\pi^n(s))^2ds\right)^{\frac12}.
		\end{align*}
		The last integral is $\E\left(\NN_2\right)^\frac12.$ Now, let us focus on the first integral. We can write 
		\begin{align*}
			\E\int_0^T &\left|D_2c(Y_{\pi^n(s)}, Y_{\pi^n(s)})\cdot{c(y_{\pi^n(s)}, y_{\pi^n(s)})}-D_2c(y_s,y_s)\cdot c(y_s,y_s)\right|^2ds\\&\lesssim \E\int_0^T \left|D_2c(Y_{\pi^n(s)}, Y_{\pi^n(s)})\cdot\left({c(y_{\pi^n(s)}, y_{\pi^n(s)})}-c(y_s,y_s)\right)\right|^2ds\\
			&\hspace{3.5cm}+\E\int_0^T \left|\left(D_2c(Y_{\pi^n(s)}, Y_{\pi^n(s)})-D_2c(y_s,y_s)\right)\cdot c(y_s,y_s)\right|^2ds\\
			&\leq K	\E\int_0^T |y_{\pi^n(s)}-y_s|^2ds+\left[\E\int_0^T \left|Y_{\pi^n(s)}-y_s\right|^4ds \right]^{\frac12} \left[\E\int_0^T \left(1+|y_s|^4\right) ds\right]^{\frac12},						\end{align*}
		where we have used the Hypotheses \ref{LipCont},~\ref{Derivative}, and Lemma \ref{MVTresult}. Thus 
		\begin{equation*}
			\E\left[\sup_{ 0 \leq t \leq T}|I_2^2|\right]\leq  \frac1\e\left[\NN_2+(\e^4+\NN_4)^{\frac12}\right]^\frac12\left({\E[\NN_2]}\right)^\frac12C_T\le \cc [n^{-1}+\e]C_{T\xi_1}.		
		\end{equation*}
		Next, we have 
		\begin{align}\label{M22I3}
			\E\left[\sup_{ 0 \leq t \leq T}|I_2^3|\right]= 	\cc \E\sup_{ 0 \leq t \leq T}\left|\int_0^t \left(\frac{(s-\pi^n(s))}{1/n}-M\right)D_2c(y_s,y_s)\cdot c(y_s,y_s) ds \right|.\nonumber
		\end{align}
		
		By Hypothesis \ref{Derivative}, the term $D_2c(y_s,y_s)\cdot c(y_s,y_s)$ is continuous. Moreover, by \eqref{growthofnoisecoefficeients} together with estimate \eqref{boundonYandYn}, we have $|c(y_s,y_s)|\leq C_T(1+|y_s|)\leq C_T$, and from Hypothesis \ref{Derivative} we also obtain $|D_2c(y_s,y_s)|<K$. Hence the product $D_2c(y_s,y_s)\cdot c(y_s,y_s)$ is not only continuous but also bounded. Therefore, there exists a mollifier $\rho_\e(y_s)$ such that
		$
		|D_2c(y_s,y_s)\cdot c(y_s,y_s)-\rho_\e(y_s)|\leq C\e.
		$
		Now, take $f$ to be a function satisfying the assumptions of Lemma \ref{G4decomposition}. Then we have
		\begin{align*}
			\E\sup_{ 0 \leq t \leq T}&\bigg|\int_0^t \left(\frac{(s-\pi^n(s))}{1/n}-M\right)D_2c(y_s,y_s)\cdot c(y_s,y_s) ds \bigg|\\
			&\le\E\int_0^T \left| \left(\frac{(s-\pi^n(s))}{1/n}-M\right)(D_2c(y_s,y_s)\cdot c(y_s,y_s)-\rho_\e(y_s))\right| ds\\
			&\hspace{3.5cm}+\E\int_0^T \left| \left(\frac{(s-\pi^n(s))}{1/n}-M\right)(\rho_\e(y_s)-f(y_s))\right| ds\\
			&\hspace{5.5cm}+\E\sup_{ 0 \leq t \leq T}\left|\int_0^t \left(\frac{(s-\pi^n(s))}{1/n}-M\right)f(y_s) ds\right|\\
			&\leq \sup_{s\in [0,T]}(|D_2c(y_s,y_s)\cdot c(y_s,y_s)-\rho_\e(y_s)|+|\rho_\e(y_s)-f(y_s)|)\\
			&\hspace{5.7cm}\times\int_0^T\E|n(s-\pi^n(s))-M|ds+n^{-1/4}C_{\xi_1T}\\
			&\leq C\e\int_0^T\E(\xi_{N_s^n+1}+M) ds+n^{-1/4}C_{\xi_1MT}\le (\e+n^{-1/4})C_{\xi_1 M T},
		\end{align*}
		where in last second inequality, we have used Lemma \ref{G4decomposition}. So, finally, we have
		\begin{align*}
			\E\left[\sup_{ 0 \leq t \leq T}|I_2^3|\right]\le \cc(\e+n^{-1/4})C_{\xi_1 M T}. 
		\end{align*}


	\end{proof}
	
	\begin{proof}[Proof of Lemma \ref{M3}]
		For $t \in [0,T],$ recalling the definition of ${\J}_3^{\e,n}(t)$ from equation \eqref{M1234} and using the C-S inequality, we obtain
		\begin{equation*}
			|{\J}_3^{\e,n}(t)| \le \left(\int_0^t|D_2c(y_s,y_s)|^{2} ds\right)^\frac12 \left(\int_0^t \left|\int_{\pi^{n}(s)}^{s}\sigma(Y_{r}^{\e,n})  dW_r\right|^2 ds\right)^\frac12.
		\end{equation*}
		Taking supremum over $[0,T]$ followed by expectation, Jensen's inequality ($\E[X^{\frac12}]\leq [\E X]^{\frac12}$) and then using Hypothesis \ref{Derivative} for the boundedness of $D_2c(y_s,y_s)$, we have
		\begin{equation}\label{M31}
			\E\left[\sup_{0\le t \le T}|{\J}_3^{\e,n}(t)|\right]\leq 				C_T \left(\int_0^T \E  \left|\int_{\pi^{n}(s)}^{s}\sigma(Y_{r}^{\e,n}) dW_r\right|^2 ds\right)^\frac12.
		\end{equation}
		~Let $\sigma_i \in \R^d$, $1 \le i \le n$, represent the columns of the matrix $\sigma$, then using It\^{o} isometry and C-S's inequality, we get
		\begin{align*}
			\int_0^T \E \left|\int_{\pi^{n}(s)}^{s}\sigma(Y_{r}^{\e,n}) dW_r\right|^2ds
			&=\int_0^T \E \left|\sum_{i=1}^{n}\int_{\pi^{n}(s)}^{s}\sigma_i(Y_{r}^{\e,n}) dW_r^i\right|^2 ds\\
			&\lesssim \int_0^T\E \sum_{i=1}^{n}\left|\int_0^T\sigma_i(Y_{r}^{\e,n}) 1_{\{\pi^{n}(s), s\}}(r) dW_r^i \right|^2 ds \\
			&\lesssim  \int_0^T\sum_{i,j=1}^{n}\E\int_{0}^{T}\sigma_{ji}^2(Y_{r}^{\e,n})1_{\{\pi^{n}(s), s\}}dr ds\\
			&=\E \int_0^T\int_{\pi^{n}(s)}^{s}|\sigma(Y_{r}^{\e,n})|^2dr ds\\
			&\leq \left(\E\int_0^T\sup_{0\leq r\leq s}|\sigma(Y_{r}^{\e,n})|^4ds\right)^{\frac12}\left(\E\int_0^T (s-\pi^n(s))^2 ds\right)^{\frac12}.
		\end{align*}
		by C-S inequality. Now, using  \eqref{growthofnoisecoefficeients}, and then moment bound \eqref{Ynbound} and Corollary \ref{Np}, we get
		\begin{align*}
			\int_0^T \E \left|\int_{\pi^{n}(s)}^{s}\sigma(Y_{r}^{\e,n}) dW_r\right|^2ds &\leq C\left(\E\int_{0}^{T}\left( 1+ \sup_{0 \le r \le s} |Y_r^{\e,n}|^4\right)ds\right)^{\frac12}\left(\E\NN_2\right)^{\frac12}\\
			&\leq C_T\left(\E[\NN_2]\right)^{\frac12}. 
		\end{align*}
		Putting this last expression in equation \eqref{M31},  we get
		\begin{equation*}\label{M3square}
			\begin{aligned}
				\E\left[\sup_{0\le t \le T}|{\J}_3^{\e,n}(t)|\right]
				\leq C_T\left(\E[\NN_2]\right)^{\frac14}\le \frac{C_{T\xi_1}}{\sqrt{n}}. 
			\end{aligned}
		\end{equation*}
	\end{proof}			
	\ignore{\begin{rmkk} 
			For simplicity, we consider only Gaussian white noise in this work. The results can also be extended to the case where the SDE \eqref{GSDE} is driven by an additive compensated Poisson noise of the form
			$$
			\int_{0<|x|<1} F\left(Y^{\e,n}_{t-},x\right)\,\widetilde{N}(dt,dx),
			$$
			where $\widetilde{N}(dt,dx)$ denotes the compensated Poisson random measure. Assume that $F$ satisfies the usual Lipschitz and linear growth conditions. Under this additional  assumption, similar estimates can be obtained as in the white noise case. he main difference is that we now deal with jump processes. To handle them, we use  BDG type inequalities for jump processes and a careful application of the C-S type inequality. The remaining arguments follow in the same manner.
		\end{rmkk}
	}
	
	\section{Acknowledgment} The first author acknowledges the support of the Institute Postdoctoral Fellowship at Ashoka University, India. The second author acknowledges the support of the Institute Postdoctoral Fellowship at IIT Kanpur, India. He would also like to thank to Prof. Suprio Bhar (IIT Kanpur) for his support during the initial stage of this work through his CRG grant with file number : IITK/RD/AD/28487.

    
	\ignore{
		\begin{appendix}
			\renewcommand{\thesection}{\Alph{section}}
			\numberwithin{equation}{section}
			
			\begin{proof}[ \textcolor{blue}{Sketch of the Proof of Theorem \ref{CLTresult2}}] 
				Similarly to Regimes 1 and 2, the main difficulty in Regime 3 is the fluctuation term $\displaystyle \int_0^t \frac{X_s^{\e, n}-X_{\pi^n(s)}^{\e, n}}{1/n} \, ds.$ The proof of this theorem proceeds along the same lines as the proof of Theorem \ref{CLTresult}.
				By Lemma \ref{L1L2decomposition}, we will get 
				\begin{align}
					\int_0^t \frac{{X_s^{\e, n}-X_{\pi^n(s)}^{\e, \delta}}}{1/n} ds :=  \LL^{n}_1(t)+ \LL^{n}_2(t),
				\end{align} where
				\begin{equation*}
					\LL^{n}_1(t) = \int_0^t \left[\frac{{e^{{(s-\pi^n(s))}A}-I}}{1/n}\right] \left[I-A^{-1}BK\right]X_{\pi^n(s)}^{\e,n} ds,
				\end{equation*} 
				and
				\begin{equation*}
					~\LL^{n}_2(t) = \e n\int_0^t e^{sA}\left(\M_s-\M_{\pi^n(s)}\right)  ds.
				\end{equation*}
				We notice that
				$$\LL^{n}_2(t)=\e n \LL^{\e,n}_2(t).$$
				Therefore, we have that 
				$$ \E\left[\sup_{0 \le t \le T} |\LL^{n}_2(t)|\right]= \E\left[\sup_{0 \le t \le T} |\e n\LL^{\e,n}_2(t)|\right]=\e n \E\left[\sup_{0 \le t \le T} |\LL^{\e,n}_2(t)|\right].$$
				By Proposition \ref{noisebd} and Definition \ref{Regimes}, we have 
				\begin{align}\label{L2n}
					\E\left[\sup_{0 \le t \le T} |\LL^{n}_2(t)|\right]\le \e \sqrt{n}C_{ABKT\xi_1}.
				\end{align}
				Observe that $\frac{1}{\e n}\to \cc = \infty $ in Regime $3$, and thus $\e \sqrt{n} \to 0$.
				
				The term
				$\LL^n_2$ can be approximated as in Proposition \ref{noisebd}. To approximate $\LL^n_1$, we proceed as in  equation \eqref{L1limit}, by decomposing $\LL^{\e,n}_1$  in the following way:
				\begin{align}
					\LL^{n}_1=& \int_0^t \left(\frac{e^{s - \pi^n(s) A} - I}{1/n}\right) (I-A^{-1}BK) X^{\e,n}_{\pi^n(s)} ds\nonumber \\
					=& \int_0^t \left(\frac{e^{s - \pi^n(s) A} - I}{1/n}\right) (I-A^{-1}BK) \left(X^{\e,n}_{\pi^n(s)}  - x_{\pi^n(s)}\right)ds \label{H1}\\
					+& \int_0^t \left(\frac{e^{s - \pi^n(s) A} - I - (s - \pi^n(s)) A}{1/n}\right) (I-A^{-1}BK)x_{\pi^n(s)}ds \label{H2}\\
					+& \int_0^t \left(\frac{(s-\pi^n(s))A}{1/n}(I-A^{-1}BK)(x_{\pi^n(s)} - x(s)) \right) ds   \label{H3}\\
					+& \int_0^t \left(\frac{(s-\pi^n(s))A}{1/n} - MA\right)(I-A^{-1}BK) x(s)ds \label{H4}\\
					+&  \int_0^t M(A-BK) x(s)ds \label{H5}\\
					=:&\sum_{i=1}^{4}H_i+ \int_0^t M (A-BK)x(s)ds:=\sum_{i=1}^{4}H_i+\ell_q(t), \nonumber 
				\end{align}
				that is,
				\begin{equation}\label{L1limitH}
					\LL^{n}_1(t)-\ell_q(t)= \sum_{i=1}^{4}H_i  
				\end{equation}
				where the functions $H_i$, $i=1,2,3,4$, are given by \eqref{H1}--\eqref{H4}, respectively, and $\ell_q(t):=\int_0^t M(A-BK)x(s)\,ds$. We can obtain an estimate for each $H_i$ directly by following the arguments presented in Sub-subsections \ref{G1sub}--\ref{G4sub}. Indeed, by \eqref{L1limit}, we have 
				\[ H_i= \e n G_i,~~~\forall i=1,2,3,4.\]
				Therefore by Proposition \ref{L1asymptotes}, Lemmas \ref{G1Lemma}-\ref{G4Lemma} and Definition \ref{Regimes}, we have that 
				\begin{align}\label{L1n}
					\E\left[\sup_{t\in [0,T]}|\LL^{n}_1(t)-\ell_q(t)|\right]&\leq\E\left[\sup_{0\leq t\leq T} \sum_{i=1}^4|H_i(t)|\right]\le \e n\E\left[\sup_{0\leq t\leq T} \sum_{i=1}^4|G_i(t)|\right] \nonumber\\ &\le  \left[ n^{-1/2}(n^{-1}+\e)+ n^{-1/2}+ n^{-1}+ n^{-1/4}\right]C_{ABKT\xi_1}\nonumber\\
					&\le n^{-1/4}C_{ABKT\xi_1}.
				\end{align}
				Combining estimates \ref{L1n} and \ref{L2n}, we have that 
				\begin{align}
					\E\left[\sup_{t\in[0,T]}\left|\left(\int_0^t \frac{{X_s^{\e, n}-X_{\pi^n(s)}^{\e, \delta}}}{1/n} ds\right)-  MBK\int_0^t (A-BK) x_s ds\right|\right]\le  \left(\frac{1}{n^{1/4}}+\e \sqrt{n}\right)C_{ABKT\xi_1}.
				\end{align}
				which conclude the proof of Theorem \ref{CLTresult2}.
			\end{proof}
			
		\end{appendix}
	}
	
	\bibliographystyle{plain}    
	\bibliography{RandomSampling}
	
\end{document}